
\documentclass[a4paper, british]{amsart}

%
%
\usepackage{libertine}
\usepackage[libertine]{newtxmath}

%
%
%
%

\usepackage{amssymb}
\usepackage{babel}
\usepackage{enumitem}
\usepackage{hyperref}
\usepackage[utf8]{inputenc}
\usepackage{newunicodechar}
\usepackage{mathtools}
\usepackage{varioref}
\usepackage[arrow,curve,matrix]{xy}

\usepackage{colortbl}
\usepackage{graphicx}
\usepackage{tikz}

\usepackage{mathrsfs}

%
%

\definecolor{linkred}{rgb}{0.7,0.2,0.2}
\definecolor{linkblue}{rgb}{0,0.2,0.6}

\setcounter{tocdepth}{1}

\numberwithin{figure}{section}

\usepackage[hyperpageref]{backref}


\sloppy

\setdescription{labelindent=\parindent, leftmargin=2\parindent}
\setitemize[1]{labelindent=\parindent, leftmargin=2\parindent}
\setenumerate[1]{labelindent=0cm, leftmargin=*, widest=iiii}

%
%
\newunicodechar{α}{\ensuremath{\alpha}}
\newunicodechar{β}{\ensuremath{\beta}}
\newunicodechar{χ}{\ensuremath{\chi}}
\newunicodechar{δ}{\ensuremath{\delta}}
\newunicodechar{ε}{\ensuremath{\varepsilon}}
\newunicodechar{Δ}{\ensuremath{\Delta}}
\newunicodechar{η}{\ensuremath{\eta}}
\newunicodechar{γ}{\ensuremath{\gamma}}
\newunicodechar{Γ}{\ensuremath{\Gamma}}
\newunicodechar{ι}{\ensuremath{\iota}}
\newunicodechar{κ}{\ensuremath{\kappa}}
\newunicodechar{λ}{\ensuremath{\lambda}}
\newunicodechar{Λ}{\ensuremath{\Lambda}}
\newunicodechar{ν}{\ensuremath{\nu}}
\newunicodechar{μ}{\ensuremath{\mu}}
\newunicodechar{ω}{\ensuremath{\omega}}
\newunicodechar{Ω}{\ensuremath{\Omega}}
\newunicodechar{π}{\ensuremath{\pi}}
\newunicodechar{Π}{\ensuremath{\Pi}}
\newunicodechar{φ}{\ensuremath{\phi}}
\newunicodechar{Φ}{\ensuremath{\Phi}}
\newunicodechar{ψ}{\ensuremath{\psi}}
\newunicodechar{Ψ}{\ensuremath{\Psi}}
\newunicodechar{ρ}{\ensuremath{\rho}}
\newunicodechar{σ}{\ensuremath{\sigma}}
\newunicodechar{Σ}{\ensuremath{\Sigma}}
\newunicodechar{τ}{\ensuremath{\tau}}
\newunicodechar{θ}{\ensuremath{\theta}}
\newunicodechar{Θ}{\ensuremath{\Theta}}
\newunicodechar{ξ}{\ensuremath{\xi}}
\newunicodechar{Ξ}{\ensuremath{\Xi}}
\newunicodechar{ζ}{\ensuremath{\zeta}}

\newunicodechar{ℓ}{\ensuremath{\ell}}

\newunicodechar{𝔹}{\ensuremath{\bB}}
\newunicodechar{ℂ}{\ensuremath{\bC}}
\newunicodechar{𝔻}{\ensuremath{\bD}}
\newunicodechar{𝔼}{\ensuremath{\bE}}
\newunicodechar{𝔽}{\ensuremath{\bF}}
\newunicodechar{ℕ}{\ensuremath{\bN}}
\newunicodechar{ℙ}{\ensuremath{\bP}}
\newunicodechar{ℚ}{\ensuremath{\bQ}}
\newunicodechar{ℝ}{\ensuremath{\bR}}
\newunicodechar{𝕏}{\ensuremath{\bX}}
\newunicodechar{ℤ}{\ensuremath{\bZ}}
\newunicodechar{𝒜}{\ensuremath{\sA}}
\newunicodechar{ℬ}{\ensuremath{\sB}}
\newunicodechar{𝒞}{\ensuremath{\sC}}
\newunicodechar{𝒟}{\ensuremath{\sD}}
\newunicodechar{ℰ}{\ensuremath{\sE}}
\newunicodechar{ℱ}{\ensuremath{\sF}}
\newunicodechar{𝒢}{\ensuremath{\sG}}
\newunicodechar{ℋ}{\ensuremath{\sH}}
\newunicodechar{𝒥}{\ensuremath{\sJ}}
\newunicodechar{ℒ}{\ensuremath{\sL}}
\newunicodechar{𝒪}{\ensuremath{\sO}}
\newunicodechar{𝒬}{\ensuremath{\sQ}}
\newunicodechar{𝒯}{\ensuremath{\sT}}
\newunicodechar{𝒲}{\ensuremath{\sW}}

\newunicodechar{∂}{\ensuremath{\partial}}
\newunicodechar{∇}{\ensuremath{\nabla}}

\newunicodechar{↺}{\ensuremath{\circlearrowleft}}
\newunicodechar{∞}{\ensuremath{\infty}}
\newunicodechar{⊕}{\ensuremath{\oplus}}
\newunicodechar{⊗}{\ensuremath{\otimes}}
\newunicodechar{•}{\ensuremath{\bullet}}
\newunicodechar{Λ}{\ensuremath{\wedge}}
\newunicodechar{↪}{\ensuremath{\into}}
\newunicodechar{→}{\ensuremath{\to}}
\newunicodechar{↦}{\ensuremath{\mapsto}}
\newunicodechar{⨯}{\ensuremath{\times}}
\newunicodechar{∪}{\ensuremath{\cup}}
\newunicodechar{∩}{\ensuremath{\cap}}
\newunicodechar{⊋}{\ensuremath{\supsetneq}}
\newunicodechar{⊇}{\ensuremath{\supseteq}}
\newunicodechar{⊃}{\ensuremath{\supset}}
\newunicodechar{⊊}{\ensuremath{\subsetneq}}
\newunicodechar{⊆}{\ensuremath{\subseteq}}
\newunicodechar{⊂}{\ensuremath{\subset}}
\newunicodechar{≥}{\ensuremath{\geq}}
\newunicodechar{≠}{\ensuremath{\neq}}
\newunicodechar{≫}{\ensuremath{\gg}}
\newunicodechar{≪}{\ensuremath{\ll}}

\newunicodechar{≤}{\ensuremath{\leq}}
\newunicodechar{∈}{\ensuremath{\in}}
\newunicodechar{∉}{\ensuremath{\not \in}}
\newunicodechar{∖}{\ensuremath{\setminus}}
\newunicodechar{◦}{\ensuremath{\circ}}
\newunicodechar{°}{\ensuremath{^\circ}}
\newunicodechar{…}{\ifmmode\mathellipsis\else\textellipsis\fi}
\newunicodechar{·}{\ensuremath{\cdot}}
\newunicodechar{⋯}{\ensuremath{\cdots}}
\newunicodechar{∅}{\ensuremath{\emptyset}}
\newunicodechar{⇒}{\ensuremath{\Rightarrow}}

\newunicodechar{⁰}{\ensuremath{^0}}
\newunicodechar{¹}{\ensuremath{^1}}
\newunicodechar{²}{\ensuremath{^2}}
\newunicodechar{³}{\ensuremath{^3}}
\newunicodechar{⁴}{\ensuremath{^4}}
\newunicodechar{⁵}{\ensuremath{^5}}
\newunicodechar{⁶}{\ensuremath{^6}}
\newunicodechar{⁷}{\ensuremath{^7}}
\newunicodechar{⁸}{\ensuremath{^8}}
\newunicodechar{⁹}{\ensuremath{^9}}
\newunicodechar{ⁱ}{\ensuremath{^i}}

\newunicodechar{⌈}{\ensuremath{\lceil}}
\newunicodechar{⌉}{\ensuremath{\rceil}}
\newunicodechar{⌊}{\ensuremath{\lfloor}}
\newunicodechar{⌋}{\ensuremath{\rfloor}}

\newunicodechar{≅}{\ensuremath{\cong}}
\newunicodechar{⇔}{\ensuremath{\Leftrightarrow}}

%
%

\DeclareFontFamily{OMS}{rsfs}{\skewchar\font'60}
\DeclareFontShape{OMS}{rsfs}{m}{n}{<-5>rsfs5 <5-7>rsfs7 <7->rsfs10 }{}
\DeclareSymbolFont{rsfs}{OMS}{rsfs}{m}{n}
\DeclareSymbolFontAlphabet{\scr}{rsfs}
\DeclareSymbolFontAlphabet{\scr}{rsfs}

\DeclareFontFamily{U}{mathx}{\hyphenchar\font45}
\DeclareFontShape{U}{mathx}{m}{n}{
      <5> <6> <7> <8> <9> <10>
      <10.95> <12> <14.4> <17.28> <20.74> <24.88>
      mathx10
      }{}
\DeclareSymbolFont{mathx}{U}{mathx}{m}{n}
\DeclareFontSubstitution{U}{mathx}{m}{n}
\DeclareMathAccent{\wcheck}{0}{mathx}{"71}

%
%

\DeclareMathOperator{\Aut}{Aut}
\DeclareMathOperator{\codim}{codim}

\DeclareMathOperator{\Id}{Id}

\DeclareMathOperator{\img}{img}
\DeclareMathOperator{\Pic}{Pic}
\DeclareMathOperator{\rank}{rank}
\DeclareMathOperator{\Ramification}{Ramification}

\DeclareMathOperator{\reg}{reg}

\DeclareMathOperator{\sEnd}{\sE\negthinspace \mathit{nd}}
\DeclareMathOperator{\sing}{sing}

\DeclareMathOperator{\supp}{supp}
\DeclareMathOperator{\tor}{tor}

\newcommand{\sA}{\scr{A}}
\newcommand{\sB}{\scr{B}}
\newcommand{\sC}{\scr{C}}
\newcommand{\sD}{\scr{D}}
\newcommand{\sE}{\scr{E}}
\newcommand{\sF}{\scr{F}}
\newcommand{\sG}{\scr{G}}
\newcommand{\sH}{\scr{H}}

\newcommand{\sJ}{\scr{J}}

\newcommand{\sL}{\scr{L}}

\newcommand{\sO}{\scr{O}}

\newcommand{\sQ}{\scr{Q}}

\newcommand{\sT}{\scr{T}}

\newcommand{\sW}{\scr{W}}

\newcommand{\cA}{\mathcal A}
\newcommand{\cC}{\mathcal C}
\newcommand{\cD}{\mathcal D}

\newcommand{\cV}{\mathcal V}


\newcommand{\bB}{\mathbb{B}}
\newcommand{\bC}{\mathbb{C}}
\newcommand{\bD}{\mathbb{D}}
\newcommand{\bE}{\mathbb{E}}
\newcommand{\bF}{\mathbb{F}}

\newcommand{\bN}{\mathbb{N}}

\newcommand{\bP}{\mathbb{P}}
\newcommand{\bQ}{\mathbb{Q}}
\newcommand{\bR}{\mathbb{R}}

\newcommand{\bX}{\mathbb{X}}

\newcommand{\bZ}{\mathbb{Z}}


\theoremstyle{plain}
\newtheorem{thm}{Theorem}[section]

\newtheorem{cor}[thm]{Corollary}
\newtheorem{defn}[thm]{Definition}
\newtheorem{fact}[thm]{Fact}
\newtheorem{lem}[thm]{Lemma}

\newtheorem{prop}[thm]{Proposition}

\theoremstyle{remark}

\newtheorem{asswlog}[thm]{Assumption w.l.o.g.}
\newtheorem{claim}[thm]{Claim}
\newtheorem{c-n-d}[thm]{Claim and Definition}
\newtheorem{consequence}[thm]{Consequence}
\newtheorem{construction}[thm]{Construction}

\newtheorem{example}[thm]{Example}

\newtheorem{notation}[thm]{Notation}
\newtheorem{obs}[thm]{Observation}
\newtheorem{rem}[thm]{Remark}

\newtheorem*{rem-nonumber}{Remark}

\newtheorem{warning}[thm]{Warning}

\numberwithin{equation}{thm}

\setlist[enumerate]{label=(\thethm.\arabic*), before={\setcounter{enumi}{\value{equation}}}, after={\setcounter{equation}{\value{enumi}}}}

\newcommand{\into}{\hookrightarrow}

\newcommand{\wtilde}{\widetilde}
\newcommand{\what}{\widehat}

%
%

\hyphenation{com-po-nents}
\hyphenation{Theo-rem}

%
%

\newcommand\CounterStep{\addtocounter{thm}{1}\setcounter{equation}{0}}

\newcommand{\factor}[2]{\left. \raise 2pt\hbox{$#1$} \right/\hskip -2pt\raise -2pt\hbox{$#2$}}
%
%
\newcommand{\Preprint}[1]{#1}
\newcommand{\Publication}[1]{}

%
%
\newcommand{\subversionInfo}{}
\newcommand{\svnid}[1]{}
\newcommand{\approvals}[2][Approval]{}
\allowdisplaybreaks

%
%
\DeclareMathOperator{\Div}{Div}

\DeclareMathOperator{\Gal}{Gal}

\DeclareMathOperator{\PSU}{PSU}
\DeclareMathOperator{\refl}{refl}

\newcommand{\pCVHS}{{\sf p$ℂ$VHS}{}}

\theoremstyle{plain}

\newtheorem{thmDef}[thm]{Theorem and Definition}
\newtheorem*{expectation}{Expectation}

\theoremstyle{remark}

\newtheorem{reminder}[thm]{Reminder}

%
%
\author{Daniel Greb}
\address{Daniel Greb, Essener Seminar für Algebraische Geometrie und Arithmetik, Fakultät für Mathe\-ma\-tik, Universität Duisburg--Essen, 45117 Essen, Germany}
\email{\href{mailto:daniel.greb@uni-due.de}{daniel.greb@uni-due.de}}
\urladdr{\href{http://www.esaga.uni-due.de/daniel.greb/}{http://www.esaga.uni-due.de/daniel.greb}}

\author{Stefan Kebekus}
\address{Stefan Kebekus, Mathematisches Institut, Albert-Ludwigs-Universität Freiburg, Eckerstraße 1, 79104 Freiburg im Breisgau, Germany and University of Strasbourg Institute for Advanced Study (USIAS), Strasbourg, France}
\email{\href{mailto:stefan.kebekus@math.uni-freiburg.de}{stefan.kebekus@math.uni-freiburg.de}}
\urladdr{\href{http://home.mathematik.uni-freiburg.de/kebekus}{http://home.mathematik.uni-freiburg.de/kebekus}}

\author{Thomas Peternell}
\address{Thomas Peternell, Mathematisches Institut, Universität
  Bayreuth, 95440~Bayreuth, Germany}
\email{\href{mailto:thomas.peternell@uni-bayreuth.de}{thomas.peternell@uni-bayreuth.de}}
\urladdr{\href{http://www.komplexe-analysis.uni-bayreuth.de/}{http://www.komplexe-analysis.uni-bayreuth.de}}

\author{Behrouz Taji}
\address{Behrouz Taji, University of Notre Dame, Department of Mathematics, 278 Hurley, Notre Dame, IN
46556, USA}
\email{\href{mailto:btaji@nd.edu}{btaji@nd.edu}}
\urladdr{\href{http://sites.nd.edu/b-taji}{http://sites.nd.edu/b-taji}}

\thanks{Daniel Greb was partially supported by the DFG-Collaborative Research
  Center SFB/TR 45 ``Periods, Moduli and Arithmetic of Algebraic Varieties''.
  Stefan Kebekus gratefully acknowledges support through a joint fellowship of
  the Freiburg Institute of Advanced Studies (FRIAS) and the University of
  Strasbourg Institute for Advanced Study (USIAS).  A part of this paper was
  worked out while Kebekus enjoyed the hospitality of IMPA in Rio de Janeiro.
  Behrouz Taji was partially supported by the DFG-Graduiertenkolleg GK1821
  ``Cohomological Methods in Geometry'' at Freiburg.}

%
%
\keywords{Classification Theory, Uniformization, Ball Quotients, Minimal Models of General Type,
Miyaoka-Yau inequality, Higgs Sheaves, KLT Singularities, Canonical Models, Stability, Hyperbolicity, Flat Vector Bundles.}

\subjclass[2010]{32Q30, 14E05, 32Q26, 14E05, 14E20, 14E30, 53B10, 53C07, 14C15, 14C17, 14M05.}

\title[The Miyaoka-Yau Inequality and uniformisation]{The Miyaoka-Yau inequality and uniformisation of canonical models}
\date{\today}

\makeatletter
\hypersetup{
  pdfauthor={\authors},
  pdftitle={\@title},
  pdfsubject={\@subjclass},
  pdfkeywords={\@keywords},
  pdfstartview={Fit},
  pdfpagelayout={TwoColumnRight},
  pdfpagemode={UseOutlines},
  bookmarks,
  colorlinks,
  linkcolor=linkblue,
  citecolor=linkred,
  urlcolor=linkred}
\makeatother

\begin{document}

\begin{abstract}
We establish the Miyaoka-Yau inequality in terms of orbifold Chern classes for
the tangent sheaf of any complex projective variety of general type with klt
singularities and nef canonical divisor.  In case equality is attained for a
variety with at worst terminal singularities, we prove that the associated
canonical model is the quotient of the unit ball by a discrete group action.\end{abstract}

\maketitle
\approvals[Approvals for Abstract]{
  Behrouz & yes \\
  Daniel & yes \\
  Stefan & yes \\
  Thomas & yes
}
\tableofcontents

%
%
\svnid{$Id: 01-intro.tex 840 2018-11-20 14:25:41Z kebekus $}

\section{Introduction}
\subversionInfo

\approvals{
  Behrouz & yes \\
  Daniel & yes \\
  Stefan & yes \\
  Thomas & yes
}

A classical result in complex geometry asserts that the Chern classes of any
holomorphic, slope-semistable vector bundle $ℰ$ of rank $r$ on a compact Kähler
manifold $(X,ω)$ satisfy the \emph{Bogomolov-Gieseker inequality}
$$
\int_X \bigl(2r·c_2(ℰ)-(r-1)·c_1²(ℰ)\bigr)Λ ω^{n-2}≥ 0.
$$
Thanks to his solution of the Calabi conjecture, Yau established in
\cite{MR0451180} the following stronger, \emph{Miyaoka-Yau inequality} for the
holomorphic tangent bundle of any $n$-dimensional compact Kähler manifold $X$
with ample canonical class $K_X$,
\begin{equation}\label{ineq:MY-classic}
  \int_X \bigl(2(n+1)·c_2(𝒯_X)-n·c_1²(𝒯_X)\bigr)·[K_X]^{n-2}≥ 0.\tag{$\ast$}
\end{equation}
In case of equality, the natural symmetries imposed by the Kähler-Einstein
condition lead to the uniformisation of $X$ by the unit ball.

A fundamental result of Birkar, Cascini, Hacon and McKernan, \cite{BCHM10},
states that every projective manifold of general type admits a minimal model,
which is a normal, $ℚ$-factorial, projective variety with at most terminal
singularities whose canonical divisor is big and nef.  These varieties are
however usually singular.  It was expected that the Miyaoka-Yau inequality
should also hold in this context, with applications to uniformisation in case of
equality.  This problem has attracted considerable interest;
Section~\ref{sect:known-results} gives a short account of the history.

\subsection{Main results of this paper}
\approvals{
  Behrouz & yes \\
  Daniel & yes \\
  Stefan & yes \\
  Thomas & yes
}

The main result of this paper settles the problem in full generality, even in
the broader context of varieties with Kawamata log-terminal (=klt) singularities
and nef canonical divisor.

\begin{thm}[$ℚ$-Miyaoka-Yau inequality]\label{thm:MYinequality}
  Let $X$ be an $n$-dimensional, projective, klt variety of general type whose
  canonical divisor $K_X$ is nef.  Then,
  \begin{equation}\label{eq:X2}
    \Bigl( 2(n+1)· \widehat{c}_2(𝒯_X)-n·\widehat{c}_1(𝒯_X)²
    \Bigr)·[K_X]^{n-2}≥ 0.
  \end{equation}
\end{thm}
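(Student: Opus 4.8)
The strategy is to obtain \eqref{eq:X2} as the Bogomolov--Gieseker inequality applied to a \emph{rank-$(n+1)$ Higgs sheaf} built from $\sT_X$, rather than to $\sT_X$ itself; the extra rank is exactly what converts the Bogomolov factor $2n$ into the Miyaoka--Yau factor $2(n+1)$. Since $X$ is of general type, $K_X$ is big and nef, so $K_X^n>0$ and $[K_X]^{n-1}$ is a movable curve class; I will measure slopes with respect to this class and use the $\mathbb{Q}$-Chern class formalism for reflexive sheaves on klt spaces established in the earlier sections, so that $\widehat c_1(\sT_X),\widehat c_2(\sT_X)$ and the intersection numbers in \eqref{eq:X2} make sense on $X$ directly.

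\emph{Step 1: The tautological Higgs sheaf and its semistability.} Let
\[
 (\sE,\theta),\qquad \sE:=\sT_X\oplus\sO_X,\qquad \theta|_{\sT_X}=0,\quad \theta|_{\sO_X}\colon \sO_X\xrightarrow{\ \Id\ }\sT_X\otimes\Omega^{[1]}_X\subset\sE\otimes\Omega^{[1]}_X,
\]
which is an honest Higgs sheaf ($\theta\wedge\theta=0$, since $\theta$ vanishes on the summand into which its image lands). The key structural observation is that any saturated $\theta$-invariant subsheaf $\sF\subseteq\sE$ is contained in $\sT_X$: if a local section of $\sF$ had nonzero $\sO_X$-component $f$, then $\theta$-invariance would force $f\cdot\Id_{\sT_X}\in\sF\otimes\Omega^{[1]}_X$, and contracting with local vector fields yields $f\cdot\sT_X\subseteq\sF$, hence $\sT_X\subseteq\sF$ and $\sF=\sE$. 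Therefore $(\sE,\theta)$ is $\mu$-semistable with respect to $[K_X]^{n-1}$ if and only if every saturated subsheaf $\sG\subsetneq\sT_X$ satisfies $\mu_{[K_X]^{n-1}}(\sG)\le\mu_{[K_X]^{n-1}}(\sE)=-\tfrac{1}{n+1}K_X^n$. This I would deduce from the $K_X$-semistability of $\sT_X$ on a klt variety of general type with nef $K_X$: granting that, $\mu_{[K_X]^{n-1}}(\sG)\le\mu_{[K_X]^{n-1}}(\sT_X)=-\tfrac1n K_X^n<-\tfrac1{n+1}K_X^n$. The semistability of $\sT_X$ in turn I would establish from the generic semipositivity of $\Omega^{[1]}_X$ --- available because a variety of general type is not uniruled (Miyaoka, Campana--P\u{a}un) --- together with the structure theory of destabilising foliations: the maximal destabilising subsheaf of $\sT_X$ would be a foliation of $K_X$-slope exceeding $-\tfrac1n K_X^n$, and the algebraic-integrability machinery for foliations on non-uniruled klt varieties, combined with the general-type hypothesis, rules this out.

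\emph{Step 2: Bogomolov--Gieseker and the computation.} Apply the Bogomolov--Gieseker inequality for $\mu$-semistable Higgs sheaves --- Simpson's inequality in the compact Kähler case, and, in the present klt situation, obtained by restricting to general complete-intersection surfaces (which carry only quotient singularities, where Langer's version applies) so that the codimension-two class $\Delta(\sE)$ may be intersected with $[K_X]^{n-2}$ --- to $(\sE,\theta)$. This gives $\bigl(2(n+1)\cdot\widehat c_2(\sE)-n\cdot\widehat c_1(\sE)^2\bigr)\cdot[K_X]^{n-2}\ge 0$. Because $\sO_X$ contributes trivially to Chern classes, the Whitney formula gives $\widehat c_1(\sE)=\widehat c_1(\sT_X)$ and $\widehat c_2(\sE)=\widehat c_2(\sT_X)$, and, as $\sE$ has rank $n+1$, this is precisely \eqref{eq:X2}. (By contrast, applying Bogomolov--Gieseker to $\sT_X$ itself, of rank $n$, only yields the strictly weaker $2n\,\widehat c_2(\sT_X)-(n-1)\widehat c_1(\sT_X)^2\ge 0$.)

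\emph{Main obstacle.} The heart of the matter is the semistability claim in Step 1. Generic semipositivity by itself only gives $\mu_{\max,[K_X]^{n-1}}(\sT_X)\le 0$, whereas what is needed is the strictly stronger bound $\le -\tfrac{1}{n+1}K_X^n<0$; closing this numerical gap is exactly the point at which one must use that $X$ is of general type rather than merely non-uniruled. In addition, all of the ingredients --- generic semipositivity, the foliation/algebraic-integrability arguments, and the Bogomolov--Gieseker inequality for Higgs sheaves --- have to be transplanted to the klt/$\mathbb{Q}$-sheaf setting and shown to interact correctly with the orbifold Chern classes, which is where the technical weight of the proof lies.
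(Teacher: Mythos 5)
Your overall architecture coincides with the paper's: a rank-$(n+1)$ Higgs sheaf built from the (co)tangent sheaf and $\sO_X$, its stability deduced from semistability of $\sT_X$ with respect to $K_X$, and a Bogomolov--Gieseker inequality for Higgs sheaves on klt spaces obtained by restriction to surfaces with quotient singularities (these are Example~\ref{ex:BQfield1}, Corollary~\ref{cor:stable-claim}, Theorem~\ref{thm:restriction} and Theorem~\ref{thm:BogIneq}). The genuine gap is precisely the point you flag yourself: semistability of $\sT_X$ with respect to $K_X$. Your proposed route---Miyaoka/Campana--P\u{a}un generic semipositivity plus ``algebraic-integrability machinery for destabilising foliations, combined with the general-type hypothesis''---does not close. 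Generic semipositivity only yields $\mu^{\max}_{K_X}(\sT_X)\le 0$, and there is no foliation-theoretic statement you can cite that upgrades this to the bound you need, namely $\mu^{\max}_{K_X}(\sT_X)\le -\tfrac{1}{n+1}K_X^n$ (equivalently, semistability of $\sT_X$); a destabilising foliation on a general-type klt variety with nef canonical class is not excluded by any algebraicity theorem, and at this level of generality no purely algebraic proof of semistability is known. The paper does not attempt one: it invokes Guenancia's analytic semistability theorem for klt varieties with ample canonical divisor (Theorem~\ref{thm:Guenancia}, which generalises Enoki and rests on singular K\"ahler--Einstein theory), applied to the canonical model $Z$ and transported to $X$ via $K_X=\varphi^*K_Z$ and Lemma~\ref{lem:elemSlp2}. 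Without some such input your Step~1 is a conjecture and the proof stops there. (Granting it, your slope computation is fine and in fact gives \emph{stability} of the Higgs sheaf, not merely semistability---which is what you actually need, since Theorem~\ref{thm:BogIneq} is only proved for stable Higgs sheaves; the restriction step preserving Higgs-stability on klt spaces is itself a substantial result, Theorem~\ref{thm:restriction}, not a formality.)

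A second, more local problem is your choice $\sE=\sT_X\oplus\sO_X$ with $\theta(1)=\Id_{\sT_X}$ viewed in $\sT_X\otimes\Omega^{[1]}_X$. On a singular klt variety the identity endomorphism is a section of $\sEnd(\sT_X)\cong\bigl(\sT_X\otimes\Omega^{[1]}_X\bigr)^{**}$ but in general \emph{not} of $\sT_X\otimes\Omega^{[1]}_X$ itself: at a point where $\sT_X$ is not free, $\Id_{\sT_X}$ lies in the image of the evaluation map $\sT_X\otimes\Omega^{[1]}_X\to\sEnd(\sT_X)$ only if $\sT_X$ is free there (dual basis lemma). So your $\theta$ does not take values in $\sE\otimes\Omega^{[1]}_X$ as Definition~\ref{def:Higgs} requires, and enlarging the target to the reflexive hull would break the pull-back and restriction machinery that the whole argument relies on---this is exactly the subtlety discussed in the paper's ``Explanation'' subsection. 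The fix is to dualise and work, as the paper does, with $\Omega^{[1]}_X\oplus\sO_X$ and $\theta(a+b)=1\otimes a$, whose image lies in the locally free summand $\sO_X\otimes\Omega^{[1]}_X$; your invariant-subsheaf analysis then dualises (no nonzero subsheaf of the summand $\Omega^{[1]}_X$ is invariant, and stability is checked via the projection to the $\sO_X$-summand, as in Corollary~\ref{cor:stable-claim}).
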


The formulation of Theorem~\ref{thm:MYinequality} uses the fact that varieties
with klt singularities have quotient singularities in codimension two.  This
allows us to define $ℚ$-Chern classes (or ``orbifold Chern classes'')
$\widehat{c}_1(𝒯_X)$ and $\widehat{c}_2(𝒯_X)$ for the tangent sheaf
$𝒯_X = (Ω¹_{X})^*$ of $X$, which is reflexive and a $ℚ$-vector bundle on the
open subset where $X$ has quotient singularities.  We refer to
Section~\ref{sect:QQ2} for definitions and for a detailed discussion.  If $X$ is
smooth in codimension two, which is the case when $X$ has terminal
singularities, these agree with the usual Chern classes $c_{•}(𝒯_X)$.  We call a
projective variety of general type \emph{minimal} if it has at worst terminal
singularities and if its canonical divisor is nef, cf.~\cite[2.13]{KM98} and
Definition~\ref{def:minimal} below.

\begin{thm}[Characterisation of singular ball quotients, I]\label{thm:BQ}
  Let $X$ be an $n$-dimensional minimal variety of general type.  If equality
  holds in \eqref{eq:X2}, then the canonical model $X_{can}$ is smooth in
  codimension two, there exists a ball quotient $Y$ and a finite, Galois,
  quasi-étale morphism $f: Y → X_{can}$.  In particular, $X_{can}$ has only
  quotient singularities.
\end{thm}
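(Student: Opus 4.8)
The plan is to descend to the canonical model $X_{can}$ and to run a uniformisation argument there in the spirit of Yau's treatment of the equality case, but formulated for the $\mathbb{Q}$-sheaves of Section~\ref{sect:QQ2}. For a reflexive $\mathbb{Q}$-sheaf $\sF$ abbreviate $\mathrm{MY}(\sF) := 2(n+1)\,\widehat{c}_2(\sF) - n\,\widehat{c}_1(\sF)^2$. The canonical morphism $\pi\colon X \to X_{can}$ is crepant and birational, $X$ is terminal — hence smooth in codimension two — and $X_{can}$ has canonical singularities. Over a general point of any codimension-two component of $\sing(X_{can})$ the transverse slice is a surface singularity admitting a crepant resolution, i.e.\ a Du~Val singularity, resolved crepantly by $X$. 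Comparing orbifold Chern classes along $\pi$, and using that $\pi$ is crepant so that $\widehat{c}_1(\sT_X)=\pi^*\widehat{c}_1(\sT_{X_{can}})$ and $[K_X]=\pi^*[K_{X_{can}}]$, I expect an identity $\mathrm{MY}(\sT_X)\cdot[K_X]^{n-2} = \mathrm{MY}(\sT_{X_{can}})\cdot[K_{X_{can}}]^{n-2} + \delta$, where $\delta$ is a sum over the codimension-two strata of $\sing(X_{can})$ of a positive multiple of the (strictly positive) $c_2$-defect of the transverse Du~Val singularity relative to its crepant resolution, weighted by $[K_{X_{can}}]^{n-2}\cdot[\text{stratum}]>0$; thus $\delta\ge 0$, with $\delta>0$ unless $X_{can}$ is smooth in codimension two. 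Since the left-hand side vanishes by hypothesis and $\mathrm{MY}(\sT_{X_{can}})\cdot[K_{X_{can}}]^{n-2}\ge 0$ by Theorem~\ref{thm:MYinequality} (applicable, as $X_{can}$ is klt of general type with $K_{X_{can}}$ ample, hence nef), I conclude $\delta=0$ and that equality holds on $X_{can}$; in particular $X_{can}$ is smooth in codimension two and its $\mathbb{Q}$-Chern classes agree with the usual ones there.

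Next, since $K_{X_{can}}$ is ample, $\sT_{X_{can}}$ is $K_{X_{can}}$-semistable by the results recalled in Section~\ref{sect:known-results}. Following Yau's argument, I would encode equality in \eqref{eq:X2} as the assertion that the tautological rank-$(n+1)$ Higgs $\mathbb{Q}$-sheaf associated to $\Omega^{[1]}_{X_{can}}\oplus\mathcal{O}_{X_{can}}$ is stable with vanishing first $\mathbb{Q}$-Chern class and with its $\mathrm{MY}$-type discriminant annihilated by $[K_{X_{can}}]^{n-2}$. Feeding this into a Lübke-type equality for the (singular) Kähler-Einstein metric of Eyssidieux-Guedj-Zeriahi — equivalently, the nonabelian Hodge correspondence for $\mathbb{Q}$-sheaves on klt spaces — which is available because $X_{can}$ has quotient singularities in codimension two and the metric has well-controlled, orbifold-type behaviour along $\sing(X_{can})$, one obtains a projective unitary local system on the smooth locus of $X_{can}$; equivalently, the Hermitian-Einstein metric on $\sT_{X_{can}}$ over that locus has constant negative holomorphic sectional curvature.

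Constant holomorphic sectional curvature, combined with the orbifold structure of $X_{can}$ along its singular locus (which by the first step has codimension $\ge 3$ and consists of quotient singularities), identifies the orbifold universal cover of $X_{can}$ with the unit ball $\bB^n$; hence $X_{can} = \bB^n/\Gamma$ for a discrete, cocompact subgroup $\Gamma\subset\Aut(\bB^n)$ acting with finite stabilisers and freely in codimension one. As $X_{can}$ is projective, $\Gamma$ is finitely generated and linear, so Selberg's lemma provides a torsion-free finite-index normal subgroup $\Gamma_0\trianglelefteq\Gamma$. Then $Y:=\bB^n/\Gamma_0$ is a smooth projective ball quotient and $f\colon Y\to X_{can}$ is finite and Galois with group $\Gamma/\Gamma_0$; its branch locus lies in the image of the fixed loci of the torsion elements of $\Gamma$, hence in $\sing(X_{can})$, which has codimension $\ge 2$, so $f$ is quasi-étale. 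Finally $X_{can}=Y/(\Gamma/\Gamma_0)$ with $Y$ smooth, so $X_{can}$ has only quotient singularities, which completes the argument.

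The main obstacle is the second step: making precise, in the $\mathbb{Q}$-sheaf and orbifold setting, the equivalence between equality in \eqref{eq:X2} and projective flatness of the tautological Higgs sheaf, and verifying that the singular Kähler-Einstein metric is tame enough along $\sing(X_{can})$ for the Lübke/nonabelian-Hodge input and the orbifold uniformisation to apply. The first step is essentially a local computation, but it has to be carried out carefully with orbifold Chern classes; the group-theoretic descent and the quasi-étale bookkeeping in the last step are routine once the preceding input is in place.
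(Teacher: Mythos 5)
Your first and last steps are sound and in fact parallel the paper: the comparison of $\what{c}_2$ along the crepant map $X \to X_{can}$ against the $\bQ$-Miyaoka--Yau inequality for $X_{can}$ (Theorem~\ref{thm:MYinequality}) is exactly Proposition~\ref{prop:c1}, where the positivity of the codimension-two defect is supplied by \cite[Prop.~1.1]{SBW94}; and the Selberg-lemma bookkeeping at the end mirrors Section~\ref{sect:ball-quotient}. The genuine gap is your second step and the uniformisation it is supposed to feed. You invoke a ``Lübke-type equality for the singular Kähler--Einstein metric of Eyssidieux--Guedj--Zeriahi, equivalently a nonabelian Hodge correspondence for $\bQ$-sheaves on klt spaces'', and then an \emph{orbifold} universal cover of $X_{can}$. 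Neither tool is available in the generality you need, and this is precisely the difficulty the paper is built to avoid: after Step~1 you only know that $\sing(X_{can})$ has codimension $\geq 3$; you do \emph{not} know that these deeper singularities are quotient singularities --- that is a conclusion of the theorem, obtained only after the ball-quotient cover has been constructed. So your parenthetical ``which by the first step has codimension $\geq 3$ and consists of quotient singularities'' is circular, there is no orbifold structure and no orbifold universal cover to speak of, and no asymptotic control of the Eyssidieux--Guedj--Zeriahi metric near such klt strata is known that would allow a Cheng--Yau-type argument \cite{MR0833802} (which needs genuine quotient singularities) or Zhang's argument \cite{MR2497488} (which needs the minimal model to be smooth) to run. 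You flag this obstacle yourself, but flagging it does not close it; as written, the passage from equality in \eqref{eq:X2} on $X_{can}$ to $X_{can} \cong \bB^n/\Gamma$ is unsupported, and the whole construction of $Y$ hangs on it.

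For contrast, the paper's route is purely algebraic at this point and inverts your order of quantifiers: instead of uniformising $X_{can}$ directly, it first passes to a quasi-étale Galois cover $\gamma : Y \to X_{can}$ with $\what{\pi}_1(Y_{\reg}) \cong \what{\pi}_1(Y)$, which exists by \cite[Thm.~1.14]{GKP13}, and proves that $Y$ is \emph{smooth} (Proposition~\ref{prop:c2}): stability of the canonical Higgs sheaf $\Omega^{[1]}_Y \oplus \sO_Y$, the restriction theorem, Simpson's characterisation of Higgs bundles coming from a \pCVHS\ on a general complete-intersection surface $S \subset Y_{\reg}$, extension of that Higgs bundle across a resolution using the fundamental-group hypothesis, the period-map descent of Corollary~\ref{cor:higgsfromdownst}, and finally the Zariski--Lipman theorem for klt spaces \cite[Thm.~6.1]{GKKP11}. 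Only once $Y$ is smooth does Yau's classical uniformisation enter, and the group-theoretic description of $X_{can}$ itself is then deduced from $Y$ in Section~\ref{sect:ball-quotient}. If you wish to salvage a differential-geometric proof along your lines, you would first have to establish the regularity/Hodge-theoretic input at the codimension-$\geq 3$ klt singularities --- which is essentially an open analytic problem, not a routine verification.
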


We refer to Section~\ref{ssec:defnVar} for a discussion of ball quotients and
canonical models.

We expect that Theorem~\ref{thm:BQ} holds without the additional assumption that
$X$ be terminal.  In fact, we prove a result slightly stronger than
Theorem~\ref{thm:BQ}, which applies to varieties with klt singularities that are
smooth in codimension two, cf.~Theorem~\ref{thm:more_general_uniformisation} as
well as Theorem and Definition \ref{thm:ball-quotient-II} below.  As already
said above, Theorem~\ref{thm:BQ} applies to all minimal models of smooth
varieties of general type, which is the case most relevant for applications in
the Minimal Model Program.  In Section~\ref{subsect:furtherdirections}, we
discuss further potential generalisations of the Miyaoka-Yau inequality and the
uniformisation result.

Extending Theorem~\ref{thm:BQ}, we show that the canonical models of
Theorem~\ref{thm:BQ} admit a ``singular uniformisation'' by the unit ball $𝔹^n$.
More precisely, they can be realised as quotients of $𝔹^n$ by actions of
discrete subgroups in $\PSU(1, n)$ that are not necessarily fixed-point free.
In particular, the geometry of these spaces can be studied using the theory of
automorphic forms, cf.\ \cite[Part~II]{Kollar95s}.

\begin{thmDef}[Characterisation of singular ball quotients, II]\label{thm:ball-quotient-II}
  Let $X$ be a normal, irreducible, compact, complex space of dimension $n$.
  Then, the following statements are equivalent.
  \begin{enumerate}
  \item\label{il:z1} The space $X$ is of the form $𝔹^n/\what{Γ}$ for a discrete,
    cocompact subgroup $\what{Γ} < \Aut_{𝒪}(𝔹^n)$ whose action on $𝔹^n$ is
    fixed-point free in codimension two.

  \item\label{il:z2} The space $X$ is of the form $Y/G$, where $Y$ is a ball
    quotient (cf.\ Definition~\ref{defn:BQ}), and $G$ is a finite group of
    automorphisms of $Y$ whose action is fixed-point free in codimension two.

  \item\label{il:z3} The space $X$ is projective, klt, and smooth in codimension
    two; the canonical divisor $K_X$ is ample, and we have equality in the
    $ℚ$-Miyaoka-Yau Inequality~\eqref{eq:X2}.
  \end{enumerate}
  A compact complex space is called \emph{singular ball quotient} if it
  satisfies these equivalent conditions.
\end{thmDef}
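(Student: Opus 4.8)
The plan is to prove the cyclic chain \ref{il:z1}~$\Rightarrow$~\ref{il:z2}, \ref{il:z2}~$\Rightarrow$~\ref{il:z3}, \ref{il:z3}~$\Rightarrow$~\ref{il:z2}, together with \ref{il:z2}~$\Rightarrow$~\ref{il:z1}. The implications linking \ref{il:z1} and \ref{il:z2} are soft statements about discrete groups and covering spaces, \ref{il:z2}~$\Rightarrow$~\ref{il:z3} is an accounting exercise with orbifold Chern classes, and \ref{il:z3}~$\Rightarrow$~\ref{il:z2} is where the analytic machinery of this paper enters.

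\emph{The equivalence of \ref{il:z1} and \ref{il:z2}.} For \ref{il:z1}~$\Rightarrow$~\ref{il:z2} I would apply Selberg's lemma to extract a normal, torsion-free subgroup $\Gamma \triangleleft \what{\Gamma}$ of finite index. A torsion-free discrete subgroup of $\Aut_\sO(\bB^n)$ acts freely on $\bB^n$ — a non-trivial element fixing a point would lie in the compact isotropy group of that point and hence generate a non-discrete subgroup — so $Y := \bB^n/\Gamma$ is a compact ball quotient, $G := \what{\Gamma}/\Gamma$ acts faithfully on it, and $Y/G = X$. Since the covering $p\colon \bB^n \to Y$ is étale, the non-free locus of $\what{\Gamma}$ on $\bB^n$ is $p^{-1}$ of the non-free locus of $G$ on $Y$; the two loci have the same codimension, so the codimension-two condition carries over. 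For \ref{il:z2}~$\Rightarrow$~\ref{il:z1} I would write $Y = \bB^n/\Gamma$ with $\Gamma = \pi_1(Y)$ torsion-free and cocompact, lift each $g \in G$ to a biholomorphism of the universal cover $\bB^n$ (the lift is holomorphic because $p$ is), and collect these lifts into a group $\what{\Gamma}$; it fits into an extension $1 \to \Gamma \to \what{\Gamma} \to G \to 1$, hence is discrete and cocompact, satisfies $\bB^n/\what{\Gamma} = Y/G = X$, and the codimension bound transfers back through $p$ exactly as before.

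\emph{The implication \ref{il:z2}~$\Rightarrow$~\ref{il:z3}.} A compact ball quotient $Y$ is a smooth projective variety with $K_Y$ ample for which equality holds in the classical Miyaoka--Yau inequality~\eqref{ineq:MY-classic}. Passing to $X = Y/G$: the quotient is projective with quotient singularities, hence klt; it is smooth in codimension two because $G$ acts freely there; and the quotient map $\pi\colon Y \to X$ is therefore quasi-étale, so that $\pi^*K_X = K_Y$ and $K_X$ is ample by descent of ampleness along finite surjective morphisms. Finally, since the reflexive pullback of $\sT_X$ along the quasi-étale morphism $\pi$ agrees with $\sT_Y$, the defining properties of orbifold Chern classes (Section~\ref{sect:QQ2}) give $\widehat{c}_i(\sT_X)\cdot[K_X]^{n-2} = \frac{1}{|G|}\,c_i(\sT_Y)\cdot[K_Y]^{n-2}$ for $i = 1, 2$, so equality in~\eqref{ineq:MY-classic} on $Y$ forces equality in~\eqref{eq:X2} on $X$.

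\emph{The implication \ref{il:z3}~$\Rightarrow$~\ref{il:z2}, and the main obstacle.} Because $K_X$ is ample, $X$ is its own canonical model, so I would invoke the klt, smooth-in-codimension-two strengthening of Theorem~\ref{thm:BQ} (Theorem~\ref{thm:more_general_uniformisation}) to obtain a ball quotient $Y$ and a finite, Galois, quasi-étale morphism $f\colon Y \to X$; set $G := \Gal(Y/X)$, so $X = Y/G$. What remains is to promote ``quasi-étale'' to ``fixed-point free in codimension two'': since $X$ is smooth in codimension two, $X \setminus X^{\mathrm{sm}}$ has codimension at least three, and over the regular variety $X^{\mathrm{sm}}$ purity of the branch locus (Zariski--Nagata) combined with $f$ being unramified in codimension one forces $f$ to be étale there; hence the non-free locus of $G$ on $Y$ is contained in $f^{-1}(\sing X)$, of codimension at least three, which is what \ref{il:z2} demands. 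The genuine obstacle is thus concentrated in this last implication and is precisely the uniformisation statement proved elsewhere in the paper; within the packaging above, the only delicate point is the sharpening from ``quasi-étale'' to ``fixed-point free in codimension two'', and it is here that the hypothesis ``$X$ smooth in codimension two'' — equivalently $\codim_X \sing X \ge 3$ — is genuinely used.
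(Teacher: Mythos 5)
Your proposal is correct, and it differs from the paper's argument in one substantive structural choice. The paper proves the cycle \ref{il:z2}~$\Rightarrow$~\ref{il:z3}~$\Rightarrow$~\ref{il:z1}~$\Rightarrow$~\ref{il:z2}: the hard leg is \ref{il:z3}~$\Rightarrow$~\ref{il:z1}, done directly by taking the Galois quasi-étale cover $f\colon Y \to X$ from the uniformisation theorem, letting $\what{\Gamma} := \pi_1(X_{\reg})$ act on the preimage of $X_{\reg}$ inside the universal cover $\bB^n$ of $Y$, extending that action to all of $\bB^n$ by an envelope-of-holomorphy argument (Nemirovski), identifying $\bB^n/\what{\Gamma}$ with $X^{an}$ via Holmann's theorem, and verifying proper discontinuity by hand through Lemma~\ref{lem:discrete}. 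You instead prove \ref{il:z3}~$\Rightarrow$~\ref{il:z2} (Theorem~\ref{thm:more_general_uniformisation} plus purity of the branch locus over the regular locus, which does correctly upgrade ``quasi-étale'' to ``fixed-point free in codimension two'' using $\codim_X \sing X \geq 3$, exactly as you say) and then \ref{il:z2}~$\Rightarrow$~\ref{il:z1} by lifting the $G$-action to the universal cover of $Y$; this elementary covering-space argument bypasses the paper's analytic extension and quotient-identification machinery, and your \ref{il:z2}~$\Rightarrow$~\ref{il:z3} and \ref{il:z1}~$\Rightarrow$~\ref{il:z2} coincide with the paper's. What your shortcut costs is that two assertions you treat as automatic need a sentence each: first, discreteness of the group $\what{\Gamma}$ of lifts does not follow formally from the extension $1 \to \Gamma \to \what{\Gamma} \to G \to 1$ (a finite-index overgroup of a discrete subgroup of a Lie group is not discrete for purely group-theoretic reasons); argue instead that $\what{\Gamma}$ acts properly discontinuously on $\bB^n$ because $\Gamma$ does and $[\what{\Gamma}:\Gamma] < \infty$, and then invoke Lemma~\ref{lem:discrete}. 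Second, in \ref{il:z1}~$\Rightarrow$~\ref{il:z2} Selberg's lemma needs $\what{\Gamma}$ finitely generated --- the paper obtains this from $\what{\Gamma} \cong \pi_1(X_{\reg})$ with $X_{\reg}$ quasi-projective after Baily's projectivity theorem (alternatively, cocompact discrete subgroups of Lie groups are finitely generated) --- and you should also record why $Y = \bB^n/\Gamma$ is projective, so that it is a ball quotient in the sense of Definition~\ref{defn:BQ}. Finally, note that the paper's longer route yields the extra information $\what{\Gamma} \cong \pi_1(X_{\reg})$, which your construction does not exhibit but which is not required by the statement.
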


\begin{cor}[Hyperbolicity of smooth loci of canonical models]\label{cor:smoothparthyperbolic}
  In the setting of Theorem~\ref{thm:BQ}, the canonical model $X_{can}$ is a
  singular ball quotient.  In particular, the smooth locus of $X_{can}$ is
  Kobayashi-hyperbolic.
\end{cor}

\approvals{
  Behrouz & yes \\
  Daniel & yes\\
  Stefan & yes\\
  Thomas & yes
}

In fact, a more precise hyperbolicity statement holds, see
Section~\ref{subsect:hyperbolicitycomments}.\Publication{\ In addition,
  classical results concerning deformation rigidity \cite{MR0111058}, Mostow
  rigidity \cite[Thm.~6]{MR0451180}, stability under Galois conjugation
  \cite[Cor.~9.5]{MR944577}, and the fact that ball quotients can be defined
  over number fields \cite{MR0223368} have analogues for singular ball
  quotients.  These aspects will be addressed in future work.}

\subsection{Outline of the proof}
\label{ssec:outline}
\approvals{
  Behrouz & yes \\
  Daniel & yes \\
  Stefan & yes \\
  Thomas & yes}

Various earlier papers used differential-geometric techniques, such as orbifold
Kähler-Einstein metrics, to obtain the Miyaoka-Yau inequality.  Inspired by the
work of Simpson \cite{MR944577} we take a different approach, partially
generalising Simpson's results on the Kobayashi-Hitchin correspondence for Higgs
sheaves.  For suitable manifolds $X$, Simpson equips $ℰ := Ω¹_X ⊕ 𝒪_X$ with a
natural structure of a Higgs bundle, proves its stability and derives a
Bogomolov-Gieseker inequality for $ℰ$.  The Miyaoka-Yau inequality for $𝒯_X$ is
an immediate consequence.  In case of equality, he constructs a variation of
Hodge structures whose period map gives the desired uniformisation by the ball.

On a technical level, one main contribution of our paper is to establish a good
definition of Higgs sheaves on singular spaces, and an associated notion of
stability.  These definitions may seem a little awkward at first, but for
varieties with the singularities of the minimal model program they have just
enough universal properties to make Simpson's approach work---the list of
properties includes restrictions theorems of Mehta-Ramanathan type, weakly
functorial pull-back, and invariance of stability under resolution.  As for a
converse, earlier work on differential forms, \cite{GKKP11, MR3084424}, suggests
that spaces with klt singularities are the largest class of varieties where
functorial pull-back properties can possibly hold for any reasonable definition.

In our singular situation, the correct analogue of the sheaf $ℰ$ used by Simpson
is $(Ω¹_X)^{**} ⊕ 𝒪_X$.  The starting point of our analysis is the fact that
this Higgs sheaf is stable with respect to $K_X$ in case $X$ is klt and $K_X$ is
big and nef.  This is a consequence of a recent result of Guenancia
\cite{Guenancia}, which in turn generalises a by now classical result of Enoki
\cite{Eno87} to the klt setup.  Using restriction theorems of Mehta-Ramanathan
type, Theorem~\ref{thm:MYinequality} follows as a consequence of a
Bogomolov-Gieseker-type inequality for stable Higgs sheaves on surfaces with
quotient singularities, Theorem~\ref{thm:BogIneq}.

To prove Theorem~\ref{thm:BQ}, let $Y → X$ we consider a quasi-étale cover,
where the étale fundamental groups $\what{π}_1(Y)$ and $\what{π}_1(Y_{\reg})$
agree; the existence of such covers was established in \cite[Thm.~1.5]{GKP13}.
We aim to prove that $Y$ is smooth.  The proof is based on the second main
technical contribution of this paper, a partial generalisation of Simpson's
Nonabelian Hodge Correspondence to the singular setting, see
Section~\ref{subsect:VHS}.  Using the relation of special representations of
fundamental groups to Higgs \emph{bundles} and variations of Hodge structures,
the choice of $Y$ allows us to prove that $(Ω¹_Y)^{**} ⊕ 𝒪_Y$ is in fact locally
free.  The confirmation of the Zariski-Lipman conjecture for spaces with klt
singularities, \cite[Thm.~6.1]{GKKP11}, then shows that $Y$ is smooth.  Using
the original uniformisation theorem proven by Yau, we conclude that $Y$ is a
ball quotient.

\subsection{Comparison with the torus-quotient case}
\approvals{
  Behrouz & yes \\
  Daniel & yes \\
  Stefan & yes \\
  Thomas & yes
}

A related uniformisation problem for klt varieties with vanishing first and
second Chern class has been solved by the authors partly in joint work with
Steven Lu, see \cite{GKP13} and \cite{LT14}.  We would like to emphasise that
although there are some similarities between the strategies of the proof of
Theorem~\ref{thm:BQ} and those that were devised to solve the uniformisation
problem in the case of vanishing Chern classes, the two approaches are
significantly different.  First of all, to prove Theorem~\ref{thm:BQ}, we need a
suitable notion of Higgs sheaves over singular spaces verifying some important
functoriality properties (see Subsections~\ref{sec:pull-back},
\ref{sec:rpull-back} and~\ref{ssect:restrict}).  But, the difference between the
two approaches in~\cite{GKP13} and the current paper is not confined to the
technicalities that arise from the setup of a theory of Higgs sheaves over
singular spaces; a substantially refined strategy is required for a successful
application of this new machinery to establish the uniformisation result,
Theorem~\ref{thm:BQ}.  We refer the reader to Remark~\ref{rem:CompareFlat} for a
detailed comparison of the two strategies.

\subsection{Structure of the paper}
\approvals{
  Behrouz & yes \\
  Daniel & yes \\
  Stefan & yes \\
  Thomas & yes
}

Section~\ref{sec:known} establishes notation and reviews a few facts that will
be used later.  Building on work of Mumford,
Sections~\ref{sect:Q-varieties}--\ref{sect:QQ2} establish basic properties
pertaining to $ℚ$-varieties and $ℚ$-sheaves, and uses these to construct
$ℚ$-Chern classes on klt spaces.

Sections~\ref{sect:oper}--\ref{sect:Higgs} introduce the main objects of our
study: sheaves with operators and (singular) Higgs sheaves on klt spaces.  The
extension theorem for reflexive differential forms and the existence of
pull-back functors, \cite{GKKP11, MR3084424}, allow us to establish weak
functoriality properties for Higgs sheaves, including variants of pulling-back
for certain morphisms, as well as into and out of $ℚ$-varieties.  This allows us
to compare stability of Higgs sheaves on different birational models.  It also
helps to establish a restriction theorem of Mehta-Ramanathan type,
Theorem~\ref{thm:restriction}, which allows us to reduce many of our problems to
the surface case.  In Section~\ref{subsect:VHS}, we extend Simpson's
correspondence between rigid representations of the fundamental group of a
smooth projective variety and polarised complex variations of Hodge structures
to our singular setup, thereby establishing the foundational steps of a
Nonabelian Hodge Theory on klt spaces.

With these methods at hand, we establish a $ℚ$-analogue of the
Bogomolov-Gieseker inequality in Section~\ref{sect:bogo}.  Section~\ref{sect:MY}
applies this, as well as a recent stability result of
Guenancia~\cite{Guenancia}, to establish the $ℚ$-Miyaoka-Yau inequality,
Theorem~\ref{thm:MYinequality}.  The second main result, Theorem~\ref{thm:BQ},
is shown in Section~\ref{sect:uniformisation}.

The concluding Section~\ref{sect:ball-quotient} discusses quotients of the ball
by cocompact subgroups of its automorphism group, in order to prove the
characterisation of singular ball quotients given in
Theorem~\ref{thm:ball-quotient-II}, as well as the hyperbolicity result of
Corollary~\ref{cor:smoothparthyperbolic}.  We conclude with an example of Keum,
showing that many of our results are essentially sharp.

\subsection{Earlier work}
\label{sect:known-results}
\approvals{
  Behrouz & yes \\
  Daniel & yes \\
  Stefan & yes \\
  Thomas & yes
}

Generalisations of the Miyaoka-Yau inequality and uniformisation in case of
equality have attracted considerable interest in the last few decades.

Inequality~\eqref{ineq:MY-classic} and the uniformisation result were extended
to the context of compact Kähler varieties with only quotient singularities by
Cheng-Yau~\cite{MR0833802} using orbifold Kähler-Einstein metrics.  Tsuji
established Inequality~\eqref{ineq:MY-classic} for \emph{smooth minimal models}
of general type in \cite{MR0976585}.  Enoki's result on the semistability of
tangent sheaf of minimal models, \cite{Eno87}, was used by Sugiyama
\cite{MR1145268} to establish the Bogomolov-Gieseker inequality for the tangent
sheaf of any resolution of a given minimal model of general type with only
canonical singularities, the polarisation given by the pullback of the canonical
bundle on the minimal model.  By using a strategy very similar to ours, that is
via results of Simpson~\cite{MR944577}, Langer in~\cite[Thm.~5.2]{MR1954067}
established the Miyaoka-Yau inequality in this context.  He recently also gave
the first purely algebraic proof of the Bogomolov Inequality for semistable
Higgs sheaves (on smooth projective varieties over fields of arbitrary
characteristic), see \cite{MR3314517}.

A strong uniformisation result, together with the Miyaoka-Yau inequality, was
established by Kobayashi~\cite{MR0799669} in the case of open orbifold surfaces.

After the work of Tsuji, the past few years have witnessed significant
developments in the theory of singular Kähler-Einstein metrics and Kähler-Ricci
flow.  These are evident, for example, in the works of Tian-Zhang
\cite{Tian-Zhang}, Eyssidieux-Guedj-Zeriahi \cite{MR2505296}, and Zhang
\cite{Zhang06}.  In particular, Inequality \eqref{ineq:MY-classic} together with
a uniformisation result for \emph{smooth minimal models} of general type have
been successfully established by Zhang~\cite{MR2497488}.

Finally, we mention that the related uniformisation problem for klt varieties
with vanishing first and second Chern class has been solved by the authors
partly in joint work with Steven Lu, see \cite{GKP13} and \cite{LT14}.

\subsection{Acknowledgements}
\approvals{
  Behrouz & yes \\
  Daniel & yes \\
  Stefan & yes \\
  Thomas & yes
}

The authors would like to thank Dave Anderson, Paolo Cascini, Fabrizio Catanese,
Philippe Eyssidieux, Jochen Heinloth, Zsolt Patakfalvi, Erwan Rousseau, Emanuel
Scheidegger, Shigeharu Takayama and Angelo Vistoli for helpful discussions.
Adrian Langer answered our questions concerning his paper \cite{MR3314517} and
saved us from making a grave mistake at least once.  He also explained his
approach to the restriction theorem\Preprint{, cf.\ our remarks at the beginning
  of Section~\ref{ssec:langer1}}.  Behrouz Taji warmly thanks Steven Lu for fruitful discussions.
  
  All authors want
to thank the Institut Élie Cartan (Nancy) for the invitation to attend the
Journées Complexes Lorraines 2015, during which crucial discussions concerning
the content of this article took place, and the referee for carefully reading the paper as well as for valuable remarks.

%
%
\svnid{$Id: 02-knownResults.tex 838 2018-04-25 11:39:59Z kebekus $}

\section{Notation and standard facts}
\label{sec:known}
\subversionInfo

\subsection{Global conventions}
\label{ssec:conventions}
\approvals{
  Behrouz & yes \\
  Daniel & yes \\
  Stefan & yes \\
  Thomas & yes
}

Throughout this paper, all schemes, varieties and morphisms will be defined over
the complex number field.  We follow the notation and conventions of
Hartshorne's book \cite{Ha77}.  In particular, varieties are always assumed to
be irreducible.  For all notation around Mori theory, such as klt spaces and klt
pairs, we refer the reader to \cite{KM98}.

\subsection{Varieties}
\label{ssec:defnVar}
\approvals{
  Behrouz & yes \\
  Daniel & yes \\
  Stefan & yes \\
  Thomas & yes
}

In the course of the proofs, we need to switch between the Zariski-- and the
Euclidean topology at times.  We will consistently use the following notation.

\begin{notation}[Complex space associated with a variety]
  Given a variety or projective scheme $X$, denote by $X^{an}$ the associated
  complex space, equipped with the Euclidean topology.  If $f : X → Y$ is any
  morphism of varieties or schemes, denote the induced map of complex spaces by
  $f^{an} : X^{an} → Y^{an}$.  If $ℱ$ is any coherent sheaf of $𝒪_X$-modules,
  denote the associated coherent analytic sheaf of $𝒪_{X^{an}}$-modules by
  $ℱ^{an}$.
\end{notation}

The notion of ``$ℚ$-Chern class'', which is used in the formulation of our main
result, is usually defined for varieties with quotient singularities.  However,
the word ``quotient singularity'' is not consistently used in the literature and
is often left undefined.  We use the following terminology.

\begin{defn}[Quotient varieties and quotient singularities]\label{defn:quotient}
  Let $X$ be a normal, quasi-projective variety.  We say that $X$ is a
  \emph{quotient variety} if there exists a finite group $G$, a smooth
  $G$-variety $\what{X}$\footnote{in other words, the group $G$ acts on the
    variety $\what{X}$ by automorphisms.} such that $X ≅ \what{X}/G$ and such
  that the quotient map is étale over $X_{\reg}$.  We say that $X$ has
  \emph{quotient singularities}, if there exists a covering of $X^{an}$ by
  analytically-open sets $(U_α)_{α ∈ A}$, and for each $α ∈ A$ a quotient
  variety $Y_α$, and an analytically open set $V_α ⊆ Y_α^{an}$ that is
  biholomorphic to $U_α$.
\end{defn}

Our main result pertains to canonical models of varieties of general type.  We
briefly recall the relevant definitions and facts.

\begin{defn}[Minimal varieties]\label{def:minimal}
  A normal, projective variety $X$ is called \emph{minimal} if $X$ has at worst
  terminal singularities and if $K_X$ is nef.
\end{defn}

\begin{reminder}[Basepoint-Free Theorem and Canonical models]\label{remi:cm}
  If $X$ is a projective, klt variety of general type whose canonical divisor
  $K_X$ is nef, the Basepoint-Free Theorem asserts that $K_X$ is semiample,
  \cite[Thm.~3.3]{KM98}.  A sufficiently high multiple of $K_X$ thus defines a
  birational morphism $φ: X → Z$ to a normal projective variety with at worst
  klt singularities whose canonical divisor $K_{Z}$ is ample, cf.\
  \cite[Lem.~2.30]{KM98}.  There exists a $ℚ$-linear equivalence
  $K_X \sim_ℚ φ^* K_{Z}$.  If $X$ is a minimal variety of general type, $Z$ has
  at worst canonical singularities, we set $Z = X_{can}$, and call it the
  \emph{canonical model} of $X$.
\end{reminder}

\begin{defn}[Ball quotient]\label{defn:BQ}
  A smooth projective variety $X$ of dimension $n$ is a \emph{ball quotient} if
  the universal cover of $X^{an}$ is biholomorphic to the unit ball
  $𝔹^n = \{ (z_1, \dots, z_n) ∈ ℂ^n \mid |z_1|² + \dots + |z_n|² < 1\}$.
  Equivalently, there exists a discrete subgroup $Γ < \Aut_{𝒪}(𝔹^n)$ of the
  holomorphic automorphism group of $𝔹^n$ such that the action of $Γ$ on $𝔹^n$
  is cocompact and fixed-point free, and such that $X$ is isomorphic to $𝔹^n/Γ$.
\end{defn}

The following will be used for notational convenience.

\begin{notation}[Big and small subsets]
  Let $X$ be a normal, quasi-projective variety.  A closed subset $Z ⊂ X$ is
  called \emph{small} if $\codim_X Z ≥ 2$.  An open subset $U ⊆ X$ is called
  \emph{big} if $X \setminus U$ is small.
\end{notation}

Fundamental groups are basic objects in our arguments.  We will use the
following notation.

\begin{defn}[Fundamental group and étale fundamental group]
  If $X$ is a complex, quasi-projective variety, we set
  $π_1\bigl(X\bigr) := π_1\bigl(X^{an}\bigr)$, and call it the \emph{fundamental
    group of $X$}.  Moreover, the étale fundamental group of $X$ will be denoted
  by $\what{π}_1\bigl(X\bigr)$.
\end{defn}

\begin{rem}
  Recall that $\what{π}_1(X)$ is isomorphic to the profinite completion of
  $π_1(X)$; e.g.~see \cite[§5 and references given there]{Milne80}.
\end{rem}

\subsection{Morphisms}
\approvals{
  Behrouz & yes \\
  Daniel & yes \\
  Stefan & yes \\
  Thomas & yes
}

Galois morphisms appear prominently in the literature, but their precise
definition is not consistent.  We will use the following definition, which does
not ask Galois morphisms to be étale.

\begin{defn}[Covers and covering maps, Galois morphisms]\label{def:cover}
  A \emph{cover} or \emph{covering map} is a finite, surjective morphism
  $γ : X → Y$ of normal, quasi-projective varieties.  The covering map $γ$ is
  called \emph{Galois} if there exists a finite group $G ⊂ \Aut(X)$ such that
  $γ$ is isomorphic to the quotient map.
\end{defn}

\begin{notation}
  In the setting of Definition~\ref{def:cover}, we will frequently write
  $$
  \xymatrix{ %
    X \ar[rrr]^{γ}_{\text{Galois with group } G} &&& Y & \text{or} & X \ar[rr]^{γ}_{·/G} && Y
  }
  $$
  to indicate that $γ$ is isomorphic to the quotient map.  We will also write
  $G = \Gal(X/Y)$.
\end{notation}

\begin{defn}[Quasi-étale morphisms]\label{defn:quasietale}
  A morphism $f : X → Y$ between normal varieties is called \emph{quasi-étale}
  if $f$ is of relative dimension zero and étale in codimension one.  In other
  words, $f$ is quasi-étale if $\dim X = \dim Y$ and if there exists a closed,
  subset $Z ⊆ X$ of codimension $\codim_X Z ≥ 2$ such that
  $f|_{X \setminus Z} : X \setminus Z → Y$ is étale.
\end{defn}

\subsection{Sheaves}
\approvals{
  Behrouz & yes \\
  Daniel & yes \\
  Stefan & yes \\
  Thomas & yes
}

Reflexive sheaves are in many ways easier to handle than arbitrary coherent
sheaves, and we will therefore frequently take reflexive hulls.  The following
notation will be used.

\begin{notation}[Reflexive hull]
  Given a quasi-projective variety $X$ and a coherent sheaf $ℰ$ on $X$, write
  $$
  Ω^{[p]}_X := \bigl(Ω^p_X \bigr)^{**}, \quad ℰ^{[m]} := \bigl(ℰ^{⊗ m}
  \bigr)^{**} \quad\text{and}\quad \det ℰ := \bigl( Λ^{\rank ℰ} ℰ \bigr)^{**}.
  $$
  Given any morphism $f : Y → X$, write $f^{[*]} ℰ := (f^* ℰ)^{**}$, etc.
\end{notation}

One key notion in our argument is that of a \emph{flat sheaf}.

\begin{defn}[\protect{Flat sheaf, \cite[Def.~1.15]{GKP13}}]\label{defn:flat}
  If $X$ is any quasi-projective variety and $ℱ$ is any locally free, analytic
  sheaf on the underlying complex space $X^{an}$, we call $ℱ$ \emph{flat} if it
  is defined by a representation of the topological fundamental group
  $π_1\bigl(X^{an} \bigr)$.  A locally free, \emph{algebraic} sheaf $ℰ$ on $X$
  is called flat if the associated analytic sheaf $ℰ^{an}$ is flat.
\end{defn}

We use \cite[Chap.~3]{Fulton98} as our main reference for Chern classes on
singular spaces.  The Bogomolov discriminant plays a central role.

\begin{notation}[Bogomolov discriminant]\label{not:bogomolov}
  Let $X$ be a projective variety and $ℰ$ be a locally free sheaf on $X$, of
  rank $r > 0$.  One defines the \emph{Bogomolov discriminant} of $ℰ$ as
  $Δ(ℰ) := 2r·c_2(ℰ) - (r-1)·c_1(ℰ)²$.
\end{notation}

\subsection{\boldmath$G$-sheaves}
\approvals{
  Behrouz & yes \\
  Daniel & yes \\
  Stefan & yes \\
  Thomas & yes
}

In the discussion of $ℚ$-varieties one needs to consider varieties $X$ that are
equipped with a faithful action of a finite group $G$.  Almost all sheaves that
are relevant in our discussion come with a natural structure of a
\emph{$G$-sheaf}, also called \emph{$G$-linearised sheaf} in the literature,
\cite{MR1304906}.  A detailed discussion of $G$-sheaves, including full proofs
of all relevant facts used here, is found in \cite[§~1.3]{MR1304906},
\cite[§~3.2]{Viehweg95}, and \cite[Appendix~A]{GKKP11}.

\begin{notation}[$G$-invariant push-forward]
  Let $X$ be a quasi-projective variety, equipped with a faithful action of a
  finite group $G$, and with associated quotient map $π : X → X/G$.  If $ℰ$ is
  any $G$-sheaf on $X$, write $π_*(ℰ)^G ⊆ π_*(ℰ)$ to denote the $G$-invariant
  part of the push-forward.
\end{notation}

If $X$ has a $G$-action and $ℰ$ is a $G$-sheaf, it is generally not true that
any $G$-subsheaf $ℱ ⊆ ℰ$ comes from the quotient.  The following
\Publication{standard } proposition gives a criterion when this \emph{is} true.
\Preprint{We include a full proof for lack of reference.}\Publication{ The
  preprint version, available from the arXiv, contains a full proof.}

\begin{prop}[$G$-sheaves coming from the quotient]\label{prop:1:1}
  Let $γ: Y → X$ be a Galois morphism with group $G$.  Let $ℬ_X$ be a reflexive
  sheaf on $X$ and $ℬ := γ^{[*]} ℬ_X$.  Observe that $ℬ$ naturally carries the
  structure of a $G$-sheaf.  If $𝒜 ⊆ ℬ$ is any saturated $G$-subsheaf, then
  there exists a reflexive, saturated subsheaf $𝒜_X ⊆ ℬ_X$ such that
  $𝒜 = γ^{[*]} 𝒜_X$.\Publication{\qed}
\end{prop}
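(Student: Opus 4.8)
The plan is to define $\sA_X$ as the $G$-invariant push-forward $\gamma_*(\sA)^G$ and then verify that it has the required properties. First I would observe that since $\sA \subseteq \sB$ is a $G$-subsheaf, applying $\gamma_*$ and taking $G$-invariants yields an inclusion of coherent sheaves $\sA_X := \gamma_*(\sA)^G \subseteq \gamma_*(\sB)^G$. Because $\gamma$ is Galois with group $G$ and $\sB = \gamma^{[*]}\sB_X$, the projection formula together with the fact that $\sB_X$ is reflexive identifies $\gamma_*(\sB)^G$ with $\sB_X$ (this is the standard "descent" statement for reflexive sheaves along a quasi-étale Galois cover; it holds in codimension one where $\gamma$ is étale, and reflexivity propagates it everywhere). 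Thus we get $\sA_X \subseteq \sB_X$.

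Next I would check reflexivity and saturation of $\sA_X$. Saturation of $\sA$ in $\sB$ means $\sB/\sA$ is torsion-free; since $\gamma$ is finite and flat in codimension one, $\gamma_*$ of a torsion-free sheaf has torsion supported in codimension $\ge 2$, and passing to $G$-invariants (an exact functor in characteristic zero, as $|G|$ is invertible) shows $\sB_X/\sA_X$ injects into $\gamma_*(\sB/\sA)^G$ in codimension one, hence is torsion-free on a big open set; combined with the fact that $\sA_X$ is a subsheaf of the reflexive sheaf $\sB_X$, one concludes $\sA_X$ is saturated, and a saturated subsheaf of a reflexive sheaf on a normal variety is reflexive.

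Finally I would prove $\gamma^{[*]}\sA_X = \sA$. There is a natural evaluation map $\gamma^*\sA_X = \gamma^*\gamma_*(\sA)^G \to \sA$; reflexivising gives $\gamma^{[*]}\sA_X \to \sB$ with image inside $\sA$ (since $\sA$ is saturated, hence reflexive). To see this is an isomorphism it suffices to check it in codimension one — that is, over the big open set where $\gamma$ is étale — since both source and target are reflexive on the normal variety $Y$. Over the étale locus the statement is the elementary Galois-descent fact that a $G$-equivariant locally free subsheaf of $\gamma^*\sB_X$ is precisely the pullback of its sheaf of invariants; this is where one uses that $\sA$ is a \emph{saturated} $G$-subsheaf so that the descended sheaf on the base is again a subsheaf of $\sB_X$ of the correct rank, and the ranks of $\sA$ and $\gamma^{[*]}\sA_X$ agree because $\gamma$ is generically étale and $G$-invariants on a generic fibre recover the full stalk up to the $G$-action. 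The main obstacle, and the one point that needs care rather than being purely formal, is controlling behaviour along the (codimension $\ge 2$) branch and ramification locus of $\gamma$: one must be sure that the equalities established over the big open set where $\gamma$ is étale genuinely extend, which is exactly why the hypotheses force $\sA_X$ and $\sB_X$ to be reflexive and why we systematically reflexivise all pullbacks.
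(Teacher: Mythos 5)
Your construction coincides with the paper's --- you take $\sA_X := \gamma_*(\sA)^G$, identify $\gamma_*(\sB)^G$ with $\sB_X$, and try to verify $\gamma^{[*]}\sA_X = \sA$ on a big open set before extending by reflexivity --- but there is a genuine gap in \emph{where} you verify it. You check the isomorphism only over the étale locus of $\gamma$, and you explicitly describe the branch and ramification locus as having codimension at least two. The proposition, however, does not assume $\gamma$ quasi-étale: a Galois cover in the sense of Definition~\ref{def:cover} may be branched along a divisor, and the paper applies Proposition~\ref{prop:1:1} to exactly such covers (Mumford's global covers of $\mathbb{Q}$-varieties, which Theorem~\ref{thm:41} explicitly allows to be non-quasi-étale). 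If the branch locus contains a divisor, agreement of two reflexive sheaves over the étale locus says nothing about them along that divisor, and the étale Galois descent you invoke is unavailable at its codimension-one points. This is precisely where the saturation hypothesis must do real work: for the degree-two cover $\gamma: \bA^1 \to \bA^1$, $t \mapsto t^2$, the subsheaf $t\cdot\sO_{\bA^1} \subset \sO_{\bA^1} = \gamma^{[*]}\sO_{\bA^1}$ is a $G$-subsheaf that is \emph{not} of the form $\gamma^{[*]}\sA_X$, and the only hypothesis excluding it is saturation; your argument never uses saturation along the ramification divisor, so it cannot be complete as written. (The same misreading affects your identification $\gamma_*(\sB)^G \cong \sB_X$: the easy fix there is to apply the projection formula, using $(\gamma_*\sO_Y)^G = \sO_X$, over the big open set where $\sB_X$ is locally free, which requires no étaleness.)

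The paper closes exactly this gap by a different mechanism. Since $\sC := \sB/\sA$ is torsion free, the sheaves $\gamma_*(\sA)^G$, $\gamma_*(\sB)^G$, $\gamma_*(\sC)^G$ are torsion free and, by exactness of the invariant push-forward functor \cite[Lem.~A.3]{GKKP11}, form a short exact sequence on $X$. After removing small subsets of $X$ and $Y$ one may assume all sheaves involved are locally free; pulling the sequence back to $Y$ and comparing it with $0 \to \sA \to \sB \to \sC \to 0$, the comparison maps are injective because they are injective over the \emph{dense} (not necessarily big) étale locus and the sources are torsion free, the middle map is an isomorphism because $\sB = \gamma^{[*]}\sB_X$, and the snake lemma forces $\gamma^*\bigl(\gamma_*(\sA)^G\bigr) \to \sA$ to be surjective, hence an isomorphism, over a big open subset of $Y$; reflexivity then finishes. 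A further minor point: your deduction of saturation of $\sA_X$ (``quotient torsion free on a big open set'' plus ``$\sA_X$ is a subsheaf of the reflexive sheaf $\sB_X$'') is not a valid inference as phrased --- it would require knowing $\sA_X$ itself is reflexive; the clean argument is that left exactness alone embeds $\sB_X/\sA_X$ into the torsion-free sheaf $\gamma_*(\sC)^G$ everywhere, which gives saturation at once.
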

\Preprint{%
\begin{proof}
  Consider the quotient $𝒞 := \factor{ℬ}{𝒜}$, which is a torsion free $G$-sheaf
  by assumption.  Its push-forward $γ_* 𝒞$ and the $G$-invariant part of the
  push-forward, $γ_* (𝒞)^G$, are likewise torsion free; the same holds for $𝒜$
  and $ℬ$.  Recalling from \cite[Lemma~A.3]{GKKP11} that $γ_*( · )^G$ is an
  exact functor, we obtain an exact sequence of torsion free sheaves on $X$,
  \begin{equation}\label{eq:ups}
    \xymatrix{ %
      0 \ar[r] & γ_* (𝒜)^G \ar[r] & γ_* (ℬ)^G \ar[r] & γ_* (𝒞)^G \ar[r] & 0.
    }
  \end{equation}
   
  Since two reflexive sheaves agree if and only if they agree on the complement
  of a small closed subset, we are free to remove small subsets from $X$ and
  $Y$.  As torsion free sheaves are locally free in codimension one, we are
  therefore free to assume that all sheaves in \eqref{eq:ups} are locally free.
  Pulling back to $Y$, we will then obtain a natural diagram as follows,
  $$
  \xymatrix{ %
    0 \ar[r] & γ^* \bigl( γ_* (𝒜)^G \bigr) \ar[r] \ar[d]_{a} & γ^* \bigl( γ_* (ℬ)^G \bigr) \ar[r] \ar[d]_{b} & γ^* \bigl( γ_* (𝒞)^G \bigr) \ar[r] \ar[d]_{c} & 0 \\
    0 \ar[r] & 𝒜 \ar[r] & ℬ \ar[r] & 𝒞 \ar[r] & 0.
  }
  $$
  The morphisms $a$, $b$ and $c$ are clearly injective over the open set of $X$
  where $γ$ is étale.  It will therefore follow from local freeness that $a$,
  $b$ and $c$ are in fact injective.  The construction of $ℬ$ immediately
  implies that $γ_* (ℬ)^G$ is isomorphic to $ℬ_X$, and $b$ is therefore
  isomorphic.  It then follows from the Snake Lemma that $a$ is in fact
  surjective.  We can thus finish the proof by setting $𝒜_X := γ_* (𝒜)^G$.
  Torsion freeness of $γ_* (𝒞)^G = \factor{ℬ_X}{𝒜_X}$ implies that $𝒜_X$ is
  saturated in $ℬ_X$.
\end{proof}
}

\subsection{Intersection, slope and stability}
\approvals{
  Behrouz & yes \\
  Daniel & yes \\
  Stefan & yes \\
  Thomas & yes
}

Given a normal, quasi-projective variety $X$, we follow the notation of
\cite{Fulton98} and denote by $A_k(X)$ the \emph{groups of $k$-dimensional
  cycles modulo rational equivalence}.  The symbol $N¹(X)_ℚ$ denotes the
$ℚ$-vector space of numerical Cartier divisor classes.  Given any divisor $D$ or
any sheaf $ℰ$ whose determinant is $ℚ$-Cartier, we write the appropriate
elements of $N¹(X)_ℚ$ as $[D]$ and $[ℰ] := [\det ℰ]$, respectively.

We recall the following standard construction of intersection numbers between
Weil-- and Cartier divisors.

\begin{construction}[Intersection of Weil and Cartier divisors]\label{cons:int}
  Let $X$ be an $n$-dimensional, normal, projective variety and $0 \ne ℰ$ be any
  coherent sheaf.  Its determinant is then a Weil divisorial sheaf on $X$, say
  $\det ℰ = 𝒪_X(D)$.  The Weil divisor $D$ defines an element $Δ ∈ A_{n-1}(X)$.
  Given $(n-1)$ line bundles $ℒ_1, …, ℒ_{n-1}$, we can then form the cap
  product and consider the number
  $$
  \deg \bigl( Δ ∩ c_1(ℒ_1) ∩ ⋯ ∩ c_1(ℒ_{n-1}) \bigr) ∈ ℤ.
  $$
  Since its value depends only on the numerical classes of the line bundles
  $ℒ_i$, the sheaf $ℰ$ induces a well-defined $ℚ$-multilinear form
  $N¹(X)_ℚ^{⨯(n-1)} → ℚ$.
\end{construction}

\begin{notation}
  Abusing notation somewhat, we denote the multilinear form of
  Construction~\ref{cons:int} by $[ℰ]$, as if the sheaf $ℰ$ had a numerical
  class.  Given elements $α_1, …, α_{n-1} ∈ N¹(X)_ℚ$, we denote the associated
  value by $[ℰ]·α_1 ⋯ α_{n-1} ∈ ℚ$.
\end{notation}

The abuse of notation is partially justified by the following remark.

\begin{rem}
  In the setting of Construction~\ref{cons:int}, if $π : \wtilde{X} → X$ is any
  resolution of singularities, then
  $[ℰ]·α_1 ⋯ α_{n-1} = [π^*ℰ]·π^*α_1 ⋯ π^*α_{n-1}$.  If $\det ℰ$ is $ℚ$-Cartier,
  then there \emph{is} a numerical class $[ℰ] ∈ N¹(X)_ℚ$, and
  Construction~\ref{cons:int} gives the expected results.
\end{rem}

\begin{defn}[Slope with respect to a nef divisor]\label{def:slope2}
  Let $X$ be a normal, projective variety and $H$ be a nef $ℚ$-Cartier divisor
  on $X$.  If $ℰ \ne 0$ is any torsion free, coherent sheaf on $X$, define the
  \emph{slope of $ℰ$ with respect to $H$} as
  $$
  μ_H(ℰ) := \frac{ [ℰ]·[H]^{\dim X-1}}{\rank ℰ}.
  $$
  Call $ℰ$ \emph{semistable with respect to $H$} if $μ_H(ℱ) ≤ μ_H(ℰ)$ for any
  subsheaf $ℱ ⊆ ℰ$ with $0 < \rank ℱ < \rank ℰ$.  Call $ℰ$ \emph{stable with
    respect to $H$} if strict inequality always holds.
\end{defn}

In the setup of Definition~\ref{def:slope2}, the class $[H]^{\dim X-1}$ is a
movable numerical curve class, cf.\ \cite[Def.~2.2]{GKP15}.  If $X$ is
$ℚ$-factorial, our definition of slope agrees with that of
\cite[Def.~2.10]{GKP15}.  The standard proofs of the following elementary facts
carry over from \cite{GKP15} essentially verbatim.

\begin{lem}[Elementary properties of slope]\label{lem:elemSlp2}
  In the setup of Definition~\ref{def:slope2}, if $π : \wtilde{X} → X$ is any
  generically finite morphism of normal, projective varieties, then the
  following holds.
  \begin{enumerate}
  \item\label{il:Ah} We have
    $μ_H(ℰ) = (\deg π)^{-1}·μ_{π^*H} \bigl( π^{[*]} ℰ \bigr)$.

  \item\label{il:Be} If $\wtilde{ℰ}$ is any coherent sheaf on $\wtilde X$ that
    differs from $π^{[*]} ℰ$ at most over a small subset of $X$, then
    $μ_H(ℰ) = (\deg π)^{-1}·μ_{π^*H} \bigl( \wtilde{ℰ} \bigr)$.  \qed
  \end{enumerate}
\end{lem}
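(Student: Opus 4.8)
The plan is to reduce both statements to the definitions, unwinding the slope in terms of Construction~\ref{cons:int}, and then to invoke the projection formula for cap products together with the remark following that construction. For part~\eqref{il:Ah}, first I would recall that $\rank \bigl(π^{[*]}\sE\bigr) = \rank \sE$, since $π$ is generically finite and reflexive hull does not change the generic rank; thus it suffices to compare the numerators $[\sE]·[H]^{\dim X -1}$ and $[π^{[*]}\sE]·[π^*H]^{\dim X -1}$. Here the relevant point is that $[π^{[*]}\sE]$ and $[π^*\sE]$ define the same multilinear form, because $\det π^{[*]}\sE$ and $\det(π^*\sE)$ agree outside a small subset of $\wtilde X$ and the intersection numbers of Construction~\ref{cons:int} only see the Weil-divisor class in $A_{\dim X -1}$, which is insensitive to small modifications. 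Then the projection formula $π_*\bigl(π^*α \cap ξ\bigr) = α \cap π_*ξ$ for $α$ a Cartier class and $ξ$ a cycle, applied iteratively with $ξ$ the fundamental class pushed against the divisorial class attached to $\det π^*\sE$, gives
$$
\bigl[π^{[*]}\sE\bigr]·[π^*H]^{\dim X -1} \;=\; (\deg π)·\bigl([\sE]·[H]^{\dim X -1}\bigr),
$$
where the factor $\deg π$ comes from $π_*[\wtilde X] = (\deg π)·[X]$ when $π$ is generically finite of that degree (and the left side is zero together with the right side if $π$ is not dominant, i.e.\ if $\deg π$ is interpreted as $0$). Dividing by $\rank\sE$ yields the claimed identity.

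For part~\eqref{il:Be}, the key observation is that both $\det \wtilde\sE$ and $\det π^{[*]}\sE$ are Weil divisorial sheaves on $\wtilde X$ whose associated divisors agree outside a subset whose image in $X$ is small; pulling back the statement ``differs at most over a small subset of $X$'' means the locus of disagreement on $\wtilde X$ has codimension $\geq 2$, hence defines the same class in $A_{\dim X -1}(\wtilde X)$. Since the multilinear form $[\,\cdot\,]$ of Construction~\ref{cons:int} depends only on that class in $A_{\dim X-1}$, we get $[\wtilde\sE] = [π^{[*]}\sE]$ as forms on $N^1(\wtilde X)_ℚ$, and also $\rank \wtilde\sE = \rank π^{[*]}\sE = \rank\sE$. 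Substituting into part~\eqref{il:Ah} gives the conclusion immediately.

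I do not expect a genuine obstacle here; the statement is explicitly flagged as carrying over ``essentially verbatim'' from \cite{GKP15}. The only points requiring a little care are bookkeeping ones: first, making sure the rank is genuinely preserved under $π^{[*]}$ (true because $π$ is generically finite and dominant, or the whole identity is the trivial $0 = 0$ otherwise); second, checking that ``small subset of $X$'' pulls back to ``small subset of $\wtilde X$'' under a generically finite morphism, which holds because such $π$ does not contract divisors to higher-codimension sets in a way that would create a divisor in the preimage — more precisely, the preimage of a codimension-$\geq 2$ set under a generically finite morphism of normal varieties again has codimension $\geq 2$, as $π$ is finite over a big open subset of $X$; and third, invoking the remark after Construction~\ref{cons:int} to handle the passage between $π^*\sE$ and $π^{[*]}\sE$. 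All of these are standard, so the proof is essentially a matter of assembling the projection formula with the codimension-insensitivity of the intersection pairing.
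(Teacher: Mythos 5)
Your overall plan (unwind Construction~\ref{cons:int}, apply the projection formula, argue that the pairing does not see what happens over small subsets of $X$) is the right one, but the codimension claim you use to implement it is false, and it fails exactly in the situation where the paper later applies the lemma. You assert that the preimage of a codimension-$\geq 2$ subset of $X$ under a generically finite morphism again has codimension $\geq 2$ in $\wtilde{X}$, ``as $\pi$ is finite over a big open subset of $X$''. This is not true once $\pi$ contracts divisors: for a resolution of singularities of a normal surface, $\pi$ is an isomorphism over the big open set $X_{\reg}$, yet the preimage of the small set $X_{\sing}$ contains the whole exceptional divisor. The lemma is invoked in the paper precisely for sheaves on a resolution that differ along the $\pi$-exceptional set (Proposition~\ref{prop:comparison}, Step~6 of the proof of Theorem~\ref{thm:restriction}), so your reduction for~(2) --- ``the locus of disagreement on $\wtilde{X}$ has codimension $\geq 2$, hence $[\wtilde{\sE}]=[\pi^{[*]}\sE]$ in $A_{\dim X-1}(\wtilde{X})$'' --- breaks down: the two determinants may genuinely differ by a divisor supported on the exceptional locus. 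The same issue already infects your part~(1): $\pi^*\sE$ and $\pi^{[*]}\sE$ agree only over the preimage of the big open set where $\sE$ is locally free, and over its complement $\pi^*\sE$ can acquire torsion along exceptional divisors, so $\det\pi^*\sE$ and $\det\pi^{[*]}\sE$ need not agree outside a small subset of $\wtilde{X}$.

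The repair is to use not the codimension in $\wtilde{X}$ of the locus of disagreement, but the fact that the polarisation is pulled back from $X$. If $D$ is a Weil divisor on $\wtilde{X}$ all of whose components are mapped by $\pi$ into subsets of $X$ of codimension $\geq 2$, then the projection formula gives
$$
D\cdot[\pi^*H]^{\dim X-1} \;=\; (\pi_*D)\cdot[H]^{\dim X-1} \;=\; 0,
$$
since $\pi_*D$ vanishes as an $(n-1)$-cycle. Thus divisors supported over small subsets of $X$ are invisible to the form $[\,\cdot\,]\cdot[\pi^*H]^{\dim X-1}$ even though they are not small in $\wtilde{X}$. With this observation your computation goes through: in~(1) the determinants of $\pi^*\sE$ and $\pi^{[*]}\sE$ differ only by such a divisor, the projection formula applied to the divisor class of $\det\pi^{[*]}\sE$ produces the factor $\deg\pi$ (its push-forward agrees with $(\deg\pi)$ times the class of $\det\sE$ up to an $(n-1)$-cycle supported in a small subset of $X$, which is zero), and ranks are preserved; in~(2) the determinants of $\wtilde{\sE}$ and $\pi^{[*]}\sE$ again differ by a divisor supported over a small subset of $X$, so their slopes with respect to $\pi^*H$ coincide, and one concludes by~(1). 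This push-forward argument is the one from \cite{GKP15} that the paper refers to; the rest of your bookkeeping (rank preservation, invoking the remark after Construction~\ref{cons:int}) is fine.
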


\begin{lem}[Harder-Narasimhan filtration]\label{lem:elemSlp1}
  In the setup of Definition~\ref{def:slope2}, there exists a unique filtration,
  $0 = ℰ_0 ⊊ ℰ_1 ⊊ ⋯ ⊊ ℰ_{ℓ} = ℰ$, whose quotients
  $ℰⁱ := ℰ_i/ℰ_{i-1}$ are torsion free, semistable with respect to $H$ and where
  the sequence $μ_H(ℰⁱ)$ is strictly decreasing.
\end{lem}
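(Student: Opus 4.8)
The plan is to run the classical Harder-Narasimhan argument, but carried out in the category of torsion free coherent sheaves on the normal projective variety $X$, using the slope $μ_H$ of Definition~\ref{def:slope2} attached to the movable curve class $[H]^{\dim X - 1}$. The only structural inputs one needs are: (i) slope is additive on short exact sequences of torsion free sheaves of positive rank (which follows at once from additivity of $[\,\cdot\,]$ on determinants, i.e.\ from the construction via $A_{n-1}(X)$ in Construction~\ref{cons:int}); (ii) ranks of subsheaves are bounded by $\rank\sE$; and (iii) the set of slopes $μ_H(\sF)$ of subsheaves $\sF \subseteq \sE$ with $0 < \rank\sF \le \rank\sE$ is bounded above and, for each fixed rank, the slopes lie in a discrete subset of $ℚ$ with bounded denominators, so a maximum is attained. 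Point (iii) is where one must be slightly careful in the singular, non-$ℚ$-factorial setting: $[\sF]\cdot[H]^{n-1} \in ℚ$ because $[H]^{n-1}$ is a genuine $ℚ$-class, and although $\det\sF$ need not be $ℚ$-Cartier, the intersection number $[\sF]\cdot[H]^{n-1}$ still takes values in $\tfrac{1}{m}ℤ$ for a fixed $m$ clearing denominators in $[H]^{n-1}$; together with the boundedness-above of slopes of subsheaves (Grothendieck's lemma, which one can import by passing to a resolution $π\colon\wtilde X \to X$ and using Lemma~\ref{lem:elemSlp2}.\ref{il:Ah}--\ref{il:Be} to compare slopes with those upstairs, where $π^*H$ is nef on a smooth variety and the classical theory applies), this guarantees that $\sup\{μ_H(\sF)\}$ is attained.

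The construction then proceeds in the usual way. First I would show there is a \emph{unique} maximal destabilising subsheaf $\sE_1 \subseteq \sE$: among all subsheaves realising the maximal slope $μ_{\max}:=\sup_{\sF}μ_H(\sF)$, pick one of maximal rank; uniqueness and the fact that it is a genuine subsheaf (not merely a subsheaf up to small sets) follow from the standard argument that if $\sF_1,\sF_2$ both have slope $μ_{\max}$ then so does $\sF_1 + \sF_2$ (using additivity of slope on the exact sequence $0 \to \sF_1\cap\sF_2 \to \sF_1\oplus\sF_2 \to \sF_1+\sF_2 \to 0$ together with $μ_H(\sF_1\cap\sF_2) \le μ_{\max}$), so the maximal-rank choice is forced to contain every maximal-slope subsheaf. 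One checks $\sE_1$ is saturated — replacing it by its saturation can only increase the slope, hence does not — and that $\sE_1$ is semistable, since any quotient of smaller slope would be contradicted by the maximality. Then I would pass to the torsion free quotient $\sE/\sE_1$, which has strictly smaller rank, and induct: the pullback of its HN filtration gives $\sE_1 \subsetneq \sE_2 \subsetneq \cdots \subsetneq \sE_\ell = \sE$ with semistable torsion free quotients. The inequality $μ_H(\sE^1) > μ_H(\sE^2)$ — and hence strict decrease all the way down by induction — is exactly the statement that no subsheaf of $\sE/\sE_1$ has slope $\ge μ_{\max}$, which is the defining property of $\sE_1$ as the maximal destabiliser.

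Uniqueness of the whole filtration follows from uniqueness of the first step by a routine induction: given two such filtrations, the first term of either must coincide with the maximal destabilising subsheaf $\sE_1$ (because semistability of $\sE^1$ plus the strict-decrease condition forces $\sE_1$ to have slope $μ_{\max}$, and then maximality of rank is forced by the same $\sF_1+\sF_2$ trick applied inside $\sE$), and then one quotients by $\sE_1$ and repeats. I expect the main — really the only — obstacle to be the verification of point (iii), boundedness of slopes of subsheaves and the discreteness needed to extract a maximum, in the generality of a singular, possibly non-$ℚ$-factorial $X$; the clean way around this is to not prove it from scratch but to transport it from a resolution via Lemma~\ref{lem:elemSlp2}, since the text explicitly remarks that "the standard proofs $\dots$ carry over from \cite{GKP15} essentially verbatim", so in the write-up I would simply invoke \cite{GKP15} for these foundational estimates and spend the space on confirming that the arguments there only use the properties of $μ_H$ listed above.
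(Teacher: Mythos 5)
Your proposal is correct and matches the paper's own treatment: the paper offers no written proof, remarking only that ``the standard proofs carry over from \cite{GKP15} essentially verbatim'', and what you spell out is precisely that classical Harder--Narasimhan argument (maximal destabiliser via additivity of slope, bounded denominators, and induction), with boundedness of slopes transported to a resolution via Lemma~\ref{lem:elemSlp2} and \cite[Prop.~2.21]{GKP15} --- the same device the authors themselves use in the proof of Proposition~\ref{prop:G-opennessx}.
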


\begin{notation}[Harder-Narasimhan filtration]
  The filtration of Lemma~\ref{lem:elemSlp1} is called \emph{Harder-Narasimhan
    filtration}.  Call $ℰ_1$ the \emph{maximal $H$-destabilising subsheaf of
    $ℰ$} and write $ μ^{\max}_H(ℰ) := μ_H(ℰ¹)$ and
  $μ^{\min}_H(ℰ) := μ_H(ℰ^{ℓ-1})$.
\end{notation}

\part{Foundational material}
%
%
\svnid{$Id: 03-Qvarieties.tex 840 2018-11-20 14:25:41Z kebekus $}

\section{$ℚ$-varieties and $ℚ$-Chern classes}
\label{sect:Q-varieties}
\subversionInfo

\subsection{$ℚ$-varieties}
\approvals{
  Behrouz & yes \\
  Daniel & yes \\
  Stefan & yes \\
  Thomas & yes }

The construction of the $ℚ$-Chern classes that are used to formulate our main
results relies on the notions of $ℚ$-variety, also known as $V$-manifolds in the
literature.  While $ℚ$-Chern classes on $ℚ$-surfaces have been discussed in the
literature at length, the (sometimes delicate) issues arising in higher
dimensions are often not well covered.  For the reader's convenience, this
section gathers the main definitions, results and constructions concerning
$ℚ$-varieties that are used in our paper.  \Publication{For the sake of brevity,
  many of the more standard proofs are left to the reader.  The preprint version
  of the paper, available from the arXiv, contains full proofs and explains all
  constructions in detail.}

\subsection{$ℚ$-varieties}
\label{subsect:QQvarieties1}
\approvals{Behrouz & yes \\
  Daniel & yes \\
  Stefan & yes \\
  Thomas & yes}

We recall the definition of $ℚ$-varieties, as given in the fundamental articles
of Mumford and Gillet, \cite{MR717614, MR772058}.

Note that there are other definitions in the literature\footnote{The definition
  found in Megyesi's well-known article \cite[Sect.~10]{SecondAsterisque} is
  more restrictive than Definition~\ref{def:Qvar} in that it requires the
  morphisms $X_{α} → U_{α}$ to be quasi-étale.}.  To avoid
confusion, we restrict ourselves to Mumford's paper as a main reference and
stick to the notation introduced there.

\begin{defn}[\protect{$ℚ$-variety, cf.\ \cite[Sect.~2]{MR717614} and \cite[Def.~9.1]{MR772058}}]\label{def:Qvar}
  A \emph{$ℚ$-variety} is a tuple consisting of a normal, quasi-projective
  variety $X$, a finite set $A$ and for each $α ∈ A$ a smooth, quasi-projective
  variety $X_α$ and a diagram of morphisms between quasi-projective varieties
  \begin{equation}\label{eq:u5}
    \xymatrix{ %
      X_α \ar[rrr]_{\text{Galois with group }G_α} \ar@/^.4cm/[rrrrrr]^{p_α} &&& U_α \ar[rrr]_{p'_α\text{, étale}} &&& X
    }
  \end{equation}
  such that $X = \bigcup p_{α}(X_{α})$ and such that the following compatibility
  condition holds: for each $(α,β) ∈ A ⨯ A$, denoting by $X_{αβ}$ the
  normalisation of $X_α ⨯_X X_β$, then the natural morphisms
  $$
  p_{αβ,α} : X_{αβ} → X_{α} \quad \text{and} \quad p_{αβ,β} : X_{αβ} → X_{β}
  $$
  are étale.  In particular, $X_{αβ}$ is smooth.  For brevity of notation, we
  refer to the $ℚ$-variety by $\bigl(X, \{ p_α\}_{α ∈ A} \bigr)$.  We refer to
  the diagrams~\eqref{eq:u5} as \emph{charts}.
\end{defn}

We refer to \cite[Sect.~2.b]{MR717614} for the definition of a morphism of
$ℚ$-varieties as well as for an explanation of the relation to general Deligne-Mumford stacks.

\subsection{Quasi-étale $ℚ$-varieties}

This paper is mainly concerned with $ℚ$-varieties whose charts are quasi-étale.
As we will see below, these have particularly good properties.

\begin{defn}
  A $ℚ$-variety $\bigl(X, \{ p_α\}_{α ∈ A}\bigr)$ is called \emph{quasi-étale}
  if all the morphisms $p_α$ are quasi-étale.
\end{defn}

\begin{rem}[Quasi-étale charts]\label{rem:qec2}
  Let $X$ be a normal, quasi-projective variety $X$, let $A$ be a finite set and
  for each $α ∈ A$, and assume we are given a diagram as in~\eqref{eq:u5}, such
  that $X = \bigcup p_{α}(X_{α})$.  If all morphisms $p_α$ are quasi-étale, then
  the morphisms $X_{αβ} → X_α$ are then likewise étale in codimension one and
  hence, by purity of the branch locus, étale, \cite[Prop.~2]{MR0095846} or
  \cite[Thm.~1]{MR0106930}.  The condition on the $p_{αβ,α}$ is therefore
  vacuous, and the $p_α$ equip $X$ with the structure of a $ℚ$-variety.
\end{rem}

\begin{lem}[Uniqueness of quasi-étale $ℚ$-variety structures]\label{lem:cref}
  Let $X$ be any normal, quasi-projective variety.  Then, any two quasi-étale
  $ℚ$-variety structures on $X$ have a common refinement.\Publication{\qed}
\end{lem}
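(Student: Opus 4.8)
The plan is to construct the common refinement by forming fiber products of all the charts involved and normalising. Suppose $\bigl(X, \{p_α\}_{α ∈ A}\bigr)$ and $\bigl(X, \{q_β\}_{β ∈ B}\bigr)$ are two quasi-étale $ℚ$-variety structures on $X$, with charts $X_α \xrightarrow{p_α} X$ and $X'_β \xrightarrow{q_β} X$ over open sets whose images cover $X$. For each pair $(α, β) ∈ A ⨯ B$, I would set $Z_{αβ}$ to be the normalisation of the fiber product $X_α ⨯_X X'_β$, equipped with the natural projections $r_{αβ} : Z_{αβ} → X_α$ and $s_{αβ} : Z_{αβ} → X'_β$, and the composed morphism $t_{αβ} := p_α ∘ r_{αβ} = q_β ∘ s_{αβ} : Z_{αβ} → X$. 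The candidate common refinement is the collection $\bigl(X, \{t_{αβ}\}_{(α,β) ∈ A⨯B}\bigr)$.

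First I would check that each $t_{αβ}$ is quasi-étale. Since $p_α$ and $q_β$ are quasi-étale, hence generically finite of relative dimension zero, the fiber product $X_α ⨯_X X'_β$ has dimension $n = \dim X$, so its normalisation $Z_{αβ}$ is an $n$-dimensional normal variety. Over the big open set $U ⊆ X$ where both $p_α$ and $q_β$ are étale, the fiber product is already normal (étale over a smooth variety) and the projections are étale; hence $t_{αβ}$ is étale over $U$, i.e.\ étale in codimension one, which is the definition of quasi-étale. By Remark~\ref{rem:qec2}, a covering family of quasi-étale morphisms automatically equips $X$ with a $ℚ$-variety structure, so $\bigl(X, \{t_{αβ}\}\bigr)$ is a quasi-étale $ℚ$-variety. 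That the images of the $t_{αβ}$ cover $X$ follows because already the images of the $p_α$ cover $X$ and $r_{αβ}$ is surjective (normalisation of a fiber product surjects onto each factor over the locus where that factor's map is flat, and a limiting argument or the going-down property handles the rest; in any case the image of $t_{αβ}$ contains the dense open set $p_α(X_α) ∩ q_β(X'_β) ∩ U$, and taking closures over all $β$ recovers $p_α(X_α)$).

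Finally I need to exhibit morphisms of $ℚ$-varieties from $\bigl(X, \{t_{αβ}\}\bigr)$ to each of the two given structures, which is essentially built into the construction: the projections $r_{αβ} : Z_{αβ} → X_α$ are étale (same purity-of-branch-locus argument as in Remark~\ref{rem:qec2}, since they are étale in codimension one over the normal variety $X_α$), and they are compatible with the Galois group actions and with the maps to $X$ by construction of the fiber product; symmetrically for the $s_{αβ}$. These data constitute morphisms of $ℚ$-varieties in the sense of \cite[Sect.~2.b]{MR717614}, so the new structure refines both. The main obstacle I anticipate is not any single hard step but the bookkeeping: verifying that the compatibility/cocycle conditions defining a morphism of $ℚ$-varieties hold for the fiber-product projections, and checking that the normalisation does not destroy étaleness in codimension one — both of which reduce cleanly to purity of the branch locus (\cite[Prop.~2]{MR0095846}) once one restricts to the big open set where all the original charts are étale.
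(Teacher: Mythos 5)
Your construction is the same as the paper's: the common refinement is obtained from the normalisations $Z_{αβ}$ of the fibre products $X_α ⨯_X X'_β$, with étaleness of the projections to the old charts coming from purity of the branch locus, and the whole thing assembled via Remark~\ref{rem:qec2}. The core of the argument is sound.

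Three points need tightening, and they are exactly what the paper's proof spells out. First, a $ℚ$-variety structure in the sense of Definition~\ref{def:Qvar} requires each chart to be an \emph{irreducible} smooth variety together with a factorisation through a Galois quotient followed by an étale map to $X$; Remark~\ref{rem:qec2}, which you invoke, \emph{presupposes} such a diagram rather than producing one. So you must pass to the irreducible components of $Z_{αβ}$ and exhibit the factorisation $Z_{αβ} → U_α ⨯_X U_β → X$, where the first map is the quotient by $G_α ⨯ G_β$ (on a fixed component, by its stabiliser) and the second is the étale map $p'_α ⨯ p'_β$ restricted to the appropriate image component. Second, $r_{αβ}$ is not surjective in general---its image is $p_α^{-1}$ of the (open, dense) image of $q_β$---but the covering condition $X = \bigcup t_{αβ}(Z_{αβ})$ holds anyway because $\bigcup_β q_β(X'_β) = X$, so taking the union (not the closure) over $β$ suffices. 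Third, purity of the branch locus requires the target to be \emph{regular}, not merely normal; this is harmless here because the charts $X_α$ are smooth by Definition~\ref{def:Qvar}, but your justification as written (``étale in codimension one over the normal variety $X_α$'') would not be enough on its own.
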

\Preprint{%
  \begin{proof}
    Let $X¹_ℚ = \bigl(X, \{ p¹_α\}_{α ∈ A}\bigr)$ and
    $X²_ℚ = \bigl(X, \{ p²_β\}_{β ∈ B}\bigr)$ be two quasi-étale $ℚ$-variety
    structures on $X$.  Denoting by $X_{αβ}$ the normalisation of $X_α ⨯_X X_β$,
    consider the diagram
    $$
    \xymatrix{ %
      X_{αβ} \ar[rr] \ar[d] && X_α \ar[d]^{\text{quasi-étale}} \\
      X_β \ar[rr]_{\text{quasi-étale}} && X.
    }
    $$
    The maps $X_{αβ} → X_α$ and $X_{αβ} → X$ are quasi-étale on every component
    of $X_{αβ}$.  Since $X_α$ is smooth, it follows from purity of the branch
    locus that the map $X_{αβ} → X_α$ is étale.  In particular, we see that
    $X_{αβ}$ is smooth.  The set of diagrams obtained by restricting
    $$
    \xymatrix{ %
      X_{αβ} \ar[rrrr]_(.45){\text{Quotient by }G_α⨯G_β} \ar@/^.4cm/[rrrrrrr]^{p_{αβ}} &&&& U_α⨯_XU_β \ar[rrr]_{p'_α ⨯ p'_β\text{, étale}} &&& X
    }
    $$
    to components of $X_{αβ}$ and to the appropriate image components of
    $U_α⨯_XU_β$ yields a $ℚ$-variety structure that refines both $X¹_ℚ$ and
    $X²_ℚ$.
  \end{proof}%
}

\subsection{Global covers}
\label{ssec:globalCover}
\approvals{
  Behrouz & yes \\
  Daniel & yes \\
  Stefan & yes \\
  Thomas & yes
}

Given an $n$-dimensional $ℚ$-variety $X_ℚ := \bigl( X, \{ p_α\}_{α ∈ A} \bigr)$
as in Definition~\ref{def:Qvar}, Mumford constructs in \cite[Sect.~2]{MR717614}
a \emph{global cover of $X_ℚ$}, that is, a normal variety $\what{X}$ (not
necessarily smooth), a global Galois morphism $γ : \what{X} → X$, and for every
$α ∈ A$ a commutative diagram as follows,
$$
\xymatrix{
  \what{X}_α \ar[rrrr]^{q_α}_{\text{Galois with group }H_α \triangleleft G} \ar@{^(->}[d]_{\text{incl.\ of open}} &&&& X_α \ar[rrrr]_{\text{Galois with group }G_α = G/H_α} &&&& U_α \ar[d]^{p'_α\text{, étale}} \\
  \what{X} \ar[rrrrrrrr]_{γ\text{, Galois with group }G} &&&&&&&& X.
}
$$
We call $\what{X}$ a \emph{global cover of $X_ℚ$}.

\begin{obs}[The importance of being Cohen-Macaulay, I]\label{obs:CM1}
  If $\what{X}$ is Cohen-Macaulay, then the Galois morphisms $q_α$ will
  automatically be flat, \cite[Ex.~18.17]{E95}.  In particular, pull-back of
  coherent sheaves is an exact functor.  Recalling that a coherent sheaf $ℱ$ is
  reflexive if and only if it is locally a $2^{nd}$ syzygy sheaf,
  \cite[Prop.~1.1]{MR597077}, it follows that for any $α ∈ A$, the pull-back of
  any reflexive sheaf on $X_α$ to $\what{X}_α$ is again reflexive.
\end{obs}

\subsection{$ℚ$-sheaves}
\label{subsect:QQsheaves}
\approvals{
  Behrouz & yes \\
  Daniel & yes \\
  Stefan & yes \\
  Thomas & yes
}

The next relevant items are the definition of $ℚ$-sheaves and the construction
of $ℚ$-sheaves by reflexive pull-back.

\begin{defn}[\protect{$ℚ$-sheaf and $ℚ$-bundle, cf.\ \cite[§ 2]{MR717614}}]\label{def:QS}
  Given a $ℚ$-variety $X_ℚ := \bigl(X, \{ q_α\}_{α ∈ A} \bigr)$ as in
  Definition~\ref{def:Qvar}, a \emph{$ℚ$-sheaf} $ℱ$ on $X_ℚ$ is a tuple
  $$
  \bigl( \{ ℱ_α\}_{α ∈ A}, \{i_{αβ}\}_{(α,β) ∈ A⨯A}\bigr)
  $$
  consisting of a family of coherent sheaves $ℱ_α$ on $X_α$ plus isomorphisms
  $$
  i_{αβ} : p_{αβ,α}^*(ℱ_α) → p_{αβ,β}^*(ℱ_β)
  $$
  that are compatible on the triple overlaps.  The $ℚ$-sheaf $ℱ$ is called
  \emph{reflexive} if all the $ℱ_α$ are reflexive.  It is called
  \emph{$ℚ$-bundle} if all the $ℱ_α$ are locally free.
\end{defn}

\begin{rem}[Induced sheaves on global covers]\label{rem:isgc}
  In the setting of Definition~\ref{def:QS}, given a global cover as in
  Section~\ref{ssec:globalCover}, Mumford shows that the pull-back sheaves
  $q_α^*ℱ_α$ glue to give a coherent $G$-sheaf $\what{ℱ}$ on $\what{X}$,
  \cite[Sect.~2]{MR717614}.  If we assume in addition that $ℱ$ is reflexive,
  then the $ℱ_α$ are locally free in codimension two, \cite[Cor.~1.4]{MR597077},
  and the same holds for $\what{ℱ}$.  In particular, if $ℱ$ is reflexive and
  $\dim X = 2$, then $\what{ℱ}$ is locally free.

  With this construction, Mumford proves that to give a $ℚ$-sheaf on $X_ℚ$, it
  is equivalent to give a $G$-sheaf on $\what{X}$ whose restrictions to
  $\what{X}_α$ are isomorphic (as $H_α$-sheaves) to pull-back sheaves from
  $X_α$.
\end{rem}

\begin{construction}[Reflexive pull-back]\label{cons:rpb1}
  Given a $ℚ$-variety $X_ℚ := \bigl(X, \{ p_α\}_{α ∈ A} \bigr)$ and given any
  coherent sheaf $ℱ$ on $X$, one defines a reflexive $ℚ$-sheaf $ℱ^{[ℚ]}$ on
  $X_ℚ$ by setting $ℱ_α := p_α^{[*]} ℱ$---the existence of natural isomorphisms
  $i_{αβ}$ is guaranteed by étalité of $p_{αβ,α}$ and $p_{αβ,β}$.  The
  $ℚ$-sheaves $\bigl( Ω^{[p]}_X \bigr)^{[ℚ]}$ and $\bigl( 𝒯_X \bigr)^{[ℚ]}$ are
  $ℚ$-bundles.
\end{construction}

\begin{rem}[The importance of being Cohen-Macaulay, II]\label{rem:CM2}
  In the setting of Construction~\ref{cons:rpb1}, let $\what{X}$ be a global
  cover as in Section~\ref{ssec:globalCover}, and let $\what{ℱ}$ be the sheaf
  induced by the $ℚ$-sheaf $ℱ^{[ℚ]}$, as in Remark~\ref{rem:isgc}.  If
  $\what{X}$ is Cohen-Macaulay, it follows from Observation~\ref{obs:CM1} that
  $\what{ℱ} = γ^{[*]} ℱ = γ^{[*]}(ℱ/\tor)$.  In particular, we obtain that these
  sheaves are locally free in codimension two.  In a similar vein, observing
  that the two reflexive sheaves $\what{ℱ}^*$ and
  $γ^{[*]}(ℱ^*) = γ^{[*]} \bigl( (ℱ/\tor)^* \bigr)$ agree over the big open set
  where the torsion free sheaf $ℱ/\tor$ is locally free, we see that
  $\what{ℱ}^*$ and $γ^{[*]}(ℱ^*)$ do in fact agree.
\end{rem}

\subsection{Constructions}
\approvals{Behrouz & yes \\
  Daniel & yes \\
  Stefan & yes \\
  Thomas & yes}

We recall three folklore constructions of $ℚ$-variety structures.

\subsubsection{Varieties with quotient singularities}
\approvals{Behrouz & yes \\
  Daniel & yes \\
  Stefan & yes \\
  Thomas & yes }

Given any $ℚ$-variety $\bigl(X, \{ p_α\}_{α ∈ A} \bigr)$ as in
Definition~\ref{def:Qvar}, then $X$ clearly has quotient singularities, in the
sense of Definition~\ref{defn:quotient}.  We briefly recall the fundamental fact
that the converse is also true, cf.~\cite[p.~276]{MR717614}.

\begin{prop}[Varieties with quotient singularities admit $ℚ$-structures]\label{cons:vqs1}
  Let $X$ by any quasi-projective variety with quotient singularities.  Then,
  $X$ admits the structure of a quasi-étale $ℚ$-variety.\Publication{\qed}
\end{prop}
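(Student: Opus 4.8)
The plan is to construct, for each point $x \in X$, a single chart of the form \eqref{eq:u5} whose image contains $x$, and then to check that finitely many of these cover $X$ and that the resulting family is quasi-étale; Remark~\ref{rem:qec2} then shows the compatibility conditions are automatic. First I would unwind Definition~\ref{defn:quotient}: around $x$ there is an analytically open $U \ni x$ biholomorphic to an analytically open $V \subseteq Y^{an}$, where $Y = \what{Y}/H$ is a quotient variety, with $\what{Y}$ smooth and $\what{Y} \to Y$ étale over $Y_{\reg}$. So the local analytic model of $X$ near $x$ is $\what{Y}/H$ near some point. The immediate obstacle is that this is an \emph{analytic}, not algebraic, local isomorphism, and that $\what{Y} \to Y$ is a priori only quasi-étale (étale over the smooth locus), whereas the charts in \eqref{eq:u5} are required to be \emph{algebraic} morphisms of quasi-projective varieties, with $X_\alpha$ smooth.

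To get around the analytic-versus-algebraic issue, the standard approach (due essentially to Mumford, cf.\ \cite[Sect.~2]{MR717614}) is to use the fact that a quotient singularity is, by a theorem of Prill / Mumford, locally analytically isomorphic to $\mathbb{C}^n/H$ for a \emph{small} finite subgroup $H < \Gl(n,\mathbb{C})$ (one acting without quasi-reflections), and that such a local analytic model is rigid enough to be algebraised: one may take $\what{X}_\alpha$ to be a suitable Zariski-open (in fact, affine) piece of $\mathbb{C}^n$ (or of an étale neighbourhood realising the same completed local ring), $G_\alpha = H$, $X_\alpha' := \what{X}_\alpha/G_\alpha$, and then one needs an algebraic \emph{étale} neighbourhood $U_\alpha \to X$ of $x$ through which $X_\alpha'$ maps. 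Here I would invoke Artin's approximation / the existence of étale-local models for quotient singularities: since $X$ has quotient singularities, there is an étale neighbourhood $p'_\alpha : U_\alpha \to X$ of $x$ and an isomorphism $U_\alpha \cong X_\alpha'$ identifying the two singularities, because étale-local structure is determined by the henselisation, and the henselisation of $\mathcal{O}_{X,x}$ agrees with that of $\mathcal{O}_{X_\alpha', [0]}$. Setting $p_\alpha := p'_\alpha \circ (X_\alpha \to X_\alpha')$, with $X_\alpha := \what{X}_\alpha$ smooth, gives a chart of the shape \eqref{eq:u5}.

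Next I would verify the two remaining requirements. Covering: the images $p_\alpha(X_\alpha) = p'_\alpha(U_\alpha)$ are Zariski-open (étale morphisms are open) and cover $X$; by quasi-compactness of $X$ finitely many suffice, giving the finite index set $A$. Quasi-étality: the map $X_\alpha \to X_\alpha'$ is $\what{Y} \to \what{Y}/H$ with $H$ small, hence étale in codimension one (it is étale over the smooth locus of $X_\alpha'$, whose complement has codimension $\geq 2$ precisely because $H$ contains no quasi-reflections); composing with the étale map $p'_\alpha$ keeps it étale in codimension one, so $p_\alpha$ is quasi-étale. Finally, by Remark~\ref{rem:qec2}, once all $p_\alpha$ are quasi-étale the compatibility maps $p_{\alpha\beta,\alpha}$ are automatically étale by purity of the branch locus, so $\bigl(X, \{p_\alpha\}_{\alpha \in A}\bigr)$ is a quasi-étale $ℚ$-variety. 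The main obstacle is the algebraisation step --- producing the algebraic étale chart $U_\alpha \to X$ realising the analytic quotient model --- which rests on the fact that quotient singularities are determined by their completed (equivalently henselised) local rings together with Artin approximation; the smallness of $H$, needed for quasi-étality, can be arranged since any finite linear group has the same quotient as its subgroup generated-complement after removing quasi-reflections (the quotient by the quasi-reflection subgroup is again smooth, by the Chevalley--Shephard--Todd theorem).
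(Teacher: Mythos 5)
Your proposal is correct and follows essentially the same route as the paper: algebraise the local analytic quotient models via Artin approximation to obtain étale charts covered quasi-étalely by smooth varieties, then invoke Remark~\ref{rem:qec2} to get the compatibility conditions for free. The only difference is your detour through Prill's linearisation, small groups and Chevalley--Shephard--Todd, which is not needed here because Definition~\ref{defn:quotient} already requires the quotient map $\what{Y} → Y$ to be étale over $Y_{\reg}$, so the charts produced by approximation are quasi-étale by construction.
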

\Preprint{%
  \begin{proof}
    Artin's Algebraic Approximation, \cite[Cor.~2.6]{ArtinApprox}, allows us to
    find an étale covering of $X$ by (normal) quotient varieties
    $U_α ≅ X_α/G_α$, with $X_α$ smooth, that have the additional property that
    the morphisms $X_α → X$ are étale in codimension one.  We have seen in
    Remark~\ref{rem:qec2} that this defines a $ℚ$-variety structure on $X$.
  \end{proof}%
}

\subsubsection{Cutting down}
\approvals{Behrouz & yes \\
  Daniel & yes \\
  Stefan & yes \\
  Thomas & yes}

If $X$ is a quasi-projective variety that has been equipped with the structure
of a $ℚ$-variety, there is generally no natural $ℚ$-variety structure on an
arbitrary hypersurfaces or subvarieties of $X$, cf.\ \cite[Warnings on
p.~116]{SecondAsterisque}.  We remark that this is different for general
elements of basepoint free linear systems.

\begin{prop}[$ℚ$-variety structures on general hyperplanes]\label{prop:CD1}
  Let $X_ℚ := \bigl(X, \{ p_α\}_{α ∈ A}) \bigr)$ be a $ℚ$-variety, let $ℒ$ be
  a line bundle on $X$ and $V ⊆ |ℒ|$ a finite-dimensional, basepoint free
  linear system whose generals element are irreducible.  Then, there exists a
  dense, Zariski-open subset $V° ⊆ V$ such any hypersurface $H$ is irreducible
  and normal, and admits a structure $H_ℚ$ of a $ℚ$-variety, together with a
  morphism $ι_ℚ : H_ℚ → X_ℚ$ whose induced morphism $ι : H → X$ is the
  inclusion.  Under the following additional assumptions, more is true.
  \begin{enumerate}
  \item\label{il:d1} If $ℰ$ is any reflexive sheaf on $X$ and if $H ∈ V°$ is
    general, then $ℰ|_H$ is likewise reflexive, and
    $ι_ℚ^* \bigl( ℰ^{[ℚ]} \bigr) ≅ \bigl( ℰ|_H \bigr)^{[ℚ]}$.
  \item\label{il:d2} If $X_ℚ$ is quasi-étale and $H ∈ V°$ is general, then $H_ℚ$
    is quasi-étale.
  \item\label{il:d3} If $X_ℚ$ admits a global, Cohen-Macaulay cover and $H ∈ V°$
    is general, then $H_ℚ$ admits a global, Cohen-Macaulay
    cover.\Publication{\qed}
  \end{enumerate}
\end{prop}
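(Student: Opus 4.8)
The plan is to restrict the chart structure of $X_ℚ$ to a general member $H$ of $V$, using Bertini's theorem to control the preimages of $H$ in the finitely many smooth varieties that occur in the charts and in their overlaps. Write the charts of $X_ℚ$ as $X_α → U_α → X$ with $X_α → U_α$ Galois of group $G_α$, with $U_α → X$ étale, and with $p_α : X_α → X$ the composition, and let $X_{αβ}$ denote the normalisations of $X_α ⨯_X X_β$, which are smooth, together with the natural maps $p_{αβ} : X_{αβ} → X$. First I would pull the system $V$ back along each of the (finitely many) morphisms $p_α$ and $p_{αβ}$; these pull-backs are again basepoint-free, since the base locus of a pull-back is contained in the preimage of the base locus. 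Applying Bertini on each of these smooth varieties and intersecting the resulting finitely many dense open loci with the locus where the general member of $V$ is irreducible, I obtain a dense, Zariski-open $V° ⊆ V$ such that for every $H ∈ V°$ all the divisors $H_α := p_α^{-1}(H) ⊆ X_α$ are smooth; the divisors $p_{αβ}^{-1}(H) ⊆ X_{αβ}$ are then automatically smooth as well, because $X_{αβ} → X_α$ is étale, so it suffices to impose the Bertini condition on the $X_α$.

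Next I would assemble the charts of $H_ℚ$ by restriction. The section cutting out $H_α$ is pulled back from $U_α$, hence $G_α$-invariant, so $G_α$ acts on the smooth variety $H_α$; decomposing $H_α$ into its (smooth, irreducible) connected components $H_{α,j}$ and replacing $G_α$ by the stabiliser of the component in question, one obtains diagrams $H_{α,j} → (\text{component of } (p'_α)^{-1}(H)) → H$ with the first map Galois and the second étale, and these cover $H$ because the $p_α(X_α)$ cover $X$. For the compatibility condition one uses that $p_{αβ}^{-1}(H) = p_{αβ,α}^{-1}(H_α)$ is étale over $H_α$---being the restriction of the étale map $X_{αβ} → X_α$---and likewise over $H_β$; what remains is the identification of (the components of) $p_{αβ}^{-1}(H)$ with the normalisations of the fibre products $H_{α,j} ⨯_H H_{β,l}$, which holds because forming the normalisation commutes with restriction to a general member of a basepoint-free system (a general $H$ meets the non-normal locus of $X_α ⨯_X X_β$ only in codimension $≥ 2$, and $p_{αβ}^{-1}(H)$ is smooth, hence normal). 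This equips $H$ with a $ℚ$-variety structure $H_ℚ$, the sub-chart inclusions $H_{α,j} \into X_α$ assemble to a morphism $ι_ℚ : H_ℚ → X_ℚ$ inducing $ι : H → X$, and in particular $H$ has quotient singularities, so it is normal (and it is irreducible by the choice of $V°$).

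Finally, each of the three addenda follows from a standard Bertini-type statement on the charts, after shrinking $V°$ further if needed. For \ref{il:d1}, the restriction of a reflexive sheaf to a general member of a basepoint-free system is reflexive and commutes with reflexive pull-back, so chart by chart $ι_ℚ^*(\sE^{[ℚ]})$ has $(α,j)$-component $(p_α^{[*]}\sE)|_{H_{α,j}} = (p_α|_{H_{α,j}})^{[*]}(\sE|_H)$, which is the corresponding component of $(\sE|_H)^{[ℚ]}$. For \ref{il:d2}, if each $p_α$ is étale outside a set of codimension $≥ 2$, then $p_α|_{H_α}$ is étale outside the intersection of that set with $H_α$, which has codimension $≥ 2$ in $H_α$ for general $H$ by a dimension count. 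For \ref{il:d3}, if $γ : \what{X} → X$ is a global Cohen-Macaulay cover, then $\what{H} := γ^{-1}(H)$ is cut out of the irreducible Cohen-Macaulay variety $\what{X}$ by a non-zero-divisor, hence Cohen-Macaulay, and is smooth away from the singular locus of $\what{X}$ by Bertini, hence normal, and its finite map to $H$ together with the restrictions of the $\what{X}_α$ exhibit it as a global Cohen-Macaulay cover of $H_ℚ$. I expect the only point requiring genuine care to be the verification of the compatibility condition above---concretely, that normalisation commutes with slicing by a general member of $V$---but this is Bertini-type bookkeeping rather than a real obstacle, and the étalité of $X_{αβ} → X_α$ carries most of the weight.
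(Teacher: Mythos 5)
Your proposal is correct and follows essentially the same route as the paper: pull the basepoint-free system back to the finitely many smooth charts, use Bertini to make the preimages $H_α$ smooth (and meet the ramification, non-locally-free, resp.\ singular loci in expected dimension), restrict the charts to obtain $H_ℚ$ and $ι_ℚ$, and treat the three addenda chart-by-chart resp.\ on the global cover, exactly as in the paper's argument. The only cosmetic differences are that you verify the overlap compatibility and derive normality of $H$ and of $γ^{-1}(H)$ via the quotient-chart description and Serre's criterion, where the paper leaves the compatibility implicit and quotes Seidenberg's theorem for normality.
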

\Preprint{%
  \begin{proof}
    There exists a dense, Zariski-open subset $V° ⊆ V$ such that the following
    holds for any hypersurface $H ∈ V°$.
    \begin{enumerate}
    \item The hypersurfaces $H$ and $(p'_α)^{-1}(H)$ are normal,
      \cite[Thm.~7]{Seidenberg50}.
    \item For any $α ∈ A$, the preimage $H_α := p_α^{-1}(H)$ is smooth and
      intersects the ramification locus of $p_α$ with the expected dimension.
    \end{enumerate}
    The charts obtained by restricting $p_α$ to the irreducible components of
    $H_α$ equip $H$ with the structure $H_ℚ$ of a $ℚ$-variety.  The obvious
    inclusion maps $H_α → X_α$ give the desired morphism $ι_ℚ : H_ℚ → X_ℚ$.  It
    remains to consider the special cases.

    \subsubsection*{Case~\ref*{il:d1}}
    
    The $ℚ$-sheaf $i_ℚ^*\bigl( ℰ^{[ℚ]} \bigr)$ is given by the collection of
    sheaves $\bigl(p_α^{[*]} ℰ) \bigr|_{H_α}$ on the $H_α$.  If $H ∈ V°$ is
    general, then both $ℰ|_H$ and the sheaves $\bigl(p_α^{[*]} ℰ\bigr)|_{H_α}$
    will be reflexive, \cite[Thm.~12.2.1]{EGA4-3}, and $H$ intersects the small
    subset of $X$ where $ℰ$ fails to be locally free in the expected dimension.
    The last condition guarantees that the sheaves
    $\bigl(p_α^{[*]} ℰ \bigr)|_{H_α}$ and $\bigl(p_α|_{H_α}\bigr)^{[*]} (ℰ|_H)$
    agree away from a small set, for all $α ∈ A$.  Since both sheaves are
    reflexive, they actually agree.  Conclude by observing that the $ℚ$-sheaf
    $\bigl(ℰ|_H\bigr)^{[ℚ]}$ is given by the collection
    $\bigl(p_α|_{H'_α}\bigr)^{[*]} (ℰ|_H)$, for irreducible components
    $H'_α ⊆ H_α$.

    \subsubsection*{Case~\ref*{il:d2}}
    
    If $X_ℚ$ is quasi-étale, then the ramification locus of $p_α$ is small in
    $X_α$, and so is the ramification locus of $p_α|_{H'_α}$ on the irreducible
    components $H'_α ⊆ H_α$, for general $H ∈ V°$ and for all $α ∈ A$.  The
    $ℚ$-variety $H_ℚ$ is thus again quasi-étale.

    \subsubsection*{Case~\ref*{il:d3}}
    
    If $X_ℚ$ admits a Cohen-Macaulay cover, say $γ : \what{X} → X$, then one may
    use \cite[Thm.~7]{Seidenberg50} to see that for general $H$, the preimage
    $γ^*(H)$ is a normal Cartier divisor in $\what{X}$ and therefore again
    Cohen-Macaulay.  Its irreducible components form global Cohen-Macaulay
    covers of $X_ℚ|_H$.
  \end{proof}%
}

\subsubsection{Quasi-étale coverings}
\approvals{
  Behrouz & yes \\
  Daniel & yes \\
  Stefan & yes \\
  Thomas & yes
}

There is generally no notion of ``pull-back'' for $ℚ$-variety structures, even
for finite morphisms.  If the morphism is quasi-étale, a pull-back structure
does exist, however.

\begin{prop}[$ℚ$-variety structures on quasi-étale coverings]\label{prop:QC1}
  Let $X_ℚ$ be a quasi-étale $ℚ$-variety, and $γ : Y → X$ a finite, quasi-étale
  cover.  Then, $Y$ admits a structure $Y_ℚ$ of a quasi-étale $ℚ$-variety,
  together with a morphism $γ_ℚ : Y_ℚ → X_ℚ$ whose induced morphism $Y → X$ is
  $γ$.  If $ℰ$ is any reflexive sheaf on $X$, then
  $γ_ℚ^* \, (ℰ^{[ℚ]}) ≅ (γ^{[*]} ℰ)^{[ℚ]}$.
\end{prop}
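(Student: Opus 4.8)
The plan is to build the $ℚ$-variety structure on $Y$ by pulling back the charts of $X_ℚ$ along $γ$ and normalising, then to check that all the required étalité and covering properties survive. Concretely, write $X_ℚ = \bigl(X, \{p_α\}_{α ∈ A}\bigr)$ with charts $X_α \xrightarrow{·/G_α} U_α \xrightarrow{p'_α} X$ as in \eqref{eq:u5}, and for each $α ∈ A$ let $Y_α$ be the normalisation of $X_α ⨯_X Y$. First I would observe that the projection $Y_α → Y$ factors as $Y_α → (U_α ⨯_X Y)^{\mathrm{norm}} → Y$, where the second map is a quasi-étale cover (base change of the quasi-étale $γ$ restricted over the étale locus, combined with étalité of $p'_α$), and the first map is the normalisation of the base change of the Galois morphism $X_α → U_α$, hence Galois with a subgroup of $G_α$ acting. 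Since $γ$ and $p_α$ are quasi-étale and $X_α$ is smooth, the composite $Y_α → X$ is quasi-étale; in particular $Y_α → X_α$ is étale in codimension one, so by purity of the branch locus (as invoked in Remark~\ref{rem:qec2}) it is étale, and therefore $Y_α$ is smooth. Collecting the maps $Y_α → Y$ and their Galois/étale factorisations then gives a diagram of the shape \eqref{eq:u5} for $Y$, and because every $Y_α → Y$ is quasi-étale, Remark~\ref{rem:qec2} tells us the compatibility conditions on the $Y_{αβ}$ are automatic: the tuple $\bigl(Y, \{q_α\}\bigr)$ is a quasi-étale $ℚ$-variety, which I call $Y_ℚ$.

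Next I would produce the morphism $γ_ℚ : Y_ℚ → X_ℚ$ of $ℚ$-varieties in the sense of \cite[Sect.~2.b]{MR717614}. The natural projections $Y_α → X_α$ from the fibre-product construction, together with the induced maps on the $U$-layer and the identity pattern on $X$ versus $Y$, assemble into such a morphism, and by construction the induced map on underlying varieties is exactly $γ$. The main content is the chart-by-chart compatibility, but this is formal once one knows each $Y_α → X_α$ is étale, which was established above.

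For the final assertion, let $\sE$ be reflexive on $X$. By Construction~\ref{cons:rpb1}, the $ℚ$-sheaf $\sE^{[ℚ]}$ on $X_ℚ$ is the collection $\{p_α^{[*]}\sE\}$, so $γ_ℚ^*(\sE^{[ℚ]})$ is the collection $\bigl\{(p_α ◦ \mathrm{pr}_{Y_α})^{[*]}\sE\bigr\} = \bigl\{(γ ◦ q_α)^{[*]}\sE\bigr\}$ on the $Y_α$, while $(γ^{[*]}\sE)^{[ℚ]}$ is the collection $\{q_α^{[*]}(γ^{[*]}\sE)\}$. On the big open set where $\sE$ is locally free and all the relevant maps are étale, these two sheaves on $Y_α$ visibly agree, and both are reflexive (reflexive pull-back along $q_α$, using that $Y_α$ is smooth hence the construction behaves well); since reflexive sheaves are determined by their restriction to a big open set, they agree everywhere, compatibly with the gluing data. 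Hence $γ_ℚ^*(\sE^{[ℚ]}) \cong (γ^{[*]}\sE)^{[ℚ]}$.

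I expect the main obstacle to be bookkeeping rather than a genuine difficulty: one must make sure that the fibre-product/normalisation construction really does factor through the two layers $X_α → U_α → X$ in a way that respects the group actions (so that each $Y_α → Y$ genuinely fits the pattern of \eqref{eq:u5} with a well-defined Galois group and a well-defined étale map to $Y$), and one must verify that $Y = \bigcup q_α(Y_α)$, which follows from $X = \bigcup p_α(X_α)$ together with surjectivity of $γ$. The only place where something could go wrong is the claim that the $Y_α$ are smooth and the $Y_α → X_α$ étale; this rests entirely on quasi-étalité of $γ$ and $p_α$ plus purity of the branch locus, and is precisely why the quasi-étale hypothesis on $X_ℚ$ is needed — without it there is no pull-back, as remarked just before the statement.
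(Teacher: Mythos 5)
Your proposal is correct and takes essentially the same route as the paper's proof: both normalise the fibre products $X_α ⨯_X Y$, use stability of étalité under base change together with purity of the branch locus over the smooth $X_α$ to see that the charts are étale (hence the $Y_α$ smooth) and that Remark~\ref{rem:qec2} applies, and then prove the sheaf identity by comparing $γ_α^* p_α^{[*]}\sE$ with $q_α^{[*]}γ^{[*]}\sE$ over the big open set where $\sE$ is locally free and concluding by reflexivity.
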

\begin{proof}
  Write $X_ℚ := \bigl(X, \{ p_α\}_{α ∈ A} \bigr)$ and, given any $α ∈ A$, let
  $Y_α$ be the normalisation of $X_α ⨯_X Y$.  Base change gives a finite set of
  diagrams as follows,
  \begin{equation}\label{eq:dsp}
    \begin{gathered}
      \xymatrix{ %
        Y_α \ar[rr]_{Q_α} \ar@/^5mm/[rrrr]^{q_α} \ar[d]_{γ_α} && U_α ⨯_X Y \ar[rr]_{q'_α} \ar[d] && Y \ar[d]^{γ\text{, quasi-étale}}\\
        X_α \ar[rr]^{\text{quot.\ by }G_α} \ar@/_5mm/[rrrr]_{p_α} && U_α \ar[rr]^{p'_α\text{, étale}} && X.
      }
    \end{gathered}
  \end{equation}
  It follows from stability of étalité under base change that $q'_α$ is étale
  and that $q_α$ is quasi-étale.  In particular, $U_α ⨯_X Y$ is normal.  This in
  turn implies that $Q_α$ is the quotient map for the natural $G_α$-action on
  $Y_α$.  Lastly, note that the map $γ_α$ is étale away from
  $$
  γ_α^{-1} \bigl( \Ramification p_α \bigr) \,∪\, q_α^{-1} \bigl( \Ramification γ \bigr),
  $$
  which is a small subset of $Y_α$.  Purity of the branch locus then implies
  that $γ_α$ is étale and, in particular, that $Y_α$ is smooth.  Using
  Remark~\ref{rem:qec2}, we see that the top rows of the diagrams
  \eqref{eq:dsp}, restricted to the irreducible components of $Y_α$, equip $Y$
  with a structure $Y_ℚ$ of a quasi-étale $ℚ$-variety.  The restrictions of the
  full Diagrams~\eqref{eq:dsp} to the irreducible components of $Y_α$ define a
  morphism $γ_ℚ : Y_ℚ → X_ℚ$ whose induced morphism $Y → X$ is $γ$.

  It remains to consider the $ℚ$-sheaves attached to a reflexive sheaf $ℰ$ on
  $X$.  To this end, observe that the $ℚ$-sheaf $γ_ℚ^* (ℰ^{[ℚ]})$ is given at
  the level of the $Y_α$ by the sheaves $γ_α^* \bigl( (ℰ^{[ℚ]})_α \bigr)$, which
  are reflexive because the $γ_α$ are étale.  But we have canonical
  isomorphisms,
  \begin{align*}
    γ_α^* \bigl( (ℰ^{[ℚ]})_α \bigr) & ≅ γ_α^* p_α^{[*]} ℰ && \text{definition of $ℰ^{[ℚ]}$}\\
                                    & ≅ q_α^{[*]} γ^{[*]} ℰ && \text{sheaves agree over big set where $ℰ$ is locally free,}
  \end{align*}
  which give the desired statement.
\end{proof}

\subsection{$ℚ$-Chern classes on klt spaces}
\label{sect:QQ2}
\approvals{Behrouz & yes \\
  Daniel & yes \\
  Stefan & yes \\
  Thomas & yes}

It is well understood that the base variety $X$ of any klt surface pair $(X,D)$
has quotient singularities.  The geometry of $X$ can then be studied using
generalised Chern classes, known as \emph{$ℚ$-Chern classes} or \emph{orbifold
  Chern classes}---we refer to Kawamata's proof \cite{Kaw92} of the abundance
conjecture in dimension three for an example.  In higher dimensions, the base
variety of a klt pair does not necessarily have quotient singularities.
However, once one removes a suitable subset $Z ⊆ X$ of codimension three, only
quotient singularities remain and $X \setminus Z$ can be equipped with the
structure of a $ℚ$-variety that admits a global, Cohen-Macaulay cover\Preprint{,
  cf.\ Lemma~\ref{lem:lx} below}.  In particular, following Mumford's
fundamental paper \cite{MR717614}, Chern classes can be defined.  Since
$\codim Z = 3$, this allows us to construct on any klt space useful intersection
products with first and second $ℚ$-Chern classes of any reflexive sheaf on $X$.
This applies in particular to the tangent sheaf $𝒯_X$.  \Preprint{We include a
  full construction and full proofs for lack of an adequate
  reference.}\Publication{The (standard) proofs are omitted, but spelled out in
  the arXiv version of this paper.}

\begin{thm}[$ℚ$-Chern classes on klt spaces]\label{thm:41}
  There exist a map that assigns to any projective, klt pair $(X,D)$ of
  dimension $n ≥ 2$ and any reflexive sheaf $ℰ$ on $X$ three symmetric,
  $ℚ$-multilinear forms, denoted as follows,
  $$\arraycolsep=2.0pt
  \begin{array}{rclcrcl}
    \what{c}_1(ℰ) : N¹(X)_ℚ^{⨯(n-1)} & → & ℚ, & \hphantom{xxx} & (α_1, …, α_{n-1}) & ↦ & \what{c}_1(ℰ)·α_1 ⋯ α_{n-1}\\
    \what{c}_1(ℰ)² : N¹(X)_ℚ^{⨯(n-2)} & → & ℚ, && (α_1, …, α_{n-2}) & ↦ & \what{c}_1(ℰ)²·α_1 ⋯ α_{n-2} \\
    \what{c}_2(ℰ) : N¹(X)_ℚ^{⨯(n-2)} & → & ℚ, && (α_1, …, α_{n-2}) & ↦ & \what{c}_2(ℰ)·α_1 ⋯ α_{n-2}
  \end{array}
    $$
    such that the following properties hold for all $X$, all reflexive $ℰ$ on
    $X$, and all $α_1, …, α_{n-1} ∈ N¹(X)_ℚ$.
  \begin{enumerate}
  \item\label{il:p1} If $n = 2$, then $X$ has quotient singularities, and
    $\what{c}_1(ℰ) ∈ N¹(X)^*_ℚ$ as well as $\what{c}_1(ℰ)²$, $\what{c}_2(ℰ) ∈ ℚ$
    are the classical $ℚ$-Chern classes\footnote{The ``multilinear forms''
      $\what{c}_1(ℰ)²$ and $\what{c}_2(ℰ)$ take no arguments and are therefore
      identified with rational numbers.} discussed in the literature,
    cf.~\cite[Chapt.~10]{SecondAsterisque}.  In particular, there exists a
    Galois cover $γ : \what{X} → X$ (not necessarily quasi-étale), where
    $γ^{[*]} ℰ$ is locally free for any reflexive sheaf $ℰ$ on $X$, and where
    the following equalities hold, for all $ℰ$ and for all numerical classes
    $α_1 ∈ N¹(X)_ℚ$,
    $$
    \what{c}_1(ℰ)·α_1 = (\deg γ)^{-1}·c_1 \bigl( γ^{[*]} ℰ \bigr)·γ^* α_1.
    $$
    Ditto for $\what{c}_1(ℰ)²$ and $\what{c}_2(ℰ)$.  The cover $γ$ is a global,
    Cohen-Macaulay cover for a suitable, quasi-étale $ℚ$-variety structure on
    $X$.
    
  \item\label{il:p3} If $n > 2$, if $ℒ ∈ \Pic(X)$ is a line bundle and
    $V ⊆ |ℒ|$ is a basepoint free linear system whose elements are all
    connected, then there exists a dense open subset $V° ⊆ V$ such that for all
    $H ∈ V°$, the hypersurface $H$ is irreducible, not contained in $\supp D$,
    the pair $(H,D|_H)$ is klt, the restriction $ℰ|_H$ is reflexive, and
    $$
    \what{c}_1(ℰ)·[ℒ]·α_2 ⋯ α_{n-1} = \what{c}_1(ℰ|_H)·α_2 ⋯
    α_{n-1}.
    $$
    Ditto for $\what{c}_1(ℰ)²$ and $\what{c}_2(ℰ)$.\Publication{\qed}
  \end{enumerate}
\end{thm}

\Preprint{Theorem~\ref{thm:41} will be shown in Section~\vref{ssec:pf:thm:41}.}
The construction of $\what{c}_1$ is compatible with classical definitions in
case the determinant of $ℰ$ is $ℚ$-Cartier.  If $ℰ$ is locally free, then the
forms $\what{c}_1(ℰ)$, $\what{c}_1(ℰ)²$ and $\what{c}_2(ℰ)$ equal the usual
product with the Chern classes of $ℰ$.

\begin{rem}
  Since every line bundle on $X$ is the difference of two very ample ones, it
  follows from multilinearity that the forms are uniquely determined by
  Items~\ref{il:p1}, \ref{il:p3}.  As for the converse, it might seem tempting
  take these items as a definition of the forms, in order to avoid the Mumford's
  constructions.  But then well-definedness needs to be shown, which will in
  essence lead to the same set of problems.
\end{rem}

The following definition and notation will be used in most of the applications.

\begin{defn}[$ℚ$-Bogomolov discriminant]
  Let $(X,D)$ be a projective klt pair and $ℰ$ be a reflexive sheaf on $X$ of
  rank $r > 0$.  One defines the \emph{$ℚ$-Bogomolov discriminant} of $ℰ$ as the
  multilinear form
  $$
  \what{Δ}(ℰ) := 2r·\what{c}_2(ℰ) - (r-1)·\what{c}_1(ℰ)².
  $$
\end{defn}

We end this section with a number of remarks and immediate corollaries that we
will later use.

\subsection{Calculus of $ℚ$-Chern classes}
\approvals{Behrouz & yes \\
  Daniel & yes \\
  Stefan & yes \\
  Thomas & yes}

The following results help to compute $ℚ$-Chern classes in practise.  They
follow fairly quickly from Mumford's construction and from basic properties of
$ℚ$-varieties.

\begin{lem}[Behaviour under quasi-étale covers]\label{lem:buqec}
  If $(X,D)$ is a projective klt pair of dimension $n ≥ 2$ and if $γ : Y → X$ is
  quasi-étale between projective varieties, then $(Y,γ^*D)$ is again klt, and
  the following equalities hold for all reflexive sheaves $ℰ$ and all numerical
  classes $α_1, …, α_{n-1} ∈ N¹(X)_ℚ$,
  $$
  \what{c}_1(γ^{[*]} ℰ)· (γ^*α_1) ⋯ (γ^*α_{n-1}) = (\deg γ)·\what{c}_1(ℰ)·α_1 ⋯ α_{n-1}.
  $$
  Ditto for $\what{c}_1(γ^{[*]} ℰ)²$ and
  $\what{c}_2(γ^{[*]} ℰ)$.\Publication{\qed}
\end{lem}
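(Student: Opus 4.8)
The plan is to handle the two assertions separately. That $(Y,\gamma^*D)$ is klt is routine: since $\gamma$ is étale in codimension one it carries no ramification divisor, so the ramification formula reads $K_Y+\gamma^*D=\gamma^*(K_X+D)$, and comparing discrepancies over a common log resolution (equivalently, invoking the standard behaviour of discrepancies under finite morphisms étale in codimension one, \cite{KM98}) shows that the klt property is preserved. In particular $\gamma^{[*]}\sE$ is a reflexive sheaf on a klt space, so all three $ℚ$-Chern forms $\what{c}_1(\gamma^{[*]}\sE)$, $\what{c}_1(\gamma^{[*]}\sE)^2$, $\what{c}_2(\gamma^{[*]}\sE)$ are defined by Theorem~\ref{thm:41}. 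For the intersection-number identities I would induct on $n$: the base case $n=2$ is treated through Mumford's global covers, and the inductive step is a cutting-down argument resting on Item~\ref{il:p3} of Theorem~\ref{thm:41}.

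\textbf{Base case $n=2$.} Here $X$ has quotient singularities, and Item~\ref{il:p1} of Theorem~\ref{thm:41} furnishes a quasi-étale $ℚ$-variety structure $X_ℚ$ together with a global, Cohen-Macaulay cover $\gamma_X\colon\what X\to X$, Galois with group $G$, such that $\gamma_X^{[*]}$ carries reflexive sheaves to locally free sheaves and $\what{c}_i(\sE)\cdot\alpha_1\cdots\alpha_{n-1}=(\deg\gamma_X)^{-1}\cdot c_i(\gamma_X^{[*]}\sE)\cdot\gamma_X^*\alpha_1\cdots\gamma_X^*\alpha_{n-1}$; by the cover-independence built into the construction of $ℚ$-Chern classes, the same formula computes them for \emph{any} pair consisting of a quasi-étale $ℚ$-structure and an associated global, Cohen-Macaulay cover (provided the relevant reflexive pull-back is locally free). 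By Proposition~\ref{prop:QC1}, $\gamma$ pulls $X_ℚ$ back to a quasi-étale $ℚ$-variety structure $Y_ℚ$ on $Y$, and---this is the crucial point---the proof of that proposition shows that the induced morphisms between the charts of $Y_ℚ$ and those of $X_ℚ$ are \emph{étale}. Hence, taking $\what Y$ to be the normalisation of a connected component of $\what X\times_X Y$, with projections $q\colon\what Y\to\what X$ and $\delta\colon\what Y\to Y$, the morphism $q$ is over each chart the base change of such an étale chart map, hence étale; since $\what X$ is Cohen-Macaulay, so is $\what Y$, and $\delta$ is a global, Cohen-Macaulay cover of $Y_ℚ$, Galois over $Y$. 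Writing $\phi:=\gamma_X\circ q=\gamma\circ\delta\colon\what Y\to X$ and comparing over a big open subset of $X$ over which $\sE$ is locally free, one gets $\delta^{[*]}(\gamma^{[*]}\sE)=q^*(\gamma_X^{[*]}\sE)$, which is locally free (Remark~\ref{rem:CM2}). The computation then runs, for $i\in\{1,2\}$, using the displayed formula on $Y$ and on $X$, the projection formula for the finite morphism $q$ of degree $\deg\gamma$, and $\deg\delta=\deg\gamma_X=|G|$:
\begin{align*}
  \what{c}_i(\gamma^{[*]}\sE)\cdot(\gamma^*\alpha_1)\cdots(\gamma^*\alpha_{n-1})
    &=\frac{1}{|G|}\cdot c_i\bigl(q^*(\gamma_X^{[*]}\sE)\bigr)\cdot q^*(\gamma_X^*\alpha_1)\cdots q^*(\gamma_X^*\alpha_{n-1})\\
    &=\frac{\deg\gamma}{|G|}\cdot c_i(\gamma_X^{[*]}\sE)\cdot\gamma_X^*\alpha_1\cdots\gamma_X^*\alpha_{n-1}\\
    &=(\deg\gamma)\cdot\what{c}_i(\sE)\cdot\alpha_1\cdots\alpha_{n-1};
\end{align*}
the identical computation handles $\what{c}_1(\sE)^2$.

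\textbf{Inductive step.} Assume $n>2$ and the statement in dimension $n-1$. Since every divisor class is a difference of very ample ones and all the forms are multilinear, it suffices to treat the case $\alpha_1=[\sL]$ with $\sL$ very ample on $X$. Apply Item~\ref{il:p3} of Theorem~\ref{thm:41} on $X$ to the linear system $|\sL|$ and, simultaneously, on $(Y,\gamma^*D)$ to the basepoint-free subsystem $\gamma^*|\sL|\subseteq|\gamma^*\sL|$, whose generic member $\gamma^{-1}(H)$ is connected because $Y$ is irreducible of dimension $\ge 2$ and maps finitely to its image in $\bP\bigl(H^0(X,\sL)^*\bigr)$. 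For $H$ outside a suitable nowhere-dense subset of $|\sL|$, the following hold at once: $(H,D|_H)$ is klt and $\sE|_H$ reflexive; $\gamma^{-1}(H)$ is an irreducible normal projective variety, $(\gamma^{-1}(H),(\gamma^*D)|_{\gamma^{-1}(H)})$ is klt, and $\gamma|_{\gamma^{-1}(H)}\colon\gamma^{-1}(H)\to H$ is quasi-étale of degree $\deg\gamma$; $(\gamma^{[*]}\sE)|_{\gamma^{-1}(H)}\cong(\gamma|_{\gamma^{-1}(H)})^{[*]}(\sE|_H)$ is reflexive; and the cutting-down identities of Item~\ref{il:p3} are valid on both sides. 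Combined with the induction hypothesis in dimension $n-1$ this gives, for $i\in\{1,2\}$,
\begin{align*}
  \what{c}_i(\gamma^{[*]}\sE)\cdot[\gamma^*\sL]\cdot(\gamma^*\alpha_2)\cdots(\gamma^*\alpha_{n-1})
    &=\what{c}_i\bigl((\gamma|_{\gamma^{-1}(H)})^{[*]}(\sE|_H)\bigr)\cdot(\gamma^*\alpha_2)\cdots(\gamma^*\alpha_{n-1})\\
    &=(\deg\gamma)\cdot\what{c}_i(\sE|_H)\cdot\alpha_2\cdots\alpha_{n-1}
     =(\deg\gamma)\cdot\what{c}_i(\sE)\cdot[\sL]\cdot\alpha_2\cdots\alpha_{n-1},
\end{align*}
and likewise for $\what{c}_1(\sE)^2$, closing the induction.

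The step I expect to be the real obstacle is the base case, precisely the claim that the auxiliary variety $\what Y$ is Cohen-Macaulay and is a legitimate global, Cohen-Macaulay cover of a $ℚ$-variety structure on $Y$; everything else is bookkeeping. This is exactly where one must use that pulling the $ℚ$-structure back along the \emph{quasi-étale} $\gamma$ produces \emph{étale} chart morphisms (Proposition~\ref{prop:QC1}), together with purity of the branch locus on the smooth charts $X_\alpha$---the same input that produces global, Cohen-Macaulay covers in Lemma~\ref{lem:lx}. Once $q\colon\what Y\to\what X$ is known to be étale over the charts, Cohen-Macaulayness of $\what Y$ follows from that of $\what X$ (Observation~\ref{obs:CM1}), and with it, via Remark~\ref{rem:CM2}, the identification $\delta^{[*]}(\gamma^{[*]}\sE)=q^*(\gamma_X^{[*]}\sE)$ used above. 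The secondary, routine point is the simultaneous genericity bookkeeping in the inductive step, each condition on $H$ cutting out a dense open subset of $|\sL|$.
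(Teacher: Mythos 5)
Your argument is correct and follows the paper's skeleton --- kltness of $(Y,γ^*D)$ (the paper simply cites \cite[Prop.~5.20]{KM98}), reduction to surfaces via Item~\ref{il:p3} of Theorem~\ref{thm:41} (your induction on $n$ is just a spelled-out version of the paper's one-line ``cutting down''), and Proposition~\ref{prop:QC1} in dimension two --- but it diverges in how the surface case is closed. The paper uses the full strength of Proposition~\ref{prop:QC1}, namely the morphism of $ℚ$-varieties $γ_ℚ\colon Y_ℚ → X_ℚ$ together with the canonical isomorphism $γ_ℚ^*(\sE^{[ℚ]}) \cong (γ^{[*]}\sE)^{[ℚ]}$, and then quotes Mumford's functoriality of $ℚ$-Chern classes under morphisms of $ℚ$-varieties, \cite[Prop.~3.8]{MR717614}; you instead re-prove that functoriality by hand, building a global Cohen--Macaulay cover $\what Y$ of $Y_ℚ$ as (the normalisation of a component of) $\what X ⨯_X Y$, using étaleness of the chart maps and Observation~\ref{obs:CM1}/Remark~\ref{rem:CM2} to get $δ^{[*]}(γ^{[*]}\sE)=q^*(γ_X^{[*]}\sE)$ locally free, and concluding by the projection formula. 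Your route is more self-contained but carries the burden of checking that $\what Y$ really is a global cover computing the $ℚ$-Chern classes of $(γ^{[*]}\sE)^{[ℚ]}$, which is exactly the content of Mumford's proposition that the paper outsources. One small inaccuracy: your degree bookkeeping ``$\deg q=\deg γ$, $\deg δ=|G|$'' can fail, since $\what X ⨯_X Y$ may be reducible (e.g.\ when the function field of $Y$ embeds into that of $\what X$), in which case $δ$ is Galois only for the stabiliser of the chosen component; this is harmless, because only the ratio enters and multiplicativity of degrees along $\what Y → X$ gives $\deg q·\deg γ_X=\deg δ·\deg γ$, so the final identity is unaffected, but the intermediate claim should be corrected or replaced by this multiplicativity argument.
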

\Preprint{%
  \begin{proof}
    The fact that $(Y,γ^*D)$ is klt is found in \cite[Prop.~5.20]{KM98}.
    Cutting down, Item~\ref{il:p3} allows us to assume without loss of
    generality that $X$ and $Y$ are surfaces, and that $ℚ$-Chern $\what{c}_1$,
    $\what{c}^{\,2}_1$ and $\what{c}_2$ are the classical $ℚ$-Chern classes.
    
    In this setting, Proposition~\ref{prop:QC1} equips $Y$ with the structure of
    a quasi-étale $ℚ$-variety $Y_ℚ$ that admits a morphism of $ℚ$-varieties,
    $γ_ℚ : Y_ℚ → X_ℚ$ whose induced morphism of varieties is $γ$.  In addition,
    by Proposition~\ref{prop:QC1} we have a canonical isomorphism of $ℚ$-sheaves
    $γ_ℚ^* \, (ℰ^{[ℚ]}) ≅ (γ^{[*]} ℰ)^{[ℚ]}$.  The formulas then follow from
    \cite[Prop.~3.8]{MR717614}.
  \end{proof}%
}

\begin{lem}[$ℚ$-Chern classes of flat sheaves]\label{lem:flat}
  If $(X,D)$ is a projective klt pair of dimension $n ≥ 2$ and $ℰ$ a reflexive
  sheaf on $X$ whose restriction to $X_{\reg}$ is locally free and flat, then
  the forms $\what{c}_1(ℰ)$, $\what{c}_1(ℰ)²$ and $\what{c}_2(ℰ)$ are all
  zero.\Publication{\qed}
\end{lem}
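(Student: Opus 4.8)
The plan is to reduce to the surface case by cutting down with general hyperplane sections, and then to pass to the global Cohen–Macaulay cover furnished by Theorem~\ref{thm:41}, on which $\sE$ pulls back to a genuine flat vector bundle. Since flat bundles on smooth projective varieties have vanishing rational Chern classes, the desired vanishing for the $ℚ$-Chern classes of $\sE$ will follow from the comparison formula in Item~\ref{il:p1}.

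\emph{Reduction to surfaces.} All three forms are multilinear, so it suffices to evaluate them on tuples of classes of very ample line bundles; applying Item~\ref{il:p3} of Theorem~\ref{thm:41} repeatedly, I would cut $X$ down by $n-2$ general very ample hypersurfaces and thereby reduce to $\dim X = 2$. The only point to check is that the hypothesis survives: for a general hyperplane section $H$, Item~\ref{il:p3} says $(H,D|_H)$ is klt and $\sE|_H$ is reflexive; moreover $H\cap X_{\reg}$ is a \emph{big} open subset of $H$ (because $\codim_X X_{\sing}\ge 2$ implies $\codim_H(H\cap X_{\sing})\ge 2$ for general $H$), and on it $\sE|_H$ restricts to the locally free, flat sheaf $\sE|_{X_{\reg}}|_{H\cap X_{\reg}}$ — flatness being preserved under restriction of the defining representation along $π_1(H\cap X_{\reg})\toπ_1(X_{\reg})$. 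Iterating, I land on a klt surface $S$ with $\sE|_S$ reflexive and locally free and flat on a big open subset of $S$, and by Item~\ref{il:p3} it suffices to treat this case.

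\emph{The surface case.} Assume $\dim X=2$ and $\sE|_U$ is locally free and flat for a big open $U\subseteq X$ which, without loss of generality, lies in $X_{\reg}$. Let $γ:\what X\to X$ be the global Cohen–Macaulay cover for a quasi-étale $ℚ$-variety structure on $X$ provided by Item~\ref{il:p1}; then $γ$ is étale in codimension one and $\what{\sE}:=γ^{[*]}\sE$ is locally free on $\what X$. Purity of the branch locus shows that $γ$ is étale over $U$, so $\what U:=γ^{-1}(U)$ is a big open subset of $\what X$ and $\what{\sE}|_{\what U}=γ^{*}(\sE|_U)$ is locally free and flat. Since $\what X\setminus\what U$ has codimension $\ge 2$, the inclusion induces an isomorphism $π_1(\what U^{an})\cong π_1(\what X^{an})$; the representation defining the flat structure on $\what{\sE}|_{\what U}$ therefore determines a flat locally free analytic sheaf $\sG$ on $\what X^{an}$ with $\sG|_{\what U}\cong\what{\sE}^{an}|_{\what U}$. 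As $\sG$ and $\what{\sE}^{an}$ are locally free, hence reflexive, and agree on the big open set $\what U$, they are isomorphic; thus $\what{\sE}$ is flat.

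To finish, I would choose a resolution $μ:Z\to\what X$. The pull-back $μ^{*}\what{\sE}$ is flat on the smooth projective surface $Z$, so by Chern–Weil theory — the curvature of the flat connection vanishes — its rational Chern classes are zero: $c_1(μ^{*}\what{\sE})=0$ in $H^2(Z,ℚ)$ and $c_1(μ^{*}\what{\sE})^2=c_2(μ^{*}\what{\sE})=0$. By functoriality of Chern classes and the projection formula \cite[Chap.~3]{Fulton98} (using $μ_*[Z]=[\what X]$) this gives $c_1(\what{\sE})·γ^{*}α=0$ for every $α\in N^1(X)_ℚ$ and $c_1(\what{\sE})^2=c_2(\what{\sE})=0$, whence by Item~\ref{il:p1},
$$
\what{c}_1(\sE)·α=(\deg γ)^{-1}·c_1(\what{\sE})·γ^{*}α=0 \qquad\text{for all }α,
$$
and likewise $\what{c}_1(\sE)^2=\what{c}_2(\sE)=0$. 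The main obstacle is the soft-topological core of the surface case: propagating the flat structure of $\what{\sE}$ from $\what U$ to all of $\what X$ via $π_1(\what U)\cong π_1(\what X)$ and reflexivity (together with the parallel, but easier, check that flatness descends to general hyperplane sections); once $\what{\sE}$ is known to be flat, the resolution-and-Chern–Weil step is routine.
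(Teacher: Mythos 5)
There is a genuine gap in the surface case, and it sits exactly where the real difficulty of the lemma lies. You claim that since $\what X\setminus\what U$ has codimension $\ge 2$, the inclusion induces an isomorphism $π_1(\what U^{an})\cong π_1(\what X^{an})$, and you use this to extend the flat structure from $\what U$ to all of $\what X$. This is false when $\what X$ is singular: removing a small closed set from a \emph{normal} variety only gives a surjection $π_1(\what U)\to π_1(\what X)$, not an isomorphism, because quotient (and more general klt) singularities have non-trivial local fundamental groups. For instance, for the $A_1$-singularity $ℂ²/\{\pm 1\}$ one has $π_1(X^{an})=1$ while $π_1(X_{\reg}^{an})=ℤ/2$; a flat bundle on the big open set given by a non-trivial character does not extend to a flat bundle on $X$. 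The global cover $\what X$ of Theorem~\ref{thm:41}~\ref{il:p1} is only normal and Cohen--Macaulay, not smooth, and the removed set $\what X\setminus\what U$ contains precisely the points lying over $X_{\sing}$, so your extension step can fail. A secondary error: you assert that the cover $γ$ of Theorem~\ref{thm:41}~\ref{il:p1} is étale in codimension one, whereas the theorem explicitly says it is \emph{not necessarily quasi-étale} (Mumford's global covers are branched along divisors); this invalidates the appeal to purity, although, as it happens, flatness of $γ^*(\sE|_U)$ would hold for any $γ$ by pulling back the representation.

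The paper avoids this problem entirely by quoting \cite[Thm.~1.14]{GKP13}: for a reflexive sheaf on a klt space that is locally free and flat on $X_{\reg}$, there exists a quasi-étale cover $γ:Y\to X$ such that $γ^{[*]}\sE$ is locally free and flat \emph{on all of $Y$}. This is a deep statement about étale fundamental groups of klt spaces (in essence, that towers of quasi-étale covers stabilise), and it is exactly the extension-across-the-singular-set step that your soft topological argument tries, and fails, to supply. Once that theorem is invoked, all Chern classes of the flat locally free sheaf $γ^{[*]}\sE$ vanish, and Lemma~\ref{lem:buqec} transports the vanishing of the $ℚ$-Chern forms back to $X$ — no cutting to surfaces and no global Cohen--Macaulay cover are needed. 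Your reduction to surfaces and the final resolution-plus-Chern--Weil computation are fine, but they cannot compensate for the missing extension theorem.
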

\Preprint{%
  \begin{proof}
    Given $X$ and $ℰ$, recall from \cite[Thm.~1.14]{GKP13} that there exists a
    quasi-étale covering $γ : Y → X$ such that the reflexive pull-back
    $γ^{[*]} ℰ$ is locally free and flat.  This implies that all Chern classes
    of $γ^{[*]} ℰ$ vanish.  Lemma~\ref{lem:buqec} thus yields the claim.
  \end{proof}
}

\Preprint{%
  \begin{lem}[Calculus of Chern classes]\label{lem:calculus}
    If $(X,D)$ is a projective klt pair of dimension $n ≥ 2$ and if $ℰ$, $ℱ$ are
    two reflexive sheaves on $X$, then the usual formulas for Chern classes of
    duals, tensor products and direct sums hold.  More specifically, we have
    equalities of multilinear forms, for $i ∈ \{1,2\}$,
    \begin{align*}
      \what{c}_i(ℰ) & = (-1)ⁱ·\what{c}_i(ℰ^*), & \what{c}_1(ℰ ⊕ ℱ) & = \what{c}_1(ℰ) + \what{c}_1(ℱ), \\
      \what{c}_1(ℰ)² & = \what{c}_1(ℰ^*)², & \what{c}_2(ℰ) & = \what{c}_2(ℰ ⊕ 𝒪_X), \\
      \what{Δ}(ℰ) & = \what{Δ}(ℰ^*) & \what{Δ}(\sEnd ℰ) & = 2(\rank ℰ)²·\what{Δ}(ℰ)
    \end{align*}
  \end{lem}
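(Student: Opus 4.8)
The plan is to reduce everything to the surface case via the cutting-down property \ref{il:p3} of Theorem~\ref{thm:41}, and then to transport the desired identities to a global Cohen--Macaulay cover where the sheaves become locally free and the classical Chern-class calculus applies. More precisely, fix a projective klt pair $(X,D)$ and reflexive sheaves $\sE$, $\sF$ on $X$. For the identities involving only $\what{c}_1$ (namely $\what{c}_1(\sE ⊕ \sF) = \what{c}_1(\sE) + \what{c}_1(\sF)$, $\what{c}_1(\sE) = -\what{c}_1(\sE^*)$, and $\what{c}_1(\sE)^2 = \what{c}_1(\sE^*)^2$) one can argue directly: since the determinant sheaves satisfy $\det(\sE ⊕ \sF) \cong (\det\sE ⊗ \det\sF)^{**}$ and $\det(\sE^*) \cong (\det\sE)^*$ up to reflexive hull, these reduce to the corresponding statements for the divisor classes, which are clear from the construction of $\what{c}_1$. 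For the remaining identities involving $\what{c}_2$ and $\what{Δ}$, I would cut down by general hyperplanes $n-2$ times using~\ref{il:p3}: at each stage the pair stays klt, the relevant reflexive sheaves restrict to reflexive sheaves, and the forms $\what{c}_2$, $\what{c}_1^2$ restrict compatibly. We may therefore assume $\dim X = 2$.

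On a klt surface, $X$ has quotient singularities by~\ref{il:p1}, so we may choose a quasi-étale $ℚ$-variety structure on $X$ with a global Galois cover $γ : \what{X} → X$ that is Cohen--Macaulay, and on which $γ^{[*]}$ of any reflexive sheaf is locally free (Observation~\ref{obs:CM1} and Remark~\ref{rem:isgc}). By~\ref{il:p1}, for any reflexive $\sG$ on $X$ we have $\what{c}_i(\sG) = (\deg γ)^{-1}·c_i(γ^{[*]}\sG)$, and similarly for $\what{c}_1^2$. Thus it suffices to verify the analogous identities for the locally free sheaves $γ^{[*]}\sE$ and $γ^{[*]}\sF$ on $\what{X}$, provided reflexive pull-back is compatible with the operations in question. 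For duals this is immediate since $γ^{[*]}(\sE^*) \cong (γ^{[*]}\sE)^*$ (they agree over the big locus where $\sE$ is locally free, and both sides are reflexive, indeed locally free). For direct sums, $γ^{[*]}(\sE ⊕ \sF) \cong γ^{[*]}\sE ⊕ γ^{[*]}\sF$ needs only that $\oplus$ commutes with reflexive pull-back, which again follows by the big-open-set argument. For $\sEnd\sE = \sHom(\sE,\sE)$, one has $γ^{[*]}(\sEnd\sE) \cong \sEnd(γ^{[*]}\sE)$ by the same reasoning. Once on $\what{X}$, all the asserted formulas are the standard Chern-class identities for locally free sheaves on a smooth projective surface: $c_1(\sG ⊕ \sH) = c_1(\sG) + c_1(\sH)$, $c_i(\sG) = (-1)^i c_i(\sG^*)$, $c_2(\sG) = c_2(\sG ⊕ \sO)$, $\Delta(\sG) = \Delta(\sG^*)$, and the classical computation $\Delta(\sEnd\sG) = 2(\rank\sG)^2 · \Delta(\sG)$ (which follows from $c_1(\sEnd\sG) = 0$, $\rank\sEnd\sG = (\rank\sG)^2$, and the formula $c_2(\sEnd\sG) = 2(\rank\sG)·c_2(\sG) - (\rank\sG - 1)·c_1(\sG)^2 \cdot$, rearranged appropriately). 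Dividing by $\deg γ$ transports these back to $X$.

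The main technical obstacle is ensuring that the reflexive pull-back functor $γ^{[*]}$ (and the restriction-to-hyperplane functor) genuinely commutes with the sheaf operations $\oplus$, $(-)^*$, $\sEnd$ up to the reflexive hull — i.e. that no extra codimension-two correction terms appear. This is where Cohen--Macaulayness of the cover $\what{X}$ is essential: by Observation~\ref{obs:CM1}, flatness of the chart maps guarantees that pull-back preserves reflexivity, so the various natural comparison morphisms, which are isomorphisms over the big locus where the input sheaves are locally free, extend to global isomorphisms of reflexive sheaves. A secondary point to check is that the general hyperplane in~\ref{il:p3} can be chosen to work simultaneously for the finitely many reflexive sheaves $\sE$, $\sF$, $\sE^*$, $\sE ⊕ \sF$, $\sEnd\sE$, etc.\ appearing in the statement; this is routine since each imposes only the avoidance of a fixed small subset, and a finite intersection of dense open subsets of the linear system is still dense open. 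Finally, one should note that the $\what{c}_1^2 = \what{c}_1(\sE^*)^2$ identity and $\what{Δ}(\sE) = \what{Δ}(\sE^*)$ are then formal consequences of the $\what{c}_1$ and $\what{c}_2$ identities by the definition of $\what{Δ}$.
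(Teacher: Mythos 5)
Your argument is correct and follows essentially the same route as the paper's proof: reduce to the surface case via Item~\ref{il:p3} of Theorem~\ref{thm:41}, pass to the global Cohen--Macaulay cover of Item~\ref{il:p1} where reflexive pull-backs become locally free, identify $γ^{[*]}(\sE^*)$, $γ^{[*]}(\sE⊕\sF)$ and $γ^{[*]}(\sEnd\sE)$ with $(γ^{[*]}\sE)^*$, $γ^{[*]}\sE⊕γ^{[*]}\sF$ and $\sEnd(γ^{[*]}\sE)$ by the reflexive-sheaves-agreeing-over-a-big-open-set argument, and conclude with the classical Chern-class identities (including the standard computation $\what{Δ}(\sEnd\sE)=2(\rank\sE)²·\what{Δ}(\sE)$, for which the paper simply cites \cite[Sect.~3.4]{MR2665168}). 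Your additional remarks on the direct treatment of the $\what{c}_1$-identities and on choosing one general hyperplane working simultaneously for the finitely many sheaves involved are fine and consistent with the paper's argument.
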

  \begin{proof}
    Item~\ref{il:p3} of Theorem~\ref{thm:41} allows us to reduce to the case
    where $X$ is a surface.  Item~\ref{il:p1} then allows us to pass to a cover
    $γ: \what{X} → X$, in order to compute the $ℚ$-Chern classes as honest Chern
    classes of locally free sheaves.  To conclude, observe that
    \begin{align*}
      γ^{[*]} (ℰ^*) & ≅ (γ^{[*]} ℰ)^*, & γ^{[*]}(ℰ ⊕ ℱ) & ≅ \bigl( γ^{[*]} ℰ \bigr ) ⊕ \bigl( γ^{[*]}ℱ \bigr), \\
      γ^{[*]} (\sEnd ℰ) & ≅ \sEnd \bigl(γ^{[*]} ℰ \bigr)
    \end{align*}
    because the sheaves are reflexive, and pairwise agree over the big open set
    where $ℰ$ is locally free.  We refer to \cite[Sect.~3.4]{MR2665168} for the
    relation between the Bogomolov discriminant of a locally free sheaf and of
    its endomorphism bundle.
  \end{proof}%
}

\Preprint{
\subsection{Preparation for the proof of Theorem~\ref*{thm:41}}
\approvals{
  Behrouz & yes \\
  Daniel & yes \\
  Stefan & yes \\
  Thomas & yes
}

We remarked in the introduction that the base variety of any klt pair has
quotient singularities in codimension two, and can be equipped with the
structure of a $ℚ$-variety after removing a very small set.  The following lemma
makes this statement precise.

\begin{lem}\label{lem:lx}
  Let $(X,D)$ be a klt pair, and $ℰ$ be a reflexive sheaf on $X$.  Then, there
  exists a closed subset $Z ⊆ X$ with $\codim_X Z ≥ 3$ and a structure of a
  quasi-étale $ℚ$-variety $X°_ℚ$ on $X° := X \setminus Z$ such that the
  following holds.
  \begin{enumerate}
  \item The $ℚ$-variety $X°_ℚ$ admits a global, Cohen-Macaulay cover
    $\what{X}°$.

  \item The $ℚ$-sheaf $(ℰ|_{X°})^{[ℚ]}$ is locally free.
  \end{enumerate}
\end{lem}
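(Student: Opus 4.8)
The plan is to combine the classical fact that klt singularities are quotient singularities in codimension two with the constructions of $ℚ$-varieties already assembled in this section. First I would invoke the structure theory of klt pairs: by \cite[Prop.~9.3]{MR717614}-type arguments (or the classification of klt surface singularities), there is a closed subset $Z_1 ⊆ X$ with $\codim_X Z_1 ≥ 3$ such that $X \setminus Z_1$ has only quotient singularities — this is the statement recalled in the text just before the lemma, namely that outside a set of codimension three a klt variety has quotient singularities. The standard reference here is the fact that a two-dimensional klt pair has a quotient singularity at every point, applied after cutting down by generic hyperplanes, together with the observation that the locus where this fails is closed of codimension $\geq 3$.

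Next, apply Proposition~\ref{cons:vqs1} to $X \setminus Z_1$: a quasi-projective variety with quotient singularities carries the structure of a quasi-étale $ℚ$-variety. Write $X^\circ_ℚ$ for this structure on $X^\circ := X \setminus Z_1$; by construction each chart $p_α : X_α → X^\circ$ is quasi-étale with $X_α$ smooth. To obtain a \emph{global} cover, I would run Mumford's construction from Section~\ref{ssec:globalCover}, producing a normal variety $\what{X}^\circ$ with a finite Galois morphism $γ : \what{X}^\circ → X^\circ$; the issue is that $\what{X}^\circ$ need not be Cohen-Macaulay a priori. Here I would use that $X^\circ$ itself is klt, hence Cohen-Macaulay (\cite[Thm.~5.22]{KM98}), and that $γ$ is quasi-étale, so $\what{X}^\circ$ is klt as well by \cite[Prop.~5.20]{KM98} — which the excerpt already cites — and therefore Cohen-Macaulay. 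If one prefers to be safe about whether the naive global cover has all the desired properties, one can further shrink $X^\circ$ by removing another closed subset of codimension $\geq 3$ where any residual trouble is concentrated; since finitely many such removals preserve $\codim \geq 3$, the final $Z$ is the union of all sets removed, still of codimension $\geq 3$.

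For the second assertion, recall from Construction~\ref{cons:rpb1} that any coherent sheaf on $X$ yields a reflexive $ℚ$-sheaf by reflexive pull-back, with $\sF_α := p_α^{[*]}(\sE|_{X^\circ})$ on each smooth chart $X_α$; since $X_α$ is smooth of dimension $n$, this $\sF_α$ is a reflexive sheaf on a smooth variety, hence locally free in codimension two only in general — so to make it genuinely locally free I would enlarge $Z$ once more to remove (the image in $X$ of) the non-locally-free loci of the $\sF_α$, which are closed of codimension $\geq 3$ in each $X_α$ and hence have image of codimension $\geq 3$ in $X$ since the $p_α$ are finite. After this last shrinking, every $\sF_α$ is locally free, so $(\sE|_{X^\circ})^{[ℚ]}$ is a $ℚ$-bundle; and by Observation~\ref{obs:CM1} and Remark~\ref{rem:CM2} the induced sheaf on the Cohen-Macaulay global cover $\what{X}^\circ$ is just $γ^{[*]}(\sE|_{X^\circ})$, as expected. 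The main obstacle I anticipate is not any single step but the bookkeeping of the successive codimension-$\geq 3$ removals and verifying that each locus one removes really is closed and of codimension at least three in $X$ — in particular that the non-Cohen-Macaulay locus of Mumford's global cover, the non-locally-free loci of the chart sheaves, and the non-quotient locus of $X$ all push forward or pull back to such sets; this uses finiteness of the charts and of $γ$ together with \cite[Cor.~1.4]{MR597077} for the codimension bound on non-locally-free loci of reflexive sheaves.
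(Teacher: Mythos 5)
Your overall architecture (remove the non-quotient locus using the codimension-three statement for klt spaces, equip $X^\circ$ with a quasi-étale $ℚ$-structure via Proposition~\ref{cons:vqs1}, take Mumford's global cover, and kill the non-locally-free loci of the chart sheaves $p_α^{[*]}(\sE|_{X^\circ})$ using reflexivity on the smooth charts and \cite[Cor.~1.4]{MR597077}) is exactly the paper's; the last step and the codimension bookkeeping for the chart sheaves are fine. The genuine gap is your argument for Cohen--Macaulayness of the global cover. You claim that $γ : \what{X}^\circ → X^\circ$ is quasi-étale, deduce via \cite[Prop.~5.20]{KM98} that $\what{X}^\circ$ is klt, and conclude it is Cohen--Macaulay. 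But Mumford's global cover is \emph{not} quasi-étale in general: although the charts $p_α$ are quasi-étale, the covers $q_α : \what{X}_α → X_α$ arising in the construction may branch along divisors, and Theorem~\ref{thm:41}\ref{il:p1} explicitly records that $γ$ is ``not necessarily quasi-étale''. Consequently $K_{\what{X}} = γ^*K_X + (\text{ramification})$, \cite[Prop.~5.20]{KM98} does not apply, and neither klt-ness nor Cohen--Macaulayness of $\what{X}^\circ$ follows; indeed $\what{X}$ is in general only a normal branched cover of the smooth charts, which can fail to be Cohen--Macaulay in dimension $≥ 3$.

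Your fallback---``shrink further where any residual trouble is concentrated''---points at the correct fix but does not supply the one fact that makes it work, and you yourself flag this verification as the open obstacle. The paper's argument is: $\what{X}'$ is normal, hence $S_2$, hence Cohen--Macaulay at every point of codimension $≤ 2$ (a normal local ring of dimension $≤ 2$ is CM), so the non-CM locus is a closed subset of codimension $≥ 3$ (openness of the CM locus, \cite[Prop.~5.7.4.1]{EGA4-3}); one then removes the image $Z_2$ of this locus, which again has codimension $≥ 3$ in $X$ by finiteness of $γ$. Replacing your klt argument by this normality/$S_2$ argument closes the gap; the rest of your proof then goes through as written (modulo the citation slip: the codimension-three quotient-singularity statement is \cite[Prop.~9.3]{GKKP11}, not Mumford's paper).
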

\begin{proof}
  Recall from \cite[Prop.~9.3]{GKKP11} that there exists a closed subset
  $Z_1 ⊆ X$ with $\codim_X Z_1 ≥ 3$ such that $X \setminus Z_1$ has quotient
  singularities.  In particular, Proposition~\ref{cons:vqs1} allows us to equip
  $X' := X \setminus Z_1$ with the structure of a $ℚ$-variety, say $X'_ℚ$.  In
  Section~\ref{ssec:globalCover}, we recalled Mumford's construction of a global
  cover $γ : \what{X}' → X'$, which is normal, in particular $S_2$.  It follows
  that there exists a closed subset $Z_2 ⊆ X'$ with $\codim_{X'} Z_2 ≥ 3$, such
  $\what{X}' \setminus γ^{-1}(Z_2)$ is Cohen-Macaulay,
  \cite[Prop.~5.7.4.1]{EGA4-3}.  Remark~\ref{rem:isgc} allows us to find a third
  set $Z_3 ⊆ X'$, again of codimension three, outside which $(ℰ|_{X°})^{[ℚ]}$ is
  locally free.  Set $Z := Z_1 ∪ Z_2 ∪ Z_3$.
\end{proof}

\subsection{Proof of Theorem~\ref*{thm:41}}
\label{ssec:pf:thm:41}
\approvals{
  Behrouz & yes \\
  Daniel & yes \\
  Stefan & yes \\
  Thomas & yes
}

Assume we are given a projective klt pair $(X,D)$ and a reflexive sheaf $ℰ$
there.  Applying Lemma~\ref{lem:lx} we find a closed subset $Z ⊆ X$, with
$\codim_X Z ≥ 3$, a structure of a quasi-étale $ℚ$-variety $X°_ℚ$ on
$X° := X \setminus Z$ with global, Cohen-Macaulay cover $\what{X}°$.  Consider
the $ℚ$-vector bundle $ℰ^{[ℚ]}$ on $X°_ℚ$, constructed by reflexive pull-back.
Next, recall Mumford's construction of $ℚ$-Chern classes for $ℚ$-vector bundles,
denoted as $c_i\bigl(ℰ^{[ℚ]}\bigr) ∈ A_{n-i}(X°)$.  This class is independent of
the atlas chosen in the construction of the $ℚ$-variety structure on $X°$, cf.\
\cite[Prop.~3.8]{MR717614} and the fact that any two quasi-étale atlases for
$ℚ$-variety structures on $X°$ admit a common refinement, Lemma~\ref{lem:cref}.

Recall also that Mumford equips $A_{*}(X°)$ with a ring structure, which allows
us to consider $c_1\bigl(ℰ^{[ℚ]}\bigr)² ∈ A_{n-2}(X°)$.  The localisation
sequence for Chow groups, \cite[Prop.~1.8]{Fulton98},
$$
A_{n-i}(Z) → A_{n-i}(X) → A_{n-i}(X°) → 0,
$$
induces a canonical isomorphism $A_{n-i}(X) ≅ A_{n-i}(X°)$, because
$\dim Z ≤ n-3$, hence $A_{n-i}(Z) = 0$ for $i ≤ 2$.  In summary, we obtain
classes $c_i\bigl(ℰ^{[ℚ]}\bigr) ∈ A_{n-i}(X)$ and
$c_1\bigl(ℰ^{[ℚ]}\bigr)² ∈ A_{n-2}(X)$.  Observing that all constructions
commute with open immersions, it follows that the classes are independent of any
of the choices made in their construction.  Taking cap products with Chern
classes of line bundles on $X$, we will therefore obtain a well-defined,
symmetric, $ℤ$-linear map
$$
\begin{matrix}
  \what{c}_2(ℰ) &:& \Pic(X)^{⨯(n-2)} & → & ℚ\\
  & & (ℒ_1, …, ℒ_{n-2}) & ↦ & \deg \bigl(c_2\bigl(ℰ^{[ℚ]}\bigr) ∩
  c_1(ℒ_1) ∩ ⋯ ∩ c_1(ℒ_{n-2}) \bigr),
\end{matrix}
$$
and analogously with $\what{c}_1(𝒢)$ and $\what{c}_1(𝒢)²$.  Observing that these
forms do not depend on the actual line bundles, but only on their numerical
classes, we obtain the forms $\what{c}_1(ℰ)$, $\what{c}_1(ℰ)²$, and
$\what{c}_2(ℰ)$ of Theorem~\ref{thm:41}.  We need to argue that they satisfy
conditions~\ref{il:p1} and \ref{il:p3}.

\subsubsection*{Condition~\ref*{il:p1}}

If $\dim X = 2$, then the subset $Z$ is necessarily empty.  It follows that
$X = X°$, and the construction above \emph{equals} the classical construction of
$ℚ$-Chern classes.

Choose any global cover $γ: \what{X} → X$ of $X_ℚ$, as discussed in
Section~\ref{ssec:globalCover}.  Then, $\what{X}$ is a normal surface and
therefore automatically Cohen-Macaulay.  The assertion that all sheaves of the
form $γ^{[*]} ℰ$ are locally free then follows from Remark~\ref{rem:CM2}.
Remark~\ref{rem:CM2} also asserts that the sheaf $\what{ℰ}$ on $\what{X}$
induced by $ℰ^{[ℚ]}$ equals $γ^{[*]} ℰ$.  The formulas for $\what{c}_1(ℰ)·α$,
$\what{c}_1(ℰ)²$ and $\what{c}_2(ℰ)$ thus follow directly from Mumford's
construction of Chern classes in $A_*(X)$, and the ring structure there.

\subsubsection*{Condition~\ref*{il:p3}}

If $H$ is general, then $H$ is normal, irreducible, and $(H,D_H)$ is klt,
\cite[Lem.~5.17]{KM98}.  The restriction $ℰ|_H$ is reflexive,
\cite[Thm.~12.2.1]{EGA4-3}.  The set $Z_H := H \setminus Z$ has codimension
$\codim_H Z_H ≥ 3$, and Proposition~\ref{prop:CD1} equips $H° := H \setminus Z$
with the structure of a quasi-étale $ℚ$-variety $H°_ℚ$ that admits a morphism of
$ℚ$-varieties, $ι_ℚ : H°_ℚ → X°_ℚ$ whose associated morphism of varieties is the
inclusion map $ι : H° → X°$.  In addition, we have a canonical isomorphism of
$ℚ$-sheaves $ι_ℚ^* \bigl( (ℰ|_{X°})^{[ℚ]} \bigr) ≅ (ℰ|_{H°})^{[ℚ]}$.  The
formulas then follow from \cite[Prop.~3.8]{MR717614}.  \qed }

\subsection{$ℚ$-varieties as Deligne-Mumford stacks}
\approvals{
  Behrouz & yes \\
  Daniel & yes \\
  Stefan & yes \\
  Thomas & yes
}

In the language of stacks, Mumford's constructions of $ℚ$-varieties recalled in
this section reflect the existence of an algebraic Deligne-Mumford (DM) stack
structure for algebraic varieties with only quotient singularities,
cf.~\cite[Prop.~9.2]{MR772058}.  More precisely, given a $ℚ$-variety /
quasi-étale $ℚ$-variety $(X, \{ p_{α} \}_{α ∈ A} )$, there is an algebraic DM
stack $χ$ verifying the following properties.

\begin{enumerate}
\item The stack $χ$ is smooth and $X$ is its coarse moduli space.
\item The isotropy group at the generic point / every codimension-one point of
  $χ$ is trivial.
\end{enumerate}

In \cite{MR772058}, Gillet developed an intersection theory for DM stacks and
their coarse moduli spaces and showed that for $ℚ$-varieties the theory is the
same as Mumford's theory recalled above.  Consequently, one could have used
Gillet's results instead of Mumford's in our discussion of $ℚ$-Chern classes for
reflexive sheaves on klt varieties.  By using Mumford's approach, we followed a
tradition in higher-dimensional classification theory, e.g.~see
\cite{SecondAsterisque, SBW94}.

%
%
\svnid{$Id: 04-Operators.tex 838 2018-04-25 11:39:59Z kebekus $}

\section{Sheaves with operators}
\label{sect:oper}
\subversionInfo

\subsection{Definitions and elementary operations}
\approvals{Behrouz & yes \\
  Daniel & yes \\
  Stefan & yes \\
  Thomas & yes}

In order to define and discuss Higgs sheaves on singular spaces in
Section~\ref{sect:Higgs}, this preliminary section discusses sheaves with
operators.  Our main emphasis lies on stability properties.  Because of the
singularities we cannot assume that any of the sheaves in question is locally
free.  We need to resort to the following, rather general definition.  We also
need to discuss the case of $G$-sheaves, but restrict ourselves to the minimal
amount of material required to make our arguments work.

\begin{defn}[Sheaf with an operator]\label{def:nshfop1}
  Let $X$ be a normal, quasi-projective variety and $𝒲$ be a coherent sheaf of
  $𝒪_X$-modules.  A \emph{sheaf with a $𝒲$-valued operator} is a pair $(ℰ, θ)$
  where $ℰ$ is a coherent sheaf and $θ: ℰ → ℰ ⊗ 𝒲$ is an $𝒪_X$-linear sheaf
  morphism.
\end{defn}

\begin{defn}[$G$-sheaf with an invariant operator]\label{def:nshfop2}
  Let $X$ be a normal, quasi-projective variety, equipped with the action of a
  finite group $G$, and let $𝒲$ be a coherent $G$-sheaf of $𝒪_X$-modules.  A
  \emph{$G$-sheaf with an invariant $𝒲$-valued operator} is a sheaf with a
  $𝒲$-valued operator, $(ℰ, θ)$, where $ℰ$ is a coherent $G$-sheaf and $θ$ is a
  morphism of $G$-sheaves.
\end{defn}

\begin{warning}[Incompatible definitions in the literature]
  The literature contains no uniform definition of sheaves with operators.  Our
  definition agrees with that of \cite[p.~257]{Langer04a} but differs from
  \cite[Def.~1.1]{MR1954067}.  All definitions that we have seen agree if $ℰ$ is
  torsion free and $𝒲$ is locally free.  We will be careful to quote the
  literature, in particular \cite{MR1954067}, only in settings where these
  conditions hold.
\end{warning}

\begin{construction}[Direct sum and tensor product]\label{cons:nHGStensor}
  Let $X$ be a normal, quasi-projective variety and $(ℰ_1, θ_1)$, $(ℰ_2, θ_2)$
  two sheaves with a $𝒲$-valued operator, as in Definition~\ref{def:nshfop1}.
  Then, $(ℰ_1⊗ℰ_2, θ_1⊗\Id_{ℰ_2} + \Id_{ℰ_1}⊗θ_2)$ and $(ℰ_1⊕ℰ_2, θ_1⊕θ_2)$ are
  again sheaves with a $𝒲$-valued operator
\end{construction}

\begin{construction}[Duals and endomorphisms]\label{cons:nHGSdual}
  Let $X$ be a normal, quasi-projective variety and $(ℰ, θ)$ a sheaf with a
  $𝒲$-valued operator, as in Definition~\ref{def:nshfop1}.  Assume that $ℰ$ is
  locally free.  The operator $θ$ can then be seen as a section in the sheaf
  $(\sEnd ℰ)⊗𝒲$.  Using the canonical identification $\sEnd ℰ ≅ \sEnd(ℰ^*)$,
  we obtain an operator on the dual sheaf, $θ^* : ℰ^* → ℰ^* ⊗ 𝒲$.
\end{construction}

\begin{notation}[Elementary operations]
  We denote the sheaves of Construction~\ref{cons:nHGStensor} briefly as
  $(ℰ_1, θ_2)⊕(ℰ_1, θ_2)$ and $(ℰ_1, θ_2)⊗(ℰ_1, θ_2)$.  If $ℒ$ is any coherent
  sheaf of $𝒪_X$-modules, taking zero-morphism gives sheaf $(ℒ, 0)$ with a
  $𝒲$-valued operator.  We will briefly write $(ℰ_1, θ_2)⊗ℒ$ instead of
  $(ℰ_1, θ_2)⊗(ℒ,0)$.  In the setting of Construction~\ref{cons:nHGSdual},
  write $(ℰ, θ)^* = (ℰ^*, θ^*)$ and $\sEnd (ℰ, θ) = (ℰ, θ)^* ⊗ (ℰ, θ)$.
\end{notation}

\begin{construction}[Pull-back and restriction]
  Let $X$ be a normal, quasi-projective variety and $(ℰ, θ)$ a sheaf with a
  $𝒲$-valued operator.  If $f : Y → X$ is morphism of normal varieties, then
  $f^*θ : f^*ℰ → f^*ℰ ⊗ f^*𝒲$ equips $f^*ℰ$ with the structure of a sheaf with
  an $f^*𝒲$-valued operator, which we denote as $f^*(ℰ, θ) = (f^*ℰ, f^*θ)$.  If
  $f$ is a closed immersion, we will also write $(ℰ, θ)|_Y = (ℰ|_Y, θ|_Y)$.
\end{construction}

\subsection{Invariant subsheaves}
\approvals{
  Behrouz & yes \\
  Daniel & yes \\
  Stefan & yes \\
  Thomas & yes
}

Much of the classical literature discusses sheaves $(ℰ, θ)$ with $𝒲$-valued
operators only in settings where both $ℰ$ and $𝒲$ are locally free.  Stability
of $(ℰ, θ)$ is then measured by looking at $θ$-invariant subsheaves of $ℰ$, that
is, subsheaves $ℱ ⊆ ℰ$ where $θ(ℱ) ⊆ ℱ ⊗ 𝒲$.  If $ℰ$ and $𝒲$ are arbitrary, the
tensor product $ℱ ⊗ 𝒲$ is not necessarily a subsheaf of $ℰ ⊗ 𝒲$ and the question
whether $θ(ℱ)$ is contained in $ℱ ⊗ 𝒲$ no longer makes sense.  In order to
obtain a workable theory with good universal properties and a meaningful
restriction theorem, the following more delicate definition needs to be used.

\begin{defn}[Invariant subsheaf]\label{def:ninvarSS}
  Let $X$ be a normal, quasi-projective variety and $(ℰ, θ)$ a sheaf with a
  $𝒲$-valued operator, as in Definition~\ref{def:nshfop1}.  A coherent subsheaf
  $ℱ ⊆ ℰ$ is called \emph{$θ$-invariant} if $θ(ℱ)$ is contained in the image of
  the natural map $ℱ ⊗ 𝒲 → ℰ ⊗ 𝒲$.  Call $ℱ$ \emph{generically invariant} if the
  restriction $ℱ|_U$ is invariant with respect to $θ|_U$, where $U ⊆ X$ is the
  maximal, dense, open subset where $𝒲$ is locally free.
\end{defn}

\begin{warning}[No operator on invariant subsheaves]\label{war:noois}
  In the setting of Definition~\ref{def:ninvarSS}, if $ℱ ⊆ ℰ$ is $θ$-invariant
  and $𝒲$ is locally free, then $ℱ ⊗ 𝒲 → ℰ ⊗ 𝒲$ is injective, the restricted map
  $θ|_{ℱ}$ factors via $ℱ ⊗ 𝒲$ and therefore endows $ℱ$ with the structure of a
  sheaf with a $𝒲$-valued operator.  If $𝒲$ is not locally free, then $θ$ does
  in general not induce a natural $𝒲$-valued operator on $ℱ$.  We refrain from
  discussing Harder-Narasimhan filtrations of sheaves with operators and do not
  attempt to define morphisms, or to construct an Abelian category.
\end{warning}

\begin{rem}[Invariance and tensor product]\label{rem:ntensor}
  In the setting of Construction~\ref{cons:nHGStensor}, let $ℱ ⊆ ℰ$ be any given
  subsheaf.  If $ℒ$ is invertible, then $ℱ ⊆ ℰ$ is $θ$-invariant (resp.\
  generically $θ$-invariant) if and only if $ℱ ⊗ ℒ ⊆ ℰ ⊗ ℒ$ is.
\end{rem}

We end the present subsection with two lemmas, pointing out that invariance is
well-behaved with respect to saturation.

\begin{lem}[Saturation of invariant subsheaf if $𝒲$ is locally free]\label{lem:nsat}
  In the setting of Definition~\ref{def:ninvarSS}, assume that $ℰ$ is torsion
  free and that $𝒲$ is locally free.  If $ℱ ⊆ ℰ$ is invariant, then so is its
  saturation $ℱ^{sat} ⊆ ℰ$.
\end{lem}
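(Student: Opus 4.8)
The plan is to exploit that tensoring with the locally free sheaf $\sW$ is an exact functor, combined with the elementary fact that there is no nonzero homomorphism from a torsion sheaf to a torsion-free one.

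First I would fix notation. Since $\sE$ is torsion free, the saturation $\sF^{sat}$ exists and is the unique subsheaf with $\sF \subseteq \sF^{sat} \subseteq \sE$ such that $\sF^{sat}/\sF$ is torsion and $\sE/\sF^{sat}$ is torsion free. As $\sW$ is locally free, tensoring the exact sequences $0 \to \sF \to \sF^{sat} \to \sF^{sat}/\sF \to 0$ and $0 \to \sF^{sat} \to \sE \to \sE/\sF^{sat} \to 0$ with $\sW$ keeps them exact. In particular, $\sF \otimes \sW$ and $\sF^{sat} \otimes \sW$ are subsheaves of $\sE \otimes \sW$ with $\sF \otimes \sW \subseteq \sF^{sat} \otimes \sW$, the quotient $(\sE \otimes \sW)/(\sF^{sat} \otimes \sW)$ is canonically $(\sE/\sF^{sat}) \otimes \sW$, and this last sheaf is torsion free because $\sE/\sF^{sat}$ is.

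Next I would consider the composite
$$\psi \colon \sF^{sat} \hookrightarrow \sE \xrightarrow{\theta} \sE \otimes \sW \longrightarrow (\sE/\sF^{sat}) \otimes \sW,$$
and note that, by the previous paragraph together with Warning~\ref{war:noois}, the desired conclusion $\theta(\sF^{sat}) \subseteq \sF^{sat} \otimes \sW$ (in the sense of Definition~\ref{def:ninvarSS}) is equivalent to $\psi = 0$. Restricting $\psi$ to $\sF$, invariance of $\sF$ gives $\theta(\sF) \subseteq \sF \otimes \sW \subseteq \sF^{sat} \otimes \sW$, so $\psi|_\sF = 0$; hence $\psi$ factors through a morphism $\sF^{sat}/\sF \to (\sE/\sF^{sat}) \otimes \sW$. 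Since the source is torsion and the target is torsion free, this morphism vanishes, and therefore $\psi = 0$, as required.

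I do not expect any serious obstacle: the argument is entirely formal once one observes that local freeness of $\sW$ is exactly what makes the relevant tensored sequences exact, so that invariance can be tested after passing to the torsion-free quotient $(\sE/\sF^{sat}) \otimes \sW$. The only point requiring a little care is that the definition of saturation invoked here (namely that $\sE/\sF^{sat}$ be torsion free) is available, which it is because $\sE$ is assumed torsion free.
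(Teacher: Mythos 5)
Your proof is correct and follows essentially the same route as the paper: the paper observes that $\sF^{sat} ⊗ \sW$ is saturated in $\sE ⊗ \sW$ (because $\sW$ is locally free) and that $θ(\sF^{sat})$ lies in it generically, hence everywhere; your composite map $\psi$ factoring through the torsion sheaf $\sF^{sat}/\sF$ and landing in the torsion-free sheaf $(\sE/\sF^{sat}) ⊗ \sW$ is just a more explicit rendering of that same argument. No gaps.
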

\begin{proof}
  Since $𝒲$ is locally free, $ℱ^{sat} ⊗ 𝒲$ is saturated in $ℰ ⊗ 𝒲$.  The sheaf
  $θ(ℱ^{sat})$, which is almost everywhere contained in $ℱ^{sat} ⊗ 𝒲$, is
  therefore entirely contained in $ℱ^{sat} ⊗ 𝒲$, and is hence $θ$-invariant.
\end{proof}

\begin{lem}[Saturations of sheaves that are invariant on an open subset]\label{lem:x11}
  In the setting of Definition~\ref{def:ninvarSS}, if there exists a dense open
  set $V ⊆ X$ such that $𝒲|_V$ is locally free and $ℱ|_V$ is $θ$-invariant, then
  is saturation $ℱ^{sat}$ is generically $θ$-invariant.
\end{lem}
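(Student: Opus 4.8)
The plan is to reduce everything to the maximal open subset $U\subseteq X$ on which $\sW$ is locally free, and there to interpret the possible failure of $\theta$-invariance of $\sF^{sat}$ as a homomorphism into a torsion-free sheaf that already vanishes on a dense open subset.

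First I would record the easy reductions. Since $\sW|_V$ is locally free we have $V\subseteq U$, so it suffices to work over $U$. Restriction to an open subset is exact, so $\sF^{sat}|_U$ is the saturation of $\sF|_U$ in $\sE|_U$, and $(\sE/\sF^{sat})|_U$ is torsion free, $\sE/\sF^{sat}$ being the torsion-free quotient of $\sE/\sF$ by the very definition of the saturation. As $\sW|_U$ is locally free, applying $-\otimes\sW$ to the exact sequence $0\to\sF^{sat}\to\sE\to\sE/\sF^{sat}\to 0$ stays exact over $U$; hence $\sF^{sat}|_U\otimes\sW|_U$ is a \emph{subsheaf} of $\sE|_U\otimes\sW|_U$ --- so it coincides with the image of the natural map occurring in Definition~\ref{def:ninvarSS} --- and the quotient $\sM:=(\sE/\sF^{sat})|_U\otimes\sW|_U$ is again torsion free, being a locally free twist of a torsion-free sheaf.

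Next I would consider the composite $\psi\colon \sF^{sat}|_U\to \sE|_U\otimes\sW|_U\to\sM$, where the first arrow is $\theta|_U$ and the second is the quotient map. By the previous paragraph, $\sF^{sat}$ is generically $\theta$-invariant precisely when $\psi=0$. Since $\sM$ is torsion free, so is $\Image\psi$, so it is enough to show that $\psi$ vanishes over some dense open subset. To this end I would pick a dense open $W\subseteq V$ over which moreover $\sF^{sat}$ agrees with $\sF$; this is possible because $\sF^{sat}/\sF$ is a torsion sheaf, hence supported on a proper closed subset. On $W$ one then has $\theta(\sF^{sat}|_W)=\theta(\sF|_W)\subseteq\sF|_W\otimes\sW|_W=\sF^{sat}|_W\otimes\sW|_W$, using the hypothesis that $\sF|_V$ is $\theta|_V$-invariant together with local freeness of $\sW|_V$; thus $\psi|_W=0$. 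Therefore $\Image\psi$ is a torsion-free coherent sheaf on the irreducible variety $U$ whose support misses the generic point, and such a sheaf is zero. Hence $\psi=0$, as claimed.

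The argument is essentially routine; the one genuine ingredient is the observation that a torsion-free coherent sheaf on an irreducible variety which vanishes on a dense open subset must vanish identically (all its associated points lie at the generic point, so its support would otherwise be the whole variety). Everything else is bookkeeping with the open subsets $V\subseteq U\supseteq W$ and the standard exactness of tensoring with the locally free sheaf $\sW|_U$.
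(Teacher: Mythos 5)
Your proof is correct and follows essentially the same route as the paper: both consider the composition $\sF^{sat} \to \sE ⊗ \sW \to (\factor{\sE}{\sF^{sat}}) ⊗ \sW$ over the locus where $\sW$ is locally free, observe that it vanishes on a dense open set, and conclude from torsion-freeness of the target that it vanishes identically. Your explicit shrinking to the open set $W$ where $\sF^{sat}=\sF$ merely spells out a point the paper's one-line claim of vanishing over $V$ leaves implicit; otherwise the arguments coincide.
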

\begin{proof}
  Aiming to prove that $ℱ^{sat}$ is \emph{generically} $θ$-invariant, we may
  assume without loss of generality that $𝒲$ is locally free.  The following
  composition of morphisms,
  $$
  \xymatrix{ %
    ℱ^{sat} \ar[r]^{θ|_{ℱ^{sat}}} & ℰ ⊗ 𝒲 \ar[rr]^(.25){\text{projection}} && \factor{(ℰ ⊗ 𝒲)}{(ℱ^{sat} ⊗ 𝒲)} = (\factor{ℰ}{ℱ^{sat}} ) ⊗ 𝒲,
  }
  $$
  will then vanish identically over $V$.  Since its target is torsion free as a
  tensor product of a torsion free and a locally free sheaf, it follows that the
  composition vanishes everywhere.  This shows the claim.
\end{proof}

\subsection{Stability}
\approvals{
  Behrouz & yes \\
  Daniel & yes \\
  Stefan & yes \\
  Thomas & yes
}

The notion of stability of sheaves with operators will be crucial for all what
follows.  The definition may look rather technical and perhaps not intuitive,
but has several advantages that will make our arguments work.  For one, it
agrees with the classical definition in cases where $ℰ$ is torsion free and $𝒲$
is locally free.  Secondly, it has good universal properties.  These will later
enable us to prove a restriction theorem for Higgs sheaves on singular spaces,
and compare stability of a Higgs sheaf on a singular space with that of its
pull-back to a resolution of singularities.

\begin{defn}[Stability of sheaves with operator]\label{defn:swostab1}
  Let $X$ be a normal, projective variety and $H$ be any nef, $ℚ$-Cartier
  $ℚ$-divisor on $X$.  Let $(ℰ, θ)$ be a sheaf with an operator, as in
  Definition~\ref{def:nshfop1}, were $ℰ$ is torsion free.  We say that $(ℰ, θ)$
  is \emph{semistable with respect to $H$} if the inequality $μ_H(ℱ) ≤ μ_H(ℰ)$
  holds for all generically $θ$-invariant subsheaves $ℱ ⊆ ℰ$ with
  $0 < \rank ℱ < \rank ℰ$.  The pair $(ℰ, θ)$ is called \emph{stable with
    respect to $H$} if strict inequality holds.  Direct sums of stable sheaves
  with operator are called \emph{polystable}.
\end{defn}

\begin{defn}[$G$-Stability of $G$-sheaves with operator]\label{defn:swostab2}
  Let $X$ be a normal, projective variety equipped with the action of a finite
  group $G$, and $H$ be any nef, $ℚ$-Cartier $ℚ$-divisor on $X$.  Let $(ℰ, θ)$
  be a $G$-sheaf with an invariant operator, as in Definition~\ref{def:nshfop2},
  were $ℰ$ is torsion free.  We say that $(ℰ, θ)$ is \emph{$G$-semistable with
    respect to $H$} if the inequality $μ_H(ℱ) ≤ μ_H(ℰ)$ holds for all
  generically $θ$-invariant $G$-subsheaves $ℱ ⊆ ℰ$ with $0 < \rank ℱ < \rank ℰ$.
  The pair $(ℰ, θ)$ is called \emph{$G$-stable with respect to $H$} if strict
  inequality holds.
\end{defn}

\begin{rem}
  The conditions spelled out in Definitions~\ref{defn:swostab1} and
  \ref{defn:swostab2} are trivially satisfied if $ℰ$ does not contain
  generically invariant subsheaves of the appropriate rank.
\end{rem}

\begin{lem}[Stability and tensor product]\label{lem:stabtp}
  In the setting of Definition~\ref{defn:swostab1}, let $ℒ$ be any invertible
  sheaf.  Then, $(ℰ, θ)$ is stable (resp.\ semistable) with respect to $H$ if
  and only if $(ℰ, θ) ⊗ ℒ$ is.
\end{lem}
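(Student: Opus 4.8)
The plan is to reduce the statement to a bijection between the relevant destabilising subsheaves of $(\sE, θ)$ and of $(\sE, θ) ⊗ \sL$, after which the slope bookkeeping is routine. First I would recall from Remark~\ref{rem:ntensor} that, for any subsheaf $\sF ⊆ \sE$, the subsheaf $\sF ⊗ \sL ⊆ \sE ⊗ \sL$ is $θ$-invariant (respectively generically $θ$-invariant) if and only if $\sF$ itself is. Since $\sL$ is invertible, tensoring with $\sL$ is an exact, rank-preserving functor that sets up an inclusion-preserving bijection $\sF \mapsto \sF ⊗ \sL$ between coherent subsheaves of $\sE$ and those of $\sE ⊗ \sL$, and this bijection respects the constraint $0 < \rank \sF < \rank \sE$. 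So the generically $θ$-invariant proper subsheaves of $\sE ⊗ \sL$ are \emph{exactly} the sheaves $\sF ⊗ \sL$ with $\sF ⊆ \sE$ generically $θ$-invariant and of intermediate rank.

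Next I would do the slope computation. For a torsion free sheaf $\sF$ of rank $s = \rank \sF$ on the $n$-dimensional projective variety $X$, one has $\det(\sF ⊗ \sL) = (\det \sF) ⊗ \sL^{⊗ s}$ up to reflexive hull, hence $[\sF ⊗ \sL] = [\sF] + s·[\sL]$ in $N^1(X)_ℚ$, and therefore
$$
μ_H(\sF ⊗ \sL) = \frac{\bigl([\sF] + s·[\sL]\bigr)·[H]^{n-1}}{s} = μ_H(\sF) + [\sL]·[H]^{n-1}.
$$
Applying this both to $\sF$ and (with $s = \rank \sE$) to $\sE$ shows that the difference $μ_H(\sF ⊗ \sL) - μ_H(\sE ⊗ \sL)$ equals $μ_H(\sF) - μ_H(\sE)$: the constant shift $[\sL]·[H]^{n-1}$ cancels. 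Consequently the inequality $μ_H(\sF ⊗ \sL) ≤ μ_H(\sE ⊗ \sL)$ holds for all admissible $\sF ⊗ \sL$ if and only if $μ_H(\sF) ≤ μ_H(\sE)$ holds for all admissible $\sF$, with strictness preserved on both sides; this is precisely the assertion that $(\sE, θ) ⊗ \sL$ is semistable (resp.\ stable) iff $(\sE, θ)$ is.

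I do not anticipate a genuine obstacle here; the only points requiring a little care are (i) making sure the subsheaf correspondence genuinely carries \emph{generically} $θ$-invariant subsheaves to \emph{generically} $θ$-invariant ones, which is exactly what Remark~\ref{rem:ntensor} supplies (note $\sW$ is unchanged by the twist, so the open locus $U$ where $\sW$ is locally free is the same on both sides), and (ii) that the determinant/slope identity above uses only that $\sL$ is a genuine line bundle, so $[\sL]$ makes sense in $N^1(X)_ℚ$ and the multilinear-form formalism of Construction~\ref{cons:int} applies verbatim. Everything else is formal.
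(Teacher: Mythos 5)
Your proof is correct and takes essentially the same route as the paper: the paper's argument is precisely the combination of Remark~\ref{rem:ntensor} (twisting by the invertible $\sL$ preserves generic $θ$-invariance, and gives the subsheaf correspondence) with the additivity of slope, $μ_H(\sF ⊗ \sL) = μ_H(\sF) + μ_H(\sL)$, so that the shift cancels when comparing $\sF$ with $\sE$. You have merely spelled out the bookkeeping in more detail than the paper does.
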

\begin{proof}
  Lemma~\ref{lem:stabtp} follows from Remark~\ref{rem:ntensor} and the fact that
  slope is additive, $μ_H(ℱ ⊗ ℒ) = μ_H(ℱ)+μ_H(ℒ)$ for all non-trivial
  subsheaves $ℱ ⊆ ℰ$.
\end{proof}

We next address openness properties of stability, with the goal to generalise
results for ample polarisations to the nef case.  The following proposition is
not the strongest possible, but suffices for our purposes.

\begin{prop}[Openness of stability]\label{prop:G-opennessx}
  Let $X$ be a normal, projective variety, equipped with an action of a finite
  group $G$.  Let $H$ be a nef $ℚ$-Cartier $ℚ$-divisor, $[H] \ne 0$, and $(ℰ,θ)$
  be a torsion free $G$-sheaf which an invariant operator, and assume that
  $(ℰ,θ)$ is $G$-stable with respect to $H$.  Given any nef $ℚ$-Cartier
  $ℚ$-divisor $A$, there exists a positive number $ε_0$ such that for all
  rational numbers $0 < ε < ε_0$, the $G$-sheaf with invariant operator $(ℰ, θ)$
  is $G$-stable with respect to $(H + ε·A)$.
\end{prop}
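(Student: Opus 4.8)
The strategy is the standard perturbation argument for stability, adapted to the $G$-equivariant, operator-valued setting. Since $(\sE,θ)$ is $G$-stable with respect to $H$, every generically $θ$-invariant $G$-subsheaf $\sF ⊆ \sE$ with $0 < \rank \sF < \rank \sE$ satisfies $μ_H(\sF) < μ_H(\sE)$, i.e. the rational number
$$
δ(\sF) := \rank(\sE)·\rank(\sF)·\bigl(μ_H(\sE) - μ_H(\sF)\bigr) = \rank(\sE)·\bigl([\sF]·[H]^{n-1}\bigr)^{-1}\cdots
$$
is strictly positive; more precisely, clearing denominators, $\rank(\sF)·\bigl([\sE]·[H]^{n-1}\bigr) - \rank(\sE)·\bigl([\sF]·[H]^{n-1}\bigr) > 0$ is a positive integer (it lies in $ℤ$ once one notes $[\sE]·[H]^{n-1}, [\sF]·[H]^{n-1} ∈ ℤ$ by Construction~\ref{cons:int}, provided $H$ is Cartier; in general it is a positive rational with bounded denominator). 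The key point is that this integer — call it $d(\sF)$ — is bounded below by $1$ uniformly over all destabilising candidates, \emph{if} there are only finitely many numerical classes $[\sF]$ that can occur. The plan is: (i) reduce to the case that $H$ is an ample Cartier divisor by a preliminary perturbation, or alternatively argue directly with the movable curve class $[H]^{n-1}$; (ii) establish a boundedness statement for the possible first Chern classes $[\sF]$ of generically $θ$-invariant $G$-saturated subsheaves; (iii) conclude by a quantitative estimate.

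\textbf{Step 1 (reduction and boundedness).} First I would replace $H$ by $H + ε'·A'$ for a fixed small rational $ε' > 0$ and an ample $A'$, so that without loss of generality one may assume $H$ is ample — the conclusion for the perturbed divisor will imply it for the original by shrinking $ε_0$; but care is needed since we must not lose $G$-stability, so actually the cleaner route is to keep $H$ nef and use that $[H]^{n-1}$ is a movable curve class in the sense of \cite[Def.~2.2]{GKP15}. The essential input is \textbf{boundedness of destabilising subsheaves}: the set of numerical classes $\{[\sF_{\mathrm{sat}}]\}$, where $\sF$ ranges over generically $θ$-invariant $G$-subsheaves of $\sE$ of a fixed rank, is finite. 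For ample polarisations this is classical (the family of saturated subsheaves of fixed rank and bounded slope is bounded, Grothendieck); for nef $H$ with $[H] \ne 0$ one uses that $H$ is a limit of ample classes together with the fact that $μ_H^{\max}$ is finite, so that $[\sF]·[H]^{n-1}$ is bounded above for destabilising $\sF$, and combines this with bounds coming from ample classes to pin down finitely many numerical possibilities. I would cite or reproduce the argument from \cite{GKP15} here, since the paper says its proofs "carry over essentially verbatim."

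\textbf{Step 2 (quantitative conclusion).} Once finiteness is in hand, set
$$
m := \min\Bigl\{ \rank(\sE)·\bigl([\sE]·[H]^{n-1}\bigr) - \rank(\sF)^{-1}\rank(\sE)^2\cdots \Bigr\} > 0
$$
— concretely, $m$ is the minimum over the finitely many classes of the positive quantity $\rank(\sF)\bigl([\sE]·[H]^{n-1}\bigr) - \rank(\sE)\bigl([\sF]·[H]^{n-1}\bigr)$, and it is attained, hence strictly positive. Now for the perturbed polarisation $H_ε := H + ε·A$ one computes, for any such $\sF$,
$$
\rank(\sF)\bigl([\sE]·[H_ε]^{n-1}\bigr) - \rank(\sE)\bigl([\sF]·[H_ε]^{n-1}\bigr) = \bigl(\text{the } H\text{-term}\bigr) + ε·R_ε(\sF),
$$
where $R_ε(\sF)$ is a polynomial in $ε$ (of degree $\le n-2$) whose coefficients are intersection numbers of $[\sE], [\sF], [H], [A]$; since there are finitely many classes $[\sF]$, the quantities $|R_ε(\sF)|$ are uniformly bounded, say by $C$, for $ε$ in a fixed bounded interval $(0,1)$. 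Choosing $ε_0 := \min(1, m/(C+1))$ makes the left-hand side $\ge m - ε C > 0$ for all $0 < ε < ε_0$, which is precisely $μ_{H_ε}(\sF) < μ_{H_ε}(\sE)$. Applying this to generically $θ$-invariant $G$-subsheaves $\sF$ and passing to saturations (which remain generically $θ$-invariant by Lemma~\ref{lem:x11} and only increase the slope) yields $G$-stability of $(\sE,θ)$ with respect to $H_ε$.

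\textbf{Main obstacle.} The delicate point is Step 1, the boundedness of destabilising subsheaves for a merely \emph{nef} (not ample) polarisation on a possibly singular $X$: one must ensure that the class $[H]^{n-1}$, being only a movable curve class, still controls the Chern classes of subsheaves tightly enough to force finiteness of numerical possibilities. The $G$-equivariance and the $θ$-invariance are harmless here — they only shrink the family of subsheaves one has to consider — so the real work is entirely the boundedness statement over a nef polarisation, which is where I would lean on \cite{GKP15} and the elementary properties of $μ^{\max}_H$, $μ^{\min}_H$ recorded in Lemma~\ref{lem:elemSlp1}.
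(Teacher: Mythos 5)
Your overall skeleton (a uniform positive gap in the $H$-slopes coming from integrality of the intersection numbers of Construction~\ref{cons:int}, plus a uniform bound on the $ε$-perturbation terms) is the right one, but the way you propose to obtain the uniform bound contains a genuine gap. The finiteness claim in your Step 1 --- that the numerical classes $[\sF]$ of saturated, generically $θ$-invariant $G$-subsheaves of fixed rank form a finite set --- is false as stated: already for $\sE = \sO_X^{⊕ 2}$ on $ℙ²$ the saturated subsheaves $\sO(-k) ⊕ \sO$ realise infinitely many classes, so some lower bound on the slope must be imposed. Even after restricting to potentially destabilising subsheaves, the boundedness you need is exactly the delicate point you flag: for a merely nef polarisation (equivalently a movable curve class $[H]^{n-1}$) on a singular variety, a lower bound on the slope does not give a bounded family by the classical Grothendieck argument, and \cite{GKP15} does not supply this; its relevant result (Prop.~2.21) gives finiteness of the \emph{maximal slope} $μ^{\max}$ with respect to movable classes, not finiteness of the set of numerical classes of destabilising subsheaves. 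So the load-bearing step of your proof is both unproven and, in the form you state it, incorrect; your preliminary ``perturb $H$ to an ample class first'' fallback is likewise circular, since that is precisely the statement being proved.

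The fix --- and the route the paper actually takes --- is to notice that no finiteness of classes is needed at all. The lower bound is exactly what you already observed: $G$-stability with respect to the Cartier divisor $H$ forces $μ_H(\sE) - μ_H(\sF) ≥ 1/r$ (bounded denominators), uniformly over all $G$-subsheaves $\sF$ of smaller rank. For the perturbation, expand $μ_{H+εA}(\sF) = μ_H(\sF) + \sum_{k≥1} \binom{n-1}{k} ε^k μ_{A^k H^{n-1-k}}(\sF)$ and observe that only an \emph{upper} bound on the mixed slopes $μ_{A^k H^{n-1-k}}(\sF)$, uniform over all coherent subsheaves $\sF ⊆ \sE$ and the finitely many exponents $k$, is required: if $[\sF]·[A]^k[H]^{n-1-k}$ happens to be very negative, that only strengthens the desired inequality. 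This uniform upper bound is precisely the finiteness of $μ^{\max}$ with respect to the movable classes $[π^*A]^k·[π^*H]^{n-1-k}$ on a resolution $π$, i.e.\ \cite[Prop.~2.21]{GKP15}, applied to $π^{[*]}\sE$. With the gap $1/r$ and this bound in hand, your Step 2 computation goes through verbatim and yields an $ε_0$ independent of $\sF$; the $G$-equivariance and $θ$-invariance indeed play no role beyond restricting the class of subsheaves, as you say.
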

\begin{proof}
  For simplicity of notation, write $n := \dim X$ and $r := \rank ℰ$.  We may
  assume that $H$ and $A$ are integral and Cartier.  In particular, recalling
  that the intersection numbers of Weil and Cartier divisors of
  Construction~\ref{cons:int} take values in the integers, the $H$-stability of
  $(ℰ,θ)$ implies that for any $G$-subsheaf $0 \ne ℱ ⊂ ℰ$ with $\rank ℱ < r$ we
  have
  \begin{equation} \label{eq:bounded denominator}
     μ_H(ℰ) - μ_H(ℱ) ≥ r^{-1}.
  \end{equation}
  Generalising Definition~\ref{def:slope2} slightly, given any number
  $0 ≤ k < n$, write
  $$
  μ_{A^kH^{n-1-k}}(ℰ) := \frac{ [ℰ]·[A]^{k}·[H]^{n-1-k}}{\rank ℰ}.
  $$
  Fix a resolution of singularities, $π : \wtilde{X} → X$ and observe that the
  curve class $α_k := [π^*A]^{k}·[π^*H]^{n-1-k} ∈ N_1(\wtilde X)_{ℚ}$ is
  movable.  In particular, if $ℱ ⊆ ℰ$ is any coherent subsheaf, then
  $π^{[*]} ℱ ⊆ π^{[*]} ℰ$, we have an equality of slopes,
  $μ_{A^kH^{n-1-k}}(ℱ) = μ_{α_k} \bigl( π^{[*]} ℱ \bigr)$, and it follows from
  \cite[Prop.~2.21]{GKP15} that
  $$
  μ^{\max}_{A^{•}H^{•}}(ℰ) := \sup \bigl\{ μ_{A^kH^{n-1-k}}(ℱ)
  \,|\, ℱ ⊆ ℰ \text{ a coherent subsheaf and } 0≤k<n \bigr\}
  $$
  is finite.  Now, given any rational $0 ≤ ε ≪ 1$ and any $G$-subsheaf
  $0 \ne ℱ ⊂ ℰ$ with $\rank ℱ < r$, owing to \eqref{eq:bounded denominator} we
  have
  \begin{align*}
    μ_{H+ε·A}(ℱ) & = μ_{H}(ℱ) + \sum_{k=1}^{n-1} \begin{pmatrix} n-1\\ k \end{pmatrix} ε^k·μ_{A^kH^{n-1-k}}(ℱ) \\
    & ≤ μ_{H}(ℰ)-\frac{1}{r} + μ^{\max}_{A^{•}H^{•}}(ℰ) · \sum_{k=1}^{n-1} \begin{pmatrix} n-1\\ k \end{pmatrix} ε^k \\
    & < μ_{H + ε·A}(ℰ),
  \end{align*}
  for $ε$ sufficiently small, which proves the claim.
\end{proof}

%
%
\svnid{$Id: 05-Higgs.tex 840 2018-11-20 14:25:41Z kebekus $}

\section{Higgs sheaves}
\label{sect:Higgs}
\subversionInfo
\approvals{
  Behrouz & yes \\
  Daniel & yes \\
  Stefan & yes \\
  Thomas & yes
}

This section introduces Higgs sheaves on singular varieties and establishes
their basic properties.  We include a discussion of Higgs $ℚ$-sheaves on
$ℚ$-varieties, investigate functoriality of Higgs sheaves, define stability and
prove a restriction theorem of Mehta-Ramanathan type.  We conclude with a
section on Higgs bundles and variations of Hodge structures that summarises some
work of Simpson and fits it into the framework of minimal model theory.

\subsection{Fundamentals}
\approvals{
  Behrouz & yes \\
  Daniel & yes \\
  Stefan & yes \\
  Thomas & yes
}

On a singular variety, some attention has to be paid concerning the definition
of ``Higgs sheaf'' at singular points.  We will see in
Section~\ref{sec:pull-back}--\ref{ssect:restrict} that Higgs sheaves in the
sense of the following definition have just enough universal properties to make
our strategy of proof work.  In the converse direction, it seems that
Definition~\ref{def:Higgs} and our notion of stability are in essence uniquely
dictated if we ask all these universal properties to hold.

\begin{defn}[Higgs sheaf and Higgs $G$-sheaf]\label{def:Higgs}
  Let $X$ be a normal variety.  A \emph{Higgs sheaf} is a pair $(ℰ, θ)$ of a
  coherent sheaf $ℰ$ of $𝒪_X$-modules, together with an $Ω^{[1]}_X$-valued
  operator $θ : ℰ → ℰ ⊗ Ω^{[1]}_X$, called \emph{Higgs field}, such that the
  composed morphism
  $$
  \xymatrix{ %
    ℰ \ar[rr]^(.35){θ} && ℰ ⊗ Ω^{[1]}_X \ar[rr]^(.45){θ ⊗ \Id} && ℰ ⊗
    Ω^{[1]}_X ⊗ Ω^{[1]}_X \ar[rr]^(.55){\Id ⊗ [Λ]} && ℰ ⊗ Ω^{[2]}_X }
  $$
  vanishes.  Following tradition, the composed morphism will be denoted by
  $θ Λ θ$.  If $X$ is equipped with the action of a finite group $G$, a
  \emph{Higgs $G$-sheaf} on $X$ is a Higgs sheaf $(ℰ, θ)$, where $ℰ$ is a
  $G$-sheaf, and where the Higgs field $θ$ is a morphism of $G$-sheaves.
\end{defn}

\begin{defn}[Morphism of Higgs sheaves]
  In the setting of Definition~\ref{def:Higgs}, a \emph{morphism of Higgs
    sheaves} (resp.\ \emph{morphism of Higgs $G$-sheaves}), written
  $f : (ℰ_1, θ_1) → (ℰ_2, θ_2)$, is a morphism $f : ℰ_1 → ℰ_2$ of sheaves
  (resp.\ $G$-sheaves) that commutes with the Higgs fields,
  $(f ⊗ \Id_{Ω^{[1]}_X}) ◦ θ_1 = θ_2 ◦ f$.
\end{defn}

The above definitions extend to $ℚ$-Higgs sheaves on $ℚ$-varieties.  These will
be introduced in Section~\ref{ssec:Qhiggs} once the existence of the necessary
pull-back functors has been established.

\begin{example}[A natural Higgs sheaf attached to a normal variety]\label{ex:BQfield1}
  Let $X$ be a normal variety.  Set $ℰ := Ω^{[1]}_X ⊕ 𝒪_X$ and define an
  operator $θ$ as follows,
  $$
  \begin{matrix}
    θ : & \mathclap{Ω^{[1]}_X} & ⊕ & \mathclap{𝒪_X} & → & \Bigl( Ω^{[1]}_X & ⊕ & 𝒪_X \Bigr) & ⊗ & Ω^{[1]}_X \\
    & a & + & b & ↦ & ( 0 & + & 1 ) & ⊗ & a.
  \end{matrix}
  $$
  An elementary computation shows that $θ Λ θ = 0$, so that $(ℰ, θ)$ forms a
  Higgs sheaf.  If $X$ is a $G$-variety, then $ℰ$ has a natural structure of a
  $G$-sheaf, and $(ℰ, θ)$ is in fact a $G$-Higgs sheaf.  Observe that the direct
  summand $𝒪_X ⊆ ℰ$ is generically $θ$-invariant.  Non-zero subsheaves of the
  direct summand $Ω^{[1]}_X$ are not generically $θ$-invariant.
\end{example}

\begin{example}[Tensor, dual and endomorphisms]
  The direct sum and tensor operation of Construction~\ref{cons:nHGStensor}
  transforms Higgs sheaves into Higgs sheaves.  Ditto for the dual sheaf and the
  endomorphism sheaf that are constructed in \ref{cons:nHGSdual} if the Higgs
  sheaf is locally free.
\end{example}

\subsection{Explanation}
\approvals{
  Behrouz & yes \\
  Daniel & yes \\
  Stefan & yes \\
  Thomas & yes
}

The reader might wonder why Definition~\ref{def:Higgs} requires the Higgs field
to take its values in $ℰ ⊗ Ω^{[1]}_X$.  At least two other potential choices for
the target come to mind.  At first sight, it might seem most natural and
functorial to take $ℰ ⊗ Ω_X¹$ for a target.  However, in the main application to
Miyaoka-Yau inequalities and to uniformisation for varieties of general type,
the naturally induced sheaf of geometric origin is $ℰ := Ω^{[1]}_X ⊕ 𝒪_X$, as
discussed in Example~\ref{ex:BQfield1} above.  For this particular $ℰ$ to be a
Higgs sheaf, we have to allow the target of the Higgs field to be
$ℰ ⊗ Ω_X^{[1]}$.  Also, note that looking at $Ω_X¹ ⊕ 𝒪_X$ instead would render a
discussion of semistability moot, as semistability requires torsion freeness and
even the most simple klt singularities lead to torsion in $Ω_X¹$, see
\cite{MR2915479} for examples.
  
On the other hand, the reader might wonder why $θ$ takes its values in
$ℰ ⊗ Ω^{[1]}_X$ and not in its reflexive hull.  The advantages of our choice
will become apparent in the following Section~\ref{sec:pull-back}, where
pull-back functors are defined: in general, none of the constructions there will
work for reflexive hulls.

\subsection{Pull-back}
\label{sec:pull-back}
\approvals{
  Behrouz & yes \\
  Daniel & yes \\
  Stefan & yes \\
  Thomas & yes
}

To pull back Higgs sheaves is at least as difficult as to pull-back reflexive
differentials.  Functorial pull-back for reflexive differentials does, however,
not exist in general unless the target space supports a divisor that makes it
klt.

\begin{construction}[Pull-back of Higgs sheaves]\label{cons:pb1}
  Let $(X,D)$ be a klt pair and let $(ℰ, θ)$ be a Higgs sheaf on $X$.  Given any
  normal variety $Y$ and any morphism $f : Y → X$, recall from \cite[Thms.~1.3
  and 5.2]{MR3084424} that there exists a natural pull-back functor for
  reflexive differentials on klt pairs that is compatible with the usual
  pull-back of Kähler differentials and gives rise to a sheaf morphism
  $$
  d_{\refl} f : f^* Ω^{[1]}_X → Ω^{[1]}_Y.
  $$
  We claim that $θ'$, defined as the composition of the following morphisms,
  \begin{equation}\label{eq:pbhgs}
    f^* ℰ \xrightarrow{f^* θ} f^* \Bigl( ℰ ⊗ Ω^{[1]}_X \Bigr) = f^*
    ℰ ⊗ f^* Ω^{[1]}_X \xrightarrow{\Id_{f^* ℰ} ⊗
      d_{\refl} f} f^* ℰ ⊗ Ω^{[1]}_Y,
  \end{equation}
  equips $f^* ℰ$ with the structure of a Higgs sheaf.  To check that
  $θ' Λ θ' = 0$, one uses the compatibility of reflexive pull-back with wedge
  products, \cite[Prop.~5.13]{MR3084424}, to verify that the following diagram
  is commutative,
  $$
  \xymatrix{ %
    f^* ℰ \ar[rr]_(.3){(θ ⊗ \Id)◦ θ} \ar@/^5mm/[rrr]^{f^*(θ Λ θ)} \ar@{=}[d] && f^* ℰ ⊗ f^* Ω^{[1]}_X ⊗ f^* Ω^{[1]}_X \ar[r] \ar[d]^{\Id ⊗ d_{\refl} ⊗ d_{\refl} } & f^* ℰ ⊗ f^* Ω^{[2]}_X \ar[d]^{\Id ⊗ d_{\refl} } \\
    f^* ℰ \ar[rr]^(.3){(θ' ⊗ \Id)◦ θ'} \ar@/_4mm/[rrr]_{θ' Λ θ'} && f^*
    ℰ ⊗ Ω^{[1]}_Y ⊗ Ω^{[1]}_Y \ar[r] & f^* ℰ ⊗ Ω^{[2]}_Y.  }
  $$
  By minor abuse of notation, this Higgs sheaf will be denoted as $f^* (ℰ, θ)$
  or $(f^* ℰ, f^* θ)$.
\end{construction}

\begin{notation}[Restriction of Higgs sheaves]
  In the setting of Construction~\ref{cons:pb1}, if $f$ is a closed or open
  immersion, we will also write $(ℰ, θ)|_Y$ or $(ℰ|_Y, θ|_Y)$.  To keep notation
  reasonably short, we will in the remainder of the paper tacitly equip
  restrictions of Higgs sheaves with their natural Higgs fields.
\end{notation}

We mentioned above that the pull-back functor
$d_{\refl} f : f^* Ω^{[1]}_X → Ω^{[1]}_Y$ is compatible with the usual pull-back
of Kähler differentials.  If $X$ and $Y$ are smooth and $f$ is a closed
immersion, the pull-back $f^* (ℰ, θ)$ of Construction~\ref{cons:pb1} will
therefore agree with the standard pull-back (resp.~restriction) of Higgs sheaves
discussed in the literature.

\begin{lem}[Pull-back of invariant subsheaves]\label{lem:pbI}
  In the setting of Construction~\ref{cons:pb1}, if $ℱ ⊆ ℰ$ is $θ$-invariant in
  the sense of Definition~\ref{def:ninvarSS}, then
  $ℱ' := \img (f^* ℱ → f^* ℰ) ⊆ f^* ℰ$ is $θ'$-invariant.
\end{lem}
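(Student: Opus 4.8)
The plan is to unwind Definition~\ref{def:ninvarSS} and carry out a short diagram chase that uses nothing beyond the right-exactness of ordinary pull-back and of tensoring with a fixed sheaf; both operations preserve surjections and therefore images. Write $\iota : \sF \into \sE$ for the inclusion, put $\sW := \Omega^{[1]}_X$, and let $f^*\sF \onto \sF' \into f^*\sE$ be the canonical factorisation of $f^*\iota$, so that $\sF' = \img(f^*\iota)$ by definition. Recalling from Construction~\ref{cons:pb1} that $\theta' = (\Id_{f^*\sE} \otimes d_{\refl}f) \circ f^*\theta$, what has to be shown is that $\theta'(\sF')$ is contained in the image of the natural map $\sF' \otimes \Omega^{[1]}_Y \to f^*\sE \otimes \Omega^{[1]}_Y$.

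First I would pass to the composite $f^*\sF \xrightarrow{f^*\iota} f^*\sE \xrightarrow{\theta'} f^*\sE \otimes \Omega^{[1]}_Y$. Since $f^*\iota$ is surjective onto $\sF'$, the image of this composite is exactly $\theta'(\sF')$. By functoriality of the ordinary pull-back one has $\theta' \circ f^*\iota = (\Id \otimes d_{\refl}f) \circ f^*(\theta \circ \iota)$, and $\theta \circ \iota : \sF \to \sE \otimes \sW$ is precisely the map $\theta|_{\sF}$ which, by $\theta$-invariance of $\sF$, factors through the subsheaf $\sI := \img(\sF \otimes \sW \to \sE \otimes \sW) \subseteq \sE \otimes \sW$.

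Now I would push this through $f^*$. As $\sF \otimes \sW \onto \sI$ is surjective, so is $f^*\sF \otimes f^*\sW = f^*(\sF \otimes \sW) \onto f^*\sI$; hence the image of $f^*\sI \to f^*\sE \otimes f^*\sW$ coincides with the image of $f^*\iota \otimes \Id_{f^*\sW}$, and therefore $\img\bigl(f^*(\theta\circ\iota)\bigr)$ is contained in the latter. Applying $\Id_{f^*\sE} \otimes d_{\refl}f$ and using the commutative square
$$
\xymatrix{
  f^*\sF \otimes f^*\sW \ar[rr]^{\Id \otimes d_{\refl}f} \ar[d]_{f^*\iota \otimes \Id} && f^*\sF \otimes \Omega^{[1]}_Y \ar[d]^{f^*\iota \otimes \Id} \\
  f^*\sE \otimes f^*\sW \ar[rr]^{\Id \otimes d_{\refl}f} && f^*\sE \otimes \Omega^{[1]}_Y,
}
$$
whose two compositions both equal $f^*\iota \otimes d_{\refl}f$, one gets $\theta'(\sF') \subseteq \img\bigl(f^*\iota \otimes \Id : f^*\sF \otimes \Omega^{[1]}_Y \to f^*\sE \otimes \Omega^{[1]}_Y\bigr)$. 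Finally, right-exactness of $-\otimes \Omega^{[1]}_Y$ turns the surjection $f^*\sF \onto \sF'$ into a surjection $f^*\sF \otimes \Omega^{[1]}_Y \onto \sF' \otimes \Omega^{[1]}_Y$, so that last image equals $\img\bigl(\sF' \otimes \Omega^{[1]}_Y \to f^*\sE \otimes \Omega^{[1]}_Y\bigr)$; this is exactly the $\theta'$-invariance of $\sF'$.

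The only point that requires attention — and the (very mild) main obstacle — is that no sheaf in sight is assumed locally free, so none of the maps $\sF \otimes \sW \to \sE \otimes \sW$, $f^*\sI \to f^*\sE \otimes f^*\sW$, $\sF' \otimes \Omega^{[1]}_Y \to f^*\sE \otimes \Omega^{[1]}_Y$ need be injective; consequently every comparison above must be phrased in terms of images of morphisms rather than by identifying subsheaves, which is how I have set it up. No input beyond right-exactness of $f^*$ and of $-\otimes(-)$ enters.
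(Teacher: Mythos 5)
Your argument is correct and is essentially the paper's own proof: factor $\theta|_{\sF}$ through $\img(\sF\otimes\Omega^{[1]}_X\to\sE\otimes\Omega^{[1]}_X)$, pull back via $f^*$ using right-exactness, and chase the commutative square built from $f^*i\otimes\Id$ and $\Id\otimes d_{\refl}f$ to land in the image of $\sF'\otimes\Omega^{[1]}_Y$. You merely spell out explicitly the final image comparison (right-exactness of $-\otimes\Omega^{[1]}_Y$ and the surjection $f^*\sF\onto\sF'$) that the paper subsumes under ``an elementary computation'' and ``then yields the claim''.
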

\begin{proof}
  Denote the natural inclusion map as $i: ℱ → ℰ$.  If $ℱ$ is $θ$-invariant, then
  $θ|_{ℱ}$ will factor via $\img \bigl( ℱ ⊗ Ω^{[1]}_X → ℰ ⊗ Ω^{[1]}_X \bigr)$.
  Pulling back, we obtain a commutative diagram
  $$
  \xymatrix{ %
    f^* ℱ \ar@{->>}[r]^(.20){a} \ar[d]_{f^* i} & f^* \img \Bigl( ℱ ⊗ Ω^{[1]}_X → ℰ ⊗ Ω^{[1]}_X \Bigr) \ar[r]^(.65){b} & f^* ℰ ⊗ f^* Ω^{[1]}_X \ar@{=}[d] \\
    f^* ℰ \ar[rr]_{f^* θ} && f^* ℰ ⊗ f^* Ω^{[1]}_X
  }
  $$
  and, by an elementary computation, an inclusion
  $$
  (f^* θ) (ℱ') ⊆ \img b = \img \Bigl( f^* ℱ ⊗ f^* Ω^{[1]}_X → f^* ℰ ⊗ f^* Ω^{[1]}_X \Bigr).
  $$
  The following commutative diagram,
  $$
  \xymatrix{ %
    f^* ℱ ⊗ f^* Ω^{[1]}_X \ar[rr]^{f^* i ⊗ \Id} \ar[d]_{\Id ⊗ d_{\refl} f} && f^* ℰ ⊗ f^* Ω^{[1]}_X \ar[d]^{\Id ⊗ d_{\refl} f} && f^* ℰ \ar[ll]_(.4){f^* θ} \ar@/^5mm/[lld]^(.4){\quad θ' := (\Id ⊗ d_{\refl} f) ◦ f^* θ} \\
    f^* ℱ ⊗ Ω^{[1]}_Y \ar[rr]_{f^* i ⊗ \Id} && f^* ℰ ⊗ Ω^{[1]}_Y, }
  $$
  then yields the claim.
\end{proof}

The following two lemmas are almost immediate.

\begin{lem}[Pull-back as criterion for invariance]\label{lem:pbII}
  In the setting of Construction~\ref{cons:pb1}, assume that $f$ is étale.  If
  $ℱ ⊆ ℰ$ is any subsheaf such that $f^* ℱ ⊆ f^* ℰ$ is $θ'$-invariant, then $ℱ$
  is $θ$-invariant.  \qed
\end{lem}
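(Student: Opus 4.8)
The plan is to reframe $θ$-invariance of a subsheaf as the vanishing of a single auxiliary sheaf morphism, and then to transport that vanishing along $f$: the point is that for étale $f$ the reflexive differential pull-back $d_{\refl} f$ is an isomorphism, and an étale (hence, being also surjective, faithfully flat) morphism gives a pull-back functor $f^{*}$ that is exact and faithful, so it neither creates nor destroys such vanishing.

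First I would record that, $f$ being étale, it is flat — so $f^{*}$ is exact and commutes with tensor products and with cokernels — and that the standard comparison morphism $f^{*}Ω^{1}_X → Ω^{1}_Y$ is an isomorphism; since $d_{\refl} f$ is compatible with the usual pull-back of Kähler differentials (Construction~\ref{cons:pb1}) and since both $f^{*}Ω^{[1]}_X$ and $Ω^{[1]}_Y$ are reflexive sheaves agreeing on the big open subset $f^{-1}(X_{\reg})$, the morphism $d_{\refl} f : f^{*}Ω^{[1]}_X → Ω^{[1]}_Y$ is an isomorphism. Recalling that $θ' = (\Id_{f^{*}\sE} ⊗ d_{\refl} f) ◦ f^{*}θ$ and that $\Id_{f^{*}\sE} ⊗ d_{\refl} f$ carries $\img\bigl(f^{*}\sF ⊗ f^{*}Ω^{[1]}_X → f^{*}\sE ⊗ f^{*}Ω^{[1]}_X\bigr)$ isomorphically onto the corresponding image inside $f^{*}\sE ⊗ Ω^{[1]}_Y$, the hypothesis that $f^{*}\sF$ be $θ'$-invariant is equivalent to the inclusion
$$ (f^{*}θ)(f^{*}\sF) \;⊆\; \img\bigl(f^{*}\sF ⊗ f^{*}Ω^{[1]}_X → f^{*}\sE ⊗ f^{*}Ω^{[1]}_X\bigr). $$

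Next I would encode invariance intrinsically. Writing $i : \sF → \sE$ and $q : \sE → \sE/\sF$ for the natural maps, right-exactness of $-\,⊗\,Ω^{[1]}_X$ identifies $\coker\bigl(\sF ⊗ Ω^{[1]}_X → \sE ⊗ Ω^{[1]}_X\bigr)$ with $(\sE/\sF) ⊗ Ω^{[1]}_X$, hence $\img\bigl(\sF ⊗ Ω^{[1]}_X → \sE ⊗ Ω^{[1]}_X\bigr) = \ker(q ⊗ \Id)$, so that $\sF$ is $θ$-invariant if and only if the composite $φ := (q ⊗ \Id_{Ω^{[1]}_X}) ◦ θ ◦ i : \sF → (\sE/\sF) ⊗ Ω^{[1]}_X$ is zero. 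Since $f^{*}$ is exact it carries $i$, $q$, $θ$ and this whole cokernel presentation to the analogous data on $Y$, identifying $f^{*}\bigl((\sE/\sF) ⊗ Ω^{[1]}_X\bigr)$ with $\bigl(f^{*}\sE/f^{*}\sF\bigr) ⊗ f^{*}Ω^{[1]}_X$; thus $f^{*}φ$ is the composite $f^{*}\sF → f^{*}\sE \xrightarrow{f^{*}θ} f^{*}\sE ⊗ f^{*}Ω^{[1]}_X → \bigl(f^{*}\sE/f^{*}\sF\bigr) ⊗ f^{*}Ω^{[1]}_X$, whose vanishing is, again by the identity $\ker = \img$ on $Y$, exactly the displayed inclusion above. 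The hypothesis therefore gives $f^{*}φ = 0$, and, $f$ being faithfully flat, $f^{*}$ is a faithful functor, so $φ = 0$; by the reformulation, $\sF$ is $θ$-invariant.

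I do not expect a genuine obstacle here — this is why the paper calls it ``almost immediate''. The only points that merit a word of care are the verification that $d_{\refl} f$ is an isomorphism in the étale case, the compatibility of the image $\img\bigl(\sF ⊗ Ω^{[1]}_X → \sE ⊗ Ω^{[1]}_X\bigr)$ with flat pull-back (handled above via its cokernel presentation, which sidesteps any reflexive-hull subtlety), and the implicit use of surjectivity of $f$ in order to invoke faithful flatness when passing from $f^{*}φ = 0$ to $φ = 0$.
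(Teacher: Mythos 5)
Your proposal is correct, and since the paper marks this lemma as ``almost immediate'' and records no argument, your cokernel reformulation ($\sF$ is $\theta$-invariant iff the composite $\sF \to \sE \otimes \Omega^{[1]}_X \to (\sE/\sF)\otimes\Omega^{[1]}_X$ vanishes), transported by the exact functor $f^*$ and the isomorphism $d_{\refl}f$, is exactly the kind of verification the authors leave to the reader. Your closing caveat is the right one to flag: faithfulness of $f^*$ (hence the final step $f^*\varphi=0 \Rightarrow \varphi=0$) uses surjectivity of the étale map $f$, which is implicit in the covering situations where the lemma is applied but is genuinely needed, since the target $(\sE/\sF)\otimes\Omega^{[1]}_X$ may have torsion and vanishing of $\varphi$ on a dense open set would not suffice.
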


\begin{lem}[Functoriality with respect to morphisms between spaces]\label{lem:fun2b}
  Given klt pairs $(X,D_X)$ and $(Y,D_Y)$, a normal space $Z$, a Higgs sheaf
  $(ℰ, θ)$ on $X$ and morphisms $g: Z → Y$ and $f: Y → X$, then
  $g^* f^* (ℰ, θ) = (f ◦ g)^* (ℰ, θ)$.  \qed
\end{lem}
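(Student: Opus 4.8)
The plan is to check separately that the two Higgs sheaves in question have the same underlying coherent sheaf and the same Higgs field. For the underlying sheaves there is nothing to do beyond the functoriality of the ordinary pull-back of coherent sheaves: there is a canonical isomorphism $g^* f^* \sE \cong (f ◦ g)^* \sE$, likewise $g^* f^* Ω^{[1]}_X \cong (f ◦ g)^* Ω^{[1]}_X$, and these are compatible with the canonical decompositions of $g^* f^*\bigl( \sE ⊗ Ω^{[1]}_X \bigr)$ into a tensor product, and they intertwine $g^* f^* θ$ with $(f ◦ g)^* θ$ because $θ$ is just an $\sO_X$-linear morphism. All of this is formal for the pull-back pseudofunctor on quasi-coherent sheaves. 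Note also that both iterated pull-back Higgs sheaves are defined in the first place: $f^*(\sE,θ)$ uses that $(X,D_X)$ is klt, then $g^*\bigl(f^*(\sE,θ)\bigr)$ uses that $(Y,D_Y)$ is klt, while $(f ◦ g)^*(\sE,θ)$ uses again that $(X,D_X)$ is klt.

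Under these identifications, unwinding Construction~\ref{cons:pb1} twice shows that the Higgs field of $g^*\bigl(f^*(\sE,θ)\bigr)$ is obtained from $(f ◦ g)^* θ$ by composing with $\bigl(\Id ⊗ d_{\refl} g\bigr) ◦ \bigl(\Id ⊗ g^*(d_{\refl} f)\bigr)$, whereas the Higgs field of $(f ◦ g)^*(\sE, θ)$ is obtained from $(f ◦ g)^* θ$ by composing with $\Id ⊗ d_{\refl}(f ◦ g)$. Hence the two agree as soon as the chain rule
$$
d_{\refl}(f ◦ g) \;=\; d_{\refl} g ◦ g^*\bigl( d_{\refl} f \bigr)
$$
holds for the reflexive differential pull-back, as morphisms $(f ◦ g)^* Ω^{[1]}_X → Ω^{[1]}_Z$. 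This identity is part of the statement that reflexive pull-back is a genuine functor on the category of klt pairs, established in \cite[Thms.~1.3 and~5.2]{MR3084424}; alternatively it follows because both sides restrict to the usual chain rule for Kähler differentials over the big open locus where $X$, $Y$, $Z$ are smooth and the maps are the obvious ones, and a morphism between reflexive sheaves is determined by its restriction to a big open subset. Its validity requires only that the source $X$ and the intermediate space $Y$ carry klt boundaries, which is assumed here, while $Z$ need only be normal. Finally, the Higgs condition $θ Λ θ = 0$ holds automatically on both sides by Construction~\ref{cons:pb1}, so the two Higgs structures literally coincide.

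The verification is therefore a routine diagram chase, and the only point that genuinely needs to be pinned down is the chain rule for $d_{\refl}$. I expect this — locating the functoriality assertion in \cite{MR3084424} and confirming that it applies under the present hypotheses, with $Z$ merely normal — to be the main, and essentially only, obstacle; the remaining bookkeeping of canonical isomorphisms for iterated pull-backs is formal and carries no geometric content.
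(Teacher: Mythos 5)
Your proposal is correct and is exactly the justification the paper has in mind: the lemma is left without proof because, once the canonical isomorphisms for iterated pull-backs of coherent sheaves are in place, everything reduces to the chain rule $d_{\refl}(f ◦ g) = d_{\refl} g ◦ g^*\bigl(d_{\refl} f\bigr)$, which is part of the functoriality of the reflexive pull-back established in \cite[Thms.~1.3 and 5.2]{MR3084424}. One caveat: your ``alternative'' argument via restriction to the locus where $X$, $Y$, $Z$ are smooth is not reliable, since that locus can be empty --- e.g.\ when $f(Y)$ lies in $X_{\sing}$ or $g(Z)$ lies in $Y_{\sing}$, which is precisely the situation the reflexive pull-back machinery exists to handle --- so the citation to the functoriality statement, not the density argument, is what actually carries the proof.
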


\subsection{Reflexive pull-back}
\label{sec:rpull-back}
\approvals{
  Behrouz & yes \\
  Daniel & yes \\
  Stefan & yes \\
  Thomas & yes
}

In the setting of Construction~\ref{cons:pb1}, assume that $(ℰ, θ)$ is a
reflexive Higgs sheaf on $X$ and $f : Y → X$ is a resolution of singularities.
The pull-back $f^* (ℰ, θ)$ is then a Higgs sheaf on $Y$, but $f^* ℰ$ is
generally not torsion free.  In particular, we cannot ask if $f^* (ℰ, θ)$ is
stable as a sheaf with an $Ω¹_Y$-valued operator.  Using smoothness of $Y$, the
following construction avoids this problem by equipping the \emph{reflexive}
pull-back $f^{[*]} ℰ$ with the structure of a Higgs sheaf.

\begin{construction}[Reflexive pull-back of Higgs sheaves]\label{cons:pb2}
  If the variety $Y$ of Construction~\ref{cons:pb1} is smooth, then
  $Ω^{[1]}_Y = Ω¹_Y$ is locally free.  Taking reflexive hulls on either end of
  \eqref{eq:pbhgs}, we obtain an operator
  $$
  f^{[*]} θ: f^{[*]} ℰ → \Bigl( f^* ℰ ⊗ Ω^{[1]}_Y \Bigr)^{**} = f^{[*]} ℰ
  ⊗ Ω¹_Y.
  $$
  The associated map $f^{[*]}θ Λ f^{[*]}θ : f^{[*]} ℰ → f^{[*]} ℰ ⊗ Ω²_Y$
  clearly agrees with $0 = θ' Λ θ'$ wherever $f^* ℰ$ is locally free.  It
  follows that $f^{[*]}θ Λ f^{[*]}θ$ vanishes generically and hence, since
  $f^{[*]} ℰ ⊗ Ω²_Y$ is torsion free, identically.  In summary, we see that
  $f^{[*]}θ$ equips the reflexive pull-back $f^{[*]} ℰ$ with the structure of a
  Higgs sheaf.  We will use the symbols $f^{[*]} (ℰ, θ)$ or
  $(f^{[*]} ℰ, f^{[*]} θ)$.
\end{construction}

\begin{lem}[Reflexive pull-back of invariant subsheaves]\label{lem:pbIII}
  In the setting of Construction~\ref{cons:pb2}, if $ℱ ⊆ ℰ$ is $θ$-invariant,
  write
  $$
  ℱ' := \img (f^* ℱ → f^* ℰ) ⊆ f^* ℰ \quad\text{and}\quad ℱ'' :=
  (ℱ')^{**} ⊆ f^{[*]} ℰ.
  $$
  Then, $ℱ''$ is $f^{[*]}θ$-invariant.
\end{lem}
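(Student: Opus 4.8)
The plan is to deduce Lemma~\ref{lem:pbIII} from Lemma~\ref{lem:pbI} by passing to reflexive hulls, using in an essential way that $Ω^1_Y$ is locally free because $Y$ is smooth. First I would record what Lemma~\ref{lem:pbI} already supplies: the subsheaf $\sF' ⊆ f^*\sE$ is invariant under the operator $θ' : f^*\sE → f^*\sE ⊗ Ω^1_Y$ of Construction~\ref{cons:pb1}, of which $f^{[*]}θ$ is the reflexive hull (Construction~\ref{cons:pb2}). Since $Ω^1_Y$ is locally free, the natural map $\sF' ⊗ Ω^1_Y → f^*\sE ⊗ Ω^1_Y$ is \emph{injective}, so $θ'$-invariance of $\sF'$ says precisely that $θ'$ restricts to an $Ω^1_Y$-valued operator $θ'|_{\sF'} : \sF' → \sF' ⊗ Ω^1_Y$, compatibly with the inclusions into $f^*\sE$ and $f^*\sE ⊗ Ω^1_Y$. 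Next I would dispose of torsion: the subsheaf $\tor(\sF')$ is annihilated by the canonical map $f^*\sE → f^{[*]}\sE$, so the image of $\sF'$ in $f^{[*]}\sE$ is $\bar{\sF} := \sF'/\tor(\sF')$, which satisfies $(\bar{\sF})^{**} = (\sF')^{**} = \sF''$, and $θ'|_{\sF'}$ descends to an operator $\bar{θ} : \bar{\sF} → \bar{\sF} ⊗ Ω^1_Y$.

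Now I would take reflexive hulls. Because $Ω^1_Y$ is locally free, one has $(\bar{\sF} ⊗ Ω^1_Y)^{**} = \sF'' ⊗ Ω^1_Y$, which is reflexive, so $\bar{θ}$ extends uniquely to an operator $\widetilde{θ} : \sF'' → \sF'' ⊗ Ω^1_Y$. It then remains to check that, composed with the (injective) inclusion $\sF'' ⊗ Ω^1_Y \into f^{[*]}\sE ⊗ Ω^1_Y$, this $\widetilde{θ}$ coincides with $f^{[*]}θ|_{\sF''}$. Both are morphisms from $\sF''$ into the reflexive---in particular torsion-free---sheaf $f^{[*]}\sE ⊗ Ω^1_Y$, and on the dense open set $V ⊆ Y$ where $f^*\sE$ is locally free and $\sF'$ is reflexive (the complement of the union of two proper closed subsets) one has $f^{[*]}\sE|_V = f^*\sE|_V$, $\sF''|_V = \sF'|_V$ and $f^{[*]}θ|_V = θ'|_V$, so over $V$ both morphisms equal $θ'$ restricted to $\sF'$. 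Since a morphism into a torsion-free sheaf that vanishes on a dense open set vanishes identically, the two morphisms agree on all of $Y$. Hence $f^{[*]}θ(\sF'') = \widetilde{θ}(\sF'') ⊆ \img\bigl(\sF'' ⊗ Ω^1_Y → f^{[*]}\sE ⊗ Ω^1_Y\bigr)$, which is exactly the assertion that $\sF''$ is $f^{[*]}θ$-invariant.

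The main work, such as it is, will be the bookkeeping around the torsion of $f^*\sE$ and the two elementary facts that reflexive hull commutes with $-⊗Ω^1_Y$ and that it carries a morphism into a reflexive sheaf to a morphism out of the reflexive hull; both are routine once one remembers $Y$ is smooth. I want to emphasise that no saturation argument is needed here: invariance is recorded by an honest factorisation of the operator, and factorisations are preserved both by reflexive hulls and by the ``agree on a dense open set'' principle, so one never has to compare $\sF''$ with its saturation in $f^{[*]}\sE$.
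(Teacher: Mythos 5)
Your argument is correct and follows essentially the same route as the paper: invoke Lemma~\ref{lem:pbI}, use local freeness of $Ω^1_Y$ so that $\sF' ⊗ Ω^1_Y$ injects into $f^*\sE ⊗ Ω^1_Y$, and pass to reflexive hulls. The paper compresses your explicit torsion-quotient, extension-over-the-hull and dense-open-set identification into the single observation that taking reflexive hulls is a left-exact functor applied to the commutative square provided by Lemma~\ref{lem:pbI}.
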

\begin{proof}
  Since $Ω¹_Y$ is locally free, $ℱ'' ⊗ Ω¹_Y$ is a subsheaf of
  $f^{[*]} ℰ ⊗ Ω¹_Y$, and Lemma~\ref{lem:pbI} gives a commutative diagram
  \begin{equation}\label{eq:dfdlhk}
    \begin{gathered}
      \xymatrix{ %
        ℱ' \ar[r] \ar@{^(->}[d] & ℱ' ⊗ Ω¹_Y \ar@{^(->}[d] \\
        f^* ℰ \ar[r]_(.4){θ'} & f^* ℰ ⊗ Ω¹_Y
      }
    \end{gathered}
  \end{equation}
  Taking reflexive hulls is a left-exact functor.  Applied to \eqref{eq:dfdlhk},
  it will thus give the desired inclusion
  $f^{[*]} θ(ℱ'') ⊆ ℱ'' ⊗ Ω¹_Y ⊆ f^{[*]} ℰ ⊗ Ω¹_Y$.
\end{proof}

\begin{obs}[Weak functoriality with respect to morphisms between spaces]\label{obs:fun4}
  Assume we are given klt pairs $(X,D_X)$ and $(Y,D_Y)$, a smooth space $Z$, a
  sheaf $ℰ$ on $X$ and morphisms $g: Z → Y$ and $f: Y → X$.  Then, there exists
  a canonical morphism $c : (f ◦ g)^{[*]} ℰ → g^{[*]}f^{[*]} ℰ$.  If we assume
  additionally that $f^* ℰ$ is reflexive, then $c$ is isomorphic and given any
  Higgs-field $θ$, one verifies immediately that
  $g^{[*]} f^* (ℰ, θ) = (f ◦ g)^{[*]} (ℰ, θ)$.
\end{obs}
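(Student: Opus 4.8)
The plan is to dispatch the three assertions in turn. The existence of $c$ is a purely formal matter of reflexive hulls and needs no hypothesis on $(X,D_X)$; that $c$ is an isomorphism is then immediate as soon as $f^*\sE$ is reflexive; and the compatibility of Higgs fields reduces to the functoriality of the reflexive differential pull-back of \cite{MR3084424}.

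First I would construct $c$. The sheaf $f^{[*]}\sE=(f^*\sE)^{**}$ carries the canonical reflexivisation map $f^*\sE\to f^{[*]}\sE$. Pulling this back along $g$ and composing with the canonical identification $(f\circ g)^*\sE=g^*f^*\sE$ produces a morphism $(f\circ g)^*\sE\to g^*\bigl(f^{[*]}\sE\bigr)$ of coherent sheaves on the smooth variety $Z$. Applying the double-dual functor $(\,\cdot\,)^{**}$ on $Z$ to this morphism yields the canonical map
$$
c\colon (f\circ g)^{[*]}\sE=\bigl((f\circ g)^*\sE\bigr)^{**}\longrightarrow \bigl(g^*(f^{[*]}\sE)\bigr)^{**}=g^{[*]}f^{[*]}\sE .
$$
If moreover $f^*\sE$ is reflexive, the map $f^*\sE\to f^{[*]}\sE$ is an isomorphism, so the morphism $(f\circ g)^*\sE\to g^*(f^{[*]}\sE)$ above is a canonical identification followed by an isomorphism; hence its double dual $c$ is an isomorphism.

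It then remains to match the Higgs fields, keeping the assumption that $f^*\sE$ is reflexive. In that case $f^*(\sE,\theta)=(f^*\sE,f^*\theta)$ of Construction~\ref{cons:pb1} is a reflexive Higgs sheaf on $Y$, its field being $f^*\theta$ composed with $\Id\otimes d_{\refl} f$. Since $Z$ is smooth and $(Y,D_Y)$ is klt, forming reflexive hulls of this operator as in Construction~\ref{cons:pb2} equips $g^{[*]}f^*\sE$ with the Higgs field obtained as the reflexive hull on $Z$ of the composite
$$
g^*f^*\theta,\qquad \Id\otimes g^*(d_{\refl} f),\qquad \Id\otimes d_{\refl} g .
$$
On the other hand, Construction~\ref{cons:pb2} applied directly to $f\circ g\colon Z\to X$ gives $(f\circ g)^{[*]}\sE$ the Higgs field that is the reflexive hull of $(f\circ g)^*\theta$ composed with $\Id\otimes d_{\refl}(f\circ g)$. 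Under the canonical identifications $(f\circ g)^*\sE=g^*f^*\sE$ and $(f\circ g)^*\Omega^{[1]}_X=g^*f^*\Omega^{[1]}_X$, functoriality of the Kähler pull-back gives $(f\circ g)^*\theta=g^*f^*\theta$, and functoriality of the reflexive differential pull-back, \cite{MR3084424}, gives the factorisation $d_{\refl}(f\circ g)=d_{\refl} g\circ g^*(d_{\refl} f)$. Hence the two composite operators coincide before taking reflexive hulls, and therefore afterwards; combining this with the identification $c$ of the underlying sheaves yields $g^{[*]}f^*(\sE,\theta)=(f\circ g)^{[*]}(\sE,\theta)$ as Higgs sheaves on $Z$.

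I do not expect a genuine obstacle here; the result is labelled an Observation and the authors note that the verification is immediate. The only points that require care are invoking the functoriality of $d_{\refl}$ in precisely the form $d_{\refl}(f\circ g)=d_{\refl} g\circ g^*(d_{\refl} f)$ — which is part of the package established in \cite{MR3084424} and already used implicitly in Construction~\ref{cons:pb1} — and keeping the canonical identifications $g^*f^*=(f\circ g)^*$ together with the behaviour of $(\,\cdot\,)^{**}$ under tensoring with locally free sheaves consistent throughout the diagram chase.
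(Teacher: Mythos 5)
Your proposal is correct and carries out exactly the verification the paper leaves implicit in this Observation: you obtain $c$ by applying $(\,\cdot\,)^{**}$ on $Z$ to the canonical map $(f◦ g)^*\sE = g^*f^*\sE → g^*\bigl(f^{[*]}\sE\bigr)$, and the Higgs fields agree because $d_{\refl}(f◦ g) = d_{\refl} g ◦ g^*(d_{\refl} f)$ from \cite{MR3084424} (the same functoriality underlying Lemma~\ref{lem:fun2b}), so the two operators coincide before, and hence after, taking reflexive hulls. As your argument implicitly shows, the reflexivity of $f^*\sE$ is only needed to make $c$ an isomorphism; the equality of the pulled-back Higgs fields holds already at the level of $g^*f^*(\sE,θ) = (f◦ g)^*(\sE,θ)$, which is consistent with the paper's setup.
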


\begin{warning}[No full functoriality with respect to morphisms between spaces]\label{warn:nofunct}
  We have seen in Lemma~\ref{lem:fun2b} that pull-back of Higgs sheaves is fully
  functorial with respect to morphisms between spaces.  There is no full
  analogue of this for reflexive pull-back.  In fact, taking reflexive hulls
  does in general not commute with pull-back, the morphism $c$ of
  Observation~\ref{obs:fun4} will in general not be isomorphic, and
  functoriality fails already at the level of sheaves, without any additional
  Higgs structure.  \Publication{The arXiv version of this paper discusses an
    example in detail.}\Preprint{For an example, consider the following
    well-known sequence of morphisms
    $$
    \xymatrix{ %
      Z \ar[r]^g & Y \ar[r]^f & X
    }
    $$
    that is obtained as follows.  Embed $ℙ¹ ⨯ ℙ¹$ into $ℙ³$ and let $X$ be the
    cone over it, which has an isolated singular point $x ∈ X$.  Let $Z$ be the
    blow-up of $X$, which is smooth, and let $Y$ one of the obvious small
    intermediate desingularisations.  Finally, let $D ⊂ X$ be a reduced
    Weil-divisor that is not $ℚ$-Cartier, and whose strict transform $D_Y ⊂ Y$
    does not contain the $f$-exceptional set.  If $D_Z ⊂ Z$ is the strict
    transform of $D$ in $Z$ and $E ⊂ Z$ the preimage of $x$, then
    $$
    f^{[*]} 𝒥_D = 𝒥_{D_Y} \quad \text{and} \quad g^{[*]} f^{[*]} 𝒥_D =
    𝒥_{D_Z}
    $$
    while $(f◦ g)^{[*]} 𝒥_D = 𝒥_{D_Z+E}$.}
\end{warning}

\subsection{Higgs sheaves on $ℚ$-varieties}
\label{ssec:Qhiggs}
\approvals{
  Behrouz & yes \\
  Daniel & yes \\
  Stefan & yes \\
  Thomas & yes
}

The definition of $ℚ$-sheaves given in Section~\ref{subsect:QQsheaves} has an
obvious analogue for Higgs sheaves.

\begin{defn}[Higgs $ℚ$-sheaf and $ℚ$-bundle]\label{def:QHS}
  Setup and notation as in Definition~\ref{def:Qvar}.  A \emph{Higgs $ℚ$-sheaf}
  $(ℰ, θ)$ on $X_ℚ$ is a tuple
  $$
  \bigl( \{ (ℰ_α, θ_α)\}_{α ∈ A}, \{i_{αβ}\}_{(α,β) ∈ A⨯A}\bigr)
  $$
  consisting of a family of Higgs sheaves $(ℰ_α, θ_α)$ on $X_α$ plus
  isomorphisms
  $$
  i_{αβ} : p_{αβ,α}^*(ℰ_α, θ_α) → p_{αβ,β}^*(ℰ_β, θ_β)
  $$
  that are compatible on the triple overlaps.  The Higgs $ℚ$-sheaf $(ℰ, θ)$ is
  called \emph{reflexive} if all the $ℰ_α$ are reflexive.  It is called
  \emph{Higgs $ℚ$-bundle} if all the $ℰ_α$ are locally free.
\end{defn}

In complete analogy to Construction~\ref{cons:rpb1}, any Higgs sheaf on $X$
pulls back to a reflexive Higgs $ℚ$-sheaf on $X_ℚ$.

\begin{construction}[Construction of Higgs $ℚ$-sheaf by reflexive pull-back]\label{cons:rpb2}
  Given a quasi-étale $ℚ$-variety $X_ℚ := \bigl(X, \{ p_α\}_{α ∈ A} \bigr)$,
  recall from \cite[Prop.~5.20]{KM98} that $X$ is necessarily klt.  In
  particular, there exists reflexive pull-back from Higgs sheaves on $X$ to
  reflexive Higgs sheaves on the manifolds $X_α$.  We can thus define a
  reflexive Higgs $ℚ$-sheaf $(ℰ, θ)^{[ℚ]}$ on $X_ℚ$, setting
  $(ℰ_α, θ_α) := p_α^{[*]} (𝒢, θ)$---the existence of natural isomorphisms
  $i_{αβ}$ is guaranteed by étalité of $p_{αβ,α}$ and $p_{αβ,β}$.
\end{construction}

As with $ℚ$-sheaves, any Higgs $ℚ$-sheaf on a $ℚ$-variety pulls back to an
honest Higgs sheaf on any global cover.  The following are direct analogues of
the appropriate statements for $ℚ$-sheaves that are found in
Section~\ref{subsect:QQsheaves}.

\begin{fact}[Induced Higgs $G$-sheaf on global cover]\label{fact:isHgc}
  In the setting of Definition~\ref{def:QHS}, assume we are given a global cover
  $γ : \what{X} → X$ as in Section~\ref{ssec:globalCover}, which is Galois with
  group $G$.  Then, the pull-back Higgs sheaves $q_α^*(ℰ_α, θ_α)$ glue to give a
  Higgs $G$-sheaf $(\what{ℰ}, \what{θ})$ on $\what{X}$.  If the Higgs $ℚ$-sheaf
  $(ℰ,θ)$ is reflexive, then $\what{ℰ}$ is locally free in codimension two.  If
  $(ℰ,θ)$ is reflexive and $\what{X}$ is Cohen-Macaulay, then
  $(\what{ℰ}, \what{θ})$ is likewise reflexive.  \qed
\end{fact}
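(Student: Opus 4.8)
The plan is to obtain everything from the corresponding statements for $ℚ$-sheaves (Remark~\ref{rem:isgc}, Observation~\ref{obs:CM1}) together with the functoriality of the Higgs pull-back from Section~\ref{sec:pull-back}; the one genuinely new ingredient is the gluing of the Higgs fields. Forgetting Higgs fields, Mumford's construction recalled in Remark~\ref{rem:isgc} already produces the coherent $G$-sheaf $\what\sE$ on $\what X$, glued from the pull-backs $q_α^*\sE_α$ along transition isomorphisms induced by the $i_{αβ}$, with $\what\sE|_{\what X_α} \cong q_α^*\sE_α$ as $H_α$-sheaves. Since each $X_α$ is smooth, hence klt with zero boundary, Construction~\ref{cons:pb1} supplies a Higgs field $q_α^*θ_α$ on $q_α^*\sE_α$, valued in $q_α^*\sE_α ⊗ Ω^{[1]}_{\what X_α} = (\what\sE ⊗ Ω^{[1]}_{\what X})|_{\what X_α}$. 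First I would verify that these local fields glue: the transition isomorphism of $\what\sE$ over $\what X_α ∩ \what X_β$ is the pull-back of $i_{αβ}$ along the natural morphism $\what X_α ∩ \what X_β → X_{αβ}$, and since $i_{αβ}$ is an isomorphism of Higgs sheaves by Definition~\ref{def:QHS} and the Higgs pull-back is functorial in the morphism (Lemma~\ref{lem:fun2b}), this transition isomorphism intertwines $q_α^*θ_α$ and $q_β^*θ_β$ on the overlap. Hence the $q_α^*θ_α$ patch together to a $G$-equivariant morphism $\what θ : \what\sE → \what\sE ⊗ Ω^{[1]}_{\what X}$ with $\what θ|_{\what X_α} = q_α^*θ_α$. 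Restricting to the open cover $\{\what X_α\}$ and using $Ω^{[2]}_{\what X}|_{\what X_α} = Ω^{[2]}_{\what X_α}$, the morphism $\what θ Λ \what θ$ restricts on $\what X_α$ to $(q_α^*θ_α) Λ (q_α^*θ_α)$, which vanishes because a pull-back of a Higgs sheaf is again a Higgs sheaf (Construction~\ref{cons:pb1}); so $\what θ Λ \what θ = 0$ and $(\what\sE, \what θ)$ is a Higgs $G$-sheaf.

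For the second assertion, assume each $\sE_α$ is reflexive. On the smooth variety $X_α$ a reflexive sheaf is locally free away from a closed subset $Z_α$ of codimension $\geq 3$, \cite[Cor.~1.4]{MR597077}. As $q_α$ is finite and surjective, every component of $q_α^{-1}(Z_α)$ maps finitely onto a closed irreducible subset of $Z_α$, so $q_α^{-1}(Z_α)$ has codimension $\geq 3$ in $\what X_α$; hence $\what\sE|_{\what X_α} = q_α^*\sE_α$ is locally free in codimension two, and since the $\what X_α$ cover $\what X$, so is $\what\sE$.

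For the third assertion, suppose in addition that $\what X$ is Cohen-Macaulay. Then by Observation~\ref{obs:CM1} each $q_α$ is flat, and therefore $q_α^*\sE_α$ is reflexive on $\what X_α$. Reflexivity is a local property and the $\what X_α$ form an open cover of $\what X$, so $\what\sE$ is reflexive, which makes $(\what\sE, \what θ)$ a reflexive Higgs $G$-sheaf. I expect the only real obstacle to be the compatibility check in the first step — identifying the transition data of $\what\sE$ with pull-backs of the $i_{αβ}$ and invoking functoriality of the Higgs pull-back — while the remaining two items reduce to routine localisation arguments.
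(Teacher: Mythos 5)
Your proposal is correct and follows exactly the route the paper intends: the statement is left unproved as a ``direct analogue'' of Remark~\ref{rem:isgc} and Observation~\ref{obs:CM1}, and your argument supplies precisely the missing ingredient, namely that the Higgs fields $q_α^*θ_α$ (defined via Construction~\ref{cons:pb1}, using that the smooth charts $X_α$ are klt) glue because the transition data are pull-backs of the Higgs-sheaf isomorphisms $i_{αβ}$ through the smooth, hence klt, overlaps $X_{αβ}$, so Lemma~\ref{lem:fun2b} applies; the same compatibility (including the case $α=β$, which encodes the $G$-structure in Mumford's construction) gives the $G$-invariance of $\what{θ}$. The codimension-two and Cohen--Macaulay statements reduce, as you say, to \cite[Cor.~1.4]{MR597077} plus finiteness of $q_α$ and to Observation~\ref{obs:CM1}, exactly as in the $ℚ$-sheaf case.
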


A Higgs $ℚ$-sheaf does not only induce an honest Higgs-sheaf on any global
cover, but also on any resolution of singularities of global covers that are
Cohen-Macaulay.  This can again be seen as a form of reflexive pull-back, this
time from the global cover (which need not be klt) to the resolution of
singularities.

\begin{lem}[Induced Higgs $G$-sheaf on resolution of global cover]\label{lem:rpbfgc}
  Given a $ℚ$-variety $X$, a reflexive Higgs $ℚ$-sheaf $(ℰ, θ)$, a global cover
  $\what{X}$ with Galois group $G$ and induced Higgs sheaf
  $\bigl( \what{ℰ}, \what{θ} \bigr)$, let $π : \wtilde{X} → \what{X}$ be a
  $G$-equivariant resolution of singularities.  Set
  $\wtilde{ℰ} := π^{[*]} \what{ℰ}$.  If $\what{X}$ is Cohen-Macaulay, then there
  exists a $G$-invariant Higgs field $\wtilde{θ}$ on $\wtilde{ℰ}$, such that the
  Higgs $G$-sheaf $\bigl( \wtilde{ℰ}, \wtilde{θ} \bigr)$ agrees with the
  reflexive $π$-pull back of $\bigl( \what{ℰ}, \what{θ} \bigr)$ over the maximal
  open set where $\what X$ is klt (and where reflexive $π$-pull pull-back is
  therefore defined).
\end{lem}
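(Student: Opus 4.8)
The strategy is to build $\wtilde{θ}$ locally, chart by chart, exploiting that the charts $X_α$ of the underlying $ℚ$-variety are smooth, and then to glue. First I would set $\wtilde{X}_α := π^{-1}(\what{X}_α) ⊆ \wtilde{X}$ for each $α ∈ A$ and consider the composed morphism $g_α := q_α ∘ π|_{\wtilde{X}_α} : \wtilde{X}_α → X_α$. Both $X_α$ and $\wtilde{X}_α$ are smooth, so Constructions~\ref{cons:pb1} and~\ref{cons:pb2} apply to $g_α$ (the reflexive differential $d_{\refl} g_α$ being the ordinary one) and produce a Higgs field $g_α^{[*]}θ_α$ on $g_α^{[*]}\sE_α$. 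Its underlying sheaf can be identified with $\wtilde{\sE}|_{\wtilde{X}_α}$: since $\what{X}$ is Cohen--Macaulay, Observation~\ref{obs:CM1} shows that $q_α^{*}\sE_α$ is reflexive, while Fact~\ref{fact:isHgc} identifies it with $\what{\sE}|_{\what{X}_α}$; as ordinary pull-back is functorial and $(-)^{**}$ commutes with restriction to open subsets, one obtains $g_α^{[*]}\sE_α = (π^{*}q_α^{*}\sE_α)^{**} = \bigl((π^{*}\what{\sE})^{**}\bigr)|_{\wtilde{X}_α} = \wtilde{\sE}|_{\wtilde{X}_α}$, canonically and compatibly on overlaps, because the isomorphisms $q_α^{*}\sE_α ≅ \what{\sE}|_{\what{X}_α}$ are precisely the gluing data behind Fact~\ref{fact:isHgc}.

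To glue the fields $g_α^{[*]}θ_α$ I would use that a Higgs field on $\wtilde{\sE}$ is a morphism into the torsion free sheaf $\wtilde{\sE} ⊗ Ω^{1}_{\wtilde{X}}$ and is therefore determined by its restriction to any dense open subset. Choosing $π$ to be an isomorphism over $\what{X}_{\reg}$, fix a dense, $G$-invariant open subset $W ⊆ \what{X}$ over which $π$ is an isomorphism, $\what{X}$ is smooth, and $q_α|_{W ∩ \what{X}_α}$ is étale for every $α$ (such $W$ exists: remove the closures of the branch loci of the $q_α$, the non-smooth locus of $\what X$, and the non-isomorphism locus of $π$, then intersect the $G$-translates). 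Over $π^{-1}(W)$ each $g_α$ is étale, reflexive pull-back coincides with ordinary pull-back, and $g_α^{[*]}θ_α$ restricts to the pull-back of $\what{θ}$ by the gluing description in Fact~\ref{fact:isHgc}. Hence on each overlap $\wtilde{X}_α ∩ \wtilde{X}_β$ the fields $g_α^{[*]}θ_α$ and $g_β^{[*]}θ_β$ agree on the dense open set $π^{-1}(W) ∩ \wtilde{X}_α ∩ \wtilde{X}_β$, hence everywhere; this produces a global Higgs field $\wtilde{θ}$ on $\wtilde{\sE}$. Running the same comparison over $π^{-1}(W)$ for $\wtilde{θ}$ and its $G$-translates — using that $π$ is $G$-equivariant and $\what{θ}$ is $G$-invariant — shows that $\wtilde{θ}$ is $G$-invariant. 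Finally, over the maximal open set $\what{X}^{\mathrm{klt}}$ on which $\what{X}$ is klt, Construction~\ref{cons:pb2} applied to $π|:π^{-1}(\what{X}^{\mathrm{klt}}) → \what{X}^{\mathrm{klt}}$ defines $π^{[*]}(\what{\sE},\what{θ})$, whose underlying sheaf is again $\wtilde{\sE}|_{π^{-1}(\what{X}^{\mathrm{klt}})}$ and which agrees with $\wtilde{θ}$ on $π^{-1}(W)$, hence — by torsion freeness — on all of $π^{-1}(\what{X}^{\mathrm{klt}})$; this is the asserted compatibility.

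The step I expect to be the main obstacle is the gluing. Reflexive hulls commute neither with pull-back nor with restriction to the charts (cf.\ Warning~\ref{warn:nofunct}), so the local fields $g_α^{[*]}θ_α$ cannot simply be transported by the cocycle $\{i_{αβ}\}$ of Definition~\ref{def:QHS}; the comparison must instead be carried out on the dense open set $π^{-1}(W)$, where all relevant morphisms are étale and $π$ is an isomorphism, and then torsion freeness of $\wtilde{\sE}$ closes the gap. Verifying that the sheaf identifications $g_α^{[*]}\sE_α ≅ \wtilde{\sE}|_{\wtilde{X}_α}$ are themselves compatible on overlaps is a secondary point of the same nature, and likewise reduces to a statement over the étale locus.
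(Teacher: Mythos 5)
Your proposal is correct and follows essentially the same route as the paper's proof: define the local Higgs fields by reflexive pull-back along the composed chart maps $q_α ◦ π|_{\wtilde{X}_α}$, identify the underlying sheaves with $\wtilde{\sE}|_{\wtilde{X}_α}$ using Cohen--Macaulayness via Observation~\ref{obs:CM1}, and glue (and check $G$-invariance and the compatibility over the klt locus) by comparing on a dense open set and invoking torsion freeness of $\wtilde{\sE} ⊗ Ω^1_{\wtilde{X}}$. The only cosmetic difference is that the paper carries out the comparison over the smooth locus of $\what{X}$ using the weak functoriality of Observation~\ref{obs:fun4}, whereas you shrink further to a dense open where $π$ is an isomorphism and the $q_α$ are étale; both close the argument in the same way.
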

\begin{proof}
  To define a $G$-invariant Higgs field on $\wtilde{ℰ}$, we denote the charts of
  the $ℚ$-variety $X_ℚ$ by $(X, \{ p_α\}_{α ∈ A})$, and use the notation for
  global covers introduced in Section~\ref{ssec:globalCover}.  Setting
  $\wtilde{X}_α := π^{-1}(\what{X}_α)$, the following diagrams summarise our
  situation
  $$
  \xymatrix{ %
    \wtilde{X}_α \ar[rr]^{π_α := π|_{\wtilde{X}_α}} && \what{X}_α \ar[rr]^{q_α}_{·/H_α} && X_α \ar[rr]_{·/G_α} \ar@/^.4cm/[rrrr]^{p_α} && U_α \ar[rr]_{p'_α\text{, étale}} && X.
  }
  $$
  Set
  $\bigl( \wtilde{ℰ}_α, \wtilde{θ}_α \bigr) := (q_α ◦ π_α)^{[*]} (ℰ_α, θ_α)$.
  Using the assumption that $\what{X}$ is Cohen-Macaulay, recall from
  Observation~\ref{obs:CM1} that $q_α^* ℰ_α$ is reflexive.  In particular, it
  follows directly that $\wtilde{ℰ}_α = \wtilde{ℰ}|_{\wtilde{X}_α}$.  More is
  true.  Over the open set where $\what X$ is smooth and pull-back of Higgs
  sheaves is therefore defined, it follows from weak functoriality,
  Observation~\ref{obs:fun4}, that
  $$
  \bigl( \wtilde{ℰ}_α, \wtilde{θ}_α \bigr) = π_α^{[*]} q_α^* (ℰ_α, θ_α) =
  π_α^{[*]} \bigl(\what{ℰ}|_{\what{X}_α}, \what{θ}_α|_{\what{X}_α} \bigr).
  $$
  In particular, we see that the $G$-invariant Higgs fields $\wtilde{θ}_α$ agree
  over this dense open set.  Since $\wtilde{X}$ is smooth, two Higgs fields on
  the torsion free sheaf $\wtilde{ℰ}$ agree if they agree on an open set.  It
  follows that the $\wtilde{θ}_α$ glue to give a globally defined Higgs
  $G$-sheaf $\bigl( \wtilde{ℰ}, \wtilde{θ} \bigr)$ that agrees with the
  reflexive $π$-pull back of $\bigl( \what{ℰ}, \what{θ} \bigr)$ wherever that
  pull-back is defined.
\end{proof}

\begin{notation}[Reflexive pull-back from global cover]\label{not:rpbfgc}
  In the setting of Lemma~\ref{lem:rpbfgc}, we write
  $π^{[*]} \bigl( \what{ℰ}, \what{θ} \bigr) := \bigl( \wtilde{ℰ}, \wtilde{θ}
  \bigr)$, and refer to this sheaf as the reflexive pull-back.
\end{notation}

Since this new piece of terminology agrees with the old one as soon as
$\what{X}$ is klt, we do not expect this to lead to any confusion.

\subsection{Stability}
\label{ssec:stability}
\approvals{
  Behrouz & yes \\
  Daniel & yes \\
  Stefan & yes \\
  Thomas & yes
}

A Higgs sheaf is stable if it is stable as a sheaf with an $Ω^{[1]}_X$-valued
operator, cf.\ Definition~\vref{defn:swostab1}.  For later use, the following
propositions, describing the behaviour of stability under pull-backs, will be
useful.

\begin{prop}[$G$-stability under birational pull-back]\label{prop:1}
  Let $(X,D)$ be a projective klt pair, where $X$ is equipped with an action of
  a finite group $G$.  Let $H$ be any nef, $ℚ$-Cartier $ℚ$-divisor on $X$ and
  $(ℰ, θ)$ be any Higgs $G$-sheaf, where $ℰ$ is torsion free.  Given a
  birational morphism $π : \wtilde{X} → X$ of projective $G$-varieties with
  $\wtilde{X}$ smooth, then $(ℰ, θ)$ is $G$-stable (resp.\ semistable) with
  respect to $H$ if and only if $π^{[*]}(ℰ, θ)$ is $G$-stable (resp.\
  semistable) with respect to $π^*H$.
\end{prop}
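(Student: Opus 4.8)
The plan is to establish, separately for each direction, a slope-preserving correspondence between the saturated generically invariant $G$-subsheaves that test $H$-stability of $(\sE,\theta)$ on $X$ and those that test $\pi^*H$-stability of $\pi^{[*]}(\sE,\theta)$ on $\wtilde X$; the equivalence then drops out by comparing the defining inequalities of Definitions~\ref{defn:swostab1} and~\ref{defn:swostab2}. Throughout I would use that $\pi$ restricts to an isomorphism over a dense, $G$-invariant open subset $U\subseteq X$ with small complement, and that after shrinking $U$ I may assume $X$ smooth along $U$ and both $\sE|_U$ and $\Omega^{[1]}_X|_U=\Omega^1_U$ locally free; over $\pi^{-1}(U)$ the reflexive pull-back $\pi^{[*]}(\sE,\theta)$ of Construction~\ref{cons:pb2} is then canonically identified, as a Higgs $G$-sheaf, with $(\sE,\theta)|_U$. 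The point to keep in mind is that $\pi^{-1}(X\setminus U)$ may well be a divisor in $\wtilde X$, which is precisely why every slope comparison below must be run through Lemma~\ref{lem:elemSlp2}.\ref{il:Be}, whose hypothesis only asks that two sheaves agree over the preimage of a \emph{big} open subset of $X$, not over a big subset of $\wtilde X$. I would also use the standard reduction that in Definitions~\ref{defn:swostab1}--\ref{defn:swostab2} it suffices to test \emph{saturated} subsheaves: passing to the saturation preserves the rank and the $G$-structure, can only increase the slope, and preserves generic $\theta$-invariance by Lemmas~\ref{lem:nsat} and~\ref{lem:x11}.

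First I would treat ``$\pi^{[*]}(\sE,\theta)$ (semi)stable w.r.t.\ $\pi^*H$ $\Rightarrow$ $(\sE,\theta)$ (semi)stable w.r.t.\ $H$''. Let $\sF\subseteq\sE$ be a generically $\theta$-invariant $G$-subsheaf with $0<\rank\sF<\rank\sE$, which I may assume saturated. Put $\sF':=\img(\pi^*\sF\to\pi^*\sE)$ and $\sF'':=(\sF')^{**}\subseteq\pi^{[*]}\sE$. Then $\sF''$ is $\pi^{[*]}\theta$-invariant by Lemma~\ref{lem:pbIII}; since $\pi$ is $G$-equivariant it is a $G$-subsheaf; and $\rank\sF''=\rank\sF$ because $\sF''$ coincides with the locally free sheaf $\sF|_{U'}$ over $\pi^{-1}(U')$ for a suitable dense open $U'\subseteq U$ with $X\setminus U'$ small. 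Hence $\sF''$ is a legitimate test sheaf for $\pi^*H$-(semi)stability upstairs. As $\sF''$ differs from $\pi^{[*]}\sF$ only over $\pi^{-1}(X\setminus U')$, Lemma~\ref{lem:elemSlp2}.\ref{il:Be} gives $\mu_{\pi^*H}(\sF'')=(\deg\pi)\cdot\mu_H(\sF)$, and Lemma~\ref{lem:elemSlp2}.\ref{il:Ah} gives $\mu_{\pi^*H}(\pi^{[*]}\sE)=(\deg\pi)\cdot\mu_H(\sE)$. The assumed inequality upstairs therefore translates verbatim into $\mu_H(\sF)\leq\mu_H(\sE)$, resp.\ $<$ in the stable case.

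For the converse, let $\wtilde\sF\subseteq\pi^{[*]}\sE$ be a generically $\pi^{[*]}\theta$-invariant $G$-subsheaf with $0<\rank\wtilde\sF<\rank\sE$, again assumed saturated. Its restriction to $\pi^{-1}(U)$ is, under the identification above, a saturated (restrictions of saturated subsheaves are saturated) and generically $\theta|_U$-invariant $G$-subsheaf of $\sE|_U$, which extends uniquely to a saturated $G$-subsheaf $\sF_X\subseteq\sE$ with $\sF_X|_U=\wtilde\sF|_{\pi^{-1}(U)}$ — uniqueness and existence because $X$ is normal, $\sE$ is torsion free, and $U$ is big and $G$-invariant. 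By Lemma~\ref{lem:x11}, $\sF_X$ is generically $\theta$-invariant, and $\rank\sF_X=\rank\wtilde\sF$, so $\sF_X$ is a legitimate test sheaf for $H$-(semi)stability of $(\sE,\theta)$. Since $\pi^{[*]}\sF_X$ and $\wtilde\sF$ agree over the preimage of a big open subset of $X$, Lemma~\ref{lem:elemSlp2} converts the inequality $\mu_H(\sF_X)\leq\mu_H(\sE)$ (resp.\ $<$) supplied by the (semi)stability of $(\sE,\theta)$ into $\mu_{\pi^*H}(\wtilde\sF)\leq\mu_{\pi^*H}(\pi^{[*]}\sE)$ (resp.\ $<$), which is what is needed. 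Carrying out both arguments with $\leq$ throughout settles the semistable case and with $<$ the stable case.

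The hard part, I expect, will not be any single deep step but the bookkeeping forced by the fact that $\pi$ may contract divisors: one must keep the images, reflexive hulls and saturations under control and check at each stage that $G$-equivariance and generic $\theta$-invariance persist, while making sure that the two sheaves being compared agree over the preimage of a big subset of $X$ (rather than merely a big subset of $\wtilde X$), so that Lemma~\ref{lem:elemSlp2}.\ref{il:Be} genuinely applies. Once the correspondences $\sF\mapsto\sF''$ and $\wtilde\sF\mapsto\sF_X$ are in place and these compatibilities are verified, the rest is immediate.
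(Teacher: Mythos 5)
Your proposal is correct and follows essentially the same route as the paper: both proofs set up a slope-compatible correspondence of test subsheaves across $\pi$ via the locus where $\pi$ is an isomorphism, extend respectively pull back and saturate, obtain generic $\theta$-invariance from Lemma~\ref{lem:x11}, and compare slopes with Lemma~\ref{lem:elemSlp2}; the paper merely packages the two implications as an equivalence of statements ``there exists a generically invariant $G$-subsheaf of slope $\geq s$'' on either side. The one small repair needed is in your first direction: Lemma~\ref{lem:pbIII} assumes $\sF$ is honestly $\theta$-invariant, whereas your $\sF$ is only generically $\theta$-invariant (invariant over the locus where $\Omega^{[1]}_X$ is locally free), so the reflexive hull $\sF''$ need not be invariant upstairs; replace it by its saturation and deduce invariance from Lemma~\ref{lem:x11} applied to the dense open set $\pi^{-1}$ of that locus --- the slope only increases, which is harmless, and this is exactly what the paper's argument does.
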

\begin{proof}
  Given any number $s ∈ ℚ$, we need to show that the following two statements
  are equivalent.
  \begin{enumerate}
  \item\label{il:Tx1} There exists a $G$-subsheaf $0 \ne ℱ ⊆ ℰ$ with slope
    $μ_H(ℱ) ≥ s$ that is generically $θ$-invariant.

  \item\label{il:Tx2} There exists a $G$-subsheaf $0 \ne \wtilde{ℱ} ⊆ π^{[*]} ℰ$
    with slope $μ_{π^* H}( \wtilde{ℱ} ) ≥ s$ that is generically
    $π^{[*]}θ$-invariant.
  \end{enumerate}
  To this end, let $X° ⊆ X_{\reg}$ be the maximal open set where $π$ is
  isomorphic, and observe that $X°$ is a big, $G$-invariant subset of $X$.  We
  set $\wtilde{X}° := π^{-1}(X°)$.

  \subsubsection*{\ref{il:Tx1}$⇒$\ref{il:Tx2}}

  Given a sheaf $ℱ$ as in \ref{il:Tx1}, set $\wtilde{ℱ}' := f^{[*]} ℱ$.  This is
  a $G$-invariant subsheaf of $f^{[*]} ℰ$ whose restriction to $\wtilde{X}°$ is
  $f^{[*]}θ$-invariant.  Its saturation $\wtilde{ℱ}$ is $G$-invariant, and, by
  Lemma~\ref{lem:x11}, generically $π^{[*]}θ$-invariant.  The ranks of
  $\wtilde{ℱ}$ and $ℱ$ agree, the slope only increases in the process.

  \subsubsection*{\ref{il:Tx2}$⇒$\ref{il:Tx1}}

  Given a sheaf $\wtilde{ℱ}$ as in \ref{il:Tx2}, use the identification
  $\wtilde{X}° ≅ X°$ to view $\wtilde{ℱ}|_{\wtilde{X}°}$ as a
  $θ|_{X°}$-invariant sheaf $ℱ°$ on $X°$.  Recall from \cite[I.Thm.~9.4.7 and
  0.Sect.~5.3.2]{EGA1} that there exists a coherent subsheaf extension of $ℱ°$
  to $X$, that is, a coherent subsheaf $ℱ' ⊆ ℰ$ whose restriction to $X°$ equals
  $ℱ°$.  As before, Lemma~\ref{lem:x11} guarantees that its saturation
  $ℱ := (ℱ')^{sat}$ is generically $θ$-invariant.  The ranks of $\wtilde{ℱ}$ and
  $ℱ$ agree, the slope only increases in the process.
\end{proof}

The following is an analogue for morphisms that are generically Galois, say with
group $G$.  It differs from Proposition~\ref{prop:1} in that it compares
$G$-stability on the domain to normal stability on the target of the morphism.
\Publication{Its proof uses Proposition~\ref{prop:1:1} to descent sheaves from
  $\wtilde{X}$ to $X$, but is otherwise completely similar to that of
  Proposition~\ref{prop:1}.  The arXiv version of this paper contains the full
  argument.}

\begin{prop}[Stability under generically Galois pull-back]\label{prop:stability_upstairs_downstairs}
  Let $(X,D)$ be a projective, klt pair, let $H$ be any nef, $ℚ$-Cartier
  $ℚ$-divisor on $X$ and $(ℰ, θ)$ be any Higgs sheaf, where $ℰ$ is torsion free.
  Given a sequence of morphisms between normal, projective varieties,
  $$
  \xymatrix{ %
    \wtilde{X} \ar@/^4mm/[rrrrrr]^f \ar[rrr]_{π\text{, $G$-equivar.\ biratl.}} &&& \what{X} \ar[rrr]_{γ\text{, Galois with group }G} &&& X,
  }
  $$
  with $\wtilde{X}$ smooth, then $(ℰ, θ)$ is stable (resp.\ semistable) with
  respect to $H$ if and only if $f^{[*]}(ℰ, θ)$ is $G$-stable (resp.\
  semistable) with respect to $π^*H$.\Publication{\qed}
\end{prop}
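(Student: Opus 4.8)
The plan is to imitate the proof of Proposition~\ref{prop:1} essentially verbatim, inserting one extra descent step that invokes Proposition~\ref{prop:1:1} in order to push a subsheaf from $\what{X}$ down to $X$. Write $f := \gamma \circ \pi : \wtilde{X} \to X$. Since $\gamma$ is the quotient map for the $G$-action and $\pi$ is $G$-equivariant, $f$ is $G$-invariant, so $f^{[*]}\sE$ carries a canonical $G$-linearisation, and because $\wtilde{X}$ is smooth and $(X,D)$ is klt, Construction~\ref{cons:pb2} turns $f^{[*]}(\sE,\theta)$ into a Higgs $G$-sheaf on $\wtilde{X}$; this is the object abbreviated by $\pi^{[*]}(\sE,\theta)$, with $\pi^*H$ short for $f^*H$. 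As in Proposition~\ref{prop:1}, everything reduces to proving, for every $s \in \mathbb{Q}$, the equivalence of: (i) $\sE$ contains a generically $\theta$-invariant subsheaf $\sF$ with $0 < \rank \sF < \rank \sE$ and $\mu_H(\sF) \geq s$; and (ii) $f^{[*]}\sE$ contains a generically $f^{[*]}\theta$-invariant $G$-subsheaf $\wtilde{\sF}$ with $0 < \rank \wtilde{\sF} < \rank \sE$ and $\mu_{f^*H}(\wtilde{\sF}) \geq (\deg f)\cdot s$. Granting this, and using $\mu_{f^*H}(f^{[*]}\sE) = (\deg f)\cdot\mu_H(\sE)$ from Lemma~\ref{lem:elemSlp2}, the choice $s = \mu_H(\sE)$ yields the stable case and the choices $s > \mu_H(\sE)$ yield the semistable case. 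Finally, replacing $\sE$ by $\sE^{**}$ changes neither the slopes nor (semi)stability, since $\sE^{**}/\sE$ is supported in codimension $\geq 2$ and saturating a destabilising subsheaf only raises the slope while preserving generic $\theta$-invariance by Lemma~\ref{lem:x11}; so I may assume $\sE$ reflexive.

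For the implication (i)$\Rightarrow$(ii) I would copy the relevant part of the proof of Proposition~\ref{prop:1}: set $\sF' := \img\bigl(f^*\sF \to f^{[*]}\sE\bigr)$ and $\wtilde{\sF} := (\sF')^{sat}$. This is a $G$-subsheaf because $f$ is $G$-invariant; it has the same rank as $\sF$; its slope is at least $(\deg f)\cdot\mu_H(\sF)$ by Lemma~\ref{lem:elemSlp2}; and over $f^{-1}(X_{\reg})$ it is $f^{[*]}\theta$-invariant by Lemma~\ref{lem:pbI}, hence generically $f^{[*]}\theta$-invariant by Lemma~\ref{lem:x11}.

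The substance is (ii)$\Rightarrow$(i), and this is where Proposition~\ref{prop:1:1} enters; I would run the descent in two stages. \emph{Stage 1 (across the birational map $\pi$).} Given $\wtilde{\sF}$ as in (ii), which I may assume saturated in $f^{[*]}\sE$ (Lemma~\ref{lem:nsat}), restrict it to the big, $G$-invariant, smooth open set $\what{X}^\circ \subseteq \what{X}$ over which $\pi$ is an isomorphism (a birational morphism onto a normal variety is an isomorphism in codimension one). Via $\pi$ this yields a saturated $G$-subsheaf of $(\gamma^{[*]}\sE)|_{\what{X}^\circ}$ which is $\gamma^*\theta$-invariant there, since reflexive pull-back of the Higgs field is functorial where all the morphisms involved are isomorphisms (cf.\ Lemma~\ref{lem:fun2b}). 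Extend it across the codimension-$\geq 2$ set $\what{X}\setminus\what{X}^\circ$ to a coherent $G$-subsheaf of the reflexive sheaf $\gamma^{[*]}\sE$ (as in the proof of Proposition~\ref{prop:1}) and saturate, obtaining a saturated $G$-subsheaf $\sG \subseteq \gamma^{[*]}\sE$; it is generically $\gamma^*\theta$-invariant by Lemma~\ref{lem:x11} (applied over the smooth locus of $\what{X}$, where reflexive pull-back along $\gamma$ is defined), it has $\rank \sG = \rank \wtilde{\sF}$, and it satisfies $\mu_{\gamma^*H}(\sG) \geq \mu_{f^*H}(\wtilde{\sF})$ by the projection formula for $\pi$ together with saturation, exactly as in Proposition~\ref{prop:1}. \emph{Stage 2 (across the finite Galois map $\gamma$).} Apply Proposition~\ref{prop:1:1} with $\sB_X := \sE$ and $\sA := \sG$: it produces a reflexive, saturated subsheaf $\sF := \sG_X \subseteq \sE$ with $\gamma^{[*]}\sF = \sG$. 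Then $\rank \sF = \rank \wtilde{\sF} \in (0,\rank \sE)$; by Lemma~\ref{lem:elemSlp2} and $\deg f = \deg\gamma$ one gets $\mu_H(\sF) = (\deg\gamma)^{-1}\cdot\mu_{\gamma^*H}(\sG) \geq (\deg\gamma)^{-1}\cdot(\deg f)\cdot s = s$; and $\sF$ is generically $\theta$-invariant because over the dense open locus where $\gamma$ is étale, $\gamma^*\sF$ coincides with the generically $\gamma^*\theta$-invariant sheaf $\sG$, so $\sF$ is $\theta$-invariant there by Lemma~\ref{lem:pbII}, whence $\sF = \sF^{sat}$ is generically $\theta$-invariant on all of $X$ by Lemma~\ref{lem:x11}.

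I expect the main obstacle to be Stage 2 together with the bookkeeping it forces on Stage 1: the descended subsheaf must remain \emph{saturated} at each step, both so that Proposition~\ref{prop:1:1} is applicable on $\what{X}$ and so that generic $\theta$-invariance on $X$ can be recovered from mere $\theta$-invariance over the étale locus of $\gamma$ by Lemma~\ref{lem:x11}. The delicate point is that $\gamma$ need not be étale in codimension one, so $\theta$-invariance can genuinely be lost along the branch divisor of $\gamma$ and is regained only because $\sF$ is saturated in the reflexive sheaf $\sE$. Everything else — the rank and slope comparisons, and the reflexive-hull and torsion bookkeeping when passing between $\gamma^*$ and $\gamma^{[*]}$ — is routine and parallels Proposition~\ref{prop:1}.
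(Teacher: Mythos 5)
Your overall route is the paper's route: reduce (semi)stability on both sides to the existence of generically invariant destabilising subsheaves above a slope threshold, use Proposition~\ref{prop:1:1} to descend along the Galois part, recover $\theta$-invariance of the descended sheaf over the \'etale locus of $\gamma$, and then extend across small sets and saturate, quoting Lemmas~\ref{lem:x11} and \ref{lem:elemSlp2}. The organisational difference is that you descend in two stages (first across $\pi$ to $\what{X}$, then across $\gamma$) and apply Proposition~\ref{prop:1:1} \emph{globally} on $\what{X}$ with $\sB_X=\sE$, whereas the paper never produces any object on $\what{X}$ at all: it restricts everything to a single big open set $X^\circ\subseteq X$ over which $\sE$ is locally free, $X$ is smooth and $\pi$ is an isomorphism, so that $f$ becomes an honest finite Galois cover there and Proposition~\ref{prop:1:1} applies with a locally free (hence reflexive) $\sB_X=\sE|_{X^\circ}$; the descended subsheaf is then extended from $X^\circ$ to $X$ and saturated, exactly as in your final step.

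The one genuine gap is your opening reduction ``replace $\sE$ by $\sE^{**}$, so I may assume $\sE$ reflexive,'' which you need precisely because the global application of Proposition~\ref{prop:1:1} requires $\sB_X$ reflexive. Within the paper's framework this reduction is not even well-posed: a Higgs field is a map $\theta:\sE\to\sE\otimes\Omega^{[1]}_X$, and there is no canonical induced operator $\sE^{**}\to\sE^{**}\otimes\Omega^{[1]}_X$, because the target $\sE^{**}\otimes\Omega^{[1]}_X$ is in general neither reflexive nor even torsion free, so a map defined off the codimension-two locus where $\sE\neq\sE^{**}$ need not extend -- this is exactly the delicacy the paper flags when explaining why $\theta$ is \emph{not} taken to have values in a reflexive hull. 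Hence the asserted equivalence of (semi)stability for $(\sE,\theta)$ and ``$(\sE^{**},\theta)$'' is unproved (and the comparison of the two pull-backs $f^{[*]}\sE$ and $f^{[*]}\sE^{**}$, which may differ along $\pi$-exceptional divisors, would also need a Proposition~\ref{prop:comparison}-type argument). The gap is easily repaired without the reduction: either apply Proposition~\ref{prop:1:1} with $\sB_X:=\sE^{**}$, noting $\gamma^{[*]}\sE=\gamma^{[*]}(\sE^{**})$, and then replace the descended sheaf $\sA_X$ by the saturated subsheaf $\sA_X\cap\sE\subseteq\sE$ (same rank and slope, and the invariance argument over the \'etale locus is unchanged); or simply run the descent over the big open set where $\sE$ is locally free, as the paper does -- which also spares you from discussing operators on $\gamma^{[*]}\sE$ over the possibly non-klt, non-smooth $\what{X}$. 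A further minor point: the coherent extension in your Stage~1 is not automatically a $G$-subsheaf; replace it by $\sum_{g\in G}g^*(\cdot)$ before saturating, as the paper does in the proof of Proposition~\ref{prop:comparison}.
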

\Preprint{ %
\begin{proof}
  Given any number $s ∈ ℚ$, we need to show that the following two statements
  are equivalent.
  \begin{enumerate}
  \item\label{il:T1} There exists a subsheaf $ℱ ⊆ ℰ$ with slope $μ_H(ℱ) ≥ s$
    that is generically $θ$-invariant.

  \item\label{il:T2} There exists a $G$-subsheaf $\wtilde{ℱ} ⊆ f^{[*]} ℰ$ with
    slope $μ_{π^* H}( \wtilde{ℱ} ) ≥ s$ that is generically
    $f^{[*]}θ$-invariant.
  \end{enumerate}
  Let $X° ⊆ X$ be the maximal open set such that $ℰ|_{X°}$ is locally free, $X°$
  is smooth, and $\what{X}° := γ^{-1}(X°)$ is smooth and isomorphic to
  $\wtilde{X}° := f^{-1}(X°)$.  Observe that $X°$ is a big subset of $X$.

  \subsubsection*{\ref{il:T1}$⇒$\ref{il:T2}}

  Given a sheaf $ℱ$ as in \ref{il:T1}, set $\wtilde{ℱ}' := f^{[*]} ℱ$.  This is
  a $G$-invariant subsheaf of $f^{[*]} ℰ$ whose restriction to $\wtilde{X}°$ is
  $f^{[*]}θ$-invariant.  Its saturation $\wtilde{ℱ}$ is clearly $G$-invariant,
  and, by Lemma~\ref{lem:x11}, generically $π^{[*]}θ$-invariant.  The ranks of
  $\wtilde{ℱ}$ and $ℱ$ agree, the slope only increases in the process.

  \subsubsection*{\ref{il:T2}$⇒$\ref{il:T1}}

  Given a $G$-subsheaf $\wtilde{ℱ}$ as in \ref{il:T2}, Lemma~\ref{lem:x11}
  allows us to assume that $\wtilde{ℱ}$ is saturated in $f^{[*]}ℰ$.
  Proposition~\ref{prop:1:1} guarantees the existence of a saturated subsheaf
  $ℱ° ⊆ ℰ|_{X°}$ such that $f^{[*]}ℱ° = \overline{ℱ}|_{\wtilde{X}°}$.  Over the
  open set $X^{◦◦} ⊆ X°$ where $f$ is étale, the sheaf $ℱ°$ is clearly
  $θ$-invariant.

  As before, recall from \cite[I.Thm.~9.4.7 and 0.Sect.~5.3.2]{EGA1} that there
  exists a coherent extension of $ℱ°$ to $X$, that is, a coherent subsheaf
  $ℱ' ⊆ ℰ$ whose restriction to $X°$ equals $ℱ°$.  Let $ℱ := (ℱ')^{sat}$ be its
  saturation in $ℰ$, which, by Lemma~\ref{lem:x11} is generically $θ$-invariant.
  The ranks of $\wtilde{ℱ}$ and $ℱ$ agree, the slope only increases in the
  process.
\end{proof}}

Consider the setting of Proposition~\ref{prop:stability_upstairs_downstairs} in
the special case where $γ$ is quasi-étale.  The pair
$\bigl( \what{X}, γ^* D \bigr)$ is then klt, and reflexive pull-back $π^{[*]}$
from $\what{X}$ to $\wtilde{X}$ exists.  The Higgs sheaves $f^{[*]}(ℰ,θ)$ and
$π^{[*]}γ^{[*]}(ℰ,θ)$, however, need not agree, cf.\ Warning~\ref{warn:nofunct}.
More generally, given a commutative diagram of morphisms between supporting
spaces of klt pairs, failure of functoriality will frequently lead to a large
number of potentially different reflexive pull-back Higgs sheaves, each
corresponding to one particular path through the diagram.  The following
proposition will often be used to compare their stability properties.

\begin{prop}[Comparison of $G$-stability]\label{prop:comparison}
  Let $X$ be a normal, projective variety, let $G$ be a finite group that acts
  on $X$, let $H$ be any nef, $ℚ$-Cartier $ℚ$-divisor on $X$ and
  $π : \wtilde{X} → X$ be a projective, birational, $G$-equivariant morphism,
  where $\wtilde{X}$ is smooth.  Let $E ⊆ \wtilde{X}$ be the $π$-exceptional set
  and assume that we are given two Higgs $G$-sheaves on $\wtilde X$, say
  $(ℰ¹, θ¹)$ and $(ℰ², θ²)$, that agree as Higgs $G$-sheaves away from $E$.
  Then, the following two statements are equivalent for any pair of numbers
  $r ∈ ℕ$, $s ∈ ℚ$.
  \begin{enumerate}
  \item\label{il:Ts1} There exists a $G$-invariant subsheaf $ℱ¹ ⊆ ℰ¹$ with
    $\rank ℱ = r$ and slope $μ_{π^*H}(ℱ¹) ≥ s$ that is $θ¹$-invariant.

  \item\label{il:Ts2} There exists a $G$-invariant subsheaf $ℱ² ⊆ ℰ²$ with
    $\rank ℱ = r$ and slope $μ_{π^*H}(ℱ²) ≥ s$ that is $θ²$-invariant.
  \end{enumerate}
  In particular, $(ℰ¹, θ¹)$ is $G$-stable (resp.\ $G$-semistable) with respect
  to $π^*H$ if and only if $(ℰ², θ²)$ is.
\end{prop}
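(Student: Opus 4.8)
The plan is to establish the equivalence \ref{il:Ts1}~$\Leftrightarrow$~\ref{il:Ts2} first, and then obtain the $G$-(semi)stability statement as a formal consequence. Since the hypotheses are symmetric in $(\sE^1,θ^1)$ and $(\sE^2,θ^2)$, it suffices to prove \ref{il:Ts1}~$\Rightarrow$~\ref{il:Ts2}. Write $n := \dim X$ and $U := \wtilde X \setminus E$; as $π$ is $G$-equivariant, the exceptional set $E = \Exc(π)$ is $G$-invariant, and hence so is $U$. By hypothesis there is an isomorphism of Higgs $G$-sheaves $ψ : (\sE^1,θ^1)|_U → (\sE^2,θ^2)|_U$. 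Given $\sF^1$ as in \ref{il:Ts1}, I set $\sF^\circ := ψ\bigl(\sF^1|_U\bigr) ⊆ \sE^2|_U$. Because $ψ$ is $G$-equivariant and compatible with the Higgs fields, $\sF^\circ$ is a $G$-subsheaf that is $θ^2|_U$-invariant and has rank $r$.

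Next I extend and saturate. Using \cite[I.~Thm.~9.4.7 and 0.~Sect.~5.3.2]{EGA1}, exactly as in the proof of Proposition~\ref{prop:stability_upstairs_downstairs}, choose a coherent subsheaf of $\sE^2$ over all of $\wtilde X$ restricting to $\sF^\circ$ on $U$; replacing it by the sum of its $G$-translates inside the $G$-sheaf $\sE^2$ — which still restricts to $\sF^\circ$ over the $G$-invariant set $U$ because $\sF^\circ$ is $G$-invariant — yields a $G$-invariant coherent subsheaf $\sF' ⊆ \sE^2$ with $\sF'|_U = \sF^\circ$, hence of rank $r$. Put $\sF^2 := (\sF')^{sat}$. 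As saturation is canonical, $\sF^2$ is again a $G$-subsheaf of rank $r$; and since $Ω^1_{\wtilde X}$ is locally free and $\sF'|_U = \sF^\circ$ is $θ^2$-invariant, Lemma~\ref{lem:x11} (applied with $V = U$) shows that $\sF^2$ is generically $θ^2$-invariant, hence — $Ω^1_{\wtilde X}$ being locally free everywhere — $θ^2$-invariant.

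It remains to compare slopes. Since $X$ is normal, $π$ is an isomorphism over a big open subset of $X$, so every divisorial component $E_i$ of $E$ satisfies $\codim_X π(E_i) ≥ 2$; thus $π_*[E_i] = 0$ in $A_{n-1}(X)$, and the projection formula gives $[E_i]·[π^*H]^{n-1} = 0$. As $\sF'$ and $\sF^1$ have the same rank and agree over $U$, the divisor class $\det\sF' - \det\sF^1$ is a $ℤ$-combination of the $E_i$, so $[\sF']·[π^*H]^{n-1} = [\sF^1]·[π^*H]^{n-1}$ and $μ_{π^*H}(\sF') = μ_{π^*H}(\sF^1) ≥ s$. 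Passing to the saturation adds only an effective divisor to the determinant, and $π^*H$ is nef, so $μ_{π^*H}(\sF^2) ≥ μ_{π^*H}(\sF') ≥ s$. This proves \ref{il:Ts1}~$\Rightarrow$~\ref{il:Ts2}, and the equivalence follows by symmetry.

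For the last assertion, the same determinant argument applied to $\sE^1$ and $\sE^2$ themselves gives $μ_{π^*H}(\sE^1) = μ_{π^*H}(\sE^2) =: μ$, and both sheaves have the same rank $ρ$. For each $0 < r < ρ$, the supremum of $μ_{π^*H}(\sF)$ over $G$-invariant, $θ^i$-invariant subsheaves $\sF ⊆ \sE^i$ of rank $r$ is finite — by the finiteness of slopes of subsheaves invoked in the proof of Proposition~\ref{prop:G-opennessx} — and is attained, so the equivalence \ref{il:Ts1}~$\Leftrightarrow$~\ref{il:Ts2} makes it independent of $i$; comparing these suprema with $μ$ then shows that $(\sE^1,θ^1)$ is $G$-semistable (resp.\ $G$-stable) with respect to $π^*H$ if and only if $(\sE^2,θ^2)$ is. I expect the slope comparison of the third paragraph — verifying that transporting, extending and saturating $\sF^1$ does not decrease $μ_{π^*H}$, via the vanishing $[E_i]·[π^*H]^{n-1} = 0$ and the nefness of $π^*H$ — to be the one genuinely substantive point; the remaining steps are bookkeeping with $G$-sheaves together with an application of Lemma~\ref{lem:x11}.
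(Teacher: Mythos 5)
Your argument is correct and follows essentially the same route as the paper's proof: reduce to one implication by symmetry, transport $\sF^1$ across the identification away from $E$, extend via \cite[I.~Thm.~9.4.7 and 0.~Sect.~5.3.2]{EGA1}, average over $G$, saturate, and invoke Lemma~\ref{lem:x11} for invariance. The only cosmetic difference is that you re-derive the slope comparison directly from $[E_i]·[π^*H]^{n-1}=0$ where the paper simply cites Lemma~\ref{lem:elemSlp2}\ref{il:Be}, and you spell out the final stability transfer that the paper leaves implicit.
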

\begin{proof}
  By symmetry, it suffices to show \ref{il:Ts1}$⇒$\ref{il:Ts2}.  Given a
  subsheaf $ℱ¹$ as in \ref{il:Ts1}, consider the open set
  $\wtilde{X}° := \wtilde{X} \setminus E$ and recall from \cite[I.Thm.~9.4.7 and
  0.Sect.~5.3.2]{EGA1} that there exists a subsheaf $𝒢 ⊆ ℰ²_{\wtilde X}$ whose
  restriction to $\wtilde{X}°$ equals $ℱ¹$.  Replacing $𝒢$ by
  $\sum_{g ∈ G} g^* 𝒢 ⊆ ℰ²_{\wtilde X}$ if needed, we may assume without loss of
  generality that $𝒢$ is $G$-invariant.  Next, recall from Item~\ref{il:Be} of
  Lemma~\ref{lem:elemSlp2} that $μ_{π^* H} (𝒢) = μ_{π^*H}(ℱ¹) ≥ s$.  Let
  $ℱ² ⊆ ℰ²$ be the saturation of $𝒢$, observe that $ℱ² ⊆ ℰ²$ is again
  $G$-invariant, and recall from Lemma~\ref{lem:x11} that $ℱ²$ is invariant with
  respect to $θ²$.
\end{proof}

\subsection{The restriction theorem for Higgs sheaves}
\label{ssect:restrict}
\approvals{Behrouz & yes \\
  Daniel & yes \\
  Stefan & yes \\
  Thomas & yes}

This subsection establishes the restriction theorem for stable Higgs sheaves,
which will be crucial for the proof of our main results.  For Higgs bundles on
manifolds with ample polarisation, the theorem appears in Simpson's work,
\cite[Lem.~3.7]{MR1179076}, referring to ``arguments of Mehta and Ramanathan''
for a restriction theorem for sheaves with operators\Preprint{\ analogous to our
  Theorem~\ref{thm:nrestrSWO}}.  \Publication{Our proof instead cites a
  restriction theorem for sheaves with operators from the work of Langer,
  \cite[Thm.~9]{MR3314517}.  He works in positive characteristic but says that,
  \emph{mutatis mutandis}, his arguments will also work in characteristic zero,
  cf.\ \cite[Page~906]{MR3314517}.  For clarity's sake, the arXiv version of
  this paper contains a statement of the precise result needed and a short,
  self-contained proof.}

\begin{thm}[Restriction theorem for stable Higgs sheaves]\label{thm:restriction}
  Let $(X,Δ)$ be a projective klt pair of dimension $n ≥ 2$, let $H ∈ \Div(X)$
  be an ample, $ℚ$-Cartier $ℚ$-divisor and let $(ℰ, θ)$ be a torsion free Higgs
  sheaf on $X$ of positive rank.  Assume that $(ℰ, θ)$ is stable with respect to
  $H$.  If $m ≫ 0$ is sufficiently large and divisible, then there exists a
  dense open set $U ⊆ |m·H|$ such that the following holds for any hyperplane
  $D ∈ U$ with associated inclusion map $ι : D → X$.
  \begin{enumerate}
  \item The hyperplane $D$ is normal, connected and not contained in $\supp Δ$.
    The pair $(D,Δ|_D)$ is klt.
  \item The sheaf $ℰ|_D$ is torsion free.  The Higgs sheaf $ι^{*}(ℰ, θ)$ is
    stable with respect to $H|_D$.
  \end{enumerate}
\end{thm}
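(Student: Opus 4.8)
The plan is to reduce the statement to a Mehta–Ramanathan-type restriction theorem for torsion-free sheaves equipped with an $\Omega^{[1]}_X$-valued operator. First I would handle item (1): the fact that a general member $D \in |m \cdot H|$ is normal, connected, not contained in $\supp \Delta$, and that $(D, \Delta|_D)$ is klt is standard Bertini-type material already invoked in the excerpt, e.g.\ \cite[Lem.~5.17]{KM98} for the klt property together with \cite[Lem.~5.17 / Thm.~7]{Seidenberg50} for normality and connectedness; it holds on a dense open $U_1 \subseteq |m \cdot H|$. Torsion-freeness of $\sE|_D$ for general $D$ likewise holds on a dense open $U_2$, since $\sE$ is torsion free and $D$ avoids the (small) non-locally-free locus of $\sE$ in the expected codimension, cf.\ \cite[Thm.~12.2.1]{EGA4-3}. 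So the content is entirely in the stability assertion.

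For the stability statement, the key point is that stability of the Higgs sheaf $(\sE, \theta)$ with respect to $H$, in the sense of Definition~\ref{defn:swostab1}, is by definition stability of $\sE$ as a sheaf with an $\Omega^{[1]}_X$-valued operator, tested against \emph{generically} $\theta$-invariant subsheaves. Over the smooth locus $X_{\reg}$, where $\Omega^{[1]}_X$ is locally free, this is exactly the classical notion of stability for a sheaf-with-operator in the sense of \cite[p.~257]{Langer04a}. I would therefore invoke the restriction theorem for stable sheaves with operators, in the form proved by Langer \cite[Thm.~9]{MR3314517} (his arguments, \emph{mutatis mutandis}, apply in characteristic zero, cf.\ \cite[Page~906]{MR3314517}; alternatively the classical Mehta–Ramanathan argument referenced by Simpson in \cite[Lem.~3.7]{MR1179076}): for $m \gg 0$ sufficiently divisible there is a dense open $U_3 \subseteq |m \cdot H|$ such that for $D \in U_3$, the restriction $\bigl(\sE|_D, \theta|_D\bigr)$ is stable as a sheaf with an $(\Omega^{[1]}_X)|_D$-valued operator. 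The remaining matching-up step is to check that $\theta|_D$ composed with $\mathrm{d}_{\refl}\iota : \iota^* \Omega^{[1]}_X \to \Omega^{[1]}_D$ — which is the Higgs field of $\iota^*(\sE,\theta)$ from Construction~\ref{cons:pb1} — has the same generically invariant subsheaves as $\theta|_D$ itself. This is true because over the big open subset $D \cap X_{\reg}$ (which, after shrinking $U$, can be arranged to have complement of codimension $\geq 2$ in $D$, using that $\codim_X X_{\sing} \geq 2$ and $D$ is general) the map $\mathrm{d}_{\refl}\iota$ restricted there is the ordinary conormal sequence map $\iota^*\Omega^1_X \to \Omega^1_D$, and a subsheaf $\sF \subseteq \sE|_D$ is generically $\theta|_D$-invariant precisely when its restriction to this big open set is invariant for the honest operator $\iota^*\theta$; composing with the generically surjective $\mathrm{d}_{\refl}\iota$ preserves this, while the reverse implication follows because on the big open set the conormal map identifies the relevant images, so invariance of $\sF$ for the composed field forces invariance for $\theta|_D$ there. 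One intersects $U := U_1 \cap U_2 \cap U_3$, still dense open.

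I expect the main obstacle to be precisely this last compatibility check between the operator-stability produced by the Mehta–Ramanathan/Langer machine, which naturally concerns $\theta|_D$ as a map into $\sE|_D \otimes (\Omega^{[1]}_X)|_D$, and the Higgs-stability we want, which concerns the composed field into $\sE|_D \otimes \Omega^{[1]}_D$; one must be careful that the two notions of ``generically invariant subsheaf'' genuinely coincide, and that the non-locally-free loci of $\sE$, the singular locus of $X$, and the support of $\Delta$ are all avoided by $D$ in the expected codimension, which is exactly what forces $m \gg 0$. Everything else — the Bertini statements, torsion-freeness, and the invocation of the operator restriction theorem — is routine given the results already assembled in Sections~\ref{sect:oper}--\ref{sect:Higgs}.
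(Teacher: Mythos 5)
Your reduction to a Mehta--Ramanathan-type theorem for sheaves with operators is the right starting point, and your Bertini-type handling of item (1) and of torsion-freeness matches the paper. But the step you flag as the ``remaining matching-up'' is exactly where the proof's real content lies, and the way you resolve it is wrong. The two notions of invariance do \emph{not} coincide, not even generically: a subsheaf $\sF \subseteq \sE|_D$ that is invariant for the composed Higgs field (with values in $\Omega^{[1]}_D$) need not be invariant for the operator $θ|_D$ (with values in $(\Omega^{[1]}_X)|_D$), because the discrepancy is measured by the conormal line bundle $\sO_D(-D)$ in the sequence $0 → \sO_D(-D) → \Omega^1_X|_D → \Omega^1_D → 0$, which is a line bundle on all of $D$ and not something supported on a small set. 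Composing with $d_{\refl}\,ι$ only shows that operator-invariance implies Higgs-invariance, which is the direction you do not need; the direction you assert (``invariance for the composed field forces invariance for $θ|_D$'') fails, so operator-stability of $(\sE|_D, θ|_D)$ does not by itself give stability of $ι^*(\sE,θ)$. The paper closes this gap with Simpson's argument (Steps 4--5 of its proof): if a Higgs-invariant destabilizer $\sF$ were not operator-invariant, the conormal sequence would produce a \emph{nonzero} map $\sF → \sQ ⊗ \sO_{\wtilde D}(-\wtilde D)$ with $\sQ$ the quotient, and this is excluded by a degree computation (Claim~\ref{claim:xya} and Consequence~\ref{cons:xya}) precisely because $\sO_{\wtilde D}(-\wtilde D)$ is very negative for $m \gg 0$; the quantitative condition $2r·μ^{\max}(\wtilde\sE) < [m·H]^n$ is where ``$m$ sufficiently large'' is actually used. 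Without this numerical step your argument does not go through.

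A second, smaller but genuine issue: you invoke Langer's operator restriction theorem directly on the singular $X$ with $\sW = \Omega^{[1]}_X$. That sheaf is only reflexive, not locally free, and the results you cite (Langer, and the appendix Theorem~\ref{thm:nrestrSWO}) are stated and used in the paper only for locally free $\sW$ on a smooth ambient variety -- see the paper's explicit warning about incompatible definitions in the literature. The paper therefore first passes to a strong resolution $π : \wtilde X → X$, transfers stability there via Proposition~\ref{prop:1}, applies the operator restriction theorem to $π^{[*]}(\sE,θ)$ with the locally free $\Omega^1_{\wtilde X}$, runs Simpson's conormal/numerical argument on $\wtilde D$, and only then descends back to $D$ using Proposition~\ref{prop:1} and the comparison statement Proposition~\ref{prop:comparison} for the two a priori different pull-back Higgs sheaves on $\wtilde D$. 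Your proposal skips both the resolution step and the descent bookkeeping, and as written the central implication it rests on is false.
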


\subsubsection*{Proof.  Step 1: Setup}
\approvals{
  Behrouz & yes \\
  Daniel & yes \\
  Stefan & yes \\
  Thomas & yes
}

Choose a strong, log resolution of singularities, say $π : \wtilde{X} → X$.  We
have seen in Proposition~\ref{prop:1} that $π^{[*]} (ℰ, θ)$ is stable with
respect to $\wtilde H := π^*H$.  Set $\wtilde{ℰ} := π^{[*]} ℰ$ and
$r := \rank ℰ$.

\begin{notation}
  Given sheaves $𝒜$ on $X$, $ℬ$ on $\wtilde X$ and $𝒞$ on a subvariety
  $\wtilde D ⊆ \wtilde X$, we write $\deg 𝒜 := \deg_H 𝒜$,
  $\deg ℬ := \deg_{\wtilde H} ℬ$, $\deg 𝒞 := \deg_{\wtilde H|_{\wtilde D}} 𝒞$
  and similarly with $μ$ and $μ^{\max}$.
\end{notation}

Twisting $ℰ$ with a sufficiently ample, invertible sheaf, Lemma~\ref{lem:stabtp}
allows us to assume that the following condition holds in addition.

\begin{asswlog}\label{ass:spoE}
  The numbers $μ(ℰ)$ and $μ^{\max}(\wtilde{ℰ})$ are positive.
\end{asswlog}

\subsubsection*{Step 2: Choice of $m$}
\CounterStep
\approvals{Behrouz & yes \\
  Daniel & yes \\
  Stefan & yes \\
  Thomas & yes}

Choosing $m ≫ 0$ sufficiently large and divisible, the following will hold.

\begin{enumerate}
\item\label{il:1} The divisor $m·H$ is integral, Cartier and very ample.

\item\label{il:2} Flenner's restriction theorem holds for $ℰ$, cf.\
  \cite[Thm.~1.2]{Flenner84}.  In particular, if $D ∈ |m·H|$ is general, then
  $μ^{\max}(ℰ|_D) = μ^{\max}(ℰ)$.
  
\item\label{il:3} The number $m$ satisfies the condition spelled out in the
  restriction theorem for sheaves with an operator, when applying the theorem to
  the Higgs sheaf $π^{[*]}(ℰ, θ)$ as a sheaf with an $Ω¹_{\wtilde{X}}$-valued
  operator,
  \Preprint{Theorem~\ref{thm:nrestrSWO}}\Publication{\cite[Thm.~9]{MR3314517}
    but see also the appendix in the arXiv version of this paper}.
  
\item\label{il:4} We have a strict inequality
  $2r·μ^{\max}(\wtilde{ℰ}) < [m·H]^n$.
\end{enumerate}

\subsubsection*{Step 3: Choice of $U$}
\CounterStep
\approvals{
  Behrouz & yes \\
  Daniel & yes \\
  Stefan & yes \\
  Thomas & yes
}

Next, observe that there exists an open subset $U ⊆ |m·H|$ such that the
following holds for all hyperplanes $D ∈ U$ and their preimages
$\wtilde D := π^{-1}(D)$.
  
\begin{enumerate}
\item\label{il:balthasar} The hyperplane $D$ is reduced, irreducible, normal and
  not contained in $\supp Δ$, Item~\ref{il:1} and Seidenberg's theorem
  \cite[Thm.~1.7.1]{BS95}.  Its preimage $\wtilde D$ is smooth, Item~\ref{il:1}
  and Bertini.  The pair $(D, Δ|_D)$ is klt, Item~\ref{il:1} and
  \cite[Lem.~5.17]{KM98}.

  In particular, there exists a reflexive pull-back functor from Higgs sheaves
  on $D$ to Higgs sheaves on $\wtilde D$.
  
\item\label{il:caspar} The restrictions $ℰ|_D$ and $\wtilde{ℰ}|_{\wtilde D}$ are
  reflexive, \cite[Thm.~12.2.1]{EGA4-3}.
    
\item\label{il:melchior} If $𝒜 ⊆ \wtilde{ℰ}|_{\wtilde D}$ is any subsheaf, then
  $μ(𝒜) ≤ μ^{\max}(\wtilde{ℰ})$, Item~\ref{il:2} and Lemma~\ref{lem:elemSlp2}.
\end{enumerate}

Choose one hyperplane $D ∈ U$ and fix that choice for the remainder of the
proof.  As before, write $\wtilde D := π^{-1}(D)$ and consider the diagram
$$
\xymatrix{ %
  \wtilde{D} \ar[rr]^{\wtilde{ι}\text{, inclusion}} \ar[d]_{π|_{\wtilde{D}}\text{, desing.}} \ar[rrd]^{δ} && \wtilde{X} \ar[d]^{π\text{, desing.}} \\
  D \ar[rr]_{ι\text{, inclusion}} && X.
}
$$
Pulling back, we can equip all spaces considered so far with naturally defined
Higgs sheaves, which we list here for the reader's convenience.
\begin{align*}
  (ℰ, θ) &&& …\text{ Higgs sheaf on $X$ that is initially given} \\
  (ℰ_D, θ_D) & := ι^* (ℰ, θ) && …\text{ Higgs sheaf on $D$, equals $ι^{[*]} (ℰ, θ)$ by \ref{il:caspar}} \\
  (\wtilde{ℰ}, \wtilde{θ}) & := π^{[*]} (ℰ, θ) && … \text{ Higgs sheaf on $\wtilde{X}$} \\
  (\wtilde{ℰ}|_{\wtilde{D}}, \wtilde{θ}|_{\wtilde{D}}) &&& …\text{ Refl.\ sheaf on $\wtilde{D}$ with $Ω¹_{\wtilde X}|_{\wtilde{D}}$-valued operator} \\
  (\wtilde{ℰ}_{\wtilde{D}}, \wtilde{θ}_{\wtilde{D}}) & := \wtilde{ι}^{\:*} (\wtilde{ℰ}, \wtilde{θ}) && …\text{ Higgs sheaf on $\wtilde{D}$, equals $\wtilde{ι}^{\:[*]}(\wtilde{ℰ}, \wtilde{θ})$ by \ref{il:caspar}} \\
  (\wtilde{ℰ_D}, \wtilde{θ_D}) & := (π|_{\wtilde{D}})^{[*]}(ℰ_D, θ_D) && …\text{ Higgs sheaf on $\wtilde{D}$, equals $δ^{[*]} (ℰ, θ)$ by \ref{il:caspar}} \\
           &&& \hphantom{…}\text{\quad and Observation~\ref{obs:fun4}}
\end{align*}
We do not claim that the two Higgs sheaves on $\wtilde{D}$, namely
$(\wtilde{ℰ}_{\wtilde{D}}, \wtilde{θ}_{\wtilde{D}})$ and
$(\wtilde{ℰ_D}, \wtilde{θ_D})$ necessarily agree, although they certainly agree
outside of the $π|_{\wtilde{D}}$-exceptional set.  We will compare these sheaves
in the last step of this proof.

\subsubsection*{Step 4: Numerical computations}
\approvals{
  Behrouz & yes \\
  Daniel & yes \\
  Stefan & yes \\
  Thomas & yes
}

We aim to show that $(ℰ_D, θ_D)$ is stable, or equivalently, that
$(\wtilde{ℰ_D}, \wtilde{θ_D})$ is stable.  For this, we will first establish
stability of $(\wtilde{ℰ}_{\wtilde{D}}, \wtilde{θ}_{\wtilde{D}})$ in Step~5 of
this proof.  The following numerical computation is instrumental.

\begin{claim}\label{claim:xya}
  If $ℱ ⊆ \wtilde{ℰ}|_{\wtilde D}$ is any saturated subsheaf with
  $μ(ℱ) ≥ μ(ℰ) = μ(\wtilde{ℰ})$ and if $𝒜$ is any subsheaf of the quotient
  $𝒬 := \bigl( \wtilde{ℰ}|_{\wtilde D} \bigr)/ℱ$, then
  $\deg 𝒜 ≤ r·μ^{\max}(\wtilde{ℰ})$.
\end{claim}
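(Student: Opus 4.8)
The plan is to reduce the desired bound on a subsheaf $\sA \subseteq \sQ$ to the Flenner-type estimate for subsheaves of $\wtilde{\sE}|_{\wtilde D}$ itself, which was recorded as Item~\ref{il:melchior}. First I would dispose of the trivial case $\sA = 0$: then $\deg \sA = 0 \le r\cdot\mu^{\max}(\wtilde{\sE})$, since $\mu^{\max}(\wtilde{\sE}) > 0$ by Assumption w.l.o.g.~\ref{ass:spoE}. So assume $\sA \ne 0$. As $\wtilde{\sE}|_{\wtilde D}$ is reflexive by Item~\ref{il:caspar} and $\sF$ is saturated, the quotient $\sQ$ is torsion free, hence so is $\sA$. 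Let $\sG \subseteq \wtilde{\sE}|_{\wtilde D}$ denote the preimage of $\sA$ under the quotient map $\wtilde{\sE}|_{\wtilde D} \onto \sQ$; this gives a short exact sequence of coherent sheaves on the smooth $(n-1)$-fold $\wtilde D$,
$$
0 \to \sF \to \sG \to \sA \to 0.
$$

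Next I would exploit additivity. Ranks add, so $\rank \sG = \rank \sF + \rank \sA \le r$, and, since $\deg$ is the product of $c_1$ with $(\wtilde H|_{\wtilde D})^{n-2}$ while $c_1$ is additive along short exact sequences, also $\deg \sG = \deg \sF + \deg \sA$. Applying Item~\ref{il:melchior} to the non-zero subsheaf $\sG \subseteq \wtilde{\sE}|_{\wtilde D}$ gives $\mu(\sG) \le \mu^{\max}(\wtilde{\sE})$, whence
$$
\deg \sG = \rank \sG \cdot \mu(\sG) \le \rank \sG \cdot \mu^{\max}(\wtilde{\sE}) \le r\cdot\mu^{\max}(\wtilde{\sE}),
$$
the last inequality using $\rank \sG \le r$ together with positivity of $\mu^{\max}(\wtilde{\sE})$. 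Finally, the hypothesis $\mu(\sF) \ge \mu(\sE)$ combined with positivity of $\mu(\sE)$ (again Assumption w.l.o.g.~\ref{ass:spoE}) forces $\deg \sF = \rank \sF \cdot \mu(\sF) \ge 0$; trivially $\deg \sF = 0$ when $\sF = 0$. Hence $\deg \sA = \deg \sG - \deg \sF \le \deg \sG \le r\cdot\mu^{\max}(\wtilde{\sE})$, as claimed.

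I do not anticipate any genuine difficulty here: the argument is pure bookkeeping with ranks, degrees, and the already-established bound of Item~\ref{il:melchior}. The only thing to watch is the direction of the inequalities, and this is precisely where both normalisations of Assumption w.l.o.g.~\ref{ass:spoE} get used: positivity of $\mu(\sE)$ to obtain $\deg \sF \ge 0$, and positivity of $\mu^{\max}(\wtilde{\sE})$ to pass from $\rank \sG \le r$ to $\rank \sG \cdot \mu^{\max}(\wtilde{\sE}) \le r\cdot\mu^{\max}(\wtilde{\sE})$. One should also keep in mind that $\mu(\cdot)$ is defined only for non-zero sheaves, which is why Item~\ref{il:melchior} is applied to $\sG$ rather than to $\sA$ directly, and why the case $\sA = 0$ is split off at the outset.
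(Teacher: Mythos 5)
Your argument is correct and is essentially the paper's own proof: both pass to the preimage $q^{-1}(\sA)=\sG$, use the exact sequence $0 → \sF → \sG → \sA → 0$ together with additivity of rank and degree, bound $\deg\sG$ via the Flenner-type estimate of Item~\ref{il:melchior}, and then use Assumption~\ref{ass:spoE} together with $μ(\sF) ≥ μ(\sE)$ to discard the $\deg\sF$ term. The only difference is cosmetic bookkeeping (you bound $\deg\sG$ first and subtract $\deg\sF ≥ 0$, while the paper runs one chain of inequalities), so there is nothing further to add.
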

\begin{proof}[Proof of Claim~\ref{claim:xya}]
  Let $q : \wtilde{ℰ}|_{\wtilde D} → 𝒬$ be the natural projection and consider
  the exact sequence
  $$
  0 → ℱ → q^{-1} 𝒜 → 𝒜 → 0.
  $$
  We obtain that $\rank(q^{-1} 𝒜) = \rank(𝒜)+\rank(ℱ)$ and
  \begin{align*}
    \deg 𝒜 & = \deg q^{-1} 𝒜 - \deg ℱ \\
             & = \rank(q^{-1} 𝒜)·μ(q^{-1} 𝒜) - \rank(ℱ)·μ(ℱ) &&\text{Definition of $μ$} \\
             & ≤ \rank(q^{-1} 𝒜)·μ^{\max}(\wtilde{ℰ}) - \rank(ℱ)·μ(ℱ) &&\text{Item~\ref{il:melchior}} \\
             & ≤ \rank(q^{-1} 𝒜)·μ^{\max}(\wtilde{ℰ}) - \rank(ℱ)·μ(\wtilde{ℰ}) &&\text{Assumption on $ℱ$} \\
             & ≤ r·μ^{\max}(\wtilde{ℰ}) &&\text{Assumption~\ref{ass:spoE}}
  \end{align*}
  and Claim~\ref{claim:xya} follows.
\end{proof}
  
\begin{consequence}\label{cons:xya}
  In the setting of Claim~\ref{claim:xya}, if
  $ℬ ⊆ 𝒬 ⊗ 𝒪_{\wtilde D}(-\wtilde D)$ is of positive rank, then
  $\deg ℬ ≤ r·μ^{\max}(\wtilde{ℰ}) - \rank(ℬ)·[\wtilde D]^{\dim X} ≤
  r·μ^{\max}(\wtilde{ℰ}) - [\wtilde D]^{\dim X}$.  \qed
\end{consequence}

\subsubsection*{Step 5: Stability of $(\wtilde{ℰ}_{\wtilde{D}}, \wtilde{θ}_{\wtilde{D}})$}
\approvals{
  Behrouz & yes \\
  Daniel & yes \\
  Stefan & yes \\
  Thomas & yes
}

With Consequence~\ref{cons:xya} at hand, stability of
$(\wtilde{ℰ}_{\wtilde{D}}, \wtilde{θ}_{\wtilde{D}})$ can now be established
following the line of argument outlined by Simpson, \cite[p.~38]{MR1179076}.
  
\begin{claim}\label{claim:restus}
  The Higgs sheaf $(\wtilde{ℰ}_{\wtilde{D}}, \wtilde{θ}_{\wtilde{D}})$ is stable
  with respect to $\wtilde{H}|_{\wtilde D}$.
\end{claim}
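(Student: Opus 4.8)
The plan is to follow Simpson's argument, \cite[p.~38]{MR1179076}, exploiting that the sheaf $\wtilde\sE|_{\wtilde D}$ carries two operators — the restricted operator $\wtilde θ|_{\wtilde D}$ with values in $Ω^1_{\wtilde X}|_{\wtilde D}$, and the genuine Higgs field $\wtilde θ_{\wtilde D}$ with values in $Ω^1_{\wtilde D}$ — and that the numerical estimates of Step~4 control the discrepancy between the two associated notions of invariant subsheaf. Since $\wtilde X$ is smooth, $π^{[*]}(\sE, θ)$ is simply a sheaf with an $Ω^1_{\wtilde X}$-valued operator, stable with respect to $\wtilde H$ by Proposition~\ref{prop:1}. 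By the choice of $m$ in Item~(\ref{il:3}), the restriction theorem for sheaves with an operator applies and shows that $(\wtilde\sE|_{\wtilde D},\wtilde θ|_{\wtilde D})$ is stable with respect to $\wtilde H|_{\wtilde D}$; in particular every generically $\wtilde θ|_{\wtilde D}$-invariant subsheaf of $\wtilde\sE|_{\wtilde D}$ of rank $r'$ with $0 < r' < r$ has slope $< μ(\wtilde\sE|_{\wtilde D})$. Claim~\ref{claim:restus} asserts the analogous statement for the a priori larger supply of $\wtilde θ_{\wtilde D}$-invariant subsheaves. Because $Ω^1_{\wtilde D}$ is locally free, it is enough (cf.\ Lemma~\ref{lem:x11}) to treat saturated subsheaves $\sF ⊆ \wtilde\sE|_{\wtilde D}$ which are genuinely $\wtilde θ_{\wtilde D}$-invariant with $0 < \rank\sF < r$, and to rule out $μ(\sF) ≥ μ(\wtilde\sE|_{\wtilde D})$.

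The bridge between the two notions of invariance is the conormal sequence $0 → \sO_{\wtilde D}(-\wtilde D) → Ω^1_{\wtilde X}|_{\wtilde D} → Ω^1_{\wtilde D} → 0$, through which $\wtilde θ_{\wtilde D}$ factors $\wtilde θ|_{\wtilde D}$. Writing $\sQ := (\wtilde\sE|_{\wtilde D})/\sF$ with projection $q$, the composite $\sF \hookrightarrow \wtilde\sE|_{\wtilde D} \xrightarrow{\wtilde θ|_{\wtilde D}} \wtilde\sE|_{\wtilde D} ⊗ Ω^1_{\wtilde X}|_{\wtilde D} \xrightarrow{q⊗\Id} \sQ ⊗ Ω^1_{\wtilde X}|_{\wtilde D}$ vanishes after composing with the conormal quotient, because $\sF$ is $\wtilde θ_{\wtilde D}$-invariant; it therefore induces a ``second fundamental form'' $\overline β : \sF → \sQ ⊗ \sO_{\wtilde D}(-\wtilde D)$, and $\sF$ is $\wtilde θ|_{\wtilde D}$-invariant if and only if $\overline β$ vanishes. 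The remaining task is to show $\overline β = 0$ under the standing assumptions, which I would do by a slope estimate that pits the negativity of the conormal bundle, quantified in Step~4, against the possible failure of $\sF$ to be semistable as a plain sheaf.

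Concretely: Item~(\ref{il:melchior}) bounds the slope of every subsheaf of $\wtilde\sE|_{\wtilde D}$ by $μ^{\max}(\wtilde\sE)$; applied to the kernel of a minimal Harder--Narasimhan quotient of $\sF$, together with $\deg\sF = \rank\sF · μ(\sF) ≥ μ(\wtilde\sE|_{\wtilde D}) > 0$ (using Assumption~\ref{ass:spoE}), this yields a lower bound $μ^{\min}(\sF) > μ(\wtilde\sE|_{\wtilde D}) - r · μ^{\max}(\wtilde\sE)$, the estimate being trivial when $\sF$ is semistable. On the other hand, Consequence~\ref{cons:xya} — which applies since $μ(\sF) ≥ μ(\wtilde\sE)$ — bounds the slope of any positive-rank subsheaf of $\sQ ⊗ \sO_{\wtilde D}(-\wtilde D)$ by $r·μ^{\max}(\wtilde\sE) - [\wtilde D]^{\dim X}$, so $μ^{\max}(\sQ ⊗ \sO_{\wtilde D}(-\wtilde D)) ≤ r·μ^{\max}(\wtilde\sE) - [\wtilde D]^{\dim X}$. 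Comparing the two bounds and invoking the strict inequality $2r·μ^{\max}(\wtilde\sE) < [\wtilde D]^{\dim X} = [m·H]^n$ of Item~(\ref{il:4}), one gets $μ^{\max}(\sQ ⊗ \sO_{\wtilde D}(-\wtilde D)) < μ^{\min}(\sF)$, whence every sheaf map $\sF → \sQ ⊗ \sO_{\wtilde D}(-\wtilde D)$ vanishes. Thus $\overline β = 0$, so $\sF$ is $\wtilde θ|_{\wtilde D}$-invariant, and $μ(\sF) ≥ μ(\wtilde\sE|_{\wtilde D})$ contradicts the stability of $(\wtilde\sE|_{\wtilde D},\wtilde θ|_{\wtilde D})$ recorded in the first paragraph. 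The main obstacle is precisely this last slope bookkeeping — arranging that the negativity of the conormal direction, rather than the behaviour of $\sF$ as an ordinary sheaf, dictates the comparison — whereas everything preceding it is formal manipulation of the pull-back and restriction functors set up in Section~\ref{sect:Higgs}.
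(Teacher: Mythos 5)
Your proposal is correct and is essentially the paper's own argument: both rest on the operator-stability of $(\wtilde{\sE}|_{\wtilde D}, \wtilde{θ}|_{\wtilde D})$ guaranteed by the choice of $m$ in~(\ref{il:3}), the conormal sequence of $\wtilde D \subset \wtilde X$ through which $\wtilde{θ}_{\wtilde D}$ factors $\wtilde{θ}|_{\wtilde D}$, and the slope bounds of Step~4 (Item~(\ref{il:melchior}), Assumption~\ref{ass:spoE}, Consequence~\ref{cons:xya}, Item~(\ref{il:4})). The only difference is cosmetic: you first show by the slope comparison that the induced map $\overline β : \sF → \sQ ⊗ \sO_{\wtilde D}(-\wtilde D)$ must vanish and then contradict operator-stability, whereas the paper invokes operator-stability to produce a non-zero such map $τ$ and then contradicts the same numerics.
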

\begin{proof}[Proof of Claim~\ref{claim:restus}]
  Argue by contradiction and assume that there exists a generically
  $\wtilde{θ}_{\wtilde{D}}$-invariant subsheaf $ℱ ⊆ \wtilde{ℰ}_{\wtilde{D}}$
  with $μ(ℱ) ≥ μ(ℰ)$.  Lemma~\ref{lem:nsat} allows us to assume that $ℱ$ is a
  saturated subsheaf of $\wtilde{ℰ}|_{\wtilde D}$.  For this, note that the
  slope of a sheaf increases when passing to the saturation.  Consider the
  standard conormal bundle sequence for the submanifold $\wtilde D ⊂ \wtilde X$,
  twisted by $𝒬 := (\wtilde{ℰ}_{\wtilde D})/ℱ$,
  \begin{equation}\label{eq:xxl}
    0 \longrightarrow 𝒬 ⊗ 𝒪_{\wtilde D}(-\wtilde D) \overset{α}{\longrightarrow}
    𝒬 ⊗ Ω¹_{\wtilde X}|^{\vphantom{1}}_{\wtilde D} \overset{β}{\longrightarrow}
    𝒬 ⊗ Ω¹_{\wtilde D} \longrightarrow 0
  \end{equation}
  and the composition $γ$ of the following two morphisms,
  $$
  ℱ \xrightarrow{\;\wtilde{θ}|_{\wtilde{D}}\;} \wtilde{ℰ}|_{\wtilde D} ⊗
  Ω¹_{\wtilde X}|^{\vphantom{1}}_{\wtilde D} \longrightarrow 𝒬 ⊗ Ω¹_{\wtilde
    X}|^{\vphantom{1}}_{\wtilde D}.
  $$
  Recalling from Condition~\ref{il:3} that $\wtilde{ℰ}|_{\wtilde D}$ is stable
  as a sheaf with the $Ω¹_{\wtilde X}|^{\vphantom{1}}_{\wtilde D}$-valued
  operator $\wtilde{θ}|_{\wtilde{D}}$, it follows that $ℱ$ is \emph{not}
  generically invariant under that operator.  In other words, the composed map
  $γ$ is not generically zero.  In contrast, the assumption that the sheaf $ℱ$
  \emph{is} generically a Higgs subsheaf implies that the map $β ◦ γ$ is
  necessarily zero.  Exactness of \eqref{eq:xxl} then gives a non-zero map
  $τ : ℱ → 𝒬 ⊗ 𝒪_{\wtilde D}(-\wtilde D)$.  We will now show by way of numerical
  computation that such a map cannot exist.  To this end, observe on the one
  hand that
  \begin{align*}
    \deg (\img τ) & = \deg (ℱ) - \deg (\ker τ) \\
                  & ≥ \rank ℱ · μ (\wtilde{ℰ}) - \deg (\ker τ) && \text{Choice of }ℱ \\
                  & ≥ \rank ℱ · μ (\wtilde{ℰ}) - \rank (\ker τ) · μ^{\max} (\wtilde{ℰ}) && \text{Item~\ref{il:melchior}} \\
                  & ≥ -r · μ^{\max} (\wtilde{ℰ}) && \text{Assumption~\ref{ass:spoE}.} \\
    \intertext{On the other hand,}
    \deg (\img τ) & ≤ r·μ^{\max}(\wtilde{ℰ}) - [\wtilde D]^{\dim X} && \text{Consequence~\ref{cons:xya}.}
  \end{align*}
  We obtain a contradiction to the choice of $m$ in Assumption~\ref{il:4}.  This
  finishes the proof of Claim~\ref{claim:restus}.
\end{proof}

\subsubsection*{Step 6: End of proof}
\approvals{
  Behrouz & yes \\
  Daniel & yes \\
  Stefan & yes \\
  Thomas & yes
}

We aim to show that the Higgs sheaf $(ℰ_D, θ_D) = ι^{[*]}(ℰ, θ)$ is stable with
respect to $H|_D$.  Applying Proposition~\ref{prop:1} to the resolution morphism
$π|_{\wtilde{D}}$, this is equivalent to showing that
$(\wtilde{ℰ_D}, \wtilde{θ_D})$ is stable with respect to
$\wtilde{H}|_{\wtilde D}$.  But since $(\wtilde{ℰ_D}, \wtilde{θ_D})$ and
$(\wtilde{ℰ}_{\wtilde{D}}, \wtilde{θ}_{\wtilde{D}})$, agree outside of the
$π|_{\wtilde{D}}$-exceptional set, Proposition~\ref{prop:comparison} says that
one is stable if and only if the other is.  Stability of
$(\wtilde{ℰ}_{\wtilde{D}}, \wtilde{θ}_{\wtilde{D}})$ was, however, established
in Claim~\ref{claim:restus}.  \qed

\subsection{Higgs bundles and variations of Hodge structures}
\label{subsect:VHS}
\approvals{
  Behrouz & yes \\
  Daniel & yes \\
  Stefan & yes \\
  Thomas & yes
}

In a series of fundamental works, including \cite{MR944577, MR1179076}, Simpson
relates locally free Higgs sheaves on projective manifolds to representations of
the fundamental group, and to variations of Hodge structures.  We will use these
results later to prove our uniformisation result, Theorem~\ref{thm:BQ}.  For the
reader's convenience, we briefly recall the most relevant definitions and
explain how they fit into the framework of minimal model theory.

\begin{defn}[Polarised, complex variation of Hodge structures]\label{def:pCVHS}
  Let $X$ be a complex manifold, and $w ∈ ℕ$ a natural number.  A
  \emph{polarised, complex variation of Hodge structures of weight $w$}, or
  \pCVHS\ in short, is a $\cC^∞$-vector bundle $\cV$ with a direct sum
  decomposition $\cV = ⊕_{r+s=w} \cV^{r,s}$, a flat connection $D$ that
  decomposes as follows
  \begin{equation}\label{eq:Gtrans}
    D|_{\cV^{r,s}} : \cV^{r,s} → \cA^{0,1}(\cV^{r+1,s-1}) ⊕ \cA^{1,0}(\cV^{r,s}) ⊕
    \cA^{0,1}(\cV^{r,s}) ⊕ \cA^{1,0}(\cV^{r-1,s+1}),
  \end{equation}
  and a $D$-parallel Hermitian metric on $\cV$ that makes the direct sum
  decomposition orthogonal and that on $\cV^{r,s}$ is positive definite if $r$
  is even and negative definite if $r$ is odd.
\end{defn}

Given a \pCVHS, one constructs an associated Higgs bundle.  In fact, there are
two equivalent constructions that produce isomorphic results.

\begin{construction}[Higgs sheaves induced by a \pCVHS]\label{cons:hs}
  Given a \pCVHS\ as in Definition~\ref{def:pCVHS}, use \eqref{eq:Gtrans} to
  decompose $D$ as $D = \overline{θ} ⊕ ∂ ⊕ \overline{∂} ⊕ θ$.
  \begin{description}
  \item[First construction] The operators $\overline{∂}$ equip the
    $\cC^∞$-bundles $\cV^{r,s}$ with complex structures.  We write $ℰ^{r,s}$ for
    the associated locally free sheaves of $𝒪_X$-modules, and set
    $ℰ := ⊕ ℰ^{r,s}$.  The operators $θ$ then define an $𝒪_X$-linear morphism
    $ℰ → ℰ ⊗ Ω¹_X$.  As $D$ is flat, this is a Higgs field.

  \item[Second construction] The operators $\overline{∂}+\overline{θ}$ equip the
    $\cC^{∞}$-bundle $\cV$ with a complex structure where the $(1,0)$-part of
    the connection $D$ becomes holomorphic.  We call this holomorphically flat
    bundle $ℋ$.  The complex subbundles $ ℱ^p:= \bigoplus_{r≥ p} \cV^{r,s} $
    are holomorphic and hence give a decreasing filtration of $ℋ$ by
    holomorphic subbundles, cf.~\cite[Thm.~10.3]{Voisin-Hodge1}.
    Condition~\eqref{eq:Gtrans} then translates into $D (ℱ^p) ⊂ ℱ^{p-1} ⊗ Ω¹_X$.
    Hence, $D$ induces an $𝒪_X$-linear morphism $ℰ → ℰ ⊗ Ω¹_X$ on the associated
    graded sheaf $ℰ:= \bigoplus ℱ^p/ℱ^{p+1}$.  As $D$ is flat, this is a Higgs
    bundle.
  \end{description}
\end{construction}

While part of Simpson's work refers to the first construction, we will use the
second construction throughout.  The formulation using filtrations is closer to
standard textbooks on Hodge theory and allows us to quote \cite{Voisin-Hodge1}
or \cite{CMSP} without conflict of notation.

\begin{defn}[Higgs bundles induced by a \pCVHS]\label{def:hscvhs}
  Let $X$ be a complex manifold and $(ℰ, θ)$ a Higgs bundle on $X$.  We say that
  \emph{$(ℰ, θ)$ is induced by a \pCVHS} if there exists a \pCVHS\ on $X$ such
  that $(ℰ, θ)$ is isomorphic to the Higgs bundle obtained from it via the
  second construction in \ref{cons:hs}.
\end{defn}

\begin{rem}
  In the setting of Definition~\ref{def:hscvhs}, the \pCVHS\ $\cV$ is in general
  not uniquely determined by $(ℰ, θ)$.
\end{rem}

\subsubsection{Criteria for a Higgs bundle to be induced by a \pCVHS}
\approvals{
  Behrouz & yes \\
  Daniel & yes \\
  Stefan & yes \\
  Thomas & yes
}

Scaling the Higgs field induces an action of $ℂ^*$ on the set of isomorphism
classes of Higgs bundles.  Under suitable assumptions, Simpson shows that Higgs
bundles induced by a \pCVHS\ correspond exactly to $ℂ^*$-fixed points.  The
following theorem summarises his results.

\begin{thm}[{Higgs bundles induced by a \pCVHS, I, \cite[Cor.~4.2]{MR1179076}}]\label{thm:charPCVHS}
  Let $X$ be a complex, projective manifold of dimension $n$ and $H ∈ \Div(X)$
  be an ample divisor.  Let $(ℰ, θ)$ be a Higgs bundle on $X$.  Then, $(ℰ, θ)$
  comes from a variation of Hodge structures in the sense of
  Definition~\ref{def:hscvhs} if and only if the following three conditions
  hold.
  \begin{enumerate}
  \item\label{il:emerson} The Higgs bundle $(ℰ, θ)$ is $H$-polystable.
  \item\label{il:lake} The intersection numbers $ch_1(ℰ)·[H]^{n-1}$ and
    $ch_2(ℰ)·[H]^{n-2}$ both vanish.
  \item\label{il:palmer} For any $t ∈ ℂ^*$, the Higgs bundles $(ℰ, θ)$ and
    $(ℰ, t·θ)$ are isomorphic.  \qed
  \end{enumerate}
\end{thm}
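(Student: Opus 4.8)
The plan is to derive the theorem from Simpson's nonabelian Hodge correspondence on projective manifolds together with his description of the fixed points of the $\bC^*$-action that rescales the Higgs field; there is no new idea to be supplied, and I would present the argument as an assembly of the results of \cite{MR944577, MR1179076}.

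\emph{The ``only if'' direction.} Suppose $(\sE,\theta)$ arises from a \pCVHS\ $\cV=\bigoplus_{r+s=w}\cV^{r,s}$ via the second construction in \ref{cons:hs}. The $D$-parallel Hodge metric is a harmonic metric for $(\sE,\theta)$ --- this is the classical curvature computation of Griffiths and Schmid showing that a polarised variation of Hodge structures satisfies Hitchin's self-duality equations --- so $(\sE,\theta)$ is $H$-polystable, which is \ref{il:emerson}. Flatness of $D$ forces all Chern classes of the underlying $\cC^\infty$-bundle to vanish in de Rham cohomology, whence $ch_1(\sE)\cdot[H]^{n-1}=ch_2(\sE)\cdot[H]^{n-2}=0$, which is \ref{il:lake}. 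Finally, the automorphism $g_t$ of $\sE=\bigoplus_p\sF^p/\sF^{p+1}$ acting on the $p$-th graded piece by multiplication by $t^p$ satisfies $g_t\circ\theta=t^{-1}\,\theta\circ g_t$, because $\theta$ lowers the filtration index by one; thus $g_t$ is an isomorphism of Higgs bundles $(\sE,\theta)\xrightarrow{\ \sim\ }(\sE,t^{-1}\theta)$, and as $t\mapsto t^{-1}$ is a bijection of $\bC^*$ we obtain \ref{il:palmer}.

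\emph{The ``if'' direction.} Assume \ref{il:emerson}--\ref{il:palmer}. One first reduces to the case where $(\sE,\theta)$ is stable: by \ref{il:emerson} we may write $(\sE,\theta)$ as a direct sum of stable Higgs bundles, and a Krull--Schmidt argument combined with the connectedness of $\bC^*$ shows that \ref{il:palmer} forces each stable summand to be $\bC^*$-fixed in the same sense; since a finite direct sum of Higgs bundles coming from \pCVHS{}s again comes from a \pCVHS\ (form the direct sum of the underlying variations after shifting weights so that the Hodge filtrations are aligned), it suffices to treat stable $(\sE,\theta)$. For such $(\sE,\theta)$, conditions \ref{il:emerson} and \ref{il:lake} place us exactly in the situation of Simpson's correspondence: $(\sE,\theta)$ carries a harmonic metric $h$ and underlies an irreducible flat bundle. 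By \ref{il:palmer} there is, for each $t\in\bC^*$, an isomorphism $f_t:(\sE,\theta)\to(\sE,t\theta)$, unique up to a scalar by stability; after normalisation the family $(f_t)$ assembles into an algebraic action of $\bC^*$ on $\sE$ lifting the standard rescaling of $\theta$. Differentiating at $t=1$ yields a holomorphic endomorphism $\psi\in H^0\bigl(X,\sEnd\sE\bigr)$ with $[\psi,\theta]=\theta$. The decisive analytic input --- Simpson's analysis of $\bC^*$-fixed harmonic bundles --- is that $\psi$ is $h$-self-adjoint with real, and after an overall normalisation integral, eigenvalues. The eigenspace decomposition $\sE=\bigoplus_r\sE_r$ of $\psi$ is then orthogonal for $h$ and satisfies $\theta(\sE_r)\subseteq\sE_{r-1}\otimes\Omega^1_X$; feeding the flat connection, the metric $h$ with its signs on the graded pieces, and the transversality encoded in $[\psi,\theta]=\theta$ back through \ref{cons:hs} reconstructs a \pCVHS\ whose associated graded Higgs bundle is $(\sE,\theta)$.

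\emph{Main obstacle.} All the weight of the proof sits in the ``if'' direction, and within it in the step that upgrades the formal $\bC^*$-fixed-point property \ref{il:palmer} to a genuine Hodge grading with integral weights and a compatible polarisation. This relies on the uniqueness of harmonic metrics and on the self-adjointness and integrality of the infinitesimal automorphism $\psi$ --- that is, precisely on the structure theory of \cite[\S4]{MR1179076}, which I would quote rather than redevelop.
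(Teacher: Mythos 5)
Your proposal is correct and takes essentially the same route as the paper: Theorem~\ref{thm:charPCVHS} is stated there without proof, as a direct quotation of Simpson's result \cite[Cor.~4.2]{MR1179076}, and you likewise defer all the substance to Simpson, your sketch of the two directions being a faithful outline of his argument. The only minor deviation is in the fixed-point step, where Simpson's own proof uses a single isomorphism $f_t$ for one $t$ of infinite order and an eigenspace decomposition of $\sE$ (so no integrality or differentiation of an assembled $\bC^*$-action is needed), but since you explicitly quote \cite[\S 4]{MR1179076} for precisely that step, nothing is at stake.
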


\begin{rem}\label{rem:charPCVHS}
  With $X$ and $H$ as in Theorem~\ref{thm:charPCVHS}, any Higgs bundle $(ℰ, θ)$
  that satisfies \ref{il:emerson} and \ref{il:lake} carries a flat
  $\cC^∞$-connection, \cite[Thm.~1(2) and Cor.~1.3]{MR1179076}.  In particular,
  all its Chern classes vanish.
\end{rem}

As one immediate consequence of Theorem~\ref{thm:charPCVHS}, we obtain the
following strengthening of \cite[Cor.~4.3]{MR1179076}.

\begin{cor}[Higgs bundles induced by a \pCVHS, II]\label{cor:pfccs}
  Let $X$ be a projective manifold, and $H ∈ Div(X)$ an ample divisor.  Let
  $\imath: S \hookrightarrow X$ be a submanifold.  The push-forward map
  $\imath_*: π_1(S) → π_1(X)$ induces a restriction map
  $$
  \begin{array}{rccc}
    r : & \begin{Bmatrix}
      \text{Isomorphism classes of $H$-semi-}\\
      \text{stable Higgs bundles $(ℰ, θ)$ on $X$} \\
      \text{with vanishing Chern classes.}
    \end{Bmatrix}
    & → &
    \begin{Bmatrix}
      \text{Isomorphism classes of $H$-semi-}\\
      \text{stable Higgs bundles $(ℰ, θ)$ on $S$} \\
      \text{with vanishing Chern classes.}
    \end{Bmatrix} \\[0.6cm]
    & (ℰ, θ) & ↦ & (ℰ, θ)|_S.
  \end{array}
  $$
  In particular, if $(ℰ, θ)$ is any $H$-semistable Higgs bundle $(ℰ, θ)$ on $X$
  with vanishing Chern classes, then $(ℰ, θ)|_S$ is again $H$-semistable.  The
  map $r$ has the following properties.
  \begin{enumerate}
  \item\label{il:simon} If $\imath_*$ is surjective, then $r$ is injective.  In
    particular, if $(ℰ, θ)$ is a Higgs bundle on $X$ such that $(ℰ, θ)|_S$ comes
    from a \pCVHS, then $(ℰ, θ)$ comes from a \pCVHS.
  \item\label{il:garfunkel} If in addition the induced push-forward map
    $\what{\imath}_*:\what{π}_1(S) → \what{π}_1(X)$ of algebraic fundamental
    groups is isomorphic, then $r$ is surjective.
  \end{enumerate}
\end{cor}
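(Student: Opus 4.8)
The plan is to deduce everything from Simpson's characterisation, Theorem~\ref{thm:charPCVHS}, together with the restriction theorem for semistable Higgs bundles (which on smooth projective varieties with ample polarisation is the classical Mehta--Ramanathan statement, and which here follows from Theorem~\ref{thm:restriction} in the smooth case, or directly from Simpson's work). First I would verify that the map $r$ is well-defined: given an $H$-semistable Higgs bundle $(\sE,θ)$ on $X$ with vanishing Chern classes, Remark~\ref{rem:charPCVHS} tells us it carries a flat $\cC^\infty$-connection, so it is a direct sum of stable Higgs bundles each satisfying the hypotheses of Theorem~\ref{thm:charPCVHS}; applying the restriction theorem to each summand shows $(\sE,θ)|_S$ is semistable, and its Chern classes vanish because they are the restrictions of the (vanishing) Chern classes on $X$, or simply because they pull back under $\imath_*$ of the flat connection. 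Here I would restrict first to a sufficiently general complete intersection surface through which $S$ factors, apply Mehta--Ramanathan there, and then restrict further; the point is that one only needs semistability to be preserved, which is exactly what the restriction theorem gives after passing to a high enough multiple of $H$.

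For injectivity in \ref{il:simon}, I would argue that when $\imath_* : π_1(S) → π_1(X)$ is surjective, a semistable Higgs bundle with vanishing Chern classes on $X$ is recovered from its restriction to $S$ via the nonabelian Hodge correspondence: such a Higgs bundle corresponds to a semisimple representation of $π_1(X)$ (by Simpson, \cite[Thm.~1]{MR1179076} together with Corlette's theorem), and a representation of $π_1(X)$ is determined by its pullback to $π_1(S)$ once $\imath_*$ is surjective. So $r$ is injective on the relevant subset. The statement that $(\sE,θ)$ comes from a \pCVHS\ whenever $(\sE,θ)|_S$ does then follows by checking the three conditions of Theorem~\ref{thm:charPCVHS}: polystability of $(\sE,θ)$ follows because its restriction is polystable and hence semisimple, so $(\sE,θ)$ itself corresponds to a semisimple representation; the vanishing of $ch_1·[H]^{n-1}$ and $ch_2·[H]^{n-2}$ is automatic once all Chern classes vanish; and the $ℂ^*$-fixed-point condition \ref{il:palmer} transfers because for each $t ∈ ℂ^*$ the isomorphism $(\sE,tθ)|_S \cong (\sE,θ)|_S$ lifts back to $X$ by the injectivity just established (applied to the bundle $(\sE,tθ)$, which is again semistable with vanishing Chern classes).

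For surjectivity in \ref{il:garfunkel}, I would proceed as follows. A semistable Higgs bundle with vanishing Chern classes on $S$ corresponds to a semisimple representation $ρ$ of $π_1(S)$; I need to extend it to $π_1(X)$. The hypothesis that $\what{\imath}_* : \what π_1(S) → \what π_1(X)$ is an isomorphism, combined with surjectivity of $\imath_*$, means that the kernel $N := \ker(\imath_* : π_1(S) → π_1(X))$ has trivial profinite completion obstruction; more precisely, any finite-image representation of $π_1(S)$ factors through $π_1(X)$. The representation coming from a Higgs bundle induced by a \pCVHS\ need not have finite image, but one uses the rigidity: it suffices to treat the case where $(\sE,θ)|_S$ is stable and a $ℂ^*$-fixed point — but in general one reduces to the case of irreducible $ρ$, and here I would invoke that an irreducible representation of $π_1(S)$ underlying a semistable Higgs bundle with trivial Chern classes, when $\what\imath_*$ is an isomorphism, must kill $N$: because $N$, being contained in the kernel of every map to a finite group (as $\what\imath_*$ is injective), lies in every finite-index subgroup, and the Zariski closure of $ρ(N)$ is a normal subgroup of the reductive group $\overline{ρ(π_1(S))}$ that is trivial after projecting to any finite quotient — combined with the structure theory of the image, this forces $ρ(N) = \{1\}$. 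Then $ρ$ descends to $π_1(X)$, giving a flat bundle on $X$ whose associated Higgs bundle restricts to $(\sE,θ)|_S$; injectivity from \ref{il:simon} identifies it with the required preimage. The main obstacle is precisely this last point: showing that the non-abelian Hodge-theoretic (a priori infinite-image) representation on $S$ descends to $X$ using only the profinite condition on fundamental groups — this is where one genuinely needs that semisimple representations underlying Higgs bundles with vanishing Chern classes have reductive monodromy and that such monodromy is, in the relevant sense, detected by finite quotients; I would expect to cite \cite[Cor.~9.5]{MR944577} or an analogous rigidity/specialisation statement of Simpson here rather than reprove it.
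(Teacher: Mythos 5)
Your overall skeleton---Simpson's correspondence plus Theorem~\ref{thm:charPCVHS}, the $\mathbb{C}^*$-equivariance trick for the second clause of \ref{il:simon}, and a profinite descent for \ref{il:garfunkel}---is the route the paper takes, but three of your steps do not hold up as written. First, well-definedness: $S$ is an \emph{arbitrary} submanifold, so Mehta--Ramanathan/Flenner-type restriction theorems (including Theorem~\ref{thm:restriction}) say nothing here; they control restrictions only to \emph{general} members of $|m\cdot H|$, and there is no ``general complete intersection surface through which $S$ factors''. The paper instead gets semistability of $(\sE,\theta)|_S$ from functoriality of Simpson's equivalence: semistable Higgs bundles with vanishing Chern classes correspond to representations of the fundamental group, and pull-back of Higgs bundles corresponds to push-forward of fundamental groups (\cite[Rem.~1 on p.~36]{MR1179076}), so the restriction corresponds to $\rho\circ\imath_*$ and automatically lies in the target category. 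Second, you repeatedly identify semistable Higgs bundles with vanishing Chern classes with \emph{semisimple} representations (equivalently, you claim such a bundle decomposes into stable summands). That is false: the equivalence of \cite[Cor.~3.10]{MR1179076} pairs semistable bundles with \emph{all} representations, and only the polystable ones with semisimple representations. This error makes your injectivity argument cover only ``the relevant subset'' rather than the whole domain, and makes your reduction of surjectivity to irreducible $\rho$ both unjustified and lossy---neither reduction is needed, since for arbitrary representations conjugacy of $\rho_1\circ\imath_*$ and $\rho_2\circ\imath_*$ already forces conjugacy of $\rho_1$ and $\rho_2$ once $\imath_*$ is surjective.

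Third, the point you yourself flag as the main obstacle in \ref{il:garfunkel} is exactly where the proof must close, and your sketch does not close it; \cite[Cor.~9.5]{MR944577} (Galois conjugation) is not the relevant tool, and no reductivity or rigidity input is needed. The required fact is purely group-theoretic: $\rho\bigl(\pi_1(S)\bigr)\subset\Gl_n(\mathbb{C})$ is a finitely generated linear group, hence residually finite by Malcev; since $\what{\imath}_*$ is injective, $N:=\ker\imath_*$ lies in every finite-index subgroup of $\pi_1(S)$, so $\rho(N)$ lies in every finite-index subgroup of $\rho\bigl(\pi_1(S)\bigr)$ and is therefore trivial---for \emph{every} representation $\rho$, not just irreducible ones. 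This is Grothendieck's theorem \cite[Thm.~1.2b]{MR0262386} (see also \cite[Sect.~8.1]{GKP13}), which is precisely what the paper cites; with it, $\rho$ descends to $\pi_1(X)\cong\pi_1(S)/N$, and surjectivity of $r$ then follows from the correspondence as you intend.
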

\begin{proof}
  Simpson's Nonabelian Hodge Correspondence, \cite[Cor.~3.10]{MR1179076} or
  \cite[Thm.~1]{MR1159261}, gives an equivalence between the categories of
  representations of the fundamental group $π_1(X)$ (resp.\ $π_1(S)$) and
  $H$-semistable Higgs bundles on $X$ (resp.\ $S$) with vanishing Chern classes.
  The correspondence is functorial in morphisms between manifolds, and pull-back
  of Higgs bundles corresponds to the push-forward of fundamental groups,
  \cite[Rem.~1 on p.~36]{MR1179076}.  In particular, we see that the restriction
  of an $H$-semistable Higgs bundle with vanishing Chern classes is again
  $H$-semistable.

  In the setting of \ref{il:simon} where the push-forward map $π_1(S) → π_1(X)$
  is surjective, this immediately implies that the restriction $r$ is injective.
  The restriction map $r$ is clearly equivariant with respect to the actions of
  $ℂ^*$ obtained by scaling the Higgs fields.  Injectivity therefore implies
  that the isomorphism class of a Higgs bundle $(ℰ, θ)$ is $ℂ^*$-fixed if and
  only if the same is true for $(ℰ, θ)|_S$.  Theorem~\ref{thm:charPCVHS} thus
  proves the second clause of \ref{il:simon}.

  Now assume that we are in the setting of \ref{il:garfunkel}, where in addition
  the push-forward map $\what{π}_1(S) → \what{π}_1(X)$ is assumed to be
  isomorphic.  Since fundamental groups of algebraic varieties are finitely
  generated, this implies via Malcev's theorem that every representation of
  $π_1(S)$ comes from a representation of $π_1(X)$, \cite[Thm.~1.2b]{MR0262386}
  or see \cite[Sect.~8.1]{GKP13} for a detailed pedestrian proof.  The claim
  thus again follows from Simpson's Nonabelian Hodge Correspondence.
\end{proof}

\subsubsection{The period map}
\approvals{
  Behrouz & yes \\
  Daniel & yes \\
  Stefan & yes \\
  Thomas & yes
}

A \pCVHS\ on a simply connected complex manifold $X$ induces a map to the period
domain.  Here, we will show that Higgs bundles that are induced by a \pCVHS\
come from the period domain.  If $X$ is the desingularisation of a klt variety,
this implies that the relevant bundle comes from the singular space.

\begin{construction}[\protect{Period map, cf.~\cite[Sect.~10.1.2--3]{Voisin-Hodge1} or \cite[Sect.~4.3]{CMSP}}]\label{constr:periodmap}
  Given a \pCVHS\ on a simply connected complex manifold $X$, we obtain a period
  map $ρ: X → \cD$ into the classifying space $\cD$ for Hodge structures of the
  given type, the so-called \emph{period domain}.  Let us quickly recall the
  construction.  Let $F$ be the flag manifold parametrising complex flags of the
  type given by the filtration $ℱ^•$.  The projective manifold $F$ embeds into
  the product $P$ of Grassmannians that parametrise subspaces of those
  dimensions that occur in the filtration $ℱ^•$.  As $X$ is simply connected,
  the holomorphically flat bundle $ℋ$ trivialises, and so the filtration $ℱ^•$
  yields a family of flags in a fixed complex vector space parametrised by $X$.
  Assigning to each point in $X$ the corresponding point in $P$ yields the
  period map $ρ: X → F \hookrightarrow P$, which is actually holomorphic,
  cf.~\cite[Thm.~10.9]{Voisin-Hodge1}.  The image of $ρ$ can be seen to lie in a
  special domain $\cD$ inside the closed complex submanifold $\check{\cD}$ of
  $F$ that is defined by the orthogonality condition required in
  Definition~\ref{def:pCVHS}, the \emph{period domain}.
\end{construction}

\begin{prop}\label{prop:tautologicalpullback}
  Let $X$ be a simply connected manifold and $(ℰ, θ)$ be a Higgs bundle on $X$
  that comes from a \pCVHS.  Let $ρ : X → \cD$ be the associated period map.
  Then, there exists a holomorphic vector bundle $ℰ_{\cD}$ on $\cD$ such that
  $ℰ ≅ ρ^* ℰ_{\cD}$.
\end{prop}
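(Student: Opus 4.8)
The plan is to construct $\sE_{\cD}$ directly on the period domain $\cD$ as the associated graded of the tautological filtration, and then to check that its pullback under $\rho$ recovers $(\sE,\theta)$ together with its Higgs field. First I would recall the structure of $\cD$: it sits inside the flag manifold $F$, which in turn embeds into a product $P$ of Grassmannians parametrising subspaces of a fixed complex vector space $W$ (the fibre of the trivialised holomorphically flat bundle $\sH$ on the simply-connected $X$). On $P$ one has the tautological subbundles; restricting to $F$ and then to $\cD$, one obtains a decreasing filtration $\sF_{\cD}^\bullet$ of the trivial bundle $W \otimes \sO_{\cD}$ by holomorphic subbundles of the prescribed ranks. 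Define $\sE_{\cD} := \bigoplus_p \sF_{\cD}^p/\sF_{\cD}^{p+1}$, a holomorphic vector bundle on $\cD$. By the very construction of the period map in \ref{constr:periodmap}, the pullback $\rho^*\sF_{\cD}^\bullet$ is precisely the filtration $\sF^\bullet$ of $\sH$ that defines the \pCVHS, so $\rho^* \sE_{\cD} \cong \bigoplus_p \sF^p/\sF^{p+1} = \sE$ as holomorphic vector bundles, using that pullback commutes with taking associated graded of a filtration by subbundles.

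Next I would upgrade the vector-bundle isomorphism to an isomorphism of Higgs bundles, since the statement (at least implicitly, given the surrounding applications) should match Higgs fields as well. The point is that the horizontality condition \eqref{eq:Gtrans}, equivalently $D(\sF^p) \subset \sF^{p-1} \otimes \Omega^1_X$, is the pullback of the analogous \emph{universal} horizontality (Griffiths transversality) statement on $\cD$: the tangent bundle of $\cD$ carries a distinguished horizontal subbundle $T^h_{\cD}$, and the Maurer–Cartan/tautological connection on $W \otimes \sO_{\cD}$ shifts $\sF_{\cD}^p$ into $\sF_{\cD}^{p-1} \otimes (T^h_{\cD})^\vee$. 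This produces a tautological Higgs field $\theta_{\cD}$ on $\sE_{\cD}$ valued in $(T^h_{\cD})^\vee \subseteq \Omega^1_{\cD}$. Because $\rho$ is horizontal (its differential lands in $T^h_{\cD}$, cf.~\cite[Thm.~10.9]{Voisin-Hodge1}), the pullback $\rho^* \theta_{\cD}$ makes sense as an $\Omega^1_X$-valued operator and, unwinding the definitions, coincides with $\theta$. This gives $\rho^*(\sE_{\cD}, \theta_{\cD}) \cong (\sE, \theta)$.

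The main obstacle I anticipate is not any single hard theorem but rather bookkeeping: one must be careful that $\cD$ is only an open (complex) submanifold of the compact dual $\check{\cD} \subset F$, so ``holomorphic vector bundle on $\cD$'' is the right target and no issue of algebraicity or of extending across $\check{\cD} \setminus \cD$ arises; one must also confirm that the ranks occurring in $\sF^\bullet$ are locally constant on $X$ (true, since $X$ is connected and the $\sF^p$ are subbundles) so that a single product of Grassmannians does the job. If the paper only needs the bare vector-bundle statement, the proof is essentially the two lines of the first paragraph: pull back the tautological filtration, take associated graded, invoke that $\rho$ is holomorphic. I would present it in that streamlined form, mentioning the Higgs-field compatibility as a remark if it is used later (e.g.\ in the uniformisation argument of Section~\ref{sect:uniformisation}).
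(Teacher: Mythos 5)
Your first paragraph is essentially the paper's own proof: it likewise restricts the tautological bundles of the Grassmannians to $\cD$, uses holomorphy of the period map to identify $ρ^*\sT^p \cong \sF^p$, and concludes that $\sE = \bigoplus \sF^p/\sF^{p+1}$ is pulled back from $\cD$. The Higgs-field compatibility in your second paragraph is not part of the statement and is not needed for its later use in Corollary~\ref{cor:higgsfromdownst}, so you can safely drop it (and if you keep it, note that $(T^h_{\cD})^\vee$ is naturally a quotient of $Ω^1_{\cD}$ rather than a subsheaf, so the tautological Higgs field takes values in that quotient and is pulled back along the horizontal differential of $ρ$).
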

\begin{proof}
  It follows from Construction~\ref{constr:periodmap} that $\cD$ is an open
  subset in a flag manifold (whose type is determined by the filtration $ℱ^•$),
  which in turn can be embedded into a product of Grassmannians.  Each of the
  Grassmannians carries a tautological vector bundle, which can be restricted to
  $\cD$, yielding a holomorphic vector bundle $𝒯^p$ on $\cD$.  By definition and
  holomorphy of the period map, we have $ρ^*(𝒯^p) ≅ ℱ^p$,
  cf.~\cite[p.~250]{Voisin-Hodge1}.  It follows that $ℱ^p/ℱ^{p+1}$ is a pullback
  from the period domain, and hence so is $ℰ = \bigoplus ℱ^p/ℱ^{p+1}$.
\end{proof}

\begin{cor}\label{cor:higgsfromdownst}
  Let $(X,D)$ be a klt pair and $π : \wtilde X → X$ a resolution of
  singularities.  Let $(ℰ, θ)$ be a Higgs bundle on $\wtilde X$ that is induced
  by a \pCVHS.  Then, $ℰ$ comes from $X$.  More precisely, there exists a
  locally free sheaf $ℰ_X$ on $X$ such that $ℰ = π^* ℰ_X$.  Necessarily, we then
  have $ℰ_X ≅ π_* (ℰ)^{**}$.
\end{cor}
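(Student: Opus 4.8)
The plan is to reduce the whole statement to a triviality property of $\sE$ along the fibres of $\pi$, and then to quote a standard descent result. Set $\sE_X := \pi_*(\sE)$. Since $X$ is normal and $\pi$ is a resolution, $\pi_*\sO_{\widetilde X} = \sO_X$, and I would use the following well-known fact (a consequence of cohomology and base change, applied on the infinitesimal neighbourhoods of the fibres): if a locally free sheaf $\sE$ on $\widetilde X$ restricts to a trivial vector bundle on every fibre $\pi^{-1}(x)$, then $\sE_X = \pi_*\sE$ is locally free and the adjunction morphism $\pi^*\sE_X \to \sE$ is an isomorphism. Granting this, $\sE \cong \pi^*\sE_X$ and $\sE_X = \sE_X^{**} = (\pi_*\sE)^{**}$, which is exactly the assertion. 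So the entire content is the \emph{Claim: for every $x \in X$, the restriction $\sE|_{\pi^{-1}(x)}$ is a trivial bundle.}

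To prove the Claim I would combine two ingredients. The first is geometric: by the theorem of Hacon--McKernan on Shokurov's rational connectedness conjecture, the fibres of a resolution of a klt variety are rationally chain connected; in particular $F := \pi^{-1}(x)_{\red}$ is connected, and any two of its points are joined by a connected chain of rational curves lying in $F$. The second is Hodge-theoretic: for any morphism $g : \mathbb{P}^1 \to \widetilde X$ the Higgs bundle $g^*(\sE,\theta)$ is trivial. Indeed, pulling back — by contravariant functoriality — the polarised complex variation of Hodge structures that induces $(\sE,\theta)$ yields a polarised complex variation of Hodge structures on $\mathbb{P}^1$ whose associated Higgs bundle, via Construction~\ref{cons:hs}, is $g^*(\sE,\theta)$. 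As $\mathbb{P}^1$ is simply connected, Proposition~\ref{prop:tautologicalpullback} applies and provides a period map $\rho : \mathbb{P}^1 \to \cD$ together with a holomorphic bundle $\sE_{\cD}$ on $\cD$ such that $g^*\sE \cong \rho^*\sE_{\cD}$. The point is that $\rho$ must be constant: a polarised variation of Hodge structures over the complete curve $\mathbb{P}^1$ has trivial monodromy (since $\pi_1(\mathbb{P}^1)=1$) and is therefore a constant variation by Deligne's theorem of the fixed part; equivalently, there is no non-constant horizontal holomorphic map $\mathbb{P}^1 \to \cD$. Hence $g^*\sE \cong \rho^*\sE_{\cD}$ is trivial.

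Applying this to the covering family of rational curves in $F$ and using connectedness and rational chain connectedness of $F$ — together with the fact that $\sE$ has vanishing Chern classes (Remark~\ref{rem:charPCVHS}), so that $\sE|_F$ is numerically trivial and its triviality on a covering family of rational curves forces triviality on $F$ — one obtains that $\sE|_F$ is trivial; passing from $F$ to the scheme-theoretic fibre $\pi^{-1}(x)$ then uses that $X$ has rational singularities, so that triviality on the reduced fibre propagates through its infinitesimal thickenings. This completes the Claim and hence the proof, with $\sE_X \cong \pi_*(\sE)^{**}$ built into the argument.

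The step I expect to be the main obstacle, and which I would isolate as a separate lemma, is precisely this last bookkeeping along the exceptional locus: upgrading ``$\sE$ is trivial on every rational curve contained in a fibre'' to ``$\sE$ is trivial on the scheme-theoretic fibre'', and thence to the descent $\pi^*\pi_*\sE \xrightarrow{\ \sim\ } \sE$. The Hodge-theoretic input — Proposition~\ref{prop:tautologicalpullback} plus the rigidity of variations of Hodge structures over $\mathbb{P}^1$ — is the comparatively soft part; the klt hypothesis enters crucially twice, through the rational chain connectedness of the fibres of $\pi$ and through the rationality of the singularities of $X$.
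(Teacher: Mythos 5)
The Hodge-theoretic half of your argument is fine and is exactly the input the paper itself uses: pulling back the \pCVHS\ along any $g:\bP^1 \to \wtilde X$, invoking Proposition~\ref{prop:tautologicalpullback} and the constancy of period maps on $\bP^1$, gives triviality of $g^*\sE$ on every rational curve. The gap lies in the second half, which you flag but do not close, and which is precisely where the difficulty of the statement sits. First, ``trivial on every rational curve contained in a fibre'' does not by any routine argument yield ``trivial on the reduced fibre'': the fibres of a resolution of a klt space, while rationally chain connected by Hacon--McKernan, can be higher-dimensional, reducible and non-normal, and your appeal to vanishing Chern classes plus a covering family of rational curves is not a citable fact in that generality -- one has to control gluing data along intersections of components and the behaviour of $\sE$ on non-rational curves inside the fibre. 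Second, the descent step -- triviality on the scheme-theoretic fibres implying that $\pi_*\sE$ is locally free with $\pi^*\pi_*\sE \xrightarrow{\ \sim\ } \sE$ -- is not a direct consequence of cohomology and base change: the scheme-theoretic fibres are in general non-reduced with $h^0$ of their structure sheaf larger than one, so one must work with all infinitesimal thickenings and kill obstruction groups of the form $H^1\bigl(F, \sEnd(\sE|_F)\otimes \sI^n/\sI^{n+1}\bigr)$, whose vanishing does not follow formally from $R^1\pi_*\sO_{\wtilde X}=0$. The remark immediately following Corollary~\ref{cor:higgsfromdownst} in the paper confirms this: the implication ``$\sE$ trivial on the fibres of $\pi$ $\Rightarrow$ $\sE$ is a pull-back from $X$'' is true for klt $X$, but its proof is described there as ``much more involved'' and deferred to a forthcoming paper. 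So what you isolate as a bookkeeping lemma is in fact the hard theorem, and your proposal as written does not prove the corollary.

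For comparison, the paper's proof sidesteps all fibrewise descent. It works locally in the analytic topology: by Takayama's result, every point of $X$ has a contractible neighbourhood $U$ whose preimage $\wtilde U = \pi^{-1}(U)$ is simply connected; the period map $\rho:\wtilde U \to \cD$ of the \pCVHS\ is constant on rational curves (the same $\bP^1$-rigidity you use) and hence, by rational chain connectedness of the fibres, factors as $\rho = \rho_U \circ \pi$ through $U$; pulling back the tautological bundle $\sE_{\cD}$ along $\rho_U$ then exhibits $\sE|_{\wtilde U} \cong \pi^*\bigl(\rho_U^*\sE_{\cD}\bigr)$ directly, with no need to analyse the structure of the exceptional fibres or their thickenings. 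If you want to salvage your route, you would have to prove the descent statement for bundles trivial on fibres over klt base points as a genuine theorem, not cite it as standard.
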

\begin{proof}
  It suffices to construct $ℰ_X$ locally in the analytic topology, near any
  given point of $X$.  Now, given any $x ∈ X$, recall from
  \cite[p.~827]{Takayama2003} that there exists a contractible, open
  neighbourhood $U = U(x) ⊆ X^{an}$ whose preimage $\wtilde U := π^{-1}(U)$ is
  simply connected.  By assumption, $(ℰ, θ)$ is induced from a \pCVHS \space
  $\cV$.  Let $ρ : \wtilde U → \cD$ be the corresponding period map.
  
  We claim that $ρ$ factors through the resolution $π: \wtilde U → U$.  Indeed,
  since the fibres of $π$ are rationally chain-connected by, it suffices to show
  that given any morphism $η: ℙ¹ → \wtilde U$, the composed map
  $ρ ◦ η : ℙ¹ → \cD$ is constant.  Pulling back $\cV$ via $η$ yields a
  \pCVHS\space on $ℙ¹$ whose associated period map equals $ρ◦η$.  However, due
  to hyperbolicity properties of the period domain $\cD$, this map has to be
  constant, \cite[Application 13.4.3]{CMSP}.
  
  By Proposition~\ref{prop:tautologicalpullback}, we know that
  $ℰ ≅ ρ^*(ℰ_{\cD})$ for some vector bundle $ℰ_{\cD}$ on the period domain
  $\cD$.  If $ρ_U: U → \cD$ is the holomorphic map whose existence was shown in
  the previous paragraph, the vector bundle $ℰ_U := ρ_U^*(ℰ_{\cD})$ hence
  fulfils $π^*(ℰ_U ) ≅ ℰ$, as desired.
\end{proof}

\begin{rem}
  Corollary~\ref{cor:higgsfromdownst} is actually true in a much more general
  setting.  In fact, the bundle $ℰ$ is trivial on the fibres of $π$.  Then,
  regardless whether $ℰ$ carries a Higgs structure or not, $ℰ$ is the pull-back
  of a bundle on $X$, as $X$ has only klt singularities.  As the proof is much
  more involved than the one presented in the previous paragraphs, with our main
  application in mind we have decided to restrict to the case of Higgs bundles
  coming from \pCVHS s here.  Details for the general case will appear in a
  forthcoming paper.
\end{rem}

\part{Miyaoka-Yau Inequality and Uniformisation}
%
%
\svnid{$Id: 06-Bogomolov.tex 840 2018-11-20 14:25:41Z kebekus $}

\section{The $ℚ$-Bogomolov-Gieseker inequality}
\label{sect:bogo}
\subversionInfo
\approvals{
  Behrouz & yes \\
  Daniel & yes \\
  Stefan & yes \\
  Thomas & yes
}

We establish the $ℚ$-Bogomolov-Gieseker inequality for Higgs sheaves on klt
spaces.  Section~\ref{sect:MY} applies this result to the natural Higgs sheaf of
Example~\ref{ex:BQfield1}, in order to establish the $ℚ$-Miyaoka-Yau inequality
for the tangent sheaf of a klt variety of general type whose canonical divisor
is nef.

\begin{thm}[$ℚ$-Bogomolov-Gieseker inequality]\label{thm:BogIneq}
  Let $(X,D)$ be a projective, klt pair of dimension $n ≥ 2$, and let $P$ be a
  nef $ℚ$-Cartier $ℚ$-divisor on $X$.  If $(ℰ, θ)$ is any reflexive Higgs sheaf
  of $\rank ℰ ≥ 2$ on $X$ that is stable with respect to $P$, then $ℰ$ verifies
  \begin{equation}\label{eq:fasax}
  \what{Δ}(ℰ)·[P]^{n-2} ≥ 0.
  \end{equation}
  We refer to \eqref{eq:fasax} as the \emph{$ℚ$-Bogomolov-Gieseker inequality}.
\end{thm}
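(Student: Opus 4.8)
The plan is to reduce~\eqref{eq:fasax}, in two steps, to the classical Bogomolov--Gieseker inequality for semistable Higgs bundles on a smooth projective surface, due to Simpson~\cite{MR1179076} and given an algebraic proof by Langer~\cite{MR3314517}. First I would reduce to the case that $P$ is ample. If $[P] = 0$ and $n \ge 3$, the left-hand side of~\eqref{eq:fasax} vanishes; if $[P] = 0$ and $n = 2$, then $P$-stability forces $(\sE, \theta)$ to admit no generically $\theta$-invariant subsheaf of intermediate rank, so it is stable with respect to \emph{every} polarisation and one runs the argument below with any ample divisor. So assume $[P] \ne 0$, fix an ample divisor $A$, and note that $\varepsilon \mapsto \what{\Delta}(\sE) \cdot [P + \varepsilon \cdot A]^{n-2}$ is a polynomial in $\varepsilon$ whose constant term is $\what{\Delta}(\sE) \cdot [P]^{n-2}$. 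By Proposition~\ref{prop:G-opennessx} (with trivial group), $(\sE, \theta)$ is stable with respect to the ample $\bQ$-divisor $P + \varepsilon \cdot A$ for all small rational $\varepsilon > 0$, so it suffices to prove the inequality for ample polarisations.

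Next I would cut down to a surface. Choosing $m$ sufficiently large and divisible so that $m \cdot P$ is very ample and that, at each of the $n-2$ successive general hyperplane sections below, both the restriction theorem for stable Higgs sheaves (Theorem~\ref{thm:restriction}) and the cutting-down statement for $\bQ$-Chern classes (Item~\ref{il:p3} of Theorem~\ref{thm:41}) apply, we take general $D_1, \dots, D_{n-2} \in |m \cdot P|$ successively and set $S := D_1 \cap \dots \cap D_{n-2}$, with inclusion $\iota : S \hookrightarrow X$. One obtains a normal, irreducible, projective \emph{klt surface} $S$ --- so $S$ has quotient singularities --- such that $(S, \Delta|_S)$ is klt, $\sE_S := \sE|_S$ is reflexive, and the restricted Higgs sheaf $(\sE_S, \theta_S) := (\sE, \theta)|_S$ is stable with respect to the ample $\bQ$-divisor $P_S := (m \cdot P)|_S$. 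Moreover Item~\ref{il:p3} of Theorem~\ref{thm:41} gives $\what{\Delta}(\sE) \cdot [m \cdot P]^{n-2} = \what{\Delta}(\sE_S) \in \bQ$, so it remains to prove $\what{\Delta}(\sE_S) \ge 0$.

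Then I would pass to a smooth cover. By Item~\ref{il:p1} of Theorem~\ref{thm:41}, there is a finite Galois cover $\gamma : \what{S} \to S$ with group $G$ --- a global, Cohen--Macaulay cover for a quasi-étale $\bQ$-variety structure on $S$ --- such that $\what{\sE} := \gamma^{[*]} \sE_S$ is locally free; since $\what{S}$ is a normal surface it is Cohen--Macaulay, and Fact~\ref{fact:isHgc} equips the $G$-bundle $\what{\sE}$ with a $G$-invariant Higgs field $\what{\theta}$. Pick a $G$-equivariant resolution $\pi : \wtilde{S} \to \what{S}$; then $\wtilde{\sE} := \pi^{[*]} \what{\sE} = \pi^{*} \what{\sE}$ is a vector bundle on the smooth projective surface $\wtilde{S}$, carrying a $G$-invariant Higgs field $\wtilde{\theta}$ by Lemma~\ref{lem:rpbfgc}. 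Proposition~\ref{prop:stability_upstairs_downstairs} turns stability of $(\sE_S, \theta_S)$ into $G$-stability of $(\wtilde{\sE}, \wtilde{\theta})$ with respect to the nef and big divisor $(\gamma \circ \pi)^{*} P_S$, and a further application of Proposition~\ref{prop:G-opennessx} makes $(\wtilde{\sE}, \wtilde{\theta})$ $G$-stable --- hence $G$-semistable --- with respect to an ample divisor on $\wtilde{S}$. The Bogomolov--Gieseker inequality for semistable Higgs bundles on a smooth projective surface, in its $G$-equivariant form (which reduces to the statement of Simpson and Langer cited above by standard averaging), then yields $\Delta(\wtilde{\sE}) \ge 0$. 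Finally $\Delta(\wtilde{\sE}) = \Delta(\what{\sE})$, because $\wtilde{\sE} = \pi^{*} \what{\sE}$ and $\pi$ is birational between surfaces, while Item~\ref{il:p1} of Theorem~\ref{thm:41} identifies $\Delta(\what{\sE})$ with $(\deg \gamma) \cdot \what{\Delta}(\sE_S)$. Hence $\what{\Delta}(\sE_S) \ge 0$, which completes the proof.

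The main obstacle is the stability bookkeeping along the tower $\wtilde{S} \to \what{S} \to S$: showing that \emph{ordinary} stability of the reflexive Higgs sheaf $(\sE_S, \theta_S)$ on the singular surface corresponds to $G$-\emph{equivariant} stability of the Higgs \emph{bundle} $(\wtilde{\sE}, \wtilde{\theta})$ on its smooth cover, carried through reflexive pull-backs that are only weakly functorial. This is precisely what Propositions~\ref{prop:1} and~\ref{prop:stability_upstairs_downstairs}, together with Lemma~\ref{lem:rpbfgc}, are designed to provide. By comparison, the reduction to surfaces is a direct application of the already-established restriction theorem, the translation between $\what{\Delta}(\sE) \cdot [P]^{n-2}$ and the honest discriminant $\Delta(\wtilde{\sE})$ on the cover is routine $\bQ$-Chern-class calculus from Section~\ref{sect:QQ2}, and the surface estimate is classical.
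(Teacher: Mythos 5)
Your proposal is correct and follows essentially the same route as the paper's proof: reduction to an ample polarisation via openness of stability, cutting down to a klt surface with the restriction theorem and the $ℚ$-Chern-class calculus of Theorem~\ref{thm:41}, passage to a global Cohen--Macaulay Galois cover and a $G$-equivariant resolution, transfer of stability, and Simpson's Bogomolov--Gieseker inequality for $G$-Higgs bundles. The one step to tighten is that Proposition~\ref{prop:stability_upstairs_downstairs} yields $G$-stability of $(\gamma◦\pi)^{[*]}(\sE_S,\theta_S)$, which by Warning~\ref{warn:nofunct} need not coincide with $(\wtilde{\sE},\wtilde{\theta})=\pi^{[*]}\what{\sE}$ (they agree only off the exceptional set), so you should additionally invoke Proposition~\ref{prop:comparison} — as the paper does — to conclude $G$-stability of the locally free Higgs $G$-sheaf to which Simpson's inequality is then applied.
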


We expect that Theorem~\ref{thm:BogIneq} will also hold for semistable sheaves.
Again, with our main application in mind, we restrict ourselves to the stable
case.

\subsection{Preparations for the proof of Theorem~\ref*{thm:BogIneq}}
\approvals{
  Behrouz & yes \\
  Daniel & yes \\
  Stefan & yes \\
  Thomas & yes
}

Cutting by hyperplanes, the proof of the $ℚ$-Bogomolov-Gieseker inequality will
quickly reduce to the surface case, which is handled first.

\begin{prop}[$ℚ$-Bogomolov-Gieseker Inequality on klt surfaces]\label{prop:Chernclassflatnesss}
  Let $(X,D)$ be a projective, klt pair of dimension two, and let $H$ be a nef
  $ℚ$-Cartier $ℚ$-divisor on $X$.  If $(ℰ,θ)$ is any reflexive Higgs sheaf of
  $\rank ℰ ≥ 2$ on $X$ that is stable with respect to $H$, then the sheaf $ℰ$
  satisfies the $ℚ$-Bogomolov-Gieseker inequality $\what{Δ}(ℰ) ≥ 0$.
\end{prop}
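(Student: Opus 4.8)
The plan is to reduce the statement to Simpson's Bogomolov-Gieseker inequality for stable Higgs bundles on a smooth projective surface. Since $(X,D)$ is a klt \emph{surface} pair, $X$ has quotient singularities, so by Theorem~\ref{thm:41}\,\ref{il:p1} there is a global, Cohen-Macaulay cover $γ : \what{X} → X$ coming from a quasi-étale $ℚ$-variety structure on $X$; here $\what X$ is a normal surface, hence Cohen-Macaulay, and $γ^{[*]}\sF$ is \emph{locally free} for every reflexive $\sF$, while $\widehat{c}_i(\sF)$ is computed as $(\deg γ)^{-1}$ times the honest Chern class of $γ^{[*]}\sF$ against $γ^*$-classes. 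First I would form the reflexive Higgs sheaf $γ^{[*]}(\sE,θ)$ via Construction~\ref{cons:pb2} (more precisely the version for global covers, Notation~\ref{not:rpbfgc}, using that $\what X$ is Cohen-Macaulay so that $γ^{[*]}\sE = γ^*\sE$ is already reflexive, in fact locally free). Call this Higgs $G$-bundle $(\what\sE,\what θ)$ on $\what X$, where $G=\Gal(\what X/X)$.

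Next I would record that $(\what\sE,\what θ)$ is $G$-stable with respect to $γ^*H$. This follows from Proposition~\ref{prop:stability_upstairs_downstairs} applied to the composition $\wtilde X \xrightarrow{π} \what X \xrightarrow{γ} X$ with $π$ a $G$-equivariant resolution: stability of $(\sE,θ)$ with respect to the nef divisor $H$ is equivalent to $G$-stability of $π^{[*]}(\sE,θ)$ with respect to $π^*γ^*H$, which by Proposition~\ref{prop:1} applied to $π : \wtilde X → \what X$ (legitimate since $\what X$ is klt wherever it is Cohen-Macaulay and smooth—on a surface the cover is automatically CM) is equivalent to $G$-stability of $(\what\sE,\what θ)$ with respect to $γ^*H$. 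A $G$-stable locally free Higgs sheaf is in particular stable (forgetting the $G$-action can only enlarge the class of destabilising subsheaves, but one must be a little careful: Simpson's inequality needs genuine stability or at least semistability, and in fact $G$-stability implies semistability as an ordinary Higgs sheaf by an averaging argument — given a generically $θ$-invariant destabilising $\sF\subseteq\what\sE$, the saturation of $\sum_{g\in G} g^*\sF$ is a $G$-invariant generically $θ$-invariant subsheaf of the same or larger slope). So $(\what\sE,\what θ)$ is a semistable Higgs bundle on the smooth projective surface $\what X$ with respect to $γ^*H$.

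Now I would invoke the Bogomolov-Gieseker inequality for semistable Higgs bundles with respect to a nef divisor on a smooth projective surface — this is Simpson's inequality (\cite{MR1954067}, or Langer \cite{MR3314517} in the algebraic formulation, which also covers the nef polarisation case by the openness/limiting argument of Proposition~\ref{prop:G-opennessx}) — to conclude $Δ(\what\sE)\cdot[γ^*H]^0 = Δ(\what\sE) ≥ 0$, i.e. $\bigl(2r\,c_2(\what\sE)-(r-1)c_1(\what\sE)^2\bigr) ≥ 0$ where $r=\rank\sE=\rank\what\sE$. Finally, by the formulas of Theorem~\ref{thm:41}\,\ref{il:p1} (equivalently Lemma~\ref{lem:buqec}), $\widehat{c}_2(\sE) = (\deg γ)^{-1} c_2(\what\sE)$ and $\widehat{c}_1(\sE)^2 = (\deg γ)^{-1} c_1(\what\sE)^2$, so $\what Δ(\sE) = (\deg γ)^{-1} Δ(\what\sE) ≥ 0$, which is the claim.

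The main obstacle I expect is not any single step but the careful handling of the nef (rather than ample) polarisation $H$: Simpson's original statement is for ample polarisations, so one needs either a perturbation argument ($H+εA$ ample, stability is an open condition by Proposition~\ref{prop:G-opennessx}, the inequality $\widehat Δ(\sE)\cdot[H+εA]^0$ is just $\widehat Δ(\sE)$ on a surface and so passes to the limit trivially — actually on a surface there is no polarisation left in the discriminant, so this is painless) or a direct appeal to Langer's algebraic version which already allows nef polarisations. A secondary technical point to get right is that $γ^{[*]}(\sE,θ)$ really is a \emph{Higgs} sheaf in the sense that $\what θ \wedge \what θ = 0$ — but this is guaranteed by Construction~\ref{cons:pb2}/Fact~\ref{fact:isHgc}, since $\what X$ is Cohen-Macaulay. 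So the surface case is genuinely a packaging of Simpson's theorem through the Mumford cover, and the real content lies entirely in the machinery already set up.
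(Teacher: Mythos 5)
Your plan follows the paper's strategy up to the decisive point, but it contains a genuine gap: you apply the Bogomolov--Gieseker inequality for (semi)stable Higgs bundles \emph{directly on the global cover $\what X$}, which you describe as ``the smooth projective surface $\what X$''. Mumford's global cover is only a \emph{normal} variety; in dimension two it is automatically Cohen--Macaulay, but it is in general singular, and since the cover $γ$ of Theorem~\ref{thm:41}~\ref{il:p1} is \emph{not} necessarily quasi-étale, $\what X$ need not even be klt. Simpson's inequality \cite[Thm.~1, Prop.~3.4]{MR944577} (and likewise Langer's algebraic version) is a statement about Higgs bundles on smooth projective varieties, so it cannot be quoted on $\what X$ as it stands. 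This is precisely why the paper's proof does \emph{not} stop on $\what X$: it passes to a $G$-equivariant resolution $π:\wtilde X → \what X$, compares the two a priori different Higgs $G$-bundles $π^{[*]}\bigl(\what\sE,\what θ\bigr)$ and $ψ^{[*]}(\sE,θ)$ (with $ψ = γ◦π$), which agree off the $π$-exceptional set, via Proposition~\ref{prop:comparison}; transfers stability using Proposition~\ref{prop:stability_upstairs_downstairs}; perturbs the merely nef class $π^*γ^*H$ to an ample divisor by a second application of Proposition~\ref{prop:G-opennessx}; and only then applies Simpson on $\wtilde X$, concluding with $Δ\bigl(π^*\what\sE\bigr)=Δ\bigl(\what\sE\bigr)=(\deg γ)·\what Δ(\sE)$. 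Note also that the nef-versus-ample issue you flag as ``painless'' reappears exactly at this stage, because the pullback of an ample class under the birational map $π$ is only nef.

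A related flaw in your stability transfer: you invoke Proposition~\ref{prop:1} for $π:\wtilde X → \what X$, but that proposition is stated for a klt base, and the reflexive pullback of the Higgs field from $\what X$ is only defined where $\what X$ is klt (cf.\ Lemma~\ref{lem:rpbfgc} and Notation~\ref{not:rpbfgc}); since $\what X$ is generally not klt, this link in your chain of equivalences is not available. The Higgs structure upstairs must instead be produced chart-wise through the Higgs $ℚ$-sheaf (Construction~\ref{cons:rpb2}, Fact~\ref{fact:isHgc}, Lemma~\ref{lem:rpbfgc}), and all stability comparisons must be carried out on the smooth model $\wtilde X$, never on $\what X$ itself. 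With those corrections your argument becomes the paper's proof; as written, the key step is applied on a space where the cited theorem does not hold.
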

\begin{proof}
  Openness of stability, Proposition~\ref{prop:G-opennessx}, allows us to assume
  without loss of generality that $H$ is integral, Cartier, and ample.
  Theorem~\ref{thm:41} gives both a $ℚ$-variety structure $X_ℚ$ on $X$, and a
  global, Cohen-Macaulay and Galois cover $γ: \what{X} → X$ that allows us to
  compute $ℚ$-Chern classes on $X$ in terms of honest Chern classes of pull-back
  sheaves.  Let $G := \Gal(\what{X}/X)$ be the corresponding Galois group and
  set $\what H := γ^*H$.

  Applying Construction~\ref{cons:rpb2} to $(ℰ, θ)$, we obtain a reflexive Higgs
  $ℚ$-sheaf $(ℰ, θ)^{[ℚ]}$ on $X_ℚ$.  Let $\bigl(\what{ℰ}, \what{θ}\bigr)$ be
  the induced Higgs $G$-sheaf on $\what X$, as discussed in
  Fact~\ref{fact:isHgc}.  Since $\what X$ is of dimension two,
  Fact~\ref{fact:isHgc} asserts that $\bigl(\what{ℰ}, \what{θ} \bigr)$ is
  actually a Higgs $G$-bundle.  Finally, let $π: \wtilde{X} → \what{X}$ be a
  strong, $G$-invariant resolution of $\what X$.  The following diagram
  summarises the situation:
  \begin{equation}\label{eq:uiif}
    \xymatrix{ %
      \wtilde{X} \ar[rrr]^{π}_{\text{resolution of sings.}} \ar@/^5mm/[rrrrrr]^{ψ} &&& \what{X}
      \ar[rrr]^(.45){γ}_{\text{Galois, with group $G$}} &&& X.  }
  \end{equation}
  We obtain two locally free Higgs $G$-sheaves on $\wtilde{X}$, namely
  $π^{[*]} \bigl(\what{ℰ}, \what{θ} \bigr)$ and $ψ^{[*]}(ℰ, θ)$---we refer to
  Section~\ref{sec:rpull-back} for the construction of the reflexive pull-back
  $ψ^{[*]}$ and to Lemma~\ref{lem:rpbfgc} and Notation~\ref{not:rpbfgc} for all
  matters concerning $π^{[*]}$.  These Higgs sheaves are not necessarily equal,
  but they do agree over the big open set of $X_{\reg}$ where $ℰ$ is locally
  free.  By reflexivity, the two Higgs sheaves will then coincide outside the
  exceptional set of $π$.

  It follows from Proposition~\ref{prop:stability_upstairs_downstairs} that
  $ψ^{[*]}(ℰ, θ)$ is $G$-stable with respect to $π^*\bigl(\what{H}\bigr)$.
  Since both sheaves agree outside the $π$-exceptional set,
  Proposition~\ref{prop:comparison} implies that the $G$-Higgs bundle
  $π^* \bigl(\what{ℰ}, \what{θ} \bigr)$ is $G$-stable with respect to the nef
  polarisation $π^*(\what{H})$ as well.  Openness of $G$-stability,
  Proposition~\ref{prop:G-opennessx}, allows us to modify
  $π^*\bigl(\what{H}\bigr)$, and find a $G$-stable, ample divisor $\wtilde{A}$
  on $\wtilde{X}$, such that $π^* \bigl(\what{ℰ}, \what{θ} \bigr)$ is $G$-stable
  with respect to $\wtilde{H}$.

  Since $\what{ℰ}$ is locally free, we can discuss the standard Bogomolov
  discriminant $Δ\bigl(\what{ℰ}\bigr)$, as introduced in
  Notation~\ref{not:bogomolov}.  The functorial properties of Chern classes,
  \cite[Thm.~3.2(d)]{Fulton98}, and the choice of $γ$ imply
  \begin{equation}\label{eq:discriminant_upstairs_downstairs}
    Δ\bigl(π^*\what{ℰ} \bigr)= Δ\bigl(\what{ℰ}\bigr) = (\deg γ) · \what{Δ}(ℰ).
  \end{equation}
  Simpson's Bogomolov-Gieseker Inequality for $G$-Higgs bundles that are stable
  with respect to an ample polarisation, \cite[Thm.~1 and Prop.~3.4]{MR944577},
  applies to $π^* \bigl(\what{ℰ}, \what{θ} \bigr)$ and $\wtilde A$, showing that
  $Δ \bigl(π^* \what{ℰ} \bigr) ≥ 0$.  Together with
  \eqref{eq:discriminant_upstairs_downstairs}, this finishes the proof of
  Proposition~\ref{prop:Chernclassflatnesss}.
\end{proof}

\subsection{Proof of Theorem~\ref*{thm:BogIneq}}
\CounterStep
\approvals{
  Behrouz & yes \\
  Daniel & yes \\
  Stefan & yes \\
  Thomas & yes
}

By multilinearity of the form $\what{Δ}$, it suffices to prove the claim in the
case where $P$ is an integral Cartier divisor.  Using that the function
$$
N¹(X)_{ℝ} → ℝ, \qquad α ↦ \what{Δ}(ℰ)·α^{n-2}
$$
is continuous, Proposition~\ref{prop:G-opennessx} allows us to assume without
loss of generality that $P$ is integral, Cartier, and ample.  Choosing $m ≫ 0$
sufficiently large, the Restriction theorem for stable Higgs sheaves,
Theorem~\ref{thm:restriction}, allows us to find a tuple of hyperplanes
$(H_1, …, H_{n-2}) ∈ |m·P|^{⨯ (n-2)}$ with associated complete intersection
surface $S := H_1 ∩ ⋯ ∩ H_{n-2}$ such that the following holds.
\begin{enumerate}
\item The scheme $S$ is a normal and irreducible surface, and not contained in
  the support of $D$.  The pair $(S,D|_S)$ is klt, \cite[Lem.~5.17]{KM98}.
  
\item The restriction $ℰ|_S$ is reflexive, \cite[Thm.~12.2.1]{EGA4-3}.
  
\item Denoting the inclusion by $ι: S → X$, the Higgs sheaf $ι^{*}(ℰ, θ)$ is
  stable with respect to $P|_S$, Theorem~\ref{thm:restriction}.
  
\item We have an equality $\what{Δ}(ℰ)·[P]^{n-2} = m^{n-2} · \what{Δ}(ℰ|_S)$,
  Item~\ref{il:p3} of Theorem~\ref{thm:41}.
\end{enumerate}
The result hence follows from Proposition~\ref{prop:Chernclassflatnesss} above.
\qed

%
%
\svnid{$Id: 07-MiyaokaYau.tex 840 2018-11-20 14:25:41Z kebekus $}

\section{The $ℚ$-Miyaoka-Yau inequality}
\label{sect:MY}
\subversionInfo

\subsection{Proof of Theorem~\ref*{thm:MYinequality}}
\label{ssec:pomyi1}
\approvals{
  Behrouz & yes \\
  Daniel & yes \\
  Stefan & yes \\
  Thomas & yes
}

Theorem~\ref{thm:MYinequality} will follow from the results of
Section~\ref{sect:bogo}, once we can apply them to the natural Higgs sheaf
$(ℰ_X,θ_X)$ of Example~\ref{ex:BQfield1}, where $ℰ_X = Ω_X^{[1]} ⊕ 𝒪_X$.  We
hence establish stability of $(ℰ_X,θ_X)$ first.  This is a consequence of the
following minor generalisation of a recent semistability result of Guenancia,
\cite[Thm.~A]{Guenancia}, which in turn generalises a classical result of Enoki,
\cite[Cor.~1.2]{Eno87}.

\begin{thm}[Semistability of tangent sheaves]\label{thm:Guenancia}
  Let $X$ be a projective, klt variety of general type whose canonical divisor
  $K_X$ is nef.  Then, $𝒯_X$ and $Ω^{[1]}_X$ are semistable with respect to
  $K_X$.
\end{thm}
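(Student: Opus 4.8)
The statement is a mild extension of Guenancia's semistability theorem \cite[Thm.~A]{Guenancia}, which itself lifts Enoki's classical result \cite[Cor.~1.2]{Eno87} to the klt setting, and the plan is to reduce it to the form proved there by two elementary steps; all the genuine difficulty will be confined to the cited result. The first step is to reduce to a statement about $\sT_X$ alone. On the normal variety $X$ the sheaves $\sT_X$ and $\Omega^{[1]}_X$ are reflexive duals of one another: $\sT_X = (\Omega^1_X)^* = \bigl(\Omega^{[1]}_X\bigr)^*$, hence $(\sT_X)^* = \Omega^{[1]}_X$. Slope-semistability of a torsion-free sheaf with respect to a fixed nef $\bQ$-Cartier divisor $H$ is unaffected by passing to the reflexive dual: one has $\mu_H(\sE^*) = -\mu_H(\sE)$, and dualising a short exact sequence $0 \to \sF \to \sE \to \sE/\sF \to 0$ of reflexive sheaves interchanges destabilising subsheaves of $\sE$ with destabilising subsheaves of $\sE^*$. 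It therefore suffices to prove that $\sT_X$ is semistable with respect to $K_X$.

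The second step passes to the canonical model. Since $X$ is klt, of general type and $K_X$ is nef, the Basepoint-Free Theorem (Reminder~\ref{remi:cm}) makes $K_X$ semi-ample and yields a birational morphism $\varphi : X \to Z$ onto a normal, projective, klt variety $Z$ with $K_Z$ ample and $K_X \sim_\bQ \varphi^* K_Z$. Let $U \subseteq Z$ be the big open set over which $\varphi$ is an isomorphism and $W := \varphi^{-1}(U)$, so that $\varphi|_W : W \xrightarrow{\;\sim\;} U$. Arguing by contradiction, suppose $\sT_X$ carries a destabilising subsheaf; passing to its saturation, there is a saturated subsheaf $0 \ne \sF \subsetneq \sT_X$ with $\mu_{K_X}(\sF) > \mu_{K_X}(\sT_X)$. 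Transport $\sF|_W$ across $W \cong U$ and extend it to a coherent subsheaf $\sF_Z \subseteq \sT_Z$, which is possible by the standard extension principle for coherent subsheaves used repeatedly in this paper, \cite[I.Thm.~9.4.7]{EGA1}. Then $\rank \sF_Z = \rank \sF$, the sheaf $\sF$ agrees with $\varphi^{[*]} \sF_Z$ over $W$, and $X \setminus W$ maps into the small set $Z \setminus U$; hence Item~\ref{il:Be} of Lemma~\ref{lem:elemSlp2}, applied to the degree-one morphism $\varphi$, gives $\mu_{K_Z}(\sF_Z) = \mu_{K_X}(\sF)$ and, in the same way, $\mu_{K_Z}(\sT_Z) = \mu_{K_X}(\sT_X)$. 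Thus $\sF_Z$ destabilises $\sT_Z$, contradicting \cite[Thm.~A]{Guenancia} applied to the klt variety $Z$, whose canonical divisor is ample.

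All the real work sits inside the quoted theorem of Guenancia. It rests on the singular Kähler--Einstein metric on the klt variety $Z$ produced by Eyssidieux--Guedj--Zeriahi \cite{MR2505296}, whose Ricci curvature is negative on $Z_{\reg}$, together with an Enoki-type Bochner computation bounding the second fundamental form of a hypothetical destabilising subsheaf of $\sT_Z$ over $Z_{\reg}$ --- an argument that must now be pushed through in spite of the incompleteness of this metric near the singular locus, which is precisely the point where the passage from the classical case \cite{Eno87} demands effort. In the reduction sketched above, the only step requiring genuine care is the slope comparison along $\varphi$ over its exceptional locus, which is handled by the birational-invariance properties recorded in Lemma~\ref{lem:elemSlp2} and in the remarks following Construction~\ref{cons:int}; everything else is purely formal.
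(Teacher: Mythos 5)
Your proposal is correct and takes essentially the same route as the paper: reduce to $\sT_X$, pass to the canonical model $\varphi : X \to Z$ via Reminder~\ref{remi:cm}, apply \cite[Thm.~A]{Guenancia} to $Z$ (where $K_Z$ is ample), and transfer semistability across the crepant birational morphism $\varphi$ using Lemma~\ref{lem:elemSlp2}. You merely make explicit the duality reduction and the subsheaf-extension/slope-comparison details that the paper's two-line proof leaves implicit.
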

\begin{proof}
  It suffices to show semistability for $𝒯_X$.  Recall from
  Reminder~\ref{remi:cm} that $K_X$ is semiample and induces a birational
  morphism $φ: X → Z$, where $Z$ is klt, and $K_Z$ is ample.  By
  \cite[Thm.~A]{Guenancia}, the tangent sheaf $𝒯_{Z}$ is semistable with respect
  to $K_{Z}$.  Since $𝒯_X$ coincides with $φ^{[*]}(𝒯_{Z})$ outside of the
  $φ$-exceptional set, $𝒯_X$ is hence semistable with respect to
  $K_X = φ^*(K_{Z})$, cf.\ Lemma~\ref{lem:elemSlp2}.  This concludes the proof.
\end{proof}

\begin{cor}[Higgs-stability for varieties of general type]\label{cor:stable-claim}
  Let $X$ be a projective, klt variety of general type whose canonical divisor
  $K_X$ is nef.  Then, the natural Higgs sheaf $(ℰ_X, θ_X)$ of
  Example~\ref{ex:BQfield1} is stable with respect to $K_X$.
\end{cor}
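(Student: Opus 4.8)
The plan is to verify the defining inequality of stability (Definition~\ref{defn:swostab1}) directly. Write $n := \dim X$, and note that $\sE_X = Ω^{[1]}_X ⊕ \sO_X$ is torsion-free, so the notion applies. I would first record the numerics. Since $\det \sE_X = \det Ω^{[1]}_X = \sO_X(K_X)$ and $\rank \sE_X = n+1$, one has $μ_{K_X}(\sE_X) = \frac{1}{n+1}(K_X^n)$; and as $X$ is of general type with $K_X$ nef, $K_X$ is big and nef, so $K_X^n > 0$ and hence $μ_{K_X}(\sE_X) > 0$. Likewise $μ_{K_X}(Ω^{[1]}_X) = \frac{1}{n}(K_X^n)$.

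Now let $\sF ⊆ \sE_X$ be any generically $θ_X$-invariant subsheaf with $0 < \rank \sF =: r < n+1$; the goal is $μ_{K_X}(\sF) < μ_{K_X}(\sE_X)$. Let $p : \sE_X → \sO_X$ be the projection onto the second summand. If $p(\sF) = 0$, then $\sF$ is a non-zero subsheaf of the direct summand $Ω^{[1]}_X$, and by the last observation in Example~\ref{ex:BQfield1} such a sheaf is \emph{not} generically $θ_X$-invariant --- a contradiction. So $p(\sF) \neq 0$, and putting $\sF' := \ker(p|_\sF) ⊆ Ω^{[1]}_X$ and $\sF'' := p(\sF) ⊆ \sO_X$ gives a short exact sequence of torsion-free sheaves $0 → \sF' → \sF → \sF'' → 0$ with $\rank \sF'' = 1$ and $\rank \sF' = r-1$.

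The degree estimate is then routine. The sheaf $\sF''$ is a non-zero rank-one subsheaf of $\sO_X$, so $\det \sF'' = \sO_X(-D)$ for an effective Weil divisor $D$, and therefore $[\sF'']·[K_X]^{n-1} ≤ 0$ because $K_X$ is nef. If $\sF' \neq 0$, then $\rank \sF' = r-1 < n$ and Theorem~\ref{thm:Guenancia} (semistability of $Ω^{[1]}_X$ with respect to $K_X$) yields $[\sF']·[K_X]^{n-1} ≤ (r-1)·μ_{K_X}(Ω^{[1]}_X) = \frac{r-1}{n}(K_X^n)$, a bound that holds trivially when $\sF' = 0$ as well. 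Adding these two inequalities and dividing by $r$ gives $μ_{K_X}(\sF) ≤ \frac{r-1}{rn}(K_X^n)$. Finally, since $r ≤ n$ we have $(r-1)(n+1) = rn + (r - n - 1) < rn$, hence $\frac{r-1}{rn} < \frac{1}{n+1}$, and together with $K_X^n > 0$ this gives the strict inequality $μ_{K_X}(\sF) < \frac{1}{n+1}(K_X^n) = μ_{K_X}(\sE_X)$, completing the argument.

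The only non-formal ingredient here is Theorem~\ref{thm:Guenancia}, which is already available; everything else is the slope bookkeeping above, in which the arithmetic inequality $\frac{r-1}{rn} < \frac{1}{n+1}$ is what makes the auxiliary summand $\sO_X$ (which only lowers slopes) cooperate with the rank bound $r ≤ n$. I expect the one point requiring care to be the very first step, where one must invoke genuine generic $θ_X$-invariance rather than mere containment in order to exclude non-zero subsheaves of $Ω^{[1]}_X$; this is exactly the fact recorded in Example~\ref{ex:BQfield1}, so no real obstacle should remain.
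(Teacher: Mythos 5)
Your argument is correct and is essentially the paper's own proof, merely run forwards instead of by contradiction: both split a generically $θ_X$-invariant subsheaf $\sF$ along the projection to the $\sO_X$-summand (nonzero by the observation in Example~\ref{ex:BQfield1}), bound the image piece by nefness of $K_X$ and the kernel piece inside $Ω^{[1]}_X$ by Theorem~\ref{thm:Guenancia}, and conclude with the same arithmetic, your inequality $\frac{r-1}{rn}<\frac{1}{n+1}$ being the rearrangement of the paper's $\frac{nr}{(n+1)(r-1)}>1$. Your direct formulation even dispenses with the saturation step (Lemma~\ref{lem:x11}) used in the paper, which is harmless since none of the degree estimates needs reflexivity.
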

\begin{proof}
  Write $n := \dim X$ and $d := [K_X]^n ∈ ℚ^+$, which is positive since $K_X$ is
  nef and big.  Aiming for a contradiction, assume that $(ℰ_X,θ_X)$ is not
  stable with respect to $K_X$.  Hence, there exists a subsheaf
  $0 \ne ℱ ⊊ ℰ_X$ that is generically $θ$-invariant and satisfies
  \begin{equation}\label{ineq:slope-F}
    μ_{K_X}(ℱ)≥ μ_{K_X}(ℰ_X) = d/(n+1).
  \end{equation}
  Lemma~\ref{lem:x11} allows us to assume that $ℱ$ is saturated in $ℰ_X$.  In
  particular, $ℱ$ is reflexive.  Write $r := \rank ℱ$ and note that $r < n+1$.
  
  Let $α: ℱ → 𝒪_X$ be the morphism induced by the projection to the
  $𝒪_X$-summand of $ℰ$.  Recalling from Example~\ref{ex:BQfield1} that no
  subsheaf of the direct summand $Ω^{[1]}_X$ is ever generically
  $θ_X$-invariant, it follows that $α$ is not the zero map.  We also notice that
  $α$ is \emph{not} an injection, for otherwise $ℱ$ is the Weil-divisorial sheaf
  of an anti-effective Weil divisor, and $[ℱ]·[K_X]^{n-1} ≤ 0$, contradicting
  Inequality~\eqref{ineq:slope-F}.  It follows that $r > 1$ and
  $\rank( \ker α) = r - 1 > 0$.  More can be said.  Since $\det(\img α)$ is Weil
  divisorial for an anti-effective divisor, we have
  $$
  [\ker α]·[K_X]^{n-1} = [ℱ]·[K_X]^{n-1} - [\img α]·[K_X]^{n-1} ≥ [ℱ]·[K_X]^{n-1}
  $$
  and, dividing by $r-1$,
  \begin{align*}
    μ_{K_X} (\ker α) & ≥ \frac{[ℱ]·[K_X]^{n-1}}{r-1} = μ_{K_X} (ℱ) · \frac{r}{r-1} \\
                     & ≥ \frac{d}{n+1} · \frac{r}{r-1} && \text{by \eqref{ineq:slope-F}} \\
                     & = \frac{d}{n} · \frac{nr}{(n+1)(r-1)} > \frac{d}{n} && \text{since $n+1>r$ and $d > 0$.}
  \end{align*}
  It follows that $μ_{K_X} (\ker α) > μ_{K_X}\bigl( Ω^{[1]}_X \bigr)$, the
  latter one being equal to $d/n$.  Since $\ker α$ injects into $Ω^{[1]}_X$ by
  definition of $α$, we hence obtain a contradiction to the semistability of
  $Ω^{[1]}_X$ proven in Theorem~\ref{thm:Guenancia}.
\end{proof}

\subsection{Proof of Theorem~\ref*{thm:MYinequality}}
\approvals{ Behrouz & yes \\
  Daniel & yes \\
  Stefan & yes \\
  Thomas & yes}

\Preprint{Using the elementary calculus of
  Lemma~\ref{lem:calculus}}\Publication{By elementary calculus of Chern
  classes}, Inequality~\eqref{eq:X2} is equivalent to
$$
\what{Δ} \bigl( 𝒯_X ⊕ 𝒪_X \bigr)·[K_X]^{n-2} = \what{Δ} \bigl( Ω^{[1]}_X ⊕
𝒪_X \bigr)·[K_X]^{n-2} ≥ 0,
$$
which follows from Theorem~\ref{thm:BogIneq} and
Corollary~\ref{cor:stable-claim}.  \qed

%
%
\svnid{$Id: 08-uniformisation.tex 840 2018-11-20 14:25:41Z kebekus $}

\section{Uniformisation}
\label{sect:uniformisation}
\subversionInfo

\subsection{Proof of Theorem~\ref*{thm:BQ}}
\approvals{
  Behrouz & yes \\
  Daniel & yes \\
  Stefan & yes \\
  Thomas & yes
}

Theorem \ref{thm:BQ} follows directly from the subsequent, more general result.

\begin{thm}\label{thm:more_general_uniformisation}
  Let $X$ be an $n$-dimensional, projective, klt variety of general type whose
  canonical divisor $K_X$ is nef.  Assume that $X$ is smooth in codimension two.
  Recall from Reminder~\ref{remi:cm} that $K_X$ is semiample, and induces a
  morphism $\varphi: X → Z$, where $Z$ is klt, and $K_Z$ is ample.  If equality
  holds in the $ℚ$-Miyaoka-Yau inequality \eqref{eq:X2}, then $Z$ is smooth in
  codimension two, there exists a ball quotient $Y$ and a finite, Galois,
  quasi-étale morphism $f: Y → Z$.  In particular, $Z$ has only quotient
  singularities.
\end{thm}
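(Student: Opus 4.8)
I would replace $X$ by its ample model $Z$, pass to a maximally quasi-étale cover $γ\colon Y\to Z$ furnished by \cite[Thm.~1.5]{GKP13}, prove by a Hodge-theoretic argument that $Y$ is smooth, and then invoke Yau's uniformisation. First I would transfer the equality from $X$ to $Z$. Since $\varphi\colon X\to Z$ is a crepant birational morphism of klt varieties ($K_X=\varphi^{*}K_Z$) and $X$ is smooth in codimension two, the calculus of $ℚ$-Chern classes (reduce to general surface sections using Item~\ref{il:p3} of Theorem~\ref{thm:41}, on which $\varphi$ restricts to a crepant birational morphism, and use that $K_X$ is numerically trivial along the $\varphi$-contracted loci) shows that equality in \eqref{eq:X2} is equivalent to
\[
\what{Δ}\bigl(Ω^{[1]}_Z ⊕ \sO_Z\bigr)·[K_Z]^{n-2} = 0 .
\]
Write $(\sE_Z,θ_Z)=\bigl(Ω^{[1]}_Z⊕\sO_Z,θ_Z\bigr)$ for the natural Higgs sheaf of Example~\ref{ex:BQfield1}; by Corollary~\ref{cor:stable-claim} it is stable with respect to the ample divisor $K_Z$. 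Using \cite[Thm.~1.5]{GKP13} I would pick a finite, Galois, quasi-étale cover $γ\colon Y\to Z$ that is maximally quasi-étale, i.e.\ such that $\what{π}_1(Y_{\reg})\to\what{π}_1(Y)$ is an isomorphism. Then $Y$ is klt of general type, $K_Y=γ^{*}K_Z$ is ample, $γ^{[*]}\sE_Z=\sE_Y:=Ω^{[1]}_Y⊕\sO_Y$, the natural Higgs sheaf $(\sE_Y,θ_Y)$ is $K_Y$-stable by Corollary~\ref{cor:stable-claim}, and Lemma~\ref{lem:buqec} gives $\what{Δ}(\sE_Y)·[K_Y]^{n-2}=(\deg γ)·\what{Δ}(\sE_Z)·[K_Z]^{n-2}=0$. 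It therefore suffices to show that $Ω^{[1]}_Y$, equivalently $\sT_Y$, is locally free.

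Fix a resolution $π\colon\wtilde{Y}\to Y$; by Proposition~\ref{prop:1} the Higgs bundle $π^{[*]}(\sE_Y,θ_Y)$ on $\wtilde{Y}$ is stable with respect to the nef and big divisor $π^{*}K_Y$. The heart of the argument is to show that it is induced by a \pCVHS\ in the sense of Definition~\ref{def:hscvhs}. Running the restriction theorem, Theorem~\ref{thm:restriction}, in the equality regime — cutting $Y$ down to a general complete-intersection surface $S\in|mK_Y|^{n-2}$ and repeating the analysis of Proposition~\ref{prop:Chernclassflatnesss} via a global Cohen--Macaulay cover of $S$ and a resolution, while tracking that all intermediate inequalities become equalities — produces, on a smooth projective surface, a polystable Higgs bundle with vanishing Bogomolov discriminant. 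After the normalisation that kills the first Chern class (pass to $\sEnd$, using Lemma~\ref{lem:calculus}, or twist by $-\tfrac{1}{n+1}K_Y$) it is polystable with respect to an ample class, has vanishing $ch_1$- and $ch_2$-numbers, and is $ℂ^{*}$-fixed because the natural grading of $Ω^{[1]}⊕\sO$ realises the scaling of $θ$ by conjugation; hence Theorem~\ref{thm:charPCVHS} shows it comes from a \pCVHS\ and, by Remark~\ref{rem:charPCVHS}, has all Chern classes zero. Propagating this conclusion from the surface back up to $\wtilde{Y}$ is exactly what Corollary~\ref{cor:pfccs} delivers: a Lefschetz-type comparison of fundamental groups together with the maximally quasi-étale choice of $Y$ forces the Higgs bundle on $\wtilde{Y}$ itself to be induced by a \pCVHS. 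Corollary~\ref{cor:higgsfromdownst} then shows that this Higgs bundle descends to a locally free sheaf on $Y$; unravelling the construction (and undoing the rank-one twist, e.g.\ within the $ℚ$-sheaf formalism of Section~\ref{subsect:QQsheaves}) one reads off that $Ω^{[1]}_Y⊕\sO_Y$ is locally free.

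Once $\sT_Y=(Ω^{[1]}_Y)^{*}$ is locally free, the Zariski--Lipman-type theorem for klt spaces \cite[Thm.~6.1]{GKKP11} forces $Y$ to be smooth. Thus $Y$ is a smooth projective variety with ample canonical divisor attaining equality in the classical Miyaoka--Yau Inequality~\eqref{ineq:MY-classic}, since $Δ(\sT_Y)·K_Y^{n-2}=\what{Δ}(\sT_Y)·[K_Y]^{n-2}=0$; by Yau's theorem \cite{MR0451180} (equivalently, via the period map provided by the \pCVHS\ above and Proposition~\ref{prop:tautologicalpullback}), $Y$ is a ball quotient. Finally $f:=γ\colon Y\to Z$ is a finite, Galois, quasi-étale morphism from a ball quotient; writing $G=\Gal(Y/Z)$, the identification $Z=Y/G$ with $Y$ smooth shows that $Z$ has only quotient singularities, and since the $G$-action is fixed-point free in codimension one its singular locus has codimension at least two, so $Z$ is smooth in codimension two. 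This proves Theorem~\ref{thm:more_general_uniformisation}, and hence Theorem~\ref{thm:BQ}.

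The main obstacle is the middle paragraph: converting the single numerical equality into the \pCVHS\ statement. Three points make it delicate. One must upgrade the nef-but-not-ample polarisation $π^{*}K_Y$ to an ample one while retaining the vanishing of the \emph{full} discriminant, which forces a careful comparison of the orbifold class $\what{c}_2$ downstairs with the ordinary $c_2$ on $\wtilde{Y}$ and a strengthening of the Bogomolov--Gieseker inequality to the vanishing of all Chern classes. One must absorb the nonzero $c_1(\sE_Y)=K_Y$, which is incompatible with a genuine \pCVHS. And — most delicately — one must transport the \pCVHS/flat-bundle structure between the auxiliary surface sections, the resolution $\wtilde{Y}$, and the singular space $Y$ itself; this is precisely where the maximally quasi-étale property of $Y$ and the comparison maps of Corollary~\ref{cor:pfccs} are indispensable. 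Everything after the local freeness of $Ω^{[1]}_Y⊕\sO_Y$ is formal.
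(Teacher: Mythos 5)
Your overall strategy coincides with the paper's (replace $X$ by the canonical model $Z$, pass to a maximally quasi-étale cover $Y\to Z$, prove smoothness of $Y$ by transporting the equality into Simpson's correspondence, then apply Yau), but there is a genuine gap at the very place you flag as "delicate": you never establish that $Z$ is smooth in codimension two, and your closing argument for it is incorrect. From "$Y$ smooth and $f\colon Y\to Z$ quasi-étale Galois" one cannot conclude that $Z$ is smooth in codimension two: quasi-étale only means the $G$-action is fixed-point free in codimension \emph{one}, and a quotient of a smooth variety by an action with fixed points in codimension two has genuine quotient singularities there (already $\mathbb{C}^2/\{\pm 1\}$). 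In the paper this statement is not an afterthought but an input: Proposition~\ref{prop:c1} proves it \emph{first}, by combining the Shepherd-Barron--Wilson comparison $\what{c}_2(\sT_Z)\cdot[K_Z]^{n-2}\le c_2(\sT_{\wtilde X})\cdot[(\varphi\circ\pi)^*K_Z]^{n-2}$, whose equality case characterises smoothness of $Z$ in codimension two \cite[Prop.~1.1]{SBW94}, with the already-proven $\mathbb{Q}$-Miyaoka--Yau inequality for $Z$ (Theorem~\ref{thm:MYinequality}); equality for $X$ then forces both equality for $Z$ and codimension-two smoothness. Your "calculus of $\mathbb{Q}$-Chern classes" transfer does not produce this, and the omission propagates: without it, $Y$ is not known to be smooth in codimension two, so your general complete-intersection surface $S\subseteq Y$ need not be smooth nor contained in $Y_{\reg}$, the restriction of $\Omega^{[1]}_Y\oplus\sO_Y$ to $S$ need not be a Higgs \emph{bundle}, $S$ does not lift isomorphically to the resolution $\wtilde Y$, and the whole transport of the \pCVHS{} via Corollary~\ref{cor:pfccs} (which needs a submanifold of a projective manifold and the Goresky--MacPherson comparison $\pi_1(S)\cong\pi_1(Y_{\reg})$) breaks down. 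This is exactly the point of the remark following Proposition~\ref{prop:c2}; your proposed workaround "via a global Cohen--Macaulay cover of $S$ and a resolution" is not how the argument can be closed, because Simpson's $\mathbb{C}^*$-fixed-point criterion and the functoriality of the Nonabelian Hodge Correspondence are only available on smooth projective varieties mapping into the smooth ambient resolution.

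A second, smaller but real gap is the final "unravelling": Corollary~\ref{cor:higgsfromdownst} only gives you \emph{some} locally free sheaf $\sF_Y$ on $Y$ with $\pi^*\sF_Y$ equal to the extended Higgs bundle on $\wtilde Y$; knowing that $\sF_Y|_S\cong\bigl(\sEnd\sE_Y\bigr)|_S$ on one surface does not by itself show that $\sEnd\sE_Y$ is locally free. The paper bridges this with the boundedness argument of Claim~\ref{claim:flatbounded} (the family ${\sf B}_r$, using vanishing of all Chern classes of the pull-back and rationality of the singularities) together with the Bertini-type theorem for isomorphism classes in bounded families \cite[Cor.~5.3]{GKP13}, which upgrades the isomorphism on $S$ to a global isomorphism $\sEnd\sE_Y\cong\sF_Y$; only then does local freeness of $\sT_Y$, and hence smoothness via \cite[Thm.~6.1]{GKKP11}, follow. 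Everything after that point in your write-up (Yau's theorem, quasi-étaleness of $f$, quotient singularities of $Z$) is fine, but the two steps above need to be supplied, and the first one in particular must come \emph{before} the cover $Y\to Z$ is used.
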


\begin{proof}[Proof of Theorem~\ref{thm:BQ}]
  As varieties with terminal singularities are smooth in codimension two, the
  result follows by applying Theorem~\ref{thm:more_general_uniformisation}.
\end{proof}

\subsection{Preparation for the proof of Theorem~\ref*{thm:more_general_uniformisation}}
\approvals{
  Behrouz & yes \\
  Daniel & yes \\
  Stefan & yes \\
  Thomas & yes
}

The proof of Theorem~\ref{thm:more_general_uniformisation} is based on the
following two propositions.

\begin{prop}\label{prop:c1}
  Let $X$ be a projective, klt variety of general type whose canonical divisor
  is nef.  Suppose that $X$ is smooth in codimension two and that equality holds
  in the $ℚ$-Miyaoka-Yau inequality \eqref{eq:X2}.  Recall from
  Reminder~\ref{remi:cm} that $K_X$ is semiample and induces a morphism
  $\varphi: X → Z$, where $Z$ is klt, and $K_Z$ is ample.  Then, $Z$ is smooth
  in codimension two, and equality holds in the $ℚ$-Miyaoka-Yau inequality for
  $Z$.
\end{prop}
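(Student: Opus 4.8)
The plan is to exploit the fact that $\varphi : X \to Z$ is birational, $K_X \sim_{\mathbb Q} \varphi^* K_Z$, and that $\mathbb{Q}$-Chern numbers are insensitive to what happens on exceptional loci and small subsets. First I would establish the statement about $Z$ being smooth in codimension two. Let $Z_{\mathrm{sing}}^{(3)} \subseteq Z$ denote the non-klt-quotient locus, which has codimension $\geq 3$ by \cite[Prop.~9.3]{GKKP11}; the issue is whether there are codimension-two singular points of $Z$, necessarily of quotient type. Since $\varphi$ is a small-to-crepant birational morphism between canonical models (in the terminal case) or more generally a contraction with $K_X = \varphi^* K_Z$, the exceptional locus $E \subseteq X$ has codimension $\geq 2$ in $X$; but $X$ is assumed smooth in codimension two, so $E$ must actually have codimension $\geq 2$ and $\varphi$ is an isomorphism over the smooth codimension-two points. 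The key point is that $\varphi$ being crepant and $X$ being terminal-in-codimension-two forces $Z$ to have no genuine codimension-two singularities: any codimension-two quotient singularity of $Z$ would pull back to a codimension-two singularity of $X$ over which $\varphi$ is necessarily an isomorphism (a $K$-crepant birational map that is an isomorphism in codimension one over a surface singularity is an isomorphism there), contradicting the hypothesis. I would make this precise using \cite[Lem.~2.30, 2.13]{KM98}.

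Next, for the equality of $\mathbb{Q}$-Miyaoka-Yau discriminants, I would use that both $X$ and $Z$ are klt, smooth in codimension two, so $\widehat{c}_1(\mathscr T_X)$, $\widehat{c}_1(\mathscr T_X)^2$, $\widehat{c}_2(\mathscr T_X)$ are defined as in Theorem~\ref{thm:41}, and likewise on $Z$. The plan is to compare $\widehat{\Delta}(\mathscr T_X \oplus \mathscr O_X) \cdot [K_X]^{n-2}$ with $\widehat{\Delta}(\mathscr T_Z \oplus \mathscr O_Z) \cdot [K_Z]^{n-2}$. Since $\mathscr T_X$ and $\varphi^{[*]} \mathscr T_Z$ agree outside the $\varphi$-exceptional set $E$ (a subset of codimension $\geq 2$; in fact, since $X$ is smooth in codimension two and $Z$ is too, one checks $E$ meets neither $X_{\mathrm{sing}}$ nor $Z_{\mathrm{sing}}$ in codimension two), and since $\mathbb{Q}$-Chern-class intersection numbers are computed on a quasi-étale cover of a $\mathbb{Q}$-variety structure away from a codimension-$3$ set, the exceptional contribution does not affect $\widehat{c}_i \cdot [K_X]^{n-2}$. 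Concretely: cutting down by general hyperplanes in $|m K_X|$ (Item~\ref{il:p3} of Theorem~\ref{thm:41}), one reduces to the surface case $\varphi_S : S \to \varphi(S)$, where $\varphi_S$ is a birational morphism of klt surfaces contracting some curves, $K_S = \varphi_S^* K_{\varphi(S)}$. For surfaces, $\widehat c_1^2$ is birationally invariant under crepant contractions since $K_S = \varphi_S^* K_{\varphi(S)}$ directly gives $[K_S]^2 = [K_{\varphi(S)}]^2$, and $\widehat c_2$ is handled via the behaviour under quasi-étale covers (Lemma~\ref{lem:buqec}) together with a local computation on the resolution comparing $c_2$ of $\mathscr T$ on two different models agreeing outside the exceptional curves — here the crepancy $K = \varphi^* K$ is exactly what makes the discrepancy contributions to $c_2$ cancel, by the standard formula for how $c_2(\mathscr T)$ changes under a blow-up relative to $K^2$.

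The main obstacle I expect is the careful bookkeeping in the surface reduction: one must verify that the general hyperplane section $H \in |mK_X|$ simultaneously (i) avoids the codimension-$3$ bad loci of $X$ and $Z$ where the $\mathbb{Q}$-variety structures are not defined, (ii) restricts $\varphi$ to a birational morphism $H \to \varphi(H)$ of klt \emph{surfaces} still satisfying $K_H = (\varphi|_H)^* K_{\varphi(H)}$, and (iii) is compatible with the $\mathbb{Q}$-sheaf formalism so that $\widehat\Delta(\mathscr T_X \oplus \mathscr O_X)\cdot[K_X]^{n-2} = m^{n-2}\widehat\Delta((\mathscr T_X\oplus\mathscr O_X)|_H)$ and similarly downstairs. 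Granting all of this — each piece being an application of Theorem~\ref{thm:41}, Reminder~\ref{remi:cm}, and \cite[Lem.~5.17]{KM98} — the equality $\widehat\Delta(\mathscr T_X\oplus\mathscr O_X)\cdot[K_X]^{n-2} = \widehat\Delta(\mathscr T_Z\oplus\mathscr O_Z)\cdot[K_Z]^{n-2}$ follows, so that equality in the $\mathbb{Q}$-Miyaoka-Yau inequality transfers from $X$ to $Z$, completing the proof.
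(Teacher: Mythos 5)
Your argument for the first assertion does not work, and in fact no purely birational argument can: smoothness of $Z$ in codimension two is \emph{not} a consequence of smoothness of $X$ in codimension two alone, but genuinely requires the Miyaoka--Yau equality. The morphism $\varphi$ need not be small --- its exceptional locus is the locus where $K_X$ fails to be ample and may contain divisors --- and a crepant birational morphism can perfectly well resolve codimension-two \emph{canonical} singularities of its target. For instance, if $Z$ is a canonical model with compound Du Val singularities along a curve and $X$ is a crepant terminalization, then $X$ is terminal, hence smooth in codimension two, and $K_X = \varphi^* K_Z$ is nef and big, yet $Z$ is singular in codimension two; already for surfaces, contracting the $(-2)$-curves of a smooth minimal surface of general type is crepant and creates Du Val points. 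So both of your claims --- that the $\varphi$-exceptional locus has codimension at least two in $X$, and that a codimension-two singularity of $Z$ would pull back to one of $X$ over which $\varphi$ is an isomorphism --- are false, and the first half of the proposal collapses.

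The same phenomenon invalidates the second half: $\what{c}_2$ is \emph{not} invariant under the crepant contraction, and the ``cancellation by crepancy'' you invoke is exactly what fails. The difference $c_2(\sT_X)\cdot[K_X]^{n-2} - \what{c}_2(\sT_Z)\cdot[K_Z]^{n-2}$ is a sum of strictly positive local contributions from the codimension-two singular points of $Z$, while the $\what{c}_1$-terms do agree because $K_X = \varphi^* K_Z$; this comparison, with its equality criterion, is \cite[Prop.~1.1]{SBW94} and is the key external input you are missing. The paper's proof chooses a strong resolution $\pi: \wtilde{X} \to X$, so that $\varphi\circ\pi$ is a resolution of $Z$ minimal in codimension two, cuts by surfaces $S_Z$ general in $|m\cdot K_Z|$ and their strict transforms $S_X$, $S_{\wtilde X}$, and uses that $X$ is smooth in codimension two to see $S_X \subseteq X_{\reg}$ with $\pi$ an isomorphism near $S_X$; this gives $\what{c}_2(\sT_Z)\cdot[K_Z]^{n-2} \le c_2(\sT_X)\cdot[K_X]^{n-2}$, with strict inequality if and only if $Z$ has codimension-two singularities. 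Sandwiching this between the $ℚ$-Miyaoka--Yau inequality applied to $Z$ itself (legitimate, since Theorem~\ref{thm:MYinequality} is already established and $Z$ is klt of general type with $K_Z$ ample) and the assumed equality \eqref{eq:X2} for $X$ forces equality throughout, which yields both conclusions simultaneously. Neither the application of Theorem~\ref{thm:MYinequality} to $Z$ nor the Shepherd-Barron--Wilson comparison appears in your proposal, and without them the statement cannot be reached.
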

\begin{proof}
  Choose a strong resolution of singularities, say $π: \wtilde X → X$, and
  observe that the composed map $\varphi ◦ π: \wtilde X → Z$ is a resolution of
  $Z$ that is minimal in codimension two.  Let $S_{Z}$ be a surface cut out by
  general sections of $|m·K_{Z}|$, for $m ≫ 0$, and let $S_X$, $S_{\wtilde X}$
  denote the strict transforms in $X$ and $\wtilde X$, respectively.  Since $X$
  is smooth in codimension two, $S_X$ is entirely contained in the smooth locus
  $X_{\reg}$, and $π$ is therefore isomorphic near $S_X$.  We obtain:
  \begin{align}
    \what{c}_2(𝒯_{Z}) · [K_{Z}]^{n-2} & ≤ c_2(𝒯_{\wtilde X}) · [(\varphi ◦ π)^* K_{Z}]^{n-2} && \text{by \cite[Prop.~1.1]{SBW94}} \label{il:es2} \\
                                        & = c_2(𝒯_{\wtilde X}|_{S_{\wtilde X}}) = c_2(𝒯_X|_{S_X}) \nonumber \\
                                        & = c_2(𝒯_X) · [K_X]^{n-2}, \nonumber \\
    \intertext{with strict inequality if and only if $Z$ does have singularities in
    codimension two, \cite[Prop.~1.1]{SBW94}.  In a similar vein,}
    \what{c}_1(𝒯_{Z}) · [K_{Z}]^{n-1} & = c_1(𝒯_X) · [K_X]^{n-1}.  \label{il:es1}
  \end{align}
  The $ℚ$-Miyaoka-Yau inequality for $Z$ thus forces equality in \eqref{il:es2}.
  This shows both that $Z$ is smooth in codimension two, and that equality holds
  in the $ℚ$-Miyaoka-Yau inequality for $Z$.
\end{proof}

\begin{prop}\label{prop:c2}
  Let $X$ be a projective, klt variety of dimension $n$ that is smooth in
  codimension two and such that the étale fundamental group of $X$ and of its
  smooth locus agree, $\what{π}_1(X_{\reg}) ≅ \what{π}_1(X)$.  If $K_X$ is
  ample and if equality holds in the $ℚ$-Miyaoka-Yau Inequality~\eqref{eq:X2},
  then $X$ is smooth.
\end{prop}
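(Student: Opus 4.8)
The plan is to show that the natural Higgs sheaf $\sE_X := \Omega^{[1]}_X \oplus \sO_X$ of Example~\ref{ex:BQfield1} is locally free, and then to invoke the Zariski--Lipman property of klt spaces, \cite[Thm.~6.1]{GKKP11}, which immediately yields smoothness. We may assume $n \geq 3$, since for $n = 2$ the assumption that $X$ be smooth in codimension two already forces $X$ to be smooth. By Corollary~\ref{cor:stable-claim} the Higgs sheaf $(\sE_X, \theta_X)$ is stable with respect to the ample divisor $K_X$, and by the calculus of $\mathbb{Q}$-Chern classes, Lemma~\ref{lem:calculus}, equality in \eqref{eq:X2} is equivalent to $\what{\Delta}(\sE_X) \cdot [K_X]^{n-2} = 0$, hence also to $\what{\Delta}(\sEnd \sE_X) \cdot [K_X]^{n-2} = 0$ and, since $\what{c}_1(\sEnd \sE_X) = 0$, to $\what{c}_2(\sEnd \sE_X) \cdot [K_X]^{n-2} = 0$. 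Finally, the automorphism of $\sE_X$ that is the identity on $\Omega^{[1]}_X$ and multiplication by $t$ on $\sO_X$ conjugates $\theta_X$ to $t^{-1} \cdot \theta_X$; consequently $(\sE_X, \theta_X)$, and therefore also $\sEnd(\sE_X, \theta_X)$, is fixed by the $\mathbb{C}^*$-action that rescales the Higgs field.

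Next I would cut down to a surface and apply Simpson's theory there. Applying the Restriction Theorem~\ref{thm:restriction} $(n-2)$ times, we find, for $m \gg 0$, a general complete intersection surface $S = H_1 \cap \dots \cap H_{n-2}$ with $H_i \in |m \cdot K_X|$ such that $(\sE_X, \theta_X)|_S$ is stable with respect to $K_X|_S$. Since $X$ is smooth in codimension two and $S$ is a general surface, $S$ is contained in $X_{\reg}$; in particular $S$ is a smooth projective surface and $\sEnd(\sE_X)|_S$ is a genuine Higgs bundle on $S$. Item~\ref{il:p3} of Theorem~\ref{thm:41}, applied repeatedly, turns the vanishing $\what{\Delta}(\sE_X) \cdot [K_X]^{n-2} = 0$ into $\Delta(\sE_X|_S) = 0$, so that $ch_1(\sEnd \sE_X|_S) \cdot [K_X|_S] = 0$ and $ch_2(\sEnd \sE_X|_S) = 0$. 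Moreover $\sEnd(\sE_X)|_S$ is polystable, being the endomorphism bundle of a stable Higgs bundle on a projective manifold, \cite{MR944577}, and, as noted above, it is fixed under rescaling of the Higgs field. Theorem~\ref{thm:charPCVHS} therefore shows that $\sEnd(\sE_X)|_S$ is induced by a \pCVHS\ on $S$.

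The heart of the argument is then to transport this variation of Hodge structures back to all of $X$. Fix a resolution of singularities $\pi : \wtilde{X} \to X$; it is an isomorphism over $X_{\reg}$, so we regard $S \subset \pi^{-1}(X_{\reg}) \subset \wtilde{X}$ as a submanifold. The Lefschetz hyperplane theorem for the quasi-projective manifold $X_{\reg}$ gives that the inclusion induces an isomorphism $\pi_1(S) \cong \pi_1(X_{\reg})$, which surjects onto $\pi_1(X) = \pi_1(\wtilde{X})$; combined with the hypothesis $\what{\pi}_1(X_{\reg}) \cong \what{\pi}_1(X)$, this shows that $S \hookrightarrow \wtilde{X}$ induces a surjection on topological fundamental groups and an isomorphism on étale fundamental groups. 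Corollary~\ref{cor:pfccs} now produces a Higgs bundle $\sG$ on $\wtilde{X}$, again induced by a \pCVHS, with $\sG|_S \cong \sEnd(\sE_X)|_S$, and Corollary~\ref{cor:higgsfromdownst} shows that $\sG$ descends to $X$: there is a locally free sheaf $\sG_X$ on $X$ with $\sG = \pi^* \sG_X$ and $\sG_X \cong \pi_*(\sG)^{**}$.

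To finish, I would identify $\sG_X$ with $\sEnd(\sE_X)$. Both are reflexive sheaves on $X$ that are locally free in a neighbourhood of $X_{\reg}$, and they agree on the general, high-degree complete intersection surface $S$; since restriction to such a surface induces an isomorphism on the spaces of global homomorphisms between such sheaves (Serre vanishing applied to the reflexive sheaf $\sHom(-,-)$), one produces sheaf maps in both directions that restrict to isomorphisms on $S$ and, by the same argument, have composites equal to the respective identities, giving $\sG_X \cong \sEnd(\sE_X)$. Hence $\sEnd(\sE_X)$ is locally free; as reflexive hulls commute with finite direct sums, $\Omega^{[1]}_X = \sHom(\sO_X, \Omega^{[1]}_X)$ is a direct summand of $\sEnd(\sE_X)$, hence locally free, and therefore so is $\sT_X = (\Omega^{[1]}_X)^*$. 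The Zariski--Lipman property for klt spaces, \cite[Thm.~6.1]{GKKP11}, then shows that $X$ is smooth. I expect the main obstacle to lie in the third paragraph: lifting the \pCVHS\ from $S$ to $\wtilde{X}$ and descending it to $X$ is exactly the place where the hypothesis on the étale fundamental groups enters, and it requires the Lefschetz-type comparison of fundamental groups to be carried out with care for the quasi-projective manifold $X_{\reg}$ rather than for $X$ itself.
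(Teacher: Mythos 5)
Up to the last step, your argument is essentially the one in the paper: stability of $(\sE_X,θ_X)$ from Corollary~\ref{cor:stable-claim}, restriction via Theorem~\ref{thm:restriction} to a general complete intersection surface $S ⊂ X_{\reg}$, verification of the three criteria of Theorem~\ref{thm:charPCVHS} for $\sEnd\bigl((\sE_X,θ_X)|_S\bigr)$, extension to $\wtilde X$ by Corollary~\ref{cor:pfccs} using Goresky--MacPherson, Takayama's theorem and the hypothesis on étale fundamental groups, and descent to a locally free sheaf $\sG_X$ on $X$ by Corollary~\ref{cor:higgsfromdownst}. One smaller omission there: to apply Corollary~\ref{cor:pfccs} you must exhibit the surface datum as a semistable Higgs bundle with vanishing Chern classes with respect to (the restriction of) an \emph{ample} class on $\wtilde X$; since $π^*K_X$ is only nef, the paper does this by taking $\wtilde H = π^*K_X + E$ with $E$ exceptional and disjoint from $\wtilde S$.

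The genuine gap is in your final identification $\sG_X \cong \sEnd \sE_X$. The Hom-comparison via ``Serre vanishing'' has the quantifiers in the wrong order: the surface $S$ (in particular the degree $m$) is fixed \emph{before} $\sG_X$ exists --- indeed $\sG_X$ is manufactured from the \pCVHS\ living on $S$ --- whereas the vanishing of $H^1$ of the negatively twisted sheaf $\sHom(\sEnd\sE_X, \sG_X)$ that you need for surjectivity of the restriction on Hom's holds only for twists that are large \emph{relative to the specific sheaf} $\sG_X$. So you cannot guarantee that the $S$ you chose is of ``sufficiently high degree'' for the pair $(\sEnd\sE_X,\sG_X)$; the argument is circular as stated. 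There are secondary problems as well: on the singular $X$ the sheaf $\sHom(\sEnd\sE_X,\sG_X)$ is only reflexive, not locally free, so the required vanishing is an Enriques--Severi--Zariski-type statement rather than plain Serre vanishing/duality, and an iterated hyperplane-section argument passes through intermediate complete intersections that do meet $X_{\sing}$, where $\sHom(-,-)|_{Y}$ and $\sHom_Y(-|_Y,-|_Y)$ need not agree. The paper removes the circularity with a boundedness device: Claim~\ref{claim:flatbounded} shows that every candidate (locally free of rank $(n+1)²$, with the prescribed $μ^{\max}$ and with vanishing Chern classes after pull-back to $\wtilde X$) lies in a bounded family ${\sf B}_r$; the surface $S$ is then chosen general with respect to this whole family \emph{before} $\sG_X$ is constructed, and the Bertini-type theorem for isomorphism classes in bounded families, \cite[Cor.~5.3]{GKP13}, converts $\sG_X|_S \cong (\sEnd\sE_X)|_S$ into $\sG_X \cong \sEnd\sE_X$. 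To repair your last paragraph you need either this boundedness argument or some other a priori uniform bound on $\sG_X$ that is independent of the choice of $S$; the rest of your proof then goes through as written.
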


\begin{rem}
  The main reason for the assumption on the codimension of the singular set is
  to guarantee smoothness of complete intersection surfaces and hence their
  isomorphic lifting to a strong resolution of singularities, where we are then
  able to use functoriality properties of Simpson's Nonabelian Hodge
  Correspondence; for details, see the subsequent proof.
\end{rem}

\subsubsection*{Proof of Proposition~\ref*{prop:c2}}
\label{subsubsect:proof_of_main_theorem}

For the reader's convenience, the proof is subdivided into a number of
relatively independent steps.

\subsubsection*{Step 1.  Setup}
\approvals{
  Behrouz & yes \\
  Daniel & yes \\
  Stefan & yes \\
  Thomas & yes
}

The main object of study in our proof is the canonical Higgs sheaf $(ℰ_X,θ_X)$
on $X$, introduced in Example~\ref{ex:BQfield1}.  Recall that
$ℰ_X = Ω^{[1]}_X ⊕ 𝒪_X$ and that $(ℰ_X,θ_X)$ is $K_X$-stable owing to
Corollary~\ref{cor:stable-claim}.  Choose a strong log resolution of
singularities, $π : \wtilde X → X$, such that there exists a $π$-ample Cartier
divisor supported on the exceptional locus of $π$.

\begin{claim}\label{claim:flatbounded}
  Write $r := (n+1)²$.  Let ${\sf B}_r$ denote the set of locally free sheaves
  $ℱ$ on $X$ that have rank $r$, satisfy
  $μ^{\max}_{K_X}(ℱ) = μ^{\max}_{K_X}(\sEnd ℰ_X)$, and have Chern classes
  $c_i\bigl(π^* ℱ\bigr) = 0$ for all $0 < i ≤ r$.  Then, ${\sf B}_r$ is bounded.
\end{claim}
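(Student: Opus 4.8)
The assertion is a boundedness statement, and I would derive it from the classical boundedness theorem for families of coherent sheaves with fixed Hilbert polynomial and uniformly bounded maximal slope (Maruyama, Langer), after transporting the problem to the smooth resolution $\wtilde X$. First I would observe that, since every $\sF \in {\sf B}_r$ is locally free and $\pi_*\sO_{\wtilde X} \cong \sO_X$, the projection formula gives $\pi_*(\pi^*\sF) \cong \sF$; hence $\sF \mapsto \pi^*\sF$ is injective on isomorphism classes. Moreover, direct image along the projective morphism $\pi$ takes bounded families to bounded families — stratify the parameter scheme by Noetherian induction, using generic flatness and cohomology-and-base-change so that $\pi_*$ behaves well on each stratum, and take the finite union. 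Consequently it suffices to show that $\wtilde{\sf B}_r := \{\pi^*\sF : \sF \in {\sf B}_r\}$, a family of locally free sheaves of rank $r$ on the smooth projective variety $\wtilde X$, is bounded.

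Next I would fix once and for all an ample Cartier divisor $\wtilde H$ on $\wtilde X$; by the choice of $\pi$ made in Step~1 of the proof of Proposition~\ref{prop:c2} one may take $\wtilde H = \pi^*(mK_X) - A$ with $A$ an effective $\pi$-ample divisor supported on $\Exc(\pi)$ and $m \gg 0$. Every $\pi^*\sF \in \wtilde{\sf B}_r$ has vanishing Chern classes — $c_i(\pi^*\sF) = 0$ for $0 < i \le r$ by hypothesis, and for $i > r \ge \dim\wtilde X$ automatically — hence the Hilbert polynomial of $\sO_{\wtilde X}^{\oplus r}$ with respect to $\wtilde H$. So all members of $\wtilde{\sf B}_r$ share a fixed Hilbert polynomial.

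The one non-formal point is a uniform bound on $\mu^{\max}_{\wtilde H}(\pi^*\sF)$. Here is where the normalisation $\mu^{\max}_{K_X}(\sF) = \mu^{\max}_{K_X}(\sEnd \sE_X)$ enters: given a saturated subsheaf $\sG \subseteq \pi^*\sF$, set $\sF_0 := (\pi_*\sG)^{**} \subseteq \sF$; then $\sG$ and $\pi^{[*]}\sF_0$ agree over $\pi^{-1}(X_{\reg})$, i.e.\ away from a subset lying over the small set $X_{\sing}\subset X$, so Lemma~\ref{lem:elemSlp2}\,\eqref{il:Be} gives $\mu_{\pi^*K_X}(\sG) = \mu_{K_X}(\sF_0) \le \mu^{\max}_{K_X}(\sF) = \mu^{\max}_{K_X}(\sEnd \sE_X)$. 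Thus the maximal slope of $\pi^*\sF$ along the nef class $\pi^*K_X$ is bounded uniformly over ${\sf B}_r$. To upgrade this to a uniform bound with respect to the ample $\wtilde H$, I would expand $[\wtilde H]^{n-1}$ into mixed products of $[\pi^*K_X]$ and $[A]$ and control the terms involving $A$: since $\pi^*\sF$ is trivial along $\pi$-contracted curves, a saturated subsheaf of it cannot acquire positive $\wtilde H$-slope from the $A$-directions, so these cross-terms can only decrease $\mu_{\wtilde H}(\sG)$. Made precise via the finiteness of slopes along movable curve classes, \cite[Prop.~2.21]{GKP15}, together with the vanishing of $c_1(\pi^*\sF)$ and $c_2(\pi^*\sF)$ — exactly in the spirit of the proof of Proposition~\ref{prop:G-opennessx} — this produces a constant $C$, depending only on $X$, $\pi$, $\wtilde H$ and $r$, with $\mu^{\max}_{\wtilde H}(\pi^*\sF) \le C$ for every $\sF \in {\sf B}_r$.

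With a fixed Hilbert polynomial and a uniform bound on the maximal slope in hand, the classical boundedness theorem on the smooth projective variety $\wtilde X$ shows that $\wtilde{\sf B}_r$ is bounded, whence ${\sf B}_r$ is bounded by the reduction of the first paragraph. The step I expect to be the main obstacle is precisely the uniform slope bound above: the hypotheses only directly control slopes along the merely nef-and-big class $\pi^*K_X$, and transferring this to the genuinely ample polarisation required by the classical boundedness theorem forces one to bound the contributions of the $\pi$-exceptional locus uniformly — it is the vanishing of the first two Chern classes of $\pi^*\sF$ that rules out "exceptional" destabilising subsheaves and makes the estimate uniform over the family.
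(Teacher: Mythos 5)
Your reduction to $\wtilde{X}$ is not wrong in spirit, and two of its ingredients are fine: the fixed Hilbert polynomial of $\pi^*\sF$ with respect to an ample divisor on $\wtilde X$ (by Riemann--Roch, since all Chern classes vanish), and the transfer of the slope hypothesis to the nef class $\pi^*K_X$ via push-forward and Lemma~\ref{lem:elemSlp2}. The genuine gap is exactly the step you flag as the main obstacle, and as written it is not closed: you need a bound on $\mu^{\max}_{\wtilde H}(\pi^*\sF)$ that is \emph{uniform over} ${\sf B}_r$, with $\wtilde H$ ample, but \cite[Prop.~2.21]{GKP15} only gives finiteness of the maximal slope for each individual sheaf, not a constant independent of the member of the family; and vanishing of $c_1$ and $c_2$ by itself never bounds maximal slopes (already on a smooth surface, fixing rank and all Chern classes leaves unbounded families of unstable locally free sheaves). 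The heuristic that ``triviality on $\pi$-contracted curves rules out positive slope from the $A$-directions'' would have to be turned into a uniform estimate for $c_1(\sG)\cdot(\pi^*K_X)^{n-1-i}\cdot A^{i}$ for all saturated $\sG\subseteq\pi^*\sF$ and all $i\geq 1$; these classes are supported on the exceptional locus, are not movable, and no argument is given that controls them independently of $\sF$. So the chain ``fixed Hilbert polynomial $+$ uniform $\mu^{\max}_{\wtilde H}$ $\Rightarrow$ bounded'' has its second hypothesis unverified.

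The detour through $\wtilde X$ is also unnecessary, which is how the paper avoids the problem: in the setting of Proposition~\ref{prop:c2} the divisor $K_X$ is ample, so one can apply the boundedness theorem \cite[Thm.~3.3.7]{MR2665168} directly on the (possibly singular) projective variety $X$ with polarisation $K_X$ -- no smoothness of the base is needed there. The slope bound required by that theorem is literally the defining condition $\mu^{\max}_{K_X}(\sF)=\mu^{\max}_{K_X}(\sEnd\sE_X)$, and the Hilbert polynomial of $\sF$ with respect to $K_X$ is fixed because $X$, being klt, has rational singularities, so $\chi_X\bigl(\sF\otimes\sO_X(tmK_X)\bigr)=\chi_{\wtilde X}\bigl(\pi^*\sF\otimes\pi^*\sO_X(tmK_X)\bigr)$, and the right-hand side is computed on the smooth $\wtilde X$ by Hirzebruch--Riemann--Roch \cite[Cor.~15.2.1]{Fulton98} in terms of the vanishing Chern classes of $\pi^*\sF$, hence is independent of $\sF\in{\sf B}_r$. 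If you want to salvage your route, you would have to either prove the uniform slope bound with respect to $\wtilde H$ honestly, or replace it by this direct application of the boundedness theorem on $X$ itself.
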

\begin{proof}[Proof of Claim~\ref{claim:flatbounded}]
  Since $X$ has rational singularities, the Euler characteristics $χ_X(𝒢)$ and
  $χ_{\wtilde X}(π^* 𝒢)$ agree for all locally free sheaves $𝒢$ on $X$.  The
  assumption on Chern classes thus guarantees that the Hilbert polynomials of
  the members $ℱ ∈ {\sf B}_r$ are constant, cf.\ \cite[Cor.~15.2.1]{Fulton98}.
  Boundedness thus follows from \cite[Thm.~3.3.7]{MR2665168}.  This ends the
  proof of Claim~\ref{claim:flatbounded}.
\end{proof}

Next, take general divisors in the linear system $|m·K_X|$, for $m$ sufficiently
large, and cut down to a surface.  To be precise, observe the following.

Choosing a sufficiently increasing and divisible sequence of numbers
$0 ≪ m_1 ≪ ⋯ ≪ m_{n-2}$ and a general tuple of elements
$(H_1, …, H_{n-2}) ∈ \prod_i |m_i·K_X|$ the following will hold when we set
$S := H_1 ∩ ⋯ ∩ H_{n-2}$.

\begin{enumerate}
\item\label{il:whitehorse} The intersection $S$ is a smooth surface, and
  entirely contained in $X_{\reg}$; this is because $X$ is smooth in codimension
  two by assumption.

\item\label{il:redhorse} The restriction $(ℰ_X,θ_X)|_S$ is stable with respect
  to $K_X|_S$, cf.\ the Restriction Theorem~\ref{thm:restriction}.

\item\label{il:blackhorse} The natural morphism $ι_*: π_1(S) → π_1(X_{\reg})$,
  induced by the inclusion $ι: S \into X_{\reg}$, is isomorphic, cf.\
  Goresky-MacPherson's Lefschetz-theorem \cite[Thm.~in
  Sect.~II.1.2]{GoreskyMacPherson}.

\item\label{il:dappledhorse} Let $ℱ ∈ {\sf B}_r$.  Then, $ℱ$ is isomorphic to
  $\sEnd ℰ_X$ if and only if the restrictions $ℱ|_S$ and $(\sEnd ℰ_X)|_S$ are
  isomorphic, cf.\ the Bertini-type theorem for isomorphism classes in bounded
  families \cite[Cor.~5.3]{GKP13}.
\end{enumerate}

\begin{rem}\label{rem:blackhorse}
  The natural morphism $π_1(X_{\reg}) → π_1(X)$ is surjective, \cite[0.7.B on
  p.~33]{FL81}, and induces an isomorphism of profinite completions by
  assumption.  Composed with the inclusion $S \hookrightarrow X_{\reg}$, it
  follows from \ref{il:blackhorse} that the morphism $π_1(S) → π_1(X)$ is
  surjective and induces an isomorphism of profinite completions.
\end{rem}

\subsubsection*{Step 2.  The endomorphism bundle}
\approvals{
  Behrouz & yes \\
  Daniel & yes \\
  Stefan & yes \\
  Thomas & yes
}

Since $S$ is entirely contained in the smooth locus of $X$, the restricted Higgs
sheaf $(ℰ_X,θ_X)|_S$ is actually a Higgs bundle, and
Construction~\ref{cons:nHGSdual} allows us to equip the corresponding
endomorphism bundle with a Higgs field.  For brevity of notation, write
$(ℱ_S, Θ_S) := \sEnd \bigl( (ℰ_X, θ_X)|_S \bigr)$.  The rank of $ℱ_S$ equals
$r = (n+1)²$.
  
\begin{claim}\label{claim:HVHS}
  The Higgs bundle $(ℱ_S, Θ_S)$ is induced by a \pCVHS, in the sense of
  Definition~\ref{def:hscvhs}.
\end{claim}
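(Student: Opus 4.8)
The plan is to verify the three conditions of Theorem~\ref{thm:charPCVHS} (Simpson's characterisation of Higgs bundles induced by a \pCVHS) for the Higgs bundle $(\sF_S, Θ_S) = \sEnd\bigl((\sE_X,θ_X)|_S\bigr)$ on the smooth projective surface $S$, with ample polarisation $K_X|_S$. First I would observe that $(\sE_X,θ_X)|_S$ is stable with respect to $K_X|_S$ by \ref{il:redhorse} (the Restriction Theorem), and that stability of a locally free Higgs bundle is preserved by passing to the endomorphism bundle — which becomes \emph{polystable} — so condition~\ref{il:emerson} of Theorem~\ref{thm:charPCVHS} holds.

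Next I would address condition~\ref{il:lake}, the vanishing of $ch_1(\sF_S)·[K_X|_S]$ and $ch_2(\sF_S)$. The first Chern class of an endomorphism bundle vanishes identically. For the second, the key input is the equality case of the $ℚ$-Miyaoka-Yau inequality: by hypothesis $\what{Δ}(\sE_X)·[K_X]^{n-2} = 0$, and since $S$ is cut out by general hyperplanes in $|m_i·K_X|$ lying in $X_{\reg}$, Item~\ref{il:p3} of Theorem~\ref{thm:41} gives $\what{Δ}(\sE_X|_S) = 0$, i.e. $Δ\bigl((\sE_X)|_S\bigr) = 0$ as an honest Bogomolov discriminant on the surface $S$. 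One then invokes the calculus relating $Δ$ of a bundle to $Δ$ of its endomorphism bundle (cf.\ Lemma~\ref{lem:calculus}: $\what{Δ}(\sEnd\sE) = 2(\rank\sE)²·\what{Δ}(\sE)$), together with $c_1(\sF_S) = 0$, to conclude $ch_2(\sF_S) = -\tfrac12 Δ(\sF_S) = 0$. Combined with $c_1(\sF_S)=0$ this is precisely condition~\ref{il:lake}; and by Remark~\ref{rem:charPCVHS} all Chern classes of $\sF_S$ then vanish.

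The remaining condition~\ref{il:palmer} — that $(\sF_S,Θ_S)$ and $(\sF_S, t·Θ_S)$ are isomorphic for all $t∈ℂ^*$ — is the point I expect to be the main obstacle, since it is the genuinely ``rigidity''-flavoured input rather than a formal consequence of stability. My plan here is to use the boundedness from Claim~\ref{claim:flatbounded}: for each $t∈ℂ^*$, the bundle underlying $(\sF_S, t·Θ_S)$ extends (or is the restriction of) a member of the bounded family ${\sf B}_r$ on $X$ — here one uses that scaling $θ_X$ by $t$ on $X$ globally scales the Higgs field, so the underlying sheaf $\sF_S$ is unchanged, but the subtlety is that the \emph{isomorphism class of the Higgs bundle} may a priori vary. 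One then argues as in Simpson's treatment of the $ℂ^*$-action on the moduli space: the orbit $\{(\sF_S, t·Θ_S)\}$ is bounded, hence has a limit at $t→0$ which is a fixed point coming from a \pCVHS, and since $(\sF_S,Θ_S)$ is polystable with vanishing Chern classes the whole orbit must be constant — alternatively one uses that the endomorphism Higgs bundle of \emph{any} polystable Higgs bundle with vanishing Chern classes is itself a fixed point under scaling, because the underlying flat $ℂ^\infty$-bundle and harmonic metric produced by the Kobayashi--Hitchin correspondence are $t$-independent up to isomorphism. I would take the cleanest route: invoke that $(\sE_X,θ_X)|_S$, being stable with vanishing discriminant, corresponds via Simpson's correspondence to an irreducible flat bundle, so its endomorphism bundle corresponds to the adjoint representation, whose isomorphism class is manifestly invariant under the $ℂ^*$-scaling (which on representations is trivial on the adjoint since inner automorphisms act trivially on $\mathfrak{sl}$); hence $(\sF_S, t·Θ_S) \cong (\sF_S, Θ_S)$ for all $t$. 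With all three conditions of Theorem~\ref{thm:charPCVHS} verified, $(\sF_S,Θ_S)$ is induced by a \pCVHS, proving Claim~\ref{claim:HVHS}.
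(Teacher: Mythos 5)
Your handling of conditions \ref{il:emerson} and \ref{il:lake} of Theorem~\ref{thm:charPCVHS} is essentially the paper's argument (the paper derives polystability of $(\sF_S,\Theta_S)$ by noting that $(\sE_X,\theta_X)|_S$ and its dual carry Hermitian--Yang--Mills metrics by \cite[Thm.~1(2)]{MR1179076}, hence so does their tensor product, rather than asserting that passing to $\sEnd$ preserves polystability; you should supply that mechanism, but this is minor). The genuine gap is in your verification of condition \ref{il:palmer}, and both routes you sketch fail. First, polystability together with vanishing Chern classes does \emph{not} force the $\bC^*$-orbit to be constant: if it did, condition \ref{il:palmer} would be vacuous, whereas it is precisely the condition that singles out the fixed points (i.e.\ the bundles coming from a \pCVHS) among all polystable Higgs bundles with vanishing Chern classes; the limit $t\to 0$ of the orbit exists and is a fixed point, but in general it is \emph{not} isomorphic to the original bundle. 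Claim~\ref{claim:flatbounded} is irrelevant here --- it is used later, to descend $\sF_{\wtilde X}$ to $X$. Second, your ``adjoint representation'' argument misapplies the Nonabelian Hodge Correspondence: $(\sE_X,\theta_X)|_S$ has $c_1 = [K_X|_S] \neq 0$, so it has non-vanishing Chern classes and does not correspond to any flat bundle or representation at all --- only its endomorphism bundle lies in the domain of the correspondence. Moreover, scaling the Higgs field is not an inner automorphism or any conjugation on the representation side; the $\bC^*$-action is exactly the structure that is invisible in the Betti picture, so there is no ``manifest invariance'' of adjoint-type objects. Endomorphism bundles of Higgs bundles that are not themselves fixed points are in general not fixed points either, so the claimed isomorphism $(\sF_S,t\cdot\Theta_S)\cong(\sF_S,\Theta_S)$ does not follow from what you wrote.

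The correct argument is purely structural and much simpler. The Higgs bundle $(\sE_X,\theta_X)|_S$ of Example~\ref{ex:BQfield1} is a \emph{system of Hodge bundles} in the sense of \cite[Sect.~4]{MR1179076}: the grading is the direct sum decomposition $\Omega^{[1]}_X|_S \oplus \sO_S$, and the Higgs field shifts the grading by one, mapping the first summand into $\sO_S\otimes\Omega^1_S$. Concretely, the automorphism $\varphi_t(a,b)=(t\cdot a,\,b)$ of $\sE_X|_S$ satisfies $\theta\circ\varphi_t = t\cdot(\varphi_t\otimes\Id)\circ\theta$, so it defines an isomorphism of Higgs bundles $(\sE_X, t\cdot\theta_X)|_S \cong (\sE_X,\theta_X)|_S$ for every $t\in\bC^*$; the induced automorphisms of the dual and of $\sEnd$ then give $(\sF_S,t\cdot\Theta_S)\cong(\sF_S,\Theta_S)$, which is condition \ref{il:palmer} as established in the paper via \cite[p.~45]{MR1179076}. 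With this replacement your proof of Claim~\ref{claim:HVHS} goes through.
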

\begin{proof}[Proof of Claim~\ref{claim:HVHS}]
  We need to check the properties listed in Theorem~\ref{thm:charPCVHS}.

  \smallskip

  Item~\ref{il:emerson}: polystability with respect to $K_X|_S$.  By
  Theorem~\ref{thm:restriction}, we know that both $(ℰ_X,θ_X)|_S$ and its dual
  are $K_X|_S$-stable Higgs bundles on the smooth surface $S$.  In particular,
  it follows from~\cite[Thm.~1(2)]{MR1179076} that both bundles carry a
  Hermitian-Yang-Mills metric with respect to $K_X|_S$, and thus so does
  $(ℱ_S, Θ_S)$.  Hence it follows from~\cite[Thm.~1]{MR1179076} that
  $(ℱ_S, Θ_S)$ is polystable with respect to $K_X|_S$.

  \smallskip

  Item~\ref{il:lake}: vanishing of Chern classes.  As the endomorphism bundle of
  the locally free sheaf $ℰ_X|_S$, the first Chern class of $ℱ_S$ clearly
  vanishes.  Vanishing of $c_2(ℱ_S)$ is then an immediate consequence of the
  assumed equality in \eqref{eq:X2}.  Together with polystability, this implies
  that $ℱ_S$ is flat, \cite[Thm.~1]{MR1179076}, and hence all its Chern classes
  vanish.

  \smallskip

  Item~\ref{il:palmer}: the Higgs bundle $(ℰ_X, θ_X)|_S$ has the structure of a
  system of Hodge bundles, \cite[Sect.~4]{MR1179076}.  Its isomorphism class is
  therefore fixed under the action of $ℂ^*$, \cite[p.~45]{MR1179076}.  Observing
  that the same holds for its dual and its endomorphism bundle, this ends the
  proof of Claim~\ref{claim:HVHS}.
\end{proof}

\subsubsection*{Step 3.  End of proof}
\approvals{
  Behrouz & yes \\
  Daniel & yes \\
  Stefan & yes \\
  Thomas & yes
}

Since $S$ is entirely contained in the smooth locus of $X$, it is canonically
isomorphic to its preimage $\wtilde{S} := π^{-1}(S)$ in the resolution
$\wtilde X$.  Let $(ℱ_{\wtilde{S}}, Θ_{\wtilde{S}})$ be the Higgs bundle on
$\wtilde{S}$ that corresponds to $(ℱ_S, Θ_S)$ under this isomorphism.

There exists a $ℚ$-divisor $E ∈ ℚ\Div(\wtilde X)$, supported entirely on the
$π$-exceptional locus, such that $\wtilde H := π^*(K_X)+E$ is ample.  Since
$\wtilde S$ and $\supp E$ are disjoint, the Higgs bundle
$(ℱ_{\wtilde{S}}, Θ_{\wtilde{S}})$ is clearly semistable with respect to
$\wtilde H$.

Recall from \cite[Thm.~1.1]{Takayama2003} that the natural map of fundamental
groups, $π_1(\wtilde X) → π_1(X)$ is isomorphic.  Together with
Remark~\ref{rem:blackhorse}, this implies that $π_1(\wtilde S) → π_1(\wtilde X)$
is surjective, and induces an isomorphism of profinite completions.
Item~\ref{il:garfunkel} of Corollary~\ref{cor:pfccs} therefore allows us to find
a Higgs bundle $(ℱ_{\wtilde X}, Θ_{\wtilde X})$ on $\wtilde X$ that restricts to
$(ℱ_{\wtilde S}, Θ_{\wtilde{S}})$, and is hence induced by a \pCVHS\ owing to
Corollary~\ref{cor:pfccs}, Item~\ref{il:simon}.  We have seen in
Remark~\ref{rem:charPCVHS} that all Chern classes of $ℱ_{\wtilde X}$ vanish.

Corollary~\ref{cor:higgsfromdownst} implies that $ℱ_{\wtilde X}$ comes from $X$.
More precisely, there exists a locally free sheaf $ℱ_X$ on $X$ such that
$ℱ_{\wtilde X} = π^* ℱ_X$.  The restriction $ℱ_X|_S$ agrees with
$ℱ_S = \sEnd ℰ_X|_S$, which together with the observation on the Chern classes
of $ℱ_{\wtilde X}$ made above implies that $ℱ_X$ is a member of the family
${\sf B}_r$ that was introduced in Claim~\vref{claim:flatbounded}.
Item~\ref{il:dappledhorse} thus gives an isomorphism $\sEnd ℰ_X ≅ ℱ_X$,
showing that $\sEnd ℰ_X$ is locally free.  But $\sEnd ℰ_X$ contains $𝒯_X$ as a
direct summand.  It follows that $𝒯_X$ is locally free and thus $X$ is smooth by
the solution of the Zariski-Lipman problem for klt spaces,
\cite[Thm.~6.1]{GKKP11}.  \qed

\subsection{Proof of Theorem~\ref*{thm:more_general_uniformisation}}
\label{subsect:proof_of_general_uniformisation}
\approvals{
  Behrouz & yes \\
  Daniel & yes \\
  Stefan & yes \\
  Thomas & yes
}

By Proposition~\ref{prop:c1} we know that the variety $Z$ is smooth in
codimension two.  Now, let $γ: Y → Z$ be a quasi-étale, Galois cover such that
$\what{π}_1(Y_{\reg}) ≅ \what{π}_1(Y)$.  By \cite[Thm.~1.14]{GKP13}, such a
cover exists.  Since $γ$ branches only over the singular set of $Z$, it follows
from \cite[Prop.~5.20]{KM98} that $Y$ is still klt and smooth in codimension
two.  Since $γ$ is finite, the $ℚ$-Cartier divisor $K_Y = γ^* K_Z$ is still
ample.  Moreover, as both $f^{[*]}𝒯_X$ and $𝒯_Y$ are reflexive and agree on the
big open set of $Y$ where $γ$ is étale, we conclude that $f^{[*]}𝒯_X = 𝒯_Y$.
Consequently, Lemma~\ref{lem:buqec} guarantees that equality holds in the
$ℚ$-Miyaoka-Yau Inequality for $Y$.  Proposition~\ref{prop:c2} hence applies and
$Y$ is smooth.  We may thus use the classical uniformisation theorem of
Yau~\cite[Rem.~(iii) on p.~1799]{MR0451180} to conclude that $Y$ is a ball
quotient, as claimed.  \qed

\approvals{
  Behrouz & yes \\
  Daniel & yes \\
  Stefan & yes \\
  Thomas & yes
}
\begin{rem}[Comparison with the torus-quotient case]\label{rem:CompareFlat}
  Let us now briefly explain the difference between the above strategy and those
  that appear in the proof of the uniformisation theorem in the case of
  vanishing Chern classes, \cite{GKP13} and \cite{LT14}.  For simplicity, we
  assume that $X$ is smooth in codimension two with only klt singularities and
  that $\what{π}_1(X_{\reg})≅ \what{π}_1(X)$.

  In the setting where $c_1(X)· H^{n-1}=0$ and $c_2(X)· H^{n-2}=0$ for some
  ample divisor $H$, one uses the (slope) semistability of $𝒯_X|_S$, where $S$
  is a sufficiently general, complete intersection, smooth, projective surface
  determined by $H$, to construct a (holomorphic) flat connection
  $$
  ∇ : 𝒯_X|_S → Ω¹_S ⊗ 𝒯_X|_S.
  $$
  Here $∇$ is compatible with the holomorphic structure of $𝒯_X|_S$, that is
  $∇^{0,1}$ is defined by the holomorphic structure of $𝒯_X|_S$.  After
  extending the linear representation $π_1(S)$ corresponding to $∇$ to a
  representation of $π_1(X)$, one can construct a flat locally free analytic
  sheaf $ℱ$ on $X$ verifying the isomorphism $ℱ|_S≅ 𝒯_X|_S$, as analytic
  sheaves.
  
  On the other hand, when $K_X$ is ample and the equality in the Miyaoka-Yau
  inequality is attained, the holomorphic structure of the harmonic bundle
  $\sEnd(ℰ_X|_S, θ_X|_S)$ defined above is different from the one given by the
  representation of $π_1(S)$ associated to the underlying flat connection.  This
  is simply because the $(0,1)$ part of the HYM connection on
  $\sEnd(ℰ_X|_S, θ_X|_S)$ is of the form
  $$
  \bar{∂} + (θ_X|_S)^h,
  $$
  where $\bar{∂}$ is the holomorphic structure of $\sEnd(ℰ_X|_S)$, $h$ is the
  harmonic metric and $(θ_X|_S)^h$ is the conjugate of the Higgs field $θ_X|_S$
  with respect to $h$.  As a result, the argument in the torus-quotient case
  breaks down: If one naively extends the representation of $π_1(S)$ defined by
  $\sEnd(ℰ_X|_S)$ to a representation $ρ$ of $π_1(X)$, the flat analytic sheaf
  $ℱ$ constructed from $ρ$ does not satisfy the isomorphism
  $\sEnd(ℰ)|_S≅ ℱ|_S$, as analytic sheaves; the holomorphic structures are
  simply not compatible.
\end{rem}

%
%
\svnid{$Id: 09-BallQuotients.tex 821 2017-10-26 11:37:11Z peternell $}

\section{Characterisation of singular ball quotients}
\label{sect:ball-quotient}
\subversionInfo
\approvals{
  Behrouz & yes \\
  Daniel & yes \\
  Stefan & yes \\
  Thomas & yes
}

In this section, we prove Theorem~\ref{thm:ball-quotient-II} and
Corollary~\ref{cor:smoothparthyperbolic}, and concerning optimality of our
results discuss an example of a singular ball quotient in
Section~\ref{ssec:keum}.  First, we recall a few standard definitions and
elementary properties.  Throughout the present section, all complex spaces will
be reduced and are assumed to have a countable basis of topology.

\begin{defn}[Properly discontinuous action]\label{def:pda}
  Let $X$ be a complex space, and $Γ$ a group of holomorphic automorphisms of
  $X$.  We say that $Γ$ acts \emph{properly discontinuously} on $X$, if for any
  points $x,y ∈ X$, there exist neighbourhoods $U = U(x)$ and $V=V(y)$ such that
  the set $\{g ∈ Γ \mid g· U ∩ V ≠ ∅\} ⊂ Γ$ is finite.
\end{defn}

\begin{rem}
  Note that there exist several, not necessarily equivalent definitions of
  ``properly discontinuous'' in the literature, especially in a purely
  topological context.  We follow \cite[Sect.~2.1]{MR1756407}, where the
  terminology ``discrete group of transformations'' is used for the same
  concept.  A further general reference is \cite[Chap.~12]{MR2766102}.
\end{rem}

\begin{lem}[Criteria for actions to be properly discontinous]\label{lem:discrete}
  Let $Γ$ be a subgroup of $\Aut_{𝒪}(𝔹^n) = \PSU(1, n)$.  Then, the following
  statements are equivalent.
  \begin{enumerate}
  \item The group $Γ$ acts properly discontinuously on $𝔹^n$.
  \item The group $Γ$ is discrete in $\PSU(1, n)$.
  \item Every $Γ$-orbit in $𝔹^n$ is a discrete subset of $𝔹^n$, and for every
    $z ∈ 𝔹^n$ the isotropy group $Γ_z = \{γ ∈ Γ \mid γ· z = z\}$ is finite.
  \end{enumerate}
\end{lem}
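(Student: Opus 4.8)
The plan is to prove the equivalence of the three statements in Lemma~\ref{lem:discrete} by a cyclic chain of implications, relying on the fact that $\PSU(1,n)$ is a real Lie group acting smoothly, properly, and transitively on the ball $𝔹^n$, with compact isotropy subgroups (the isotropy at the origin being $\mathrm{P}(\mathrm{U}(1)\times \mathrm{U}(n))$, which is compact). This is the standard Riemannian-homogeneous-space setup, and all three conditions are classical reformulations of discreteness in the isometry group of a Riemannian manifold; the argument is essentially that of \cite[Chap.~12]{MR2766102} or \cite[Sect.~2.1]{MR1756407}, specialised to complex hyperbolic space with its Bergman metric.

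First I would show $(2)\Rightarrow(1)$: since $G := \PSU(1,n)$ acts properly on $𝔹^n$ (the action map $G\times 𝔹^n \to 𝔹^n\times 𝔹^n$ is proper, because $G$ is the isometry group of a complete Riemannian metric), for any $x,y$ the preimage of a compact neighbourhood $\overline{U(x)}\times\overline{V(y)}$ under this map is compact in $G\times 𝔹^n$, hence has compact image $K$ in $G$; then $\{g\in \Gamma : g\cdot U\cap V\ne \emptyset\}\subseteq \Gamma\cap K$, which is finite because $\Gamma$ is discrete and $K$ is compact. Next, $(1)\Rightarrow(3)$: properly discontinuous action immediately forces every orbit to be discrete (take $x=y$ a point of the orbit; only finitely many $g\in\Gamma$ move $x$ into a fixed small neighbourhood of itself) and every isotropy group $\Gamma_z$ to be finite (it is contained in $\{g\in\Gamma : g\cdot U\cap U\ne\emptyset\}$ for any neighbourhood $U$ of $z$). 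Finally, $(3)\Rightarrow(2)$: suppose $\Gamma$ is not discrete in $G$; then there is a sequence of distinct elements $\gamma_k\in\Gamma$ with $\gamma_k\to e$ in $G$. Fixing any $z\in 𝔹^n$, the points $\gamma_k\cdot z$ converge to $z$. If infinitely many $\gamma_k\cdot z$ are distinct from $z$, the orbit $\Gamma\cdot z$ has $z$ as a non-isolated point, contradicting discreteness of the orbit. Otherwise, after passing to a subsequence, $\gamma_k\cdot z = z$ for all $k$, so the $\gamma_k$ lie in the isotropy group $\Gamma_z$, which is then infinite — contradicting the second half of $(3)$.

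The main obstacle, such as it is, is making precise the properness of the $\PSU(1,n)$-action used in $(2)\Rightarrow(1)$ and the existence of the convergent sequence in $(3)\Rightarrow(2)$; both rest on identifying $𝔹^n$ with complex hyperbolic space $G/K$ where $K$ is a maximal compact subgroup, so that $G$ acts by isometries of a complete metric and the action is automatically proper. I would cite \cite[Chap.~12]{MR2766102} for the general principle that for a Lie group acting properly on a manifold, discreteness, proper discontinuity, and the orbit-plus-finite-isotropy condition coincide, and note that $\PSU(1,n)\cong \Aut_\sO(𝔹^n)$ has precisely this structure. A minor point worth spelling out is that the isotropy of a point $z$ under the full group $\Gamma$ need only be checked to be finite because it sits inside a compact subgroup of $G$ (a conjugate of $K$) intersected with the discrete (or merely closed-orbit) group $\Gamma$; in the non-discrete case this is where the contradiction is extracted. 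All of these steps are short once the homogeneous-space picture is in place, so I do not anticipate needing any genuinely new ideas beyond carefully invoking the completeness of the Bergman metric on $𝔹^n$.
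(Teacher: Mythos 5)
Your proposal is correct, and it is in substance the standard argument: the paper itself offers no proof at all, simply declaring the lemma classical and citing \cite[Sect.~2.1]{MR1756407} (and \cite[Sect.~2.2]{MR1177168} for $n=1$), and the cyclic chain you give via properness of the $\PSU(1,n)$-action on $𝔹^n = G/K$ is exactly what those references contain. The only point worth making explicit in your step $(2)\Rightarrow(1)$ is that a discrete subgroup of a Hausdorff topological group is closed, so that $Γ ∩ K$ is compact and discrete, hence finite; this is standard and does not affect the correctness of your argument.
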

\begin{proof}
  This is classical, see for example \cite[Sect.~2.1]{MR1756407}, or
  \cite[Sect.~2.2]{MR1177168} for the prototypical case $n=1$.
\end{proof}

\subsection{Proof of Theorem~\ref*{thm:ball-quotient-II}}
\label{ssec:pfthmbq2}
\CounterStep
\approvals{
  Behrouz & yes \\
  Daniel & yes \\
  Stefan & yes \\
  Thomas & yes
}

We will prove the implications \ref{il:z2} $⇒$ \ref{il:z3} $⇒$ \ref{il:z1} $⇒$
\ref{il:z2} separately.

\subsubsection*{\ref{il:z2} $⇒$ \ref{il:z3}}
\approvals{
  Behrouz & yes \\
  Daniel & yes \\
  Stefan & yes \\
  Thomas & yes
}

As $G$ is a finite group, and as $Y$ is projective and smooth, $X$ is
projective.  Moreover, it follows from the assumptions on the $G$-action that
$f: Y → X$ is quasi-étale.  This implies that $K_X$ is $ℚ$-Cartier, that $X$ is
klt, and that $K_Y = f^* K_X$.  Moreover, by the same argument as in the first
paragraph of Section~\ref{subsect:proof_of_general_uniformisation} we have
$𝒯_Y = f^{[*]}𝒯_X$.

Now, recall that $K_Y$ is ample, and that the Chern classes of $𝒯_Y$ satisfy the
Miyaoka-Yau equality, see e.g.~\cite[(8.8.3)]{Kollar95s}.  It follows that $K_X$
is ample.  The $ℚ$-Miyaoka-Yau equality for $𝒯_X$ then follows from
Lemma~\ref{lem:buqec}.

\subsubsection*{\ref{il:z3} $⇒$ \ref{il:z1}}
\approvals{
  Behrouz & yes \\
  Daniel & yes \\
  Stefan & yes \\
  Thomas & yes
}

Let $f:Y→ X$ be the finite, Galois, quasi-étale morphism from a ball quotient
$Y$ to $X$ guaranteed by Theorem~\ref{thm:BQ}.  Let $G$ be the Galois group of
$f: Y → X$ and define $\wtilde{π}: 𝔹^n → X$ as $\wtilde{π} = f ◦ π$, where
$π: 𝔹^n → Y$ is the universal cover of $Y^{an}$.  Let $Γ := π_1(Y^{an})$ be the
deck transformation group of $π$.  Then, the restriction of $\wtilde{π}$ to
$\wtilde U := \wtilde{π}^{-1}(X_{\reg}^{an})$ is a topological covering map,
which we call $\wtilde{π}_{\reg}$.  Additionally, as the codimension of
$\wtilde U$ in the manifold $𝔹^n$ is more than two, $\wtilde U$ is simply
connected.  Consequently,
$π_{\reg}:= π|_{\wtilde U}: \wtilde U → f^{-1}(X_{\reg}^{an})$ and
$\wtilde{π}_{\reg}$ are universal covering maps.  It follows that
$\what{Γ} = π_1(X_{\reg}^{an})$ acts on $\wtilde U$ by holomorphic
automorphisms, and the action is properly discontinuous and fixed-point free.
As $Γ = π_1(Y^{an}) = π_1(f^{-1}(X_{\reg})^{an})$, and since
$f^{-1}(X_{\reg})/ G = X_{\reg}$, we have an exact sequence of groups
\begin{equation}\label{eq:fundamentalgroupsequence}
  1 → Γ → \what{Γ} → G → 1,
\end{equation}
and the action of $\what{Γ}$ on $\wtilde U$ extends the action of $Γ$ on
$\wtilde U$.  Our situation can hence be summarised in the following commutative
diagram,
\begin{equation}
  \begin{gathered}\label{eq:bigdiagram}
    \xymatrix{ %
        \wtilde U \ar@{^(->}[d] \ar[rrr]_(.4){π_{\reg} \text{, quot.\ by }Γ} \ar@/^5mm/[rrrrr]^{\wtilde{π}_{\reg}\text{, quot.\ by }\what{Γ}} &&& f^{-1}(X_{\reg})^{an} \ar@{^(->}[d] \ar[rr]_{\text{quot.\ by }G} && X_{\reg}^{an} \ar@{^(->}[d]\\
        𝔹^n \ar[rrr]^{π\text{, quot.\ by }Γ} &&& Y^{an} \ar[rr]^{f^{an}\text{, quot.\ by }G} && X^{an}.  }
  \end{gathered}
\end{equation}
As the inclusion $\wtilde U \hookrightarrow 𝔹^n$ realises $𝔹^n$ as the envelope
of holomorphy of $\wtilde U$, the action of $\what{Γ}$ on $\wtilde U$ uniquely
extends to a holomorphic action of $\what{Γ}$ on $𝔹^n$, see
\cite[Lem.~4.1]{MR3170714}.  This extended action is fixed-point free in
codimension two by construction.  It now follows from the exact
Sequence~\eqref{eq:fundamentalgroupsequence} and from
Diagram~\eqref{eq:bigdiagram} that the topological quotient
$𝔹^n/\what{Γ} \simeq (𝔹^n/Γ)/G$ is homeomorphic to $X^{an}$, and therefore
Hausdorff.  As $𝔹^n$ and $X^{an}$ are both normal complex spaces, and as we
already know that $X_{\reg}^{an}$ is biholomorphic to $\wtilde U /\what{Γ}$,
\cite[Satz on p.~328]{MR0150789} hence implies that $𝔹^n/\what{Γ}$ is in a
natural way a normal complex space, which is in fact biholomorphic to $X^{an}$.
In particular, $\wtilde{π}: 𝔹^n → X^{an}$ is the quotient map for the
$\what{Γ}$-action.  To conclude the proof, we will show that this action is
properly discontinuous.

As $\wtilde{π}$ is holomorphic, for every $z ∈ 𝔹^n$ the fibre
$\wtilde{π}^{-1} \bigl(\wtilde{π} (z)\bigr) = \what{Γ}·z$ is a zero-dimensional
analytic, and hence discrete, subset of $𝔹^n$.  Moreover, we claim that all
isotropy groups $\what{Γ}_z$ of points $z ∈ 𝔹^n$ are finite.  From this, it will
follow that the $\what{Γ}$-action is properly discontinuous, see
Lemma~\ref{lem:discrete}.  So, suppose that there is a point $z_0 ∈ 𝔹^n$ such
that $Γ_{z_0}$ is infinite.  As the isotropy of ${z_0}$ in the full automorphism
group $\PSU(1, n)$ is compact, $\what{Γ}_{z_0}$ is not a discrete subgroup of
$\PSU(1, n)$, i.e., there exists a sequence of elements $γ_n ∈ \what{Γ}_{z_0}$
converging to the identity element, cf.\ \cite[p.~7]{MR1756407}.  Now, if $z_1$
is any point in $\wtilde{U}$, where the $\what{Γ}$-action is free, it follows
that $\what{Γ}· z_1 = \wtilde{π}_{\reg}^{-1}\bigl(\wtilde{π}_{\reg}(z_1)\bigr)$
is not discrete, a contradiction.

\subsubsection*{\ref{il:z1} $⇒$ \ref{il:z2}}
\approvals{
  Behrouz & yes \\
  Daniel & yes \\
  Stefan & yes \\
  Thomas & yes
}

Recall that compact quotients of $𝔹^n$ by discrete subgroups of $\PSU(1, n)$ are
projective algebraic, see e.g.~\cite{MR0068872}.  Let
$\what{π}: 𝔹^n → X= 𝔹^n/\what{Γ}$ be the quotient map.  As the action of
$\what{Γ}$ is fixed-point free in codimension two, the restriction
$\what{π}|_{\what{π}^{-1}(X_{\sing})}$ is unramified and hence a topological
covering map.  Moreover, the preimage $\what{π}^{-1}(X_{\sing})$ has complement
of complex codimension at least three in the smooth manifold $𝔹^n$, and is
therefore simply connected.  As $X_{\reg}$ is (the complex space associated
with) a quasi-projective algebraic variety, its fundamental group, which is
isomorphic to $\what{Γ}$, is finitely generated.  It therefore follows from
Selberg's Lemma, e.g.~see \cite{MR925989}, that $\what{Γ}$ has a normal subgroup
$Γ$ of finite index that acts without fixed points on $𝔹^n$.  From this, we
obtain the following factorisation of the $\what{Γ}$-quotient map:
$$
𝔹^n \longrightarrow 𝔹^n/Γ \overset{f}{\longrightarrow} 𝔹^n/\what{Γ}= X.
$$
Here, $f$ is the quotient for the action of the finite group $G:= \what{Γ} / Γ$
on the projective manifold $Y:= 𝔹^n/Γ$, which by the assumption on the
$\what{Γ}$-action is fixed-point free in codimension two.  It follows that $f$
is quasi-étale.  \qed

\subsection{Proof of Corollary~\ref*{cor:smoothparthyperbolic}}
\approvals{
  Behrouz & yes \\
  Daniel & yes \\
  Stefan & yes \\
  Thomas & yes
}

If $X$ is a singular ball quotient, let $π: 𝔹^n → X$ be the quotient map for the
corresponding discrete group action.  Then,
$$
π|_{π^{-1}(X_{\reg})}: π^{-1}(X_{\reg}) → X_{\reg}
$$
is an unramified covering map.  By \cite[Prop.~3.2.2(1)]{KobayashiGrundlehren}
the manifold $π^{-1}(X_{\reg}) ⊂ 𝔹^n$ is Kobayashi-hyperbolic, as it is
contained in the $n$-dimensional polydisk $\bD⨯ ⋯ ⨯ \bD$, which is
Kobayashi-hyperbolic by \cite[Prop.~3.2.3]{KobayashiGrundlehren}.  Hence, the
statement follows from \cite[Thm.~3.2.8(2)]{KobayashiGrundlehren}.  \qed

\subsection{Further comments on Corollary~\ref*{cor:smoothparthyperbolic}}
\label{subsect:hyperbolicitycomments}
\approvals{
  Behrouz & yes \\
  Daniel & yes \\
  Stefan & yes \\
  Thomas & yes
}

Let $π: 𝔹^n → X$ be a singular ball quotient.  Then, $X$ has slightly more
general hyperbolicity properties than those stated in
Corollary~\ref*{cor:smoothparthyperbolic}, as we will explain now.  Let
$d_{𝔹^n}: 𝔹^n ⨯ 𝔹^n → ℝ^{≥ 0}$ be the Kobayashi distance on the ball.  Then, we
can define a natural distance on $X$ as follows: if $p, q ∈ X$, and if
$\wtilde{p} ∈ 𝔹^n$ satisfies $π(\wtilde{p}) = p$, we set
$$
d'_X:= \inf_{\wtilde{q}} d_{𝔹^n}(\wtilde{p}, \wtilde{q}),
$$
where the infimum runs over all points $\wtilde{q} ∈ 𝔹^n$ such that
$π(\wtilde{q}) = q$.  In fact, analogous to the Kobayashi pseudodistance, $d'_X$
can be defined using chains of locally liftable holomorphic maps from the unit
disc $\bD ⊂ ℂ$ to $X$, see \cite[p.~101]{MR2194466}.  Here, a holomorphic map
$f$ from a complex space $Z$ into $X$ is called \emph{locally liftable} if every
point $z∈ Z$ has an analytically open neighbourhood $U$ such that $f|_{U}$
factors via $π$.  As $𝔹^n$ is Kobayashi-hyperbolic, $d'_X$ is indeed a distance,
see \cite[Chap.~VII, Prop.~6.3]{MR2194466}.  It follows that every locally
liftable holomorphic map from $ℂ$ to $X$ is constant.  This property does not
imply that $X$ is Kobayashi-hyperbolic, see the subsequent subsection for an
example.  However, many of the properties known for holomorphic maps into
Kobayashi-hyperbolic manifolds hold for locally liftable holomorphic maps into
$X$.  At this time, we are not aware of any singular ball quotient with
\emph{canonical} singularities that fails to be Kobayashi-hyperbolic.

\subsection{Keum's singular ball quotient}
\label{ssec:keum}
\CounterStep
\approvals{
  Behrouz & yes \\
  Daniel & yes \\
  Stefan & yes \\
  Thomas & yes
}

The following example illustrates three points:
\begin{enumerate}
\item The fundamental group of singular ball quotients might be trivial.
\item Kobayashi-hyperbolicity in general will not extend over klt singularities.
\item The resolution of a klt singular ball quotient $X$ might have a geometry
  that is very different from $X$.
\end{enumerate}

Keum found a two-dimensional ball quotient $Y$ together with an order $7$
automorphism $g$ that acts with isolated fixed points on $Y$ such that the
minimal resolution $π: \wtilde X → X$ of the quotient $X = Y/\langle g \rangle$
is simply connected, of Kodaira dimension one, and admits an elliptic fibration
$η: \wtilde X → C$, see \cite[Thm.~1.1(2) and Prop.~2.4]{MR2443971} and
\cite{MR2239523}.  The general fibre $F$ of $η$ is an elliptic curve.  Composing
the universal covering map $f: ℂ → F$ with $π$ yields a non-constant (not
locally liftable) holomorphic map from $ℂ$ to $X^{an}$, which is therefore not
Kobayashi-hyperbolic.  On the other hand, note that the smooth locus
$X_{\reg}^{an}$ is hyperbolic by the proof of
Corollary~\ref{cor:smoothparthyperbolic}.  As $\wtilde X$ is simply connected
and as the singularities of $X$ are klt, $X$ itself is also simply connected,
while its smooth locus has infinite fundamental group.  Note that the
singularities of $X$ are worse than canonical, as the resolution $\wtilde X$ is
not of general type.  Hence, $X$ is certainly not the minimal model of any
smooth projective variety of general type.

%
%
\svnid{$Id: 10-directions.tex 835 2018-01-30 10:12:11Z greb $}

\section{Further directions}\label{subsect:furtherdirections}
\subversionInfo

\subsection{The general klt case}
\approvals{
  Behrouz & yes \\
  Daniel & yes \\
  Stefan & yes \\
  Thomas & yes
}

While we established the Miyaoka-Yau inequality, Theorem~\ref{thm:MYinequality},
for a general klt variety with big and nef canonical divisor, the uniformisation
theorem, Theorem~\ref{thm:BQ}, assumes the variety to be smooth in codimension
two.  This is used crucially in its proof when we construct a complex variation
of Hodge structures on the restriction of the natural Higgs sheaf to the
\emph{smooth} complete intersection surface and subsequently extend this
variation of Hodge structure to a strong resolution of singularities, into which
the complete intersection surface embeds.

The authors are currently working towards removing this additional assumption by
comparing the Nonabelian Hodge correspondence on a resolution of a given klt
variety $X$ to Mochizuki's Nonabelian Hodge correspondence, \cite{MR2310103}, on
the smooth part of $X$.  An alternative approach consists in proving that a
generalisation of the Nonabelian Hodge correspondence to the case of smooth
projective DM-stacks, as discussed in \cite{MR2918179}, applies to the
restriction of the canonical Higgs sheaf to a complete intersection surface, and
then again in comparing with a resolution.

\subsection{The case of klt pairs}
\approvals{
  Behrouz & yes \\
  Daniel & yes \\
  Stefan & yes \\
  Thomas & yes
}

In \cite[Thm.~B]{GT16}, partially building upon the foundational work done in
the present paper, the following generalisation of
Theorem~\ref{thm:MYinequality} has been proved.

\begin{thm}\label{thm:OrbiMY}
  Let $(X, D)$ be a projective, klt pair, where $D$ is of the form
  $D := \sum (1 -\frac{1}{a_i})D_i$ for prime divisors $D_{•}$ and
  positive integers $a_{•}$.  Assume that $K_X + D$ is ample.  Writing
  $\check{c}_1$, $\check{c}_2$ for the relevant orbifold Chern classes for
  pairs, \cite[Sect.~2]{GT16}, the following ``Miyaoka-Yau inequality'' is
  satisfied,
  $$
  \bigl(2(n+1)· \check{c}_2 (X,D) - n · \check{c}_1²(X,D)\bigr)·[K_X+D]^{n-2}≥
  0.  \eqno\qed
  $$
\end{thm}

Based on this result, one expects the following generalisation of our result on
uniformisation, Theorem~\ref{thm:BQ}.

\begin{expectation}\label{conj:orbi}
  In the setting of Theorem~\ref{thm:OrbiMY}, if the equality is achieved, then
  $X$ has only quotient singularities and $(X,D)$ admits a (global) smooth
  Deligne-Mumford stack structure $\mathcal X$ whose universal cover (in the
  sense of Deligne-Mumford stacks) is the ball.
\end{expectation}

We note that this in particular implies that the local isotropy groups of
$\mathcal X$ act trivially in codimension one with the exception of the divisors
$D_i$ along which the isotropy groups are isomorphic to $ℤ/(a_i ℤ)$.  We
conclude by pointing out that the Expectation was confirmed for surface pairs
$(X, D)$ by Kobayashi, Nakamura, and Sakai in~\cite[Thm.~12]{MR1030189}.

\Preprint{
\part{Appendices}
\appendix

%
%
\svnid{$Id: 0A-restriction.tex 838 2018-04-25 11:39:59Z kebekus $}

\section{The restriction theorem for sheaves with operators}
\label{sect:app-oper}
\subversionInfo

\subsection{Generalised Bogomolov-Gieseker inequalities}
\label{ssec:langer1}
\approvals{
  Behrouz & yes \\
  Daniel & yes \\
  Stefan & yes \\
  Thomas & yes
}

As a preparation for the proof of the restriction theorem in
Section~\ref{ssec:restrSWO}, we establish a technical Bogomolov-Gieseker type
inequality for sheaves with operators.  We are grateful to Adrian Langer who
explained much of the content of Sections~\ref{ssec:langer1} and
\ref{ssec:restrSWO} to us, and allowed us to reproduce his ideas here.  Some
parts of the proofs are variations of arguments found in Langer's papers.

As before, we have not tried to formulate the strongest result possible.  In
contrast to the setting of Section~\ref{sect:Higgs}, we can restrict ourselves
to the traditional setting of torsion free sheaves $(ℰ,θ)$ with $𝒲$-valued
operators, where $𝒲$ is locally free.  This will allow us to quote Langer's
paper \cite{MR1954067}, to discuss the Harder-Narasimhan filtration of
$(ℰ, θ)$, and to write $μ^{\max}_H(ℰ,θ)$ and $μ^{\min}_H(ℰ,θ)$ in
Corollary~\ref{cor:ncl1}.

\begin{prop}[Generalised Bogomolov-Giesecker inequality I]\label{prop:nli}
  Let $X$ be a smooth, projective variety of dimension $n ≥ 2$, let $H$ be a big
  and nef divisor on $X$ and $𝒲$ be a locally free sheaf.  Let $(ℰ, θ)$ be a
  torsion free sheaf with a $𝒲$-valued operator, where $(ℰ, θ)$ is
  semistable with respect to $H$.  Then,
  $$
  Δ(ℰ)·[H]^{n-2} ≥ -\frac{r⁴}{4d}·μ^{\max}_H(𝒲)², \quad \text{where $d := [H]^n$ and $r := \rank ℰ$.}
  $$
\end{prop}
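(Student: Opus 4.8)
The plan is to compare the ordinary Harder--Narasimhan filtration of $\sE$ (forgetting $θ$) with the operator $θ$, and to pay for the gap between plain semistability and $θ$-semistability through the sizes of the graded pieces. So I would fix the Harder--Narasimhan filtration $0=\sE_0\subsetneq\sE_1\subsetneq\dots\subsetneq\sE_\ell=\sE$ of $\sE$ with respect to $H$, with torsion free, $H$-semistable graded pieces $\sE^i:=\sE_i/\sE_{i-1}$ of rank $r_i$ and slope $μ_i:=μ_H(\sE^i)$, so that $μ_1>\dots>μ_\ell$ (the case $\ell=1$ being exactly the case where $\sE$ itself is $H$-semistable). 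Write $\bar μ:=μ_H(\sE)$ and $d:=[H]^n>0$, and note $X$ is smooth, so classical results apply throughout.

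Next I would invoke the standard discriminant identity for a filtered sheaf, obtained by iterating the two-term formula $\tfrac1r Δ(\sE)=\tfrac1{r'}Δ(\sE')+\tfrac1{r''}Δ(\sE'')-\tfrac{r'r''}{r}\bigl(\tfrac{c_1(\sE')}{r'}-\tfrac{c_1(\sE'')}{r''}\bigr)^2$, namely
$$
Δ(\sE)=\sum_i\frac{r}{r_i}\,Δ(\sE^i)-\sum_{i<j}r_ir_j\Bigl(\frac{c_1(\sE^i)}{r_i}-\frac{c_1(\sE^j)}{r_j}\Bigr)^2
$$
in $N^\bullet(X)_{ℚ}$. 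Intersecting with $[H]^{n-2}$, each $Δ(\sE^i)·[H]^{n-2}≥0$ by the Bogomolov--Gieseker inequality for $H$-semistable torsion free sheaves with respect to a big and nef polarisation, while the Hodge index inequality for the big and nef class $H$ (decompose a class $α$ as $λ·H$ plus an $H$-primitive part to get $α^2·[H]^{n-2}≤(α·[H]^{n-1})^2/d$) applied to $α=\tfrac{c_1(\sE^i)}{r_i}-\tfrac{c_1(\sE^j)}{r_j}$ bounds each cross term by $(μ_i-μ_j)^2/d$. Using the elementary identity $\sum_{i<j}r_ir_j(μ_i-μ_j)^2=r\sum_i r_i(μ_i-\bar μ)^2$, this already yields
$$
Δ(\sE)·[H]^{n-2}\ ≥\ -\frac{r}{d}\sum_i r_i(μ_i-\bar μ)^2 .
$$

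The operator now enters to bound the spread of the $μ_i$. Since $μ_H(\sE_i)$ is strictly decreasing in $i$ with $μ_H(\sE_\ell)=\bar μ$, every $\sE_i$ with $i<\ell$ has slope strictly larger than $\bar μ$, hence is \emph{not} $θ$-invariant by $θ$-semistability; thus $θ$ induces a nonzero map $\sE_i→(\sE/\sE_i)⊗Ω^1$-style composite, i.e.\ a nonzero $\sE_i→(\sE/\sE_i)⊗\sW$, and composing with the quotient onto $\sE^j⊗\sW$ for the least $j>i$ for which the composite is nonzero gives a nonzero map $\sE_i→\sE^j⊗\sW$. Since the minimal slope of a quotient is at least that of the source, $μ_i=μ^{\min}_H(\sE_i)≤μ^{\max}_H(\sE^j⊗\sW)≤μ_j+μ^{\max}_H(\sW)≤μ_{i+1}+μ^{\max}_H(\sW)$, using the standard comparison of maximal slopes under tensoring with a locally free sheaf (valid in characteristic zero, where tensor products of semistable sheaves are semistable). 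Telescoping over $i=1,\dots,\ell-1$ gives $S:=μ_1-μ_\ell≤(\ell-1)·μ^{\max}_H(\sW)≤(r-1)·μ^{\max}_H(\sW)$. Finally, as the $μ_i$ lie in an interval of length $S$ with rank-weighted mean $\bar μ$ and total weight $r$, the weighted variance satisfies $\sum_i r_i(μ_i-\bar μ)^2≤r·p(1-p)S^2≤rS^2/4$, the extreme configuration having all weight at the two endpoints; plugging this into the previous display yields $Δ(\sE)·[H]^{n-2}≥-\tfrac{r}{d}·\tfrac{rS^2}{4}≥-\tfrac{r^4}{4d}·μ^{\max}_H(\sW)^2$, as claimed. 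The step I expect to be the main obstacle is the operator estimate: one must check carefully that $θ(\sE_i)\not\subseteq\sE_i⊗\sW$ produces a genuinely nonzero map into (and onto) a semistable graded piece $\sE^j$ with $j>i$, and that the comparison $μ^{\min}_H(\text{quotient})≥μ^{\min}_H(\text{source})$ together with the tensor-product slope bound is applied correctly; by contrast the discriminant identity, the Hodge index inequality, and the variance computation (whose optimal constant $1/4$ is exactly $\max_p p(1-p)$) are routine.
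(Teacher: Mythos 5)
Your proposal is correct, but it follows a genuinely different route from the paper: the paper disposes of Proposition~\ref{prop:nli} in one line, by observing that for locally free $\sW$ Definition~\ref{def:nshfop1} agrees with Langer's notion and then quoting the (slightly stronger) inequality of \cite[Prop.~7.2]{MR1954067}, whereas you give a self-contained derivation. Your mechanism --- ordinary Harder--Narasimhan filtration of $\sE$ with respect to $H$, the discriminant identity for the graded pieces, the Hodge-index bound on the cross terms, and a bound $μ_i-μ_{i+1} ≤ μ^{\max}_H(\sW)$ on consecutive slope gaps extracted from the failure of $θ$-invariance of the $\sE_i$ --- is sound, and it is in fact the same style of argument the paper uses one step later to deduce Corollary~\ref{cor:ncl1} from Proposition~\ref{prop:nli} (there the filtration is taken in the category of sheaves with operators and the gap enters through $δ(\sE)$ rather than through $μ^{\max}_H(\sW)$); your version even yields the marginally better constant $r²(r-1)²/(4d)$. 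What the citation buys the authors, and what you should still patch, is that your two ``classical'' inputs are textbook-standard only for \emph{ample} polarisations: for big and nef $H$ both the Bogomolov inequality $Δ(\sE^i)·[H]^{n-2} ≥ 0$ for the $H$-semistable pieces and the estimate $μ^{\max}_H(\sE^j ⊗ \sW) ≤ μ_H(\sE^j)+μ^{\max}_H(\sW)$ need either Langer's nef versions, the movable-class framework of \cite{GKP15} (note $[H]^{n-1}$ is movable), or a short perturbation argument with $H+ε A$ using boundedness of the potential destabilisers and passing to the limit. Finally, a cosmetic fix at the step you flagged: take the largest $j$ such that the composite $\sE_i → (\sE/\sE_{j-1})⊗\sW$ is nonzero (equivalently the smallest $j$ with image contained in $(\sE_j/\sE_i)⊗\sW$); since $\sW$ is locally free and the $\sE_k$ are saturated, the image then lies in $\sE^j⊗\sW$ and the slope comparison $μ_i = μ^{\min}_H(\sE_i) ≤ μ_j+μ^{\max}_H(\sW) ≤ μ_{i+1}+μ^{\max}_H(\sW)$ goes through; alternatively you can skip the graded piece altogether, as the nonzero map $\sE_i → (\sE/\sE_i)⊗\sW$ already gives $μ_i ≤ μ^{\max}_H(\sE/\sE_i)+μ^{\max}_H(\sW) = μ_{i+1}+μ^{\max}_H(\sW)$.
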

\begin{proof}
  This is an immediate consequence of \cite[Prop.~7.2]{MR1954067} where a
  slightly stronger result is shown.  Observe that Definition~\ref{def:nshfop1}
  agrees with \cite[Def.~1.1]{MR1954067} because $𝒲$ is assumed to be locally
  free.
\end{proof}

\begin{cor}[Generalised Bogomolov-Gieseker inequality II]\label{cor:ncl1}
  Let $X$ be a smooth, projective variety of dimension $n ≥ 2$, let $H$ be a big
  and nef divisor on $X$, and $𝒲$ be a locally free sheaf.  Let $(ℰ, θ)$ be
  a torsion free sheaf with a $𝒲$-valued operator.  Then,
  $$
  Δ(ℰ)·[H]^{n-2} ≥ -\frac{r⁴}{4d}· μ^{\max}_H(𝒲)² - \frac{r²}{d}·δ(ℰ),
  $$
  where
  $δ(ℰ) := \bigl( μ^{\max}_H(ℰ,θ)-μ_H(ℰ) \bigr)·\bigl(
  μ_H(ℰ)-μ^{\min}_H(ℰ,θ) \bigr)$, where $d = [H]^n$ and $r := \rank(ℰ)$.
\end{cor}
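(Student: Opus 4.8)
The plan is to reduce to the semistable case settled in Proposition~\ref{prop:nli} by passing to the Harder--Narasimhan filtration. Since $\sW$ is locally free, Langer's formalism \cite{MR1954067} provides a filtration $0 = \sE_0 \subsetneq \sE_1 \subsetneq \cdots \subsetneq \sE_\ell = \sE$ by $θ$-invariant subsheaves whose successive quotients $\sG_i := \sE_i/\sE_{i-1}$ are torsion free, carry an induced $\sW$-valued operator $θ_i$, are semistable as sheaves with operator with respect to $H$, and have strictly decreasing slopes $μ_i := μ_H(\sG_i)$, the extreme ones being $μ_1 = μ^{\max}_H(\sE,θ)$ and $μ_\ell = μ^{\min}_H(\sE,θ)$. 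Put $r_i := \rank \sG_i$ and $μ := μ_H(\sE)$; then $\sum_i r_i = r$, $\sum_i r_i μ_i = rμ$, and in particular $μ_\ell ≤ μ ≤ μ_1$.

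The starting point is the classical additivity of the Bogomolov discriminant along the filtration,
$$
\Delta(\sE) = r·\sum_{i=1}^\ell \frac{\Delta(\sG_i)}{r_i} - \sum_{i<j} r_ir_j·\xi_{ij}^2, \qquad \xi_{ij} := \frac{c_1(\sG_i)}{r_i} - \frac{c_1(\sG_j)}{r_j},
$$
obtained by iterating the two-step identity $\tfrac{\Delta(\sE)}{r} = \tfrac{\Delta(\sF)}{r'} + \tfrac{\Delta(\sQ)}{r''} - \tfrac{r'r''}{r}\bigl(\tfrac{c_1(\sF)}{r'} - \tfrac{c_1(\sQ)}{r''}\bigr)^2$ for $0 → \sF → \sE → \sQ → 0$, itself a direct computation from $c_1(\sE) = c_1(\sF)+c_1(\sQ)$ and $c_2(\sE) = c_2(\sF)+c_2(\sQ)+c_1(\sF)·c_1(\sQ)$. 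Intersecting with $[H]^{n-2}$, I would estimate the two sums separately. For the first, Proposition~\ref{prop:nli} applied to each semistable pair $(\sG_i,θ_i)$ gives $\Delta(\sG_i)·[H]^{n-2} ≥ -\tfrac{r_i^4}{4d}μ^{\max}_H(\sW)^2$, whence $r·\sum_i \tfrac{\Delta(\sG_i)·[H]^{n-2}}{r_i} ≥ -\tfrac{r}{4d}μ^{\max}_H(\sW)^2·\sum_i r_i^3 ≥ -\tfrac{r^4}{4d}μ^{\max}_H(\sW)^2$, using $\sum_i r_i^3 ≤ (\sum_i r_i)^3 = r^3$. For the second, the Hodge index inequality for the big and nef class $H$, namely $D^2·[H]^{n-2} ≤ d^{-1}(D·[H]^{n-1})^2$ for every $ℚ$-divisor $D$, applied to $D = \xi_{ij}$ (which satisfies $\xi_{ij}·[H]^{n-1} = μ_i - μ_j$), yields $\sum_{i<j} r_ir_j·\xi_{ij}^2·[H]^{n-2} ≤ d^{-1}\sum_{i<j} r_ir_j(μ_i-μ_j)^2$.

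It then remains to check the elementary numerical inequality $\sum_{i<j} r_ir_j(μ_i-μ_j)^2 ≤ r^2·δ(\sE)$. The identity $\sum_{i<j} r_ir_j(μ_i-μ_j)^2 = r·\sum_i r_i(μ_i-μ)^2$ follows by expanding the left-hand side and using $\sum_i r_iμ_i = rμ$. Next, $μ_\ell ≤ μ_i ≤ μ_1$ gives $(μ_1-μ_i)(μ_i-μ_\ell) ≥ 0$, i.e.\ $μ_i^2 ≤ (μ_1+μ_\ell)μ_i - μ_1μ_\ell$; forming the $r_i$-weighted sum and subtracting $rμ^2$ yields $\sum_i r_i(μ_i-μ)^2 ≤ r\bigl((μ_1+μ_\ell)μ - μ_1μ_\ell - μ^2\bigr) = r(μ_1-μ)(μ-μ_\ell) = r·δ(\sE)$. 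Combining this with the two estimates above gives exactly $\Delta(\sE)·[H]^{n-2} ≥ -\tfrac{r^4}{4d}μ^{\max}_H(\sW)^2 - \tfrac{r^2}{d}δ(\sE)$.

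I expect the only delicate point to be the bookkeeping around the Harder--Narasimhan formalism for sheaves with operators — in particular that the quotients $\sG_i$ genuinely carry semistable $\sW$-valued operators, which is exactly where the material following Langer \cite{MR1954067} is needed — together with quoting the Hodge index inequality in the form valid for $H$ merely big and nef rather than ample; the discriminant identity and the numerical estimate are routine.
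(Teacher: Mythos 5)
Your proof is correct and follows essentially the same route as the paper's: the Harder--Narasimhan filtration of $(\sE,θ)$ in the category of sheaves with $\sW$-valued operators, the decomposition of $Δ(\sE)$ along the graded pieces (cited in the paper as \cite[eq.~(7.3)]{MR2665168}), Proposition~\ref{prop:nli} applied to each semistable quotient together with $\sum_i r_i^3 \le r^3$, the Hodge index inequality for the big and nef class $[H]$, and finally the elementary bound $\sum_{i<j} r_ir_j(μ_i-μ_j)^2 \le r^2·δ(\sE)$, which the paper quotes from \cite[Lem.~1.4]{Langer04a}. The only deviation is that you verify the discriminant identity and this last numerical lemma by direct computation rather than citation, and both computations are correct.
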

\begin{proof}
  Consider the Harder-Narasimhan filtration of $(ℰ, θ)$ in the category of
  sheaves with a $𝒲$-valued operator,
  $$
  0 = ℱ_0 ⊊ ℱ_1 ⊊ ⋯ ⊊ ℱ_{ℓ} = ℰ.
  $$
  To keep the notation reasonably short, set
  $$
  ℱⁱ := \factor{ℱ_i}{ℱ_{i-1}}, \quad r_i := \rank ℱⁱ, \quad μ_i := μ_H(ℱⁱ).
  $$
  By \cite[eq.~(7.3)]{MR2665168}, we can express the Bogomolov discriminant of
  $ℰ$ in terms of the discriminants of the $ℱⁱ$ as follows,
  \begin{equation}\label{eq:n3a}
    Δ(ℰ) = r·\sum_{i=1}^{ℓ} \frac{1}{r_i}·Δ(ℱⁱ) - \sum_{i<j} r_ir_j \left( \frac{1}{r_i}·c_1(ℱⁱ) - \frac{1}{r_j}·c_1(ℱ^j) \right)²
  \end{equation}

  The quotients $ℱⁱ$ inherit $𝒲$-valued operators $θⁱ$ that make
  $(ℱⁱ, θⁱ)$ semistable with respect to $H$.  In particular,
  Proposition~\ref{prop:nli} gives an estimate for the first summand in the
  right-hand side of \eqref{eq:n3a}, after taking the product with $[H]^{n-2}$
  \begin{align}
    r·\sum_{i=1}^{ℓ} \frac{1}{r_i}·Δ(ℱⁱ)·[H]^{n-2} & ≥ -r·\sum_{i=i}^{ℓ} \frac{r_i³}{4d} · μ^{\max}_H(𝒲)² && \text{Prop.~\ref{prop:nli}} \label{eq:nAx} \\
                                                                     & ≥ - \frac{r⁴}{4d}·μ^{\max}_H(𝒲)².  \notag
  \end{align}
  To discuss the second summand in \eqref{eq:n3a}, set
  $M_i := \frac{1}{r_i}·c_1(ℱ_i)$.  Observe that $μ_i = M_i·[H]^{n-1}$ and
  recall from the Hodge index theorem that for two divisor classes $α$ and $β$
  on $X$ with $α$ nef, we have
  $\bigl( β·α^{n-1} \bigr)² ≥ α^n· \bigl(β²·α^{n-2} \bigr)$.  For $α = [H]$ and
  $β = M_i - M_j$, we hence obtain
  $$
  \sum_{i<j} r_ir_j \left( M_i-M_j\right)²·[H]^{n-2} ≤ \frac{1}{d}·\sum_{i<j}
  r_ir_j·(μ_i-μ_j)².
  $$
  An elementary calculation, carried out in
  \cite[Lem.~1.4]{Langer04a}\footnote{Note that $μ_i > μ_j$ for $i < j$ by
    definition of the Harder-Narasimhan filtration.}, gives
  $$
  \sum_{i < j} r_i r_j (μ_i - μ_j)² ≤ r² \bigl(μ_1 - μ_H(ℰ)\bigr)·\bigl(μ_H(ℰ) - μ_m\bigr),
  $$
  hence
  \begin{equation}\label{eq:nBx}
    \sum_{i<j} r_ir_j \left( M_i-M_j\right)²·[H]^{n-2} ≤ \frac{r²}{d}·δ(ℰ).
  \end{equation}
  Combining Inequalities~\eqref{eq:nAx} and \eqref{eq:nBx} with the description
  of the Bogomolov discriminant found in \eqref{eq:n3a} ends the proof of
  Corollary~\ref{cor:ncl1}.
\end{proof}

\subsection{Restriction theorem for sheaves with operators}
\label{ssec:restrSWO}
\approvals{
  Behrouz & yes \\
  Daniel & yes \\
  Stefan & yes \\
  Thomas & yes
}

The following restriction theorem of Mehta-Ramanathan type is a main technical
tool for the proof of the restriction theorem for (singular) Higgs sheaves in
Section~\ref{ssect:restrict}, and hence of the $ℚ$-Miyaoka-Yau type inequality
that we will establish in Section~\ref{sect:MY}.  Again, we can restrict
ourselves to the traditional setting of sheaves with operators that take values
in a locally free sheaf.

\begin{thm}[Restriction theorem for sheaves with operators]\label{thm:nrestrSWO}
  Let $X$ be a smooth, projective variety of dimension $n ≥ 2$, let $H$ be a big
  and nef divisor on $X$, and $𝒲$ be a locally free sheaf on $X$.  Let
  $(ℰ, θ)$ be a torsion free sheaf with a $𝒲$-valued operator, where
  $(ℰ, θ)$ is stable with respect to $H$.  Set $r := \rank(ℰ)$ and
  $d := [H]^n$.  Assume further that we are given a number $m ∈ ℕ^+$ with
  \begin{equation}\label{eq:prop:li}
    m > r·Δ(ℰ)·[H]^{n-2} + \frac{r⁵}{4d} · μ^{\max}_H(𝒲)²
  \end{equation}
  and a hypersurface $D ∈ |m·H|$ such that the following holds.
  \begin{enumerate}
  \item The hypersurface $D$ is irreducible, normal, and not contained in
    $\supp H$.
  \item The restriction $ℰ|_D$ is torsion free.
  \end{enumerate}
  Then, $(ℰ|_D, θ|_D)$ is stable with respect to $H|_D$.
\end{thm}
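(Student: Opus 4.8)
The plan is to argue by contradiction, following the effective (Langer‐style) form of the Mehta--Ramanathan restriction theorem, with the generalised Bogomolov--Gieseker inequality of Corollary~\ref{cor:ncl1} supplying the numerical obstruction. So suppose $(\sE|_D,\theta|_D)$ is \emph{not} stable with respect to $H|_D$. Since $\sW|_D$ is locally free, generic invariance is here an honest condition, and passing to a saturated, maximally destabilising term of the Harder--Narasimhan filtration of $(\sE|_D,\theta|_D)$ in the category of torsion free sheaves with a $\sW|_D$-valued operator, one obtains a proper, saturated subsheaf $\sF_D\subsetneq\sE|_D$ of some rank $\rho$ with $0<\rho<r$, which is $\theta|_D$-invariant, satisfies $\mu_{H|_D}(\sF_D)\ge\mu_{H|_D}(\sE|_D)$, and has torsion free quotient $\sQ_D:=(\sE|_D)/\sF_D$.

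\textbf{Step 1: lifting $\sF_D$ to $X$.} Writing $\iota:D\hookrightarrow X$ for the inclusion, set $\sG:=\ker\bigl(\sE\twoheadrightarrow\iota_*\sQ_D\bigr)$, the surjection being $\sE\to\sE|_D\to\sQ_D$. Then $\sE(-D)\subseteq\sG\subseteq\sE$, the quotient $\sG/\sE(-D)$ is canonically $\iota_*\sF_D$, $\rank\sG=r$, and $c_1(\sG)=c_1(\sE)-(r-\rho)[D]$. As $\theta$ is $\sO_X$-linear one has $\theta(\sE(-D))\subseteq\sE(-D)\otimes\sW$; since $\sW$ is locally free, $\sG\otimes\sW=\ker(\sE\otimes\sW\to\iota_*\sQ_D\otimes\sW)$, and a short diagram chase modulo $\sE(-D)\otimes\sW$ that uses the $\theta|_D$-invariance of $\sF_D$ shows $\theta(\sG)\subseteq\sG\otimes\sW$. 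Thus $(\sG,\theta)$ is again a torsion free sheaf with a $\sW$-valued operator, and every $\theta$-invariant subsheaf of $\sG$ is a $\theta$-invariant subsheaf of $\sE$.

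\textbf{Step 2: two estimates for $\Delta(\sG)\cdot[H]^{n-2}$.} Expanding $c_1(\sG)$ and $c_2(\sG)$ by means of the exact sequences $0\to\sG\to\sE\to\iota_*\sQ_D\to0$ and $0\to\sE(-D)\to\sG\to\iota_*\sF_D\to0$ (for $c_\bullet(\iota_*-)$ one invokes the standard Grothendieck--Riemann--Roch computation), and substituting $[D]=m[H]$, $d=[H]^n$, $\deg_{H|_D}\sF_D=\rho\,\mu_{H|_D}(\sF_D)$ and $\mu_{H|_D}(\sE|_D)=m\,\mu_H(\sE)$, one obtains after simplification
\begin{align*}
\Delta(\sG)\cdot[H]^{n-2}&=\Delta(\sE)\cdot[H]^{n-2}-\rho(r-\rho)m^2d-2r\rho\bigl(\mu_{H|_D}(\sF_D)-\mu_{H|_D}(\sE|_D)\bigr)\\
&\le\Delta(\sE)\cdot[H]^{n-2}-\rho(r-\rho)m^2d,
\end{align*}
the inequality being the destabilising hypothesis on $\sF_D$. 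For the opposite bound, Corollary~\ref{cor:ncl1} applied to $(\sG,\theta)$ gives $\Delta(\sG)\cdot[H]^{n-2}\ge-\frac{r^4}{4d}\mu^{\max}_H(\sW)^2-\frac{r^2}{d}\delta(\sG)$. Here $\delta(\sG)$ is controlled using the stability of $(\sE,\theta)$: semistability gives $\mu^{\min}_H(\sE,\theta)=\mu_H(\sE)$, and stability forces every proper $\theta$-invariant subsheaf of $\sE$, and — by the twist‐invariance of stability — of $\sE(-D)$, to have slope \emph{strictly} below the ambient slope. Combined with $\sE(-D)\subseteq\sG\subseteq\sE$ and $\mu_H(\sG)=\mu_H(\sE)-\frac{(r-\rho)md}{r}$, this yields $\mu^{\max}_H(\sG,\theta)<\mu_H(\sE)$ and $\mu^{\min}_H(\sG,\theta)>\mu_H(\sE)-md$, hence $\delta(\sG)<\frac{\rho(r-\rho)m^2d^2}{r^2}$; the integrality of the intersection numbers involved promotes this strict inequality to one with a definite gap.

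\textbf{Step 3: conclusion, and the main obstacle.} Comparing the two estimates for $\Delta(\sG)\cdot[H]^{n-2}$, the terms $-\rho(r-\rho)m^2d$ cancel, and what remains is an inequality bounding $m$ from above in terms of $r$, $d$, $\mu^{\max}_H(\sW)$ and $\Delta(\sE)\cdot[H]^{n-2}$; since $(\sE,\theta)$ is semistable, Proposition~\ref{prop:nli} bounds $\Delta(\sE)\cdot[H]^{n-2}$ from below, so the bound contradicts hypothesis~\eqref{eq:prop:li}. I expect the main difficulty to lie precisely in making this last step sharp: one must pin down the definite gap in the $\delta(\sG)$-estimate and the lower-order terms of the $c_\bullet(\iota_*-)$-expansion accurately enough that the resulting constants line up with the exact shape of \eqref{eq:prop:li} — this is where it is safer to follow Langer's normalisation (via his $L_{\max}$-functions) rather than reconstruct the constants by hand, and, if needed, to iterate the elementary modification $\sE\rightsquigarrow\sG$ into a chain $\sE=\sG_0\supseteq\sG_1\supseteq\cdots$ and use the Bogomolov inequality to bound the number of strictly destabilising steps, which produces the same effective bound on $m$.
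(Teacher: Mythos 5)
Your argument is essentially the paper's own proof (following Langer): pass to the kernel $\sE' = \ker\bigl(\sE \to \iota_*\sQ_D\bigr)$ with its induced $\sW$-valued operator, compute $\Delta(\sE')\cdot[H]^{n-2}$ via the elementary modification, and play Corollary~\ref{cor:ncl1} off against the bound on $\delta(\sE')$ obtained from stability of $(\sE,\theta)$ and of its twist by $\sO_X(-D)$ together with the integrality gap $1/r^2$; the constants do line up to give exactly $m \le r\cdot\Delta(\sE)\cdot[H]^{n-2} + \frac{r^5}{4d}\,\mu^{\max}_H(\sW)^2$, contradicting~\eqref{eq:prop:li}. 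The only superfluous point is your closing appeal to Proposition~\ref{prop:nli}: the contradiction is direct and requires no lower bound on $\Delta(\sE)\cdot[H]^{n-2}$, nor any iteration of the modification.
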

\begin{proof}
  Argue by contradiction and assume that $(ℰ|_D, θ|_D)$ is \emph{not} stable
  with respect to $H|_D$.  Then, there exists a maximal, $θ|_D$-invariant,
  saturated, destabilising subsheaf $ℱ_D ⊆ ℰ|_D$.  Set
  $$
  𝒬 := \factor{ℰ|_D}{ℱ_D}, \quad ρ := \rank_D 𝒬 \quad \text{and} \quad
  ℰ' := \ker \bigl( ℰ → ℰ|_D → 𝒬 \bigr).
  $$
  Observe that $μ_{H|_D}(𝒬) ≤ μ_{H|_D}(ℰ|_D)$.  Also, observe that $ℰ'$ is a
  $θ$-invariant subsheaf of $ℰ$.  Since $𝒲$ is locally free, the operator $θ$
  induces a $𝒲$-valued operator $θ'$ on $ℰ'$, cf.\ Warning~\ref{war:noois}, and
  we can consider the Harder-Narasimhan filtration of $(ℰ', θ')$,
  $$
  0 = ℱ'_0 ⊊ ℱ'_1 ⊊ ⋯ ⊊ ℱ'_{ℓ'} = ℰ'
  $$
  In the following, we aim to compute the main invariants of $(ℰ', θ')$.

  To begin, Chern class computations analogous to \cite[Prop.~5.2.2 and proof of
  Thm.~7.3.5]{MR2665168}, yields the following
  \begin{align}
    \label{il:nCC-c1} [ℰ'] & = [ℰ] - ρm·[H] \\
    \label{il:nCC-c2} Δ(ℰ')·[H]^{n-2} & = Δ(ℰ)·[H]^{n-2}-m²ρ(r-ρ)·[H]^n \\
                            & \qquad\qquad\qquad\qquad\qquad + 2rρ·\bigl(μ_{H|_D}(𝒬)-μ_{H|_D}(ℰ|_D)\bigr) \notag\\
                            & ≤ Δ(ℰ)·[H]^{n-2} - m²dρ(r-ρ).\notag
  \end{align}

  Secondly, observing that $ℰ'$ is a proper subsheaf of $ℰ$ whose slope is
  strictly smaller than that of $ℰ$ by \eqref{il:nCC-c1}, it follows from
  stability of $(ℰ,θ)$ that $μ^{\max}_H(ℰ', θ')$ is strictly smaller than
  $μ_H(ℰ)$, the difference being at least $1/r²$.  In particular,
  \begin{equation}\label{eq:ntz-A}
    μ^{\max}_H(ℰ', θ') - μ_H(ℰ') = \frac{ρ}{r}md + μ^{\max}_H(ℰ', θ') - μ_H(ℰ) ≤ \frac{ρ}{r}md - \frac{1}{r²}.
  \end{equation}

  Thirdly, consider $(ℰ'', θ'') := (ℰ,θ) ⊗ 𝒪_X(-D)$, which is stable with
  respect to $H$ by Lemma~\ref{lem:stabtp}.  On the other hand, $ℰ''$ is a
  $θ$-invariant subsheaf of $ℰ$, and admits a generically surjective morphism to
  $\factor{ℱ'_{ℓ'}}{ℱ'_{ℓ'-1}}$.  It follows that $μ_H(ℰ'') ≤ μ^{\min}_H(ℰ',θ')$
  and
  \begin{equation}\label{eq:ntz-B}
    μ_H(ℰ') - μ^{\min}_H(ℰ', θ') ≤ \frac{r-ρ}{r}md + μ_H(ℰ'') - μ^{\min}_H(ℰ',θ') ≤ \frac{r-ρ}{r}md.
  \end{equation}
  Combining \eqref{eq:ntz-A} and \eqref{eq:ntz-B}, we obtain in the notation of
  Corollary~\ref{cor:ncl1},
  \begin{equation}\label{eq:tz-C}
    δ(ℰ') ≤ \Bigl( \frac{ρ}{r}md - \frac{1}{r²} \Bigr)·\Bigl( \frac{r-ρ}{r}md \Bigr) ≤ \frac{1}{r²} \Bigl(m²d²ρ(r-ρ)-\frac{md}{r} \Bigr).
  \end{equation}
  Summing up, we have:
  \begin{align*}
    0 & ≤ d·Δ(ℰ')·[H]^{n-2} + \frac{r⁴}{4}μ^{\max}_H(𝒲)² + r²·δ(ℰ') && \text{Cor.~\ref{cor:ncl1}} \\
      & ≤ d·Δ(ℰ)·[H]^{n-2} - m²d²ρ(r-ρ) + \frac{r⁴}{4}μ^{\max}_H(𝒲)² + r²·δ(ℰ') &&\text{Ineq.~\eqref{il:nCC-c2}} \\
      & ≤ d·Δ(ℰ)·[H]^{n-2} + \frac{r⁴}{4}μ^{\max}_H(𝒲)² - \frac{md}{r} &&\text{Ineq.~\eqref{eq:tz-C}.}
  \end{align*}
  This contradicts the choice of $m$ in \eqref{eq:prop:li} and therefore ends
  the proof of Theorem~\ref{thm:nrestrSWO}.
\end{proof}

}

\vspace{0.3cm}


\begin{thebibliography}{BCHM10}

\bibitem[Alp87]{MR925989}
Roger~C. Alperin.
\newblock An elementary account of {S}elberg's lemma.
\newblock {\em Enseign. Math. (2)}, 33(3-4):269--273, 1987.

\bibitem[Art69]{ArtinApprox}
Michael Artin.
\newblock Algebraic approximation of structures over complete local rings.
\newblock {\em Inst. Hautes Études Sci. Publ. Math.}, (36):23--58, 1969.
\newblock \href{http://dx.doi.org/10.1007/BF02684596}{DOI:10.1007/BF02684596}.

\bibitem[Bai54]{MR0068872}
Walter~L. Baily.
\newblock On the quotient of an analytic manifold by a group of analytic
  homeomorphisms.
\newblock {\em Proc. Nat. Acad. Sci. U. S. A.}, 40:804--808, 1954.

\bibitem[BCHM10]{BCHM10}
Caucher Birkar, Paolo Cascini, Christopher~D. Hacon, and James McKernan.
\newblock Existence of minimal models for varieties of log general type.
\newblock {\em J. Amer. Math. Soc.}, 23:405--468, 2010.
\newblock
  \href{http://dx.doi.org/10.1090/S0894-0347-09-00649-3}{DOI:10.1090/S0894-0347-09-00649-3}.

\bibitem[BS95]{BS95}
Mauro~C. Beltrametti and Andrew~J. Sommese.
\newblock {\em The adjunction theory of complex projective varieties},
  volume~16 of {\em De Gruyter Expositions in Mathematics}.
\newblock Walter de Gruyter \& Co., Berlin, 1995.
\newblock
  \href{http://dx.doi.org/10.1515/9783110871746}{DOI:10.1515/9783110871746}.

\bibitem[CMSP03]{CMSP}
James Carlson, Stefan Müller-Stach, and Chris Peters.
\newblock {\em Period mappings and period domains}, volume~85 of {\em Cambridge
  Studies in Advanced Mathematics}.
\newblock Cambridge University Press, Cambridge, 2003.

\bibitem[CY86]{MR0833802}
Shiu-Yuen Cheng and Shing-Tung Yau.
\newblock Inequality between {C}hern numbers of singular {K}ähler surfaces and
  characterization of orbit space of discrete group of $su(2,1)$.
\newblock {\em Complex differential geometry and nonlinear differential
  equations (Brunswick, Maine, 1984), Contemp. Math., Amer. Math. Soc.,
  Providence, RI}, 49:31--44, 1986.

\bibitem[EGZ09]{MR2505296}
Philippe Eyssidieux, Vincent Guedj, and Ahmed Zeriahi.
\newblock Singular {K}ähler-{E}instein metrics.
\newblock {\em J. Amer. Math. Soc.}, 22(3):607--639, 2009.
\newblock
  \href{http://dx.doi.org/10.1090/S0894-0347-09-00629-8}{DOI:10.1090/S0894-0347-09-00629-8}.

\bibitem[Eis95]{E95}
David Eisenbud.
\newblock {\em Commutative algebra with a view toward algebraic geometry},
  volume 150 of {\em Graduate Texts in Mathematics}.
\newblock Springer-Verlag, New York, 1995.

\bibitem[Eno88]{Eno87}
Ichiro Enoki.
\newblock Stability and negativity for tangent sheaves of minimal {K}ähler
  spaces.
\newblock In {\em Geometry and analysis on manifolds ({K}atata/{K}yoto, 1987)},
  volume 1339 of {\em Lecture Notes in Math.}, pages 118--126. Springer,
  Berlin, 1988.
\newblock \href{http://dx.doi.org/10.1007/BFb0083051}{DOI:10.1007/BFb0083051}.

\bibitem[FL81]{FL81}
William Fulton and Robert Lazarsfeld.
\newblock Connectivity and its applications in algebraic geometry.
\newblock In {\em Algebraic geometry ({C}hicago, {I}ll., 1980)}, volume 862 of
  {\em Lecture Notes in Math.}, pages 26--92. Springer, Berlin, 1981.
\newblock \href{http://dx.doi.org/10.1007/BFb0090889}{DOI:10.1007/BFb0090889}.

\bibitem[Fle84]{Flenner84}
Hubert Flenner.
\newblock Restrictions of semistable bundles on projective varieties.
\newblock {\em Comment. Math. Helv.}, 59(4):635--650, 1984.
\newblock \href{http://dx.doi.org/10.1007/BF02566370}{DOI:10.1007/BF02566370}.

\bibitem[Ful98]{Fulton98}
William Fulton.
\newblock {\em Intersection {T}heory}, volume~2 of {\em Ergebnisse der
  Mathematik und ihrer Grenzgebiete. 3. Folge. A Series of Modern Surveys in
  Mathematics [Results in Mathematics and Related Areas. 3rd Series. A Series
  of Modern Surveys in Mathematics]}.
\newblock Springer-Verlag, Berlin, second edition, 1998.
\newblock
  \href{http://dx.doi.org/10.1007/978-1-4612-1700-8}{DOI:10.1007/978-1-4612-1700-8}.

\bibitem[Gil84]{MR772058}
Henri Gillet.
\newblock Intersection theory on algebraic stacks and {$Q$}-varieties.
\newblock In {\em Proceedings of the {L}uminy conference on algebraic
  {$K$}-theory ({L}uminy, 1983)}, volume~34, pages 193--240, 1984.

\bibitem[GKKP11]{GKKP11}
Daniel Greb, Stefan Kebekus, Sándor~J. Kovács, and Thomas Peternell.
\newblock Differential forms on log canonical spaces.
\newblock {\em Inst. {H}autes {É}tudes Sci.~{P}ubl.~{M}ath.}, 114(1):87--169,
  November 2011.
\newblock
  \href{http://dx.doi.org/10.1007/s10240-011-0036-0}{DOI:10.1007/s10240-011-0036-0}
  An extended version with additional graphics is available as
  \href{http://arxiv.org/abs/1003.2913}{arXiv:1003.2913}.

\bibitem[GKP16a]{GKP15}
Daniel Greb, Stefan Kebekus, and Thomas Peternell.
\newblock Movable curves and semistable sheaves.
\newblock {\em Int. Math. Res. Not.}, 2016(2):536--570, 2016.
\newblock
  \href{http://dx.doi.org/10.1093/imrn/rnv126}{DOI:10.1093/imrn/rnv126}.
  Preprint \href{http://arxiv.org/abs/1408.4308}{arXiv:1408.4308}.

\bibitem[GKP16b]{GKP13}
Daniel Greb, Stefan Kebekus, and Thomas Peternell.
\newblock Étale fundamental groups of {K}awamata log terminal spaces, flat
  sheaves, and quotients of abelian varieties.
\newblock {\em Duke Math. J.}, 165(10):1965--2004, 2016.
\newblock
  \href{http://dx.doi.org/10.1215/00127094-3450859}{DOI:10.1215/00127094-3450859}.
  Preprint \href{http://arxiv.org/abs/1307.5718}{arXiv:1307.5718}.

\bibitem[GM88]{GoreskyMacPherson}
Mark Goresky and Robert~D. MacPherson.
\newblock {\em Stratified {M}orse theory}, volume~14 of {\em Ergebnisse der
  Mathematik und ihrer Grenzgebiete (3) [Results in Mathematics and Related
  Areas (3)]}.
\newblock Springer-Verlag, Berlin, 1988.
\newblock
  \href{http://dx.doi.org/10.1007/978-3-642-71714-7}{DOI:10.1007/978-3-642-71714-7}.

\bibitem[GR11]{MR2915479}
Daniel Greb and Sönke Rollenske.
\newblock Torsion and cotorsion in the sheaf of {K}ähler differentials on some
  mild singularities.
\newblock {\em Math. Res. Lett.}, 18(6):1259--1269, 2011.
\newblock
  \href{http://dx.doi.org/10.4310/MRL.2011.v18.n6.a14}{DOI:10.4310/MRL.2011.v18.n6.a14}.
  Preprint \href{http://arxiv.org/abs/1012.5940}{arXiv:1012.5940}.

\bibitem[Gro60]{EGA1}
Alexandre Grothendieck.
\newblock Éléments de géométrie algébrique. {I}. {L}e langage des
  schémas.
\newblock {\em Inst. Hautes Études Sci. Publ. Math.}, (4):228, 1960.
\newblock
  \href{http://www.numdam.org/item/PMIHES_1960__4__5_0}{numdam.PMIHES-1960-4-5-0}.

\bibitem[Gro66]{EGA4-3}
Alexandre Grothendieck.
\newblock Éléments de géométrie algébrique. {IV}. Étude locale des
  schémas et des morphismes de schémas. {T}roisiéme partie.
\newblock {\em Inst. Hautes Études Sci. Publ. Math.}, (28):255, 1966.
\newblock Revised in collaboration with Jean Dieudonné.
  \href{http://www.numdam.org/item?id=PMIHES_1966__28__5_0}{numdam.PMIHES-1966-28-5-0}.

\bibitem[Gro70]{MR0262386}
Alexander Grothendieck.
\newblock Représentations linéaires et compactification profinie des groupes
  discrets.
\newblock {\em Manuscripta Math.}, 2:375--396, 1970.
\newblock \href{http://dx.doi.org/10.1007/BF01719593}{DOI:10.1007/BF01719593}.

\bibitem[GT16]{GT16}
Henri Guenancia and Behrouz Taji.
\newblock Orbifold stability and {M}iyaoka-{Y}au inequality for minimal pairs.
\newblock Preprint \href{http://arxiv.org/abs/1611.05981}{arXiv:1611.05981},
  November 2016.

\bibitem[Gue16]{Guenancia}
Henri Guenancia.
\newblock Semistability of the tangent sheaf of singular varieties.
\newblock {\em Algebraic Geometry}, 3(5):508--542, November 2016.
\newblock
  \href{http://dx.doi.org/10.14231/AG-2016-024}{DOI:10.14231/AG-2016-024}.
  Preprint \href{http://arxiv.org/abs/1502.03711}{arXiv:1502.03711}.

\bibitem[Har77]{Ha77}
Robin Hartshorne.
\newblock {\em Algebraic geometry}.
\newblock Springer-Verlag, New York, 1977.
\newblock Graduate Texts in Mathematics, No. 52.
  \href{http://dx.doi.org/10.1007/978-1-4757-3849-0}{DOI:10.1007/978-1-4757-3849-0}.

\bibitem[Har80]{MR597077}
Robin Hartshorne.
\newblock Stable reflexive sheaves.
\newblock {\em Math. Ann.}, 254(2):121--176, 1980.
\newblock \href{http://dx.doi.org/10.1007/BF01467074}{DOI: 10.1007/BF01467074}.

\bibitem[HL10]{MR2665168}
Daniel Huybrechts and Manfred Lehn.
\newblock {\em The geometry of moduli spaces of sheaves}.
\newblock Cambridge Mathematical Library. Cambridge University Press,
  Cambridge, second edition, 2010.
\newblock
  \href{http://dx.doi.org/10.1017/CBO9780511711985}{DOI:10.1017/CBO9780511711985}.

\bibitem[Hol63]{MR0150789}
Harald Holmann.
\newblock Komplexe {R}äume mit komplexen {T}ransformationsgruppen.
\newblock {\em Math. Ann.}, 150:327--360, 1963.
\newblock \href{http://dx.doi.org/10.1007/BF01470762}{DOI:10.1007/BF01470762}.

\bibitem[Kat92]{MR1177168}
Svetlana Katok.
\newblock {\em Fuchsian groups}.
\newblock Chicago Lectures in Mathematics. University of Chicago Press,
  Chicago, IL, 1992.

\bibitem[Kaw92]{Kaw92}
Yujiro Kawamata.
\newblock Abundance theorem for minimal threefolds.
\newblock {\em Invent. Math.}, 108(2):229--246, 1992.
\newblock \href{http://dx.doi.org/10.1007/BF02100604}{DOI:10.1007/BF02100604}.

\bibitem[Keb13]{MR3084424}
Stefan Kebekus.
\newblock Pull-back morphisms for reflexive differential forms.
\newblock {\em Adv. Math.}, 245:78--112, 2013.
\newblock
  \href{http://dx.doi.org/10.1016/j.aim.2013.06.013}{DOI:10.1016/j.aim.2013.06.013}.
  Preprint \href{http://arxiv.org/abs/1210.3255}{arXiv:1210.3255}.

\bibitem[Keu06]{MR2239523}
Jong-Hae Keum.
\newblock A fake projective plane with an order 7 automorphism.
\newblock {\em Topology}, 45(5):919--927, 2006.
\newblock
  \href{http://dx.doi.org/10.1016/j.top.2006.06.006}{DOI:10.1016/j.top.2006.06.006}.

\bibitem[Keu08]{MR2443971}
Jong-Hae Keum.
\newblock Quotients of fake projective planes.
\newblock {\em Geom. Topol.}, 12(4):2497--2515, 2008.
\newblock \href{http://dx.doi.org/10.2140/gt.2008.12.2497}{DOI:
  10.2140/gt.2008.12.2497}.

\bibitem[KM98]{KM98}
János Kollár and Shigefumi Mori.
\newblock {\em Birational geometry of algebraic varieties}, volume 134 of {\em
  Cambridge Tracts in Mathematics}.
\newblock Cambridge University Press, Cambridge, 1998.
\newblock With the collaboration of C.\ H.\ Clemens and A.\ Corti, Translated
  from the 1998 Japanese original.
  \href{http://dx.doi.org/10.1017/CBO9780511662560}{DOI:10.1017/CBO9780511662560}.

\bibitem[KNS89]{MR1030189}
Ryoichi Kobayashi, Shu Nakamura, and Fumio Sakai.
\newblock A numerical characterization of ball quotients for normal surfaces
  with branch loci.
\newblock {\em Proc. Japan Acad. Ser. A Math. Sci.}, 65(7):238--241, 1989.
\newblock
  \href{http://projecteuclid.org/euclid.pja/1195512773}{euclid.pja/1195512773}.

\bibitem[Kob85]{MR0799669}
Ryoichi Kobayashi.
\newblock Einstein-{K}ähler {V}-metrics on open {S}atake {V}-surfaces with
  isolated quotient singularities.
\newblock {\em Math. Ann.}, 272(3):385--398, 1985.
\newblock \href{http://dx.doi.org/10.1007/BF01455566}{DOI:10.1007/BF01455566}.

\bibitem[Kob98]{KobayashiGrundlehren}
Shoshichi Kobayashi.
\newblock {\em Hyperbolic {C}omplex {S}paces}, volume 318 of {\em Grundlehren
  der Mathematischen Wissenschaften [Fundamental Principles of Mathematical
  Sciences]}.
\newblock Springer-Verlag, Berlin, 1998.
\newblock
  \href{http://dx.doi.org/10.1007/978-3-662-03582-5}{DOI:10.1007/978-3-662-03582-5}.

\bibitem[Kob05]{MR2194466}
Shoshichi Kobayashi.
\newblock {\em Hyperbolic manifolds and holomorphic mappings}.
\newblock World Scientific Publishing Co. Pte. Ltd., Hackensack, NJ, second
  edition, 2005.
\newblock An introduction.

\bibitem[Kol92]{SecondAsterisque}
János Kollár.
\newblock {\em Flips and abundance for algebraic threefolds}.
\newblock Société Mathématique de France, Paris, 1992.
\newblock Papers from the Second Summer Seminar on Algebraic Geometry held at
  the University of Utah, Salt Lake City, Utah, August 1991, Astérisque No.
  211 (1992).

\bibitem[Kol95]{Kollar95s}
János Kollár.
\newblock {\em Shafarevich maps and automorphic forms}.
\newblock M. B. Porter Lectures. Princeton University Press, Princeton, NJ,
  1995.

\bibitem[Lan02]{MR1954067}
Adrian Langer.
\newblock A note on {B}ogomolov's instability and {H}iggs sheaves.
\newblock In {\em Algebraic geometry}, pages 237--256. de Gruyter, Berlin,
  2002.

\bibitem[Lan04]{Langer04a}
Adrian Langer.
\newblock Semistable sheaves in positive characteristic.
\newblock {\em Ann. of Math. (2)}, 159(1):251--276, 2004.
\newblock
  \href{http://dx.doi.org/10.4007/annals.2004.159.251}{DOI:10.4007/annals.2004.159.251}.

\bibitem[Lan15]{MR3314517}
Adrian Langer.
\newblock Bogomolov's inequality for {H}iggs sheaves in positive
  characteristic.
\newblock {\em Invent. Math.}, 199(3):889--920, 2015.
\newblock
  \href{http://dx.doi.org/10.1007/s00222-014-0534-z}{DOI:10.1007/s00222-014-0534-z}.

\bibitem[Lee11]{MR2766102}
John~M. Lee.
\newblock {\em Introduction to topological manifolds}, volume 202 of {\em
  Graduate Texts in Mathematics}.
\newblock Springer, New York, second edition, 2011.
\newblock
  \href{http://dx.doi.org/10.1007/978-1-4419-7940-7}{DOI:10.1007/978-1-4419-7940-7}.

\bibitem[LT14]{LT14}
Steven~S.Y. Lu and Behrouz Taji.
\newblock A characterization of finite quotients of {A}belian varieties.
\newblock {\em IMRN}, 2018(1):292--319, October 2014.
\newblock
  \href{http://dx.doi.org/10.1093/imrn/rnw251}{DOI:10.1093/imrn/rnw251}.
  Preprint \href{http://arxiv.org/abs/1410.0063}{arXiv:1410.0063}.

\bibitem[MFK94]{MR1304906}
David Mumford, John Fogarty, and Frances Kirwan.
\newblock {\em Geometric invariant theory}, volume~34 of {\em Ergebnisse der
  Mathematik und ihrer Grenzgebiete (2) [Results in Mathematics and Related
  Areas (2)]}.
\newblock Springer-Verlag, Berlin, third edition, 1994.
\newblock
  \href{http://dx.doi.org/10.1007/978-3-642-57916-5}{DOI:10.1007/978-3-642-57916-5}.

\bibitem[Mil80]{Milne80}
James~S. Milne.
\newblock {\em {É}tale cohomology}, volume~33 of {\em Princeton Mathematical
  Series}.
\newblock Princeton University Press, Princeton, N.J., 1980.

\bibitem[Moc06]{MR2310103}
Takuro Mochizuki.
\newblock Kobayashi-{H}itchin correspondence for tame harmonic bundles and an
  application.
\newblock {\em Astérisque}, (309):viii+117, 2006.

\bibitem[Mum83]{MR717614}
David Mumford.
\newblock Towards an enumerative geometry of the moduli space of curves.
\newblock In {\em Arithmetic and geometry, {V}ol. {II}}, volume~36 of {\em
  Progr. Math.}, pages 271--328. Birkhäuser Boston, Boston, MA, 1983.
\newblock \href{http://dx.doi.org/10.1007/978-1-4757-9286-7_12}{DOI:
  10.1007/987-1-4757-9286-7\-12}.

\bibitem[Nag59]{MR0106930}
Masayoshi Nagata.
\newblock On the purity of branch loci in regular local rings.
\newblock {\em Illinois J. Math.}, 3:328--333, 1959.

\bibitem[Nem13]{MR3170714}
Stefan Nemirovski.
\newblock Levi problem and semistable quotients.
\newblock {\em Complex Var. Elliptic Equ.}, 58(11):1517--1525, 2013.
\newblock
  \href{http://dx.doi.org/10.1080/17476933.2011.592579}{DOI:10.1080/17476933.2011.592579}.

\bibitem[SBW94]{SBW94}
Nicholas~I. Shepherd-Barron and Pelham~M.H. Wilson.
\newblock Singular threefolds with numerically trivial first and second {C}hern
  classes.
\newblock {\em J. Algebraic Geom.}, 3(2):265--281, 1994.

\bibitem[Sei50]{Seidenberg50}
Abraham Seidenberg.
\newblock The hyperplane sections of normal varieties.
\newblock {\em Trans. Amer. Math. Soc.}, 69:357--386, 1950.
\newblock
  \href{https://doi.org/10.1090/S0002-9947-1950-0037548-0}{DOI:10.1090/S0002-9947-1950-0037548-0}.

\bibitem[Sim88]{MR944577}
Carlos~T. Simpson.
\newblock Constructing variations of {H}odge structure using {Y}ang-{M}ills
  theory and applications to uniformization.
\newblock {\em J. Amer. Math. Soc.}, 1(4):867--918, 1988.
\newblock \href{http://dx.doi.org/10.1090/S0894-0347-1988-0944577-9}{DOI:
  10.1090/S0894-0347-1988-0944577-9}.

\bibitem[Sim91]{MR1159261}
Carlos~T. Simpson.
\newblock Nonabelian {H}odge theory.
\newblock In {\em Proceedings of the {I}nternational {C}ongress of
  {M}athematicians, {V}ol.\ {I}, {II} ({K}yoto, 1990)}, pages 747--756. Math.
  Soc. Japan, Tokyo, 1991.

\bibitem[Sim92]{MR1179076}
Carlos~T. Simpson.
\newblock Higgs bundles and local systems.
\newblock {\em Inst. Hautes Études Sci. Publ. Math.}, 75:5--95, 1992.
\newblock \href{http://dx.doi.org/10.1007/BF02699491}{DOI:10.1007/BF02699491},
  \href{http://www.numdam.org/item?id=PMIHES_1992__75__5_0}{numdam.PMIHES-1992-75-5-0}.

\bibitem[Sim11]{MR2918179}
Carlos Simpson.
\newblock Local systems on proper algebraic {$V$}-manifolds.
\newblock {\em Pure Appl. Math. Q.}, 7(4, Special Issue: In memory of Eckart
  Viehweg):1675--1759, 2011.

\bibitem[Sug90]{MR1145268}
Kenichi Sugiyama.
\newblock Einstein-{K}ähler metrics on minimal varieties of general type and
  an inequality between {C}hern numbers.
\newblock {\em Adv. Stud. Pure Math.}, 18-I:417--433, 1990.

\bibitem[Tak03]{Takayama2003}
Shigeharu Takayama.
\newblock Local simple connectedness of resolutions of log-terminal
  singularities.
\newblock {\em Internat. J. Math.}, 14(8):825--836, 2003.
\newblock
  \href{http://dx.doi.org/10.1142/S0129167X0300196X}{DOI:10.1142/S0129167X0300196X}.

\bibitem[Tsu88]{MR0976585}
Hajime Tsuji.
\newblock Stability of tangent bundles of minimal algebraic varieties.
\newblock {\em Topology}, 27(4):429--442, 1988.
\newblock
  \href{http://dx.doi.org/10.1016/0040-9383(88)90022-5}{DOI:10.1016/0040-9383(88)90022-5}.

\bibitem[TZ06]{Tian-Zhang}
Gang Tian and Zhou Zhang.
\newblock On the {K}ähler-{R}icci flow on projective manifolds of general
  type.
\newblock {\em Chinese Ann. Math. Ser. B}, 27(2):179--192, 2006.
\newblock
  \href{http://dx.doi.org/10.1007/s11401-005-0533-x}{DOI:10.1007/s11401-005-0533-x}.

\bibitem[VGS00]{MR1756407}
Ernest~B. Vinberg, Vladimir~V. Gorbatsevich, and Ossip~V. Shvartsman.
\newblock Discrete subgroups of {L}ie groups.
\newblock In {\em Lie groups and {L}ie algebras, {II}}, volume~21 of {\em
  Encyclopaedia Math. Sci.}, pages 1--123, 217--223. Springer, Berlin, 2000.

\bibitem[Vie95]{Viehweg95}
Eckart Viehweg.
\newblock {\em Quasi-projective moduli for polarized manifolds}, volume~30 of
  {\em Ergebnisse der Mathematik und ihrer Grenzgebiete (3) [Results in
  Mathematics and Related Areas (3)]}.
\newblock Springer-Verlag, Berlin, 1995.
\newblock
  \href{http://dx.doi.org/10.1007/978-3-642-79745-3}{DOI:10.1007/978-3-642-79745-3}.

\bibitem[Voi07]{Voisin-Hodge1}
Claire Voisin.
\newblock {\em Hodge theory and complex algebraic geometry. {I}}, volume~76 of
  {\em Cambridge Studies in Advanced Mathematics}.
\newblock Cambridge University Press, Cambridge, english edition, 2007.

\bibitem[Yau77]{MR0451180}
Shing-Tung Yau.
\newblock Calabi's conjecture and some new results in algebraic geometry.
\newblock {\em Proc. Nat. Acad. Sci. U.S.A.}, 74(5):1798--1799, 1977.

\bibitem[Zar58]{MR0095846}
Oscar Zariski.
\newblock On the purity of the branch locus of algebraic functions.
\newblock {\em Proc. Nat. Acad. Sci. U.S.A.}, 44:791--796, 1958.

\bibitem[Zha06]{Zhang06}
Zhou Zhang.
\newblock On degenerate {M}onge-{A}mpère equations over closed {K}ähler
  manifolds.
\newblock {\em Int. Math. Res. Not.}, pages 1--18, 2006.
\newblock
  \href{http://dx.doi.org/10.1155/IMRN/2006/63640}{DOI:10.1155/IMRN/2006/63640}.

\bibitem[Zha09]{MR2497488}
Yuguang Zhang.
\newblock Miyaoka-{Y}au inequality for minimal projective manifolds of general
  type.
\newblock {\em Proc. Amer. Math. Soc.}, 137(8):2749--2754, 2009.
\newblock \href{http://dx.doi.org/10.1090/S0002-9939-09-09838-4}{DOI:
  10.1090/S0002-9939-09-09838-4}.

\end{thebibliography}
\end{document}